\documentclass[11pt]{amsart}
\usepackage[utf8]{inputenc}
\usepackage[T1]{fontenc}
\usepackage{epigraph}
\usepackage{amsfonts}
\usepackage{amsmath}
\usepackage{amssymb}
\usepackage{amsthm}
\usepackage{amscd}
\usepackage{bbm}
\usepackage[matrix,arrow,curve]{xy}
\usepackage{hyperref}
\usepackage{tikz-cd}
\usepackage{mathrsfs}
\usepackage{multirow}
\usepackage{stmaryrd}
\usepackage{mathtools}
\usepackage{etoolbox}
\setcounter{section}{-1}

\oddsidemargin=6pt
\evensidemargin=6pt
\topmargin=-20pt
\textwidth=160mm
\textheight=638pt

\setcounter{tocdepth}{1}

\newcommand{\bbk}{\mathbbm{k}}
\newcommand{\bbHom}{\operatorname{\mathbbm{H}om}}
\newcommand{\bbExt}{\operatorname{\mathbbm{E}xt}}
\newcommand{\Ker}{\operatorname{Ker}}
\newcommand{\Coker}{\operatorname{Coker}}
\renewcommand{\Im}{\operatorname{Im}}
\newcommand{\Id}{\operatorname{Id}}
\newcommand{\id}{\operatorname{id}}
\newcommand{\gr}{\operatorname{gr}}
\newcommand{\Gr}{\operatorname{Grass}}
\newcommand{\oub}{\operatorname{fgt}}
\newcommand{\ind}{\operatorname{ind}}
\newcommand{\supp}{\operatorname{supp}}
\newcommand{\rk}{\operatorname{rk}}
\newcommand{\Hom}{\operatorname{Hom}}

\newcommand{\Ext}{\operatorname{Ext}}
\newcommand{\End}{\operatorname{End}}
\newcommand{\Spec}{\operatorname{Spec}}
\newcommand{\Frac}{\operatorname{Frac}}
\newcommand{\Sch}{\mathrm{Sch}}
\newcommand{\Nil}{\underline{\cal Nil}}
\newcommand{\bbN}{\mathbb{N}}
\newcommand{\bbZ}{\mathbb{Z}}

\newcommand{\bbFq}{\mathbb{F}_q}
\newcommand{\bbQ}{\mathbb{Q}}

\newcommand{\bbP}{\mathbb{P}}
\newcommand{\bbC}{\mathbb{C}}
\newcommand{\bbT}{\mathbb{T}}
\newcommand{\ra}{\rightarrow}
\newcommand{\la}{\leftarrow}
\newcommand{\xra}[1]{\xrightarrow{#1}}
\newcommand{\xla}[1]{\xleftarrow{#1}}

\renewcommand{\phi}{\varphi}
\newcommand{\eps}{\varepsilon}
\newcommand{\Ocal}{\ensuremath{\mathcal{O}}}

\renewcommand{\bf}[1]{\mathbf{#1}}
\newcommand{\bb}[1]{\mathbb{#1}}
\newcommand{\cal}[1]{\mathcal{#1}}
\newcommand{\fk}[1]{\ensuremath{\mathfrak{#1}}}
\newcommand{\ona}[1]{\operatorname{#1}}

\theoremstyle{plain}
\newtheorem{thm}{Theorem} [section]
\newtheorem*{thm*}{Theorem}
\newtheorem{lm}[thm]{Lemma}
\newtheorem{prop}[thm]{Proposition}
\newtheorem{corr}[thm]{Corollary}

\newtheorem{conj}[thm]{Conjecture}
\newtheorem*{gupr}{Guiding principle}

\theoremstyle{definition}
\newtheorem{defi}[thm]{Definition}
\newtheorem*{defi*}{Definition}

\theoremstyle{remark}
\newtheorem{exe}[thm]{Example}
\newtheorem{rmq}[thm]{Remark}
\newtheorem*{nota}{Notation}

\AtEndEnvironment{ncorr}{\qed}

\title[CoHAs for Higgs torsion sheaves and moduli of triples]{Cohomological Hall algebras for Higgs torsion sheaves, moduli of triples and sheaves on surfaces}
\author{Alexandre Minets}
\address{Institute of Science and Technology Austria, Klosterneuburg, Austria.}
\email{alexandre.minets@ist.ac.at}

\begin{document}

\begin{abstract}
For any free oriented Borel-Moore homology theory $A$, we construct an associative product on the $A$-theory of the stack of Higgs torsion sheaves over a projective curve $C$. We show that the resulting algebra $A\bf{Ha}_C^0$ admits a natural shuffle presentation, and prove it is faithful when $A$ is replaced with usual Borel-Moore homology groups. We also introduce moduli spaces of stable triples, heavily inspired by Nakajima quiver varieties, whose $A$-theory admits an $A\bf{Ha}_C^0$-action. These triples can be interpreted as certain sheaves on $\bbP(T^*C)$. In particular, we obtain an action of $A\bf{Ha}_C^0$ on the cohomology of Hilbert schemes of points on $T^*C$.
\end{abstract}
\maketitle
\tableofcontents
\section{Introduction}
Let $\cal C$ be a hereditary abelian category over finite field $\bbFq$, such that all $\Hom$- and $\Ext$-spaces have finite dimension. We have two important examples of such categories:
\begin{itemize}
	\item for a finite quiver $Q$, the category of finite dimensional representations $\ona{Rep}Q=\mathrm{Rep}_{\bbFq}Q$;
	\item for a smooth projective curve $C$ over $\bbFq$, the category of coherent sheaves $\ona{Coh}C$.
\end{itemize}

Given a category $\cal C$ satisfying the conditions above, one can associate to it the \emph{Hall algebra} $\cal H(\cal C)$, as defined in~\cite{S}.
Broadly speaking, its basis is given by isomorphism classes of objects in $\cal C$, and the product is given by the sum of all non-isomorphic extensions.
In the case $\cal C=\ona{Rep}Q$, where $Q$ is a quiver of Dynkin type, a famous theorem by Ringel~\cite{Ri} describes the Hall algebra $\cal H(\ona{Rep}Q)$ as the positive half of the quantum group $U_\nu(\fk g_Q)$, specialized at $\nu=q^{1/2}$.
Moreover, one can upgrade $\cal H(\cal C)$ to a (twisted, topological) bialgebra, such that the Drinfeld double $D(\cal H(\ona{Rep}Q))$ is isomorphic to the quantum group itself.

By contrast, the Hall algebra $\cal H(\ona{Coh}C)$ seems to be far less understood.
For instance, an explicit description of (the spherical part of) $\cal H(\ona{Coh}C)$ by generators and relations is known only when $C$ is rational~\cite{Ka2} or elliptic~\cite{BS}.
Our principal motivation is to get a better understanding of this algebra.
One way to do it is to study its representation theory.
Unfortunately, since we do not possess an explicit combinatorial description of $\cal H(\ona{Coh}C)$ in terms of generators and relations (see, however,~\cite[Section 4.11]{S} for partial results), we have to construct its representations indirectly.

We use an approach close in spirit to the well-known construction of Nakajima~\cite{Nak94}, which realizes irreducible representations of the universal enveloping algebra $U(\fk g)$ of a simple Lie algebra $\fk g$ as homology groups of certain varieties.
Let us summarize a variant of this construction, following the point of view from~\cite{YZ}.
Namely, for a finite type quiver $Q=(I,E)$ and a projective $\bbC Q$-module $P$ with top of graded dimension $\bf w\in \bbZ_+^I$, one considers the algebraic stack $T^*\ona{Rep}_{\bf v}^{\la P}Q$, where $\ona{Rep}_{\bf v}^{\la P}Q$ parametrizes pairs $(V,\phi)$ with $V\in \ona{Rep}Q$, $\underline{\dim}V=\bf v$, and $\phi\in \Hom_{\bbC Q}(P,V)$.
The $\bbC$-points of this stack can be identified with representations of a quiver $\overline{Q}^\heartsuit$, satisfying certain conditions~\cite[Section 5]{Gi}.
For every dimension vector $\bf v\in \bbZ_+^I$, one introduces a stability condition on these representations, such that subrepresentations of stable representations are stable, and the moduli stack of stable representations forms a smooth variety $\cal M(\bf v,\bf w)$.
Inside these varieties, one has Lagrangian subvarieties $\cal L(\bf v,\bf w):=\pi_{\bf v}^{-1}(0)$, where $\pi_{\bf v}:\cal M(\bf v,\bf w)\ra \Spec\Gamma(\Ocal_{\cal M(\bf v,\bf w)})$ is the affinization map.
Finally, one considers a correspondence $Z_{\bf v}\subset (T^*\ona{Rep}_{\bf v}Q\times M(\bf v_1,\bf w))\times\cal M(\bf v+\bf v_1,\bf w)$, which parametrizes triples $(V,V_1,V_2)$ with $V_2/V_1\simeq V$.
Denoting the projections on the first and second factor by $\Phi_{\bf v}$ and $\Psi_{\bf v}$ correspondingly, we have the following operators in Borel-Moore homology:
\begin{align*}
e_{i,\bf v} & =(\Psi_{\epsilon_i})_*(\Phi_{\epsilon_i})^*([T^*\ona{Rep}_{\epsilon_i}Q]\boxtimes - ):H(\cal M(\bf v,\bf w))\ra \cal M(\bf v+\epsilon_i,\bf w),\\
f_{i,\bf v} & =\left\langle(\Phi_{\epsilon_i})_*(\Psi_{\epsilon_i})^*(-),[T^*\ona{Rep}_{\epsilon_i}Q]\right\rangle:H(\cal M(\bf v,\bf w))\ra \cal M(\bf v-\epsilon_i,\bf w),
\end{align*}
where $\epsilon_i$ is the dimension vector of the simple representation at vertex $i\in I$.
Then $e_i:=\sum_{\bf v}e_{i,\bf v}$, $f_i:=\sum_{\bf v}f_{i,\bf v}$ give rise to an action of $U(\fk g_Q)$ on $M_{\bf w}=\bigoplus_{\bf v}H(\cal M(\bf v,\bf w))$, and moreover its restriction to $\bigoplus_{\bf v}H(\cal L(\bf v,\bf w))$ is the irreducible highest module of weight $\bf w$.

In fact, this action can be extended to a much bigger algebra, so-called Yangian.
This can be achieved by realizing it inside the \emph{cohomological Hall algebra}~\cite{SV2,YZ}, isomorphic to $\bigoplus_{\bf v}H(T^*\ona{Rep}_{\bf v} Q)$ as a vector space (see~\cite{MO} for another perspective on Yangians).
The latter algebra then acts on $M_{\bf w}$ by correspondences similar to the ones described above.
The purpose of this paper is to begin investigation of analogous algebras and their representations in the context of curves.

In order to apply the same set of ideas to our situation, we have to introduce several modifications to our context.
First, we have to consider $T^*\mathop{\underline{\cal Coh}} C$ instead of $\mathop{\underline{\cal Coh}} C$; note that the former stack is isomorphic to the stack of Higgs sheaves $\mathop{\underline{\cal Higgs}} C$.
Secondly, we will study a homological version of Hall algebra.
It will be modeled on the vector space $A(\mathop{\underline{\cal Higgs}} C)$, where $A$ is either Borel-Moore homology or an arbitrary free oriented Borel-Moore homology theory (see~\cite[Chapter 5]{LM} for the definition of the latter).

Optimistically, our program is as follows:
\begin{enumerate}
	\item construct a (bi-)algebra structure $A\bf{Ha}_C$ on $A(\mathop{\underline{\cal Higgs}} C)$;
	\item define a suitable stability condition on $T^*\underline{\cal Coh}^{\la \cal F}C$, where $\underline{\cal Coh}^{\la \cal F}C$ is the stack of pairs $(\cal E,\alpha)$ with $\cal E\in\ona{Coh}C$, $\alpha\in\Hom(\cal F,\cal E)$;
	\item construct an action of the Drinfeld double $D(A\bf{Ha}_C)$ on the $A$-theory $A(\cal M)$ of the moduli of stable objects.
\end{enumerate}

In the present article, we treat a very particular case of the plan above.
Namely, we restrict our attention to the category of \emph{torsion} sheaves on $C$.
Then, we have the following result:
\begin{thm}
There exists an associative product on $\bigoplus_d A(\mathop{\underline{\cal Higgs}}_{d}^{0} C)$, which makes it into an algebra $A\bf{Ha}_{0,C}$ (Theorem~\ref{ass}).
\end{thm}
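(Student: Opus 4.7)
The plan is to construct the product via the familiar Hall-algebra correspondence adapted to the Higgs setting. Introduce the stack $\mathop{\underline{\cal Higgs}}_{0,d_1,d_2} C$ parametrizing short exact sequences $0\to (\cal E_1,\theta_1)\to (\cal E,\theta)\to (\cal E_2,\theta_2)\to 0$ of Higgs torsion sheaves with $\cal E_i$ of length $d_i$. It fits into the diagram
\[
\mathop{\underline{\cal Higgs}}_{0,d_1} C \times \mathop{\underline{\cal Higgs}}_{0,d_2} C \xleftarrow{p} \mathop{\underline{\cal Higgs}}_{0,d_1,d_2} C \xrightarrow{q} \mathop{\underline{\cal Higgs}}_{0,d_1+d_2} C,
\]
where $p$ records the sub- and quotient Higgs sheaves and $q$ records the middle term. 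The product on each graded piece is then defined as $m_{d_1,d_2} := q_* \circ p^*$, and these assemble into a single product on $\bigoplus_d A(\mathop{\underline{\cal Higgs}}_{0,d} C)$.

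For this definition to make sense one must analyze the two projections. The fiber of $p$ over a pair $((\cal E_1,\theta_1),(\cal E_2,\theta_2))$ is the classifying stack of Higgs extensions, which is a quotient of $\Ext^1_{\ona{Higgs}}(\cal E_2,\cal E_1)$ by the action of $\Hom_{\ona{Higgs}}(\cal E_2,\cal E_1)$; consequently $p$ is quasi-smooth (in fact a vector bundle stack locally) and admits a refined Gysin pullback $p^*$ in any free oriented Borel-Moore homology theory in the sense of~\cite[Chapter 5]{LM}. The morphism $q$ is representable, and its fiber over a fixed $(\cal E,\theta)$ is the closed subscheme of $\theta$-invariant subsheaves inside the projective Quot-scheme $\mathrm{Quot}_{\cal E,d_1}$, hence $q$ is proper and $q_*$ is defined.

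Associativity is then established by the standard iterated-extension argument. I would introduce the stack $\mathop{\underline{\cal Higgs}}_{0,d_1,d_2,d_3} C$ of 3-step Higgs filtrations and fit it into a commutative diagram whose two outer boundaries compute the triple products $(ab)c$ and $a(bc)$. Each inner square should be Cartesian in a derived sense, and associativity then reduces to the base-change identity $p^* q_* = q'_* p'^*$ together with the compatibility of Gysin pullback with composition.

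The main technical obstacle lies precisely in this base-change step: the ambient stacks are Artin and not of finite type, and $p$ is only quasi-smooth, so $p^*$ is a refined Gysin pullback rather than a literal smooth pullback. Verifying that the relevant squares are derived-Cartesian and that base change holds in this generality for an arbitrary free oriented Borel-Moore homology theory is where the bulk of the technical work sits.
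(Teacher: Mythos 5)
Your proposal takes a genuinely different route from the paper, and unfortunately it has a gap that the paper's approach is specifically designed to sidestep.

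The paper does not work with the intrinsic Hall diagram on stacks at all. Instead, it exploits the fact that $\underline{\cal Higgs}_{0,d}$ is a global quotient $[T^*_{G_d}\cal Quot^\circ_{0,d}/G_d]$ and builds the correspondence out of the \emph{conormal bundle} $Z = T^*_W(X\times X')$ to the closed embedding $W = G\times_P\widetilde{\cal Quot}_{0,d_\bullet}\hookrightarrow X\times X'$, where $X$, $X'$ are \emph{smooth} varieties. The two legs of the correspondence are then a projective map $\Psi_G$ and an honest l.c.i.\ map $\Phi:Z\to T^*X$ between smooth varieties, so the only pullback required is the ordinary refined Gysin pullback along an l.c.i.\ morphism of schemes — exactly what \cite[Chapter 5]{LM} supplies. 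Associativity is then proved by hand: Lemma~\ref{transvers} establishes transversality of the relevant intersections in $X_3\times X_2\times X_1$, which via \cite[Theorem 2.7.26]{CG} gives the composition of Lagrangian correspondences $Z_1\times_{T^*X_2}Z_3\simeq Z_2$, and base change does the rest.

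Your proposal instead uses the map $p$ from the stack of short exact Higgs sequences to the product of Higgs stacks, and asserts that $p$ is quasi-smooth, ``in fact a vector bundle stack locally,'' and therefore admits a refined Gysin pullback in any free oriented Borel-Moore theory in the sense of \cite[Chapter 5]{LM}. Both halves of this assertion fail. First, the category of Higgs torsion sheaves is 2-Calabi–Yau: $\Ext^2_{\ona{Higgs}}(\cal E_2,\cal E_1)\simeq\Hom_{\ona{Higgs}}(\cal E_1,\cal E_2)^*$ does not vanish (already for skyscrapers $\cal E_1=\cal E_2=\Ocal_x$ on $T^*C$), so the fibres $[\Ext^1/\Hom]$ do \emph{not} vary as a vector bundle stack; the $\Ext^2$ obstruction makes $p$ at best quasi-smooth as a morphism of \emph{derived} stacks, and the classical stack $\underline{\cal Higgs}_{0,d}$ (which is what the paper manipulates, as the singular classical quotient $[\mu^{-1}(0)/G_d]$) does not carry this structure. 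Second, and more decisively, the Levine–Morel framework \cite{LM} provides Gysin pullbacks only for l.c.i.\ morphisms of schemes; there is no refined Gysin pullback for quasi-smooth morphisms of derived Artin stacks valid for an arbitrary free OBM in that reference. You acknowledge at the end that verifying base change ``is where the bulk of the technical work sits,'' but the issue is upstream: $p^*$ is not even defined in the framework you are citing. Building such a pullback is a substantial piece of theory in its own right, and the whole point of the paper's conormal-bundle construction on $\cal Quot$-atlases is to avoid ever needing it.
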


The proof uses the techniques found in~\cite{SV1,YZ}.
Because of our restrictions on the rank of sheaves, all stacks we consider can be explicitly realized as global quotients, and thus we can forget their stacky nature and work with equivariant $A$-theory of their atlases instead.
In positive rank the stack $\underline{\cal Coh}_{r,d}$ is only \emph{locally} a quotient stack, so that one has to check that separate constructions in each patch can be glued together.
This was done in~\cite{SS}.

Note that we do \emph{not} construct a coproduct on $A\bf{Ha}_{C}^{0}$.
However, if we denote by $A\bf{Ha}^{0,T}_{C}$ the version of $A\bf{Ha}_{C}^{0}$ equivariant with respect to the scaling action of $\bb G_m$ on the cotangent fibers, one can define a certain algebra $A\bf{Sh}_C$ with explicit formulas for multiplication and construct a map $\rho:A\bf{Ha}^{0,T}_{C}\ra A\bf{Sh}_C$.
Roughly speaking, $A\bf{Sh}_C$ looks like the space of formal series with coefficients in $A(C)$, and the product is given by twisted symmetrization (see Definition~\ref{shuf}).
We expect $\rho$ to be injective (Conjecture~\ref{conjAll}). This prediction is supported by the following theorem:
\begin{thm}\label{thmalgintro}
If $A=H$ are the usual Borel-Moore homology groups, the map $\rho:H\bf{Ha}^{0,T}_{0,C}\ra H\bf{Sh}_C$ becomes injective after tensoring by $\Frac(A_T(pt))$ (Corollary~\ref{shufflefaith}).
\end{thm}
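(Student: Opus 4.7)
The plan is to identify $\rho_d$, in each degree $d$, with a concrete restriction map to a ``generic'' open substack of Higgs torsion sheaves with disjoint supports, and then deduce injectivity from the cotangent structure combined with a stratification argument on $\mathop{\underline{\cal Coh}}_{0,d} C$.

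First, I would make $\rho$ explicit: in degree $d$, it should coincide with the pullback along the open inclusion $j_d : U_d \hookrightarrow \mathop{\underline{\cal Higgs}}_{0,d} C$, where $U_d$ parametrizes Higgs torsion sheaves whose underlying sheaf is supported on $d$ pairwise distinct points of $C$. Canonically $U_d$ is equivalent to an open substack of $(\mathop{\underline{\cal Higgs}}_{0,1} C)^d/\Sigma_d$, so pullback along $j_d$ lands in the $\Sigma_d$-invariants of $H^T(\mathop{\underline{\cal Higgs}}_{0,1} C)^{\otimes d}$, which by Definition~\ref{shuf} is the degree-$d$ part of $H\bf{Sh}_C$. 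One then verifies, using the convolution description of the Hall product, that this restriction is an algebra map whose target formula matches the twisted symmetrization defining $H\bf{Sh}_C$; this identifies it with $\rho$.

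For injectivity, I would exploit the identification $\mathop{\underline{\cal Higgs}}_{0,d} C = T^*\mathop{\underline{\cal Coh}}_{0,d} C$ with the scaling $T=\bb G_m$-action contracting the cotangent fibers to the zero section. The $T$-fixed locus is $\mathop{\underline{\cal Coh}}_{0,d} C$, and the associated equivariant deformation retraction yields $H^T(\mathop{\underline{\cal Higgs}}_{0,d} C)\cong H(\mathop{\underline{\cal Coh}}_{0,d} C)\otimes\bbQ[t]$, and similarly for $U_d$. Since both modules are free over $\bbQ[t]$, injectivity of $\rho_d$ reduces to injectivity of the analogous non-equivariant restriction from $H(\mathop{\underline{\cal Coh}}_{0,d} C)$ to the BM homology of the open substack of distinct-support sheaves inside $(\mathop{\underline{\cal Coh}}_{0,1} C)^d/\Sigma_d$.

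The final step is this non-equivariant injectivity, which I would approach using the Hilbert-Chow morphism $\mathop{\underline{\cal Coh}}_{0,d} C\ra\mathrm{Sym}^d C$ and an induction on the stratification of $\mathrm{Sym}^d C$ by partition type. At each step, the closed complement of the open stratum decomposes into products of local moduli of torsion sheaves supported at coincident points, so Künneth reduces the question to lower-$d$ situations. The main obstacle, I expect, is precisely here: controlling the long exact sequences of BM homology associated to each open-closed pair and showing that classes supported on the deeper strata do not die in $H(\mathop{\underline{\cal Coh}}_{0,d} C)$. A clean way to force this may be to produce an explicit spanning set of $H(\mathop{\underline{\cal Coh}}_{0,d} C)$ from Chern classes of tautological bundles on $C\times\mathop{\underline{\cal Coh}}_{0,d} C$, and to compute their restrictions to $U_d$ as explicit symmetric functions with coefficients in $H(C)\otimes\bbQ[t]$, thereby identifying $\rho(H\bf{Ha}^T_{0,C})$ with a recognisably faithful copy inside $H\bf{Sh}_C$.
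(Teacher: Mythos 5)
Your proposal contains a fundamental error in the middle step that undermines the entire strategy, and the final step is precisely the crux of the matter but is left unresolved.

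The central problem is your claim that the contraction of $\mathop{\underline{\cal Higgs}}_{0,d}C=T^*\mathop{\underline{\cal Coh}}_{0,d}C$ onto its zero section $\mathop{\underline{\cal Coh}}_{0,d}C$ yields $H^T(\mathop{\underline{\cal Higgs}}_{0,d}C)\cong H(\mathop{\underline{\cal Coh}}_{0,d}C)\otimes\bbQ[t]$. The cotangent stack of $\mathop{\underline{\cal Coh}}_{0,d}C$ is \emph{not} a vector bundle over $\mathop{\underline{\cal Coh}}_{0,d}C$: concretely, $\mathop{\underline{\cal Higgs}}_{0,d}C=[\mathscr C_d/G_d]$ where $\mathscr C_d=T^*_{G_d}\cal Quot^\circ_{0,d}$ is a moment-map fiber, which is singular for $d\geq 2$ (it is the global-curve analogue of the commuting variety). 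Borel--Moore homology has no homotopy invariance for $\bb G_m$-contractions of singular schemes — for instance the nilpotent cone in $\fk{gl}_2$ contracts to a point but its BM homology is not that of a point — so no Thom-type isomorphism is available, and the reduction to $H(\mathop{\underline{\cal Coh}}_{0,d}C)$ fails. The paper gets around this by an entirely different route: it first shows (Corollary~\ref{isonilp}, using Proposition~\ref{semiloc} and the fact that the non-nilpotent locus is separated from the nilpotent locus by the characteristic polynomial of the Higgs field) that one may replace $\mathop{\underline{\cal Higgs}}_{0,d}$ by its nilpotent substack $\mathop{\underline{\cal Higgs}}_{0,d}^{nilp}$ after a partial localization; then it uses Laumon's stratification $\mathop{\underline{\cal Higgs}}_{0,d}^{nilp}=\bigsqcup_{\nu\vdash d}\underline{\cal Nil}_\nu$, and the crucial Proposition~\ref{Nilfib} that each $\underline{\cal Nil}_\nu$ \emph{is} a (stack) vector bundle over $\prod_i\mathop{\underline{\cal Coh}}_{0,\nu_i}$. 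This is the only place a Thom-type identification is legitimately used.

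You also correctly identify, but do not resolve, the other genuine difficulty: why the long exact sequences of the stratification degenerate, i.e. why classes supported on deeper strata ``do not die.'' The paper's answer uses a genuinely Hodge-theoretic input — by Laumon's theorem $H^*(\mathop{\underline{\cal Coh}}_{0,d})\simeq S^d(H^*(C)[z])$ is \emph{pure} as a mixed Hodge structure, purity propagates along the filtration by strata, and strictness of morphisms with respect to the weight filtration forces all connecting maps to vanish, splitting the long exact sequences into short ones. This is the reason the argument works for $A=H$ and not for arbitrary free OBM theories (the paper explicitly flags this after stating the theorem). Your suggestion of building an explicit spanning set from tautological Chern classes is a plausible alternative in principle but is not developed, and it is unclear it would be any easier than the purity argument. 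Finally, a minor point: the paper's $\rho_d$ is the composite $i_d^*\circ j_{d*}$ (push to $T^*\cal Quot^\circ_{0,d}$, pull back to the $\bbT$-fixed locus $C^d$), not a restriction to the distinct-support open substack $U_d$; the fixed locus $C^d$ contains points with arbitrary support multiplicities, so your identification of $\rho$ with an open restriction is not the one in the text and would itself require justification.
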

If the conjecture is true, this map can be used to find relations in $A\bf{Ha}^{0,T}_{C}$ via direct computations, and also to transport a natural coproduct from $A\bf{Sh}_C$.

Next, let us pick a locally free sheaf $\cal F$ as framing.
\begin{defi*}
A \textit{stable Higgs triple} of rank $0$, degree $d$ and frame $\cal F$ is a triple $(\cal E,\alpha,\theta)$ with $\cal E\in\ona{Coh}_{0,d}C$, $\alpha\in\Hom(\cal F,\cal E)$, $\theta\in\bbExt^1(\cal E,(\cal F\xra{\alpha}\cal E)\otimes \omega)$, such that the image of $\alpha$ generates $\cal E$ under $\theta$ (Definition~\ref{trstable}).
\end{defi*}

\begin{thm}
Let $C$ be a smooth projective curve, and $d$, $n$ positive integers.
\begin{enumerate}
\item The moduli of stable Higgs triples of degree $d$ and frame $\cal F$ is represented by a smooth quasi-projective variety $\mathscr B(d,\cal F)$ (Theorem~\ref{moduli});
\item Let $\cal F=\bbk^n\otimes\Ocal$. Then for any $n$, the space $A\mathscr M_n=\bigoplus_d A(\mathscr B(d,\bbk^n\otimes\Ocal))$ is equipped with a structure of an $A\bf{Ha}^0_{C}$-module (Corollary~\ref{reps}).
\end{enumerate}
\end{thm}

The second part of this theorem is proved by the same methods as Theorem~\ref{thmalgintro}.
We strongly expect that the same result holds for any locally free $\cal F$.
As for the first part, it is done by realizing stable Higgs triples as sheaves on a compactification of $T^*C$. 
Namely, we have the following theorem:
\begin{thm}
	The variety $\mathscr B(d,\cal F)$ is isomorphic to the moduli space of $f$-semisimple torsion-free sheaves on $\bbP_C(\omega\oplus\Ocal)$, equipped with framing at infinity and satisfying certain numerical conditions.
	In particular, $\mathscr B(d,\Ocal)$ is isomorphic to the Hilbert scheme of points $\ona{Hilb}_d T^*C$ (Section~\ref{negu}).
\end{thm}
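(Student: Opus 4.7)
The strategy is to exhibit mutually inverse morphisms between $\mathscr B(d,n)$ and the moduli of $f$-semisimple torsion-free sheaves on the ruled surface $X := \bbP(T^*C)$ equipped with a trivialization along the section at infinity $\sigma_\infty\subset X$, where $f:X\to C$ is the projection and $X\setminus\sigma_\infty\simeq T^*C$. In one direction, I would send a framed sheaf $\cal G$ on $X$ to the Higgs triple obtained by derived pushforward along $f$: up to an appropriate twist by powers of $\Ocal_X(\sigma_\infty)$ so that the resulting complex is concentrated in a single degree and has length $d$, the torsion sheaf $\cal E$ is extracted from $Rf_*\cal G$; the trivialization of $\cal G|_{\sigma_\infty}$ combined with the connecting map of $R^\bullet f_*$ applied to $0\to\cal G(-\sigma_\infty)\to\cal G\to\cal G|_{\sigma_\infty}\to 0$ yields $\alpha:\Ocal_C^n\to\cal E$, while the class $\theta\in\bbExt^1(\cal E,[\Ocal^n\xra{\alpha}\cal E]\otimes\omega)$ is produced by multiplication by the canonical section of $\Ocal_X(\sigma_\infty)$, which encodes the fibrewise affine coordinate on $T^*C$.

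For the reverse direction, starting from a stable triple $(\cal E,\alpha,\theta)$, I would assemble the data into the cokernel of a relative two-term complex on $X$ of the form $f^*\cal B\otimes\Ocal_X(-1)\to f^*\cal A$ whose differential encodes $\alpha$ and a representative of $\theta$; this is the $\bbP(T^*C)$-analogue of Beilinson's monad construction, with $\cal A$ and $\cal B$ read off from the complex $[\Ocal^n\xra{\alpha}\cal E]$ and its Serre-dual on $C$. Mutual inverseness would then follow by base change and the projection formula along $f$, using that a torsion-free sheaf on $X$ of the prescribed numerical type is reconstructed from its two-term relative Beilinson resolution.

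The main obstacle will be matching the stability conditions. The stability of $(\cal E,\alpha,\theta)$ forbids proper $\theta$-stable subsheaves of $\cal E$ containing $\Im\alpha$, whereas on the $X$-side one needs to exclude destabilizing framed subsheaves of $\cal G$. Using the monad, I expect a framed subsheaf of $\cal G$ to correspond to a subcomplex of the Beilinson resolution that descends to a $\theta$-invariant subsheaf of $\cal E$ containing $\Im\alpha$; making this bijection precise would close the argument. For $n=1$ this reduces the framed moduli to the space of ideal sheaves of length-$d$ subschemes of $T^*C$ extended trivially across $\sigma_\infty$, yielding the identification $\mathscr B(d,1)\simeq\ona{Hilb}_d T^*C$.
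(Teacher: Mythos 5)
Your overall strategy — derived pushforward along the ruling $\pi:S\ra C$ in one direction and a cone construction in the other — is the same as the paper's, but you misidentify where stability enters, and that is precisely the step the paper has to work hardest to prove.

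On the forward direction your picture is essentially right: the paper uses the unit $u:E\ra \pi^! R\pi_* E$ of the adjunction $R\pi_*\dashv\pi^!$, completes it to a triangle, and identifies the cone via a derived see-saw lemma (Lemma~\ref{derseesaw}); twisting by $\Ocal_S(-D)$ and restricting to $D$ then yields $\theta$ and $\alpha$. This is a cleaner packaging of your ``connecting map plus multiplication by the canonical section of $\Ocal_X(\sigma_\infty)$.'' On the reverse direction the paper does not build a two-term Beilinson monad of vector bundles $f^*\cal B(-1)\ra f^*\cal A$; it assembles $(\theta,\iota):\cal E\ra \alpha[1]\otimes(\omega^{-1}\oplus\Ocal)$ by adjunction into a morphism $\Psi:\pi^!(\cal E\otimes\omega^{-1})\ra \pi^!\alpha\otimes\Ocal_S(D)[1]$ between objects of the derived category and sets $E:=\mathrm{Cone}(\Psi)[-2]$. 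A priori $E$ is a complex, not a sheaf.

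The genuine gap in your proposal is the treatment of stability, which you flag as ``the main obstacle'' and then only hope to resolve by matching destabilizing subobjects on both sides. That is not how the equivalence works. On the surface side, $f$-semistability is a condition on the generic fiber of $E$ over $C$ (Lemma~\ref{Mozlem}: it must be $\Ocal_{\bbP^1}(\ell)^{\oplus m}$), and the paper verifies it directly — there is no bijection of destabilizing subsheaves with $\theta$-invariant subsheaves of $\cal E$. The role of triple-stability is entirely different: Lemma~\ref{stabsurj} shows that stability of $(\cal E,\alpha,\theta)$ is \emph{equivalent} to surjectivity of the component $\psi_h:\pi^*\cal E\ra\pi^*(\Coker\alpha\otimes\omega)\otimes\Ocal_S(D)$ of $\Psi$, and it is this surjectivity that forces $\mathrm{Cone}(\Psi)[-2]$ to be concentrated in a single degree, i.e.\ to be an actual (torsion-free) sheaf rather than a complex. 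Without an argument of this type, your reverse map is not even well-defined as a map to a moduli of sheaves. Conversely, the paper deduces that the forward map only produces stable triples by observing that an unstable triple would yield a genuine two-term complex under the reverse construction, contradicting the fact that it must recover $E$. So the stability comparison is not a separate ``obstacle'' to be cleared at the end — it is built into the well-definedness of the correspondence, and your proposed mechanism (subobjects $\leftrightarrow$ subcomplexes) does not capture it.

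Your reading of the $n=1$ case is correct in spirit; the paper's Corollary~\ref{triplesheaves} first twists to normalize $c_1=0$ and then sends $E$ to the inclusion $E|_{T^*C}\subset E^{\vee\vee}|_{T^*C}\simeq \Ocal_{T^*C}$, which is the extension-across-$\sigma_\infty$ picture you describe.
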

This isomorphism can be understood as a relative version of classical derived equivalence between the category of sheaves on $\bbP^1$ and of representations of the Kronecker quiver~\cite{Bei}.
We refer the reader to Section~\ref{negu} for definitions and precise statement.

Unfortunately, it is not entirely clear how to extend a $A\bf{Ha}^{0,T}_{C}$-module structure on $A\mathscr M^T_n$ to a Yetter-Drinfeld module~\cite{RT} with respect to some coproduct on $A\bf{Ha}^{0,T}_{C}$.
Still, the isomorphism $\mathscr B(d,\Ocal)\simeq\ona{Hilb}_{d}(T^*C)$ suggests that $A\mathscr M^T_n$ should admit an action of the Drinfeld double of $A\bf{Ha}^{0,T}_{C}$, similar to~\cite[Chapter 8]{Nak}.

In higher rank, we expect the moduli of stable Higgs triples to retain a close relation to the moduli of sheaves on $\bbP_C(\omega\oplus\Ocal)$ framed at infinity.
This is evidenced by the fact that similar objects appear in the works of Negu{\c t}~\cite{Ne2,Ne3}, where for any smooth projective surface $S$ he defines an action of a certain $\cal W$-algebra on the $K$-theory of moduli of stable sheaves on $S$.
We expect that for $S=T^*C$, these algebras get embedded into a suitable completion of $K\bf{Ha}^0_{C}$.
In general, since Higgs sheaves on $C$ can be thought of as coherent sheaves with proper support on $T^*C$ via BNR-correspondence~\cite{BNR}, one can imagine a much more general picture:

\begin{gupr}
Let $S$ be a smooth projective surface together with a smooth divisor $D\subset S$.
Denote by $\underline{\cal Coh}(S,D)$ the stack of $\Ocal_S$-modules with support disjoint from $D$, and by $\underline{\cal Coh}_0(S,D)$ its substack of $\Ocal_S$-modules of finite length.
Then $A\bf{Ha}_S=A(\underline{\cal Coh}(S,D))$ should admit a Hall-like structure of an associative algebra, such that $A\bf{Ha}^0_{S}=A(\underline{\cal Coh}_0(S,D))$ is a subalgebra containing $A$-theoretic $\cal W$-algebra.
Furthermore, the Drinfeld double $D(A\bf{Ha}_S)$ should act on the $A$-theory of stable sheaves on $S$ framed at $D$, for a certain stability condition.
\end{gupr}

For this principle to hold true, one will certainly need additional technical assumptions, such as transversality of the divisor defining stability condition with $D$.
However, this discussion reaches far beyond the scope of this article.

Since the first draft of the present paper has appeared, some additional progress has been made in generalizing its results.
As mentioned above, the definition of algebra $A\bf{Ha}^0_{C}$ was extended to positive rank Higgs sheaves in~\cite{SS}.
In $K$-theory, the rank 0 algebra $K\bf{Ha}^0_{S}$ was defined for any smooth surface $S$ in~\cite{Zh}.
In homology, the full algebra $H\bf{Ha}_S$ was defined in~\cite{KV}.
Moreover, it was shown $H\bf{Ha}_S$ acts on the Borel-Moore homology groups of rank 1 semi-stable sheaves on $S$.

Let us finish the introduction with a brief outline of the structure of the paper.
In Section~\ref{cohquot} we choose explicit presentations of $\underline{\cal Coh}_{0,d}$ and $\underline{\cal Higgs}_{0,d}$ as global quotient stacks, given by certain $\cal Quot$-schemes. We also recollect basic facts about these schemes.
In Section~\ref{prod} we recall a construction introduced in works of Schiffmann and Vasserot, which permits us to define an associative product on $\bigoplus_d A(\underline{\cal Higgs}_{0,d})$.
In Section~\ref{globsh} we introduce \textit{global shuffle algebras} $A\bf{Sh}_g$, prove that these algebras satisfy some quadratic relations, and obtain a shuffle presentation $\rho$ of $A\bf{Ha}_C^{0,T}$ for a certain choice of $g$. The map $\rho$ is obtained by localizing our product diagrams to the fixed point sets of a certain torus $\bbT$. In passing, we also propose a geometric interpretation of the difference between two types of shuffle product, appearing in literature in similar context (Corollary~\ref{prodnorm}).
In Section~\ref{injsh}, we prove that for $A=H$ the shuffle presentation $\rho$ is faithful. The proof uses the scaling torus action and weight filtration in a crucial way, so that it cannot be easily translated to other homology theories. However, we conjecture that $\rho$ is faithful for general $A$.
In Section~\ref{modtri} we introduce the moduli stack of Higgs triples, construct an action of $A\bf{Ha}_C^{0}$ on $\bigoplus_d A(\mathscr B(d,n))$, and discuss how it can be related to the classical action of Heisenberg algebra on cohomology groups of Hilbert schemes of points on $T^*C$~\cite{Nak}.
In Section~\ref{quishe}, we collect some technical facts about quiver sheaves for later use.
In Section~\ref{negu}, we provide an alternative description of $\mathscr B(d,n)$ as a moduli of sheaves on a compactification of $T^*C$. We also briefly describe the relation between our work and the $W$-algebras of Negu\c{t}.
Finally, in Appendix~\ref{OBM} we recall the notion of oriented Borel-Moore homology functor, following the monograph by Levine and Morel~\cite{LM}, and gather the statements necessary for our proofs. In particular, we adapt the localization theorem of Borel-Atiyah-Segal to this framework.

\subsection*{Acknowledgements:}

This paper constitutes a part of author's Ph.D. thesis, written under direction of Olivier Schiffmann.
The author would like to thank him for his perpetual support and constant encouragement.
I would also like to thank Quoc Ho, Sergei Mozgovoy, Andrei Negu{\c t}, Francesco Sala and Gufang Zhao for their help and illuminating discussions, and the anonymous referee for their valuable suggestions.

\subsection*{Conventions:}
We denote by $\Sch/\bbk$ the category of $\bbk$-schemes of finite type over $\bbk$; $pt$ stands for the terminal object $\Spec\bbk\in \Sch/\bbk$.
For any $X\in \Sch/\bbk$, the category of coherent $\Ocal_X$-sheaves is denoted by $\ona{Coh}X$.
We will usually denote coherent sheaves by calligraphic letters, and implicitly identify locally free sheaves with corresponding vector bundles.
For any $\cal E,\cal F\in \ona{Coh}X$, we write $\Ext^i(\cal E,\cal F)$ for Ext-functors, $\Hom:=\Ext^0$, and $\cal Ext^i(\cal E,\cal F)$ for Ext-sheaves, $\cal Hom:=\cal Ext^0$.
More generally, for any two complexes of sheaves $\cal E^\bullet,\cal F^\bullet$ we denote by $\bbHom(\cal E^\bullet,\cal F^\bullet)$ the space of morphisms in the derived category $\cal D^b(\ona{Coh}X)$, and $\bbExt^i(\cal E^\bullet,\cal F^\bullet):=\bbHom(\cal E^\bullet,\cal F^\bullet[i])$.
Finally, we will liberally use the language of stacks; see~\cite{LMB} or~\cite{Ols} for background.
\section{Coherent sheaves and $\cal Quot$-schemes}\label{cohquot}

Let $\bbk$ be an algebraically closed base field of characteristic $0$. Let $C$ be a smooth projective curve defined over $\bbk$, and $\Ocal$ its structure sheaf. Then one can define the following algebraic stacks (over $\Sch/\bbk$ in étale topology):
\begin{itemize}
\item $\underline{\cal Coh}_{0,d}$, the stack of torsion sheaves on $C$ of degree $d$~\cite[Théorème 4.6.2.1]{LMB};
\item for any $\cal F\in \ona{Coh} C$, the stack $\underline{\cal Coh}_{0,d}^{\leftarrow \cal F}$ of pairs $(\cal E\in \ona{Coh}_{0,d}C,\alpha\in\Hom(\cal F,\cal E))$~\cite[Section 4.1]{GHS};
\item the cotangent stack $\underline{\cal Higgs}_{0,d}:=T^*\underline{\cal Coh}_{0,d}$. It is defined as the relative Spec of the symmetric algebra of the tangent sheaf; see \cite[Chapitre 14, 17]{LMB} for the relevant definitions.
\end{itemize}

\begin{rmq}
Note that the tangent sheaf of a stack is \textit{not} the same as the tangent complex, but is rather its zeroth cohomology.
\end{rmq}

All of the stacks above can be realized as global quotient stacks. Below we will make an explicit choice of such presentation for computational purposes.

\begin{defi}
Let $\underline{\cal Quot}_{0,d}$ be the following functor:
\begin{align*}
\underline{\cal Quot}_{0,d}:\Sch/\bbk & \ra Set^{op},\\
T & \mapsto \left\{\phi:\bbk^d\otimes \Ocal_{T\times C}\twoheadrightarrow \cal E_T \,\middle\vert\, \begin{array}{c}
\cal E_T\in \ona{Coh}(T\times C),\phi\text{ flat over }T,\\
\text{for any }t\in T, \rk\cal E_t=0\text{ and }\deg\cal E_t=d.\\
\end{array}\right\},\\
(T'\xra{f} T) & \mapsto (\phi\mapsto f^*\phi).
\end{align*}
Moreover, let us consider its open subfunctor $\underline{\cal Quot}^\circ_{0,d}\subset \underline{\cal Quot}_{0,d}$, consisting of quotients
\[
\phi:\bbk^d\otimes \Ocal_{T\times C}\twoheadrightarrow \cal E_T,
\]
such that the map $H^0(\phi_t):\bbk^d\ra H^0(\cal E_t)$ is an isomorphism for all $t\in T$.
\end{defi}

Let $G_d:=GL_d(\bbk)$. Note that $G_d$ acts on $\underline{\cal Quot}^\circ_{0,d}$ via linear transformations of $\bbk^d$.

\begin{prop}\label{atlas}
Let $d>0$ be an integer.
\begin{enumerate}
\item $\underline{\cal Quot}_{0,d}$ and $\underline{\cal Quot}^\circ_{0,d}$ are representable by smooth schemes $\cal Quot_{0,d}$ and $\cal Quot^\circ_{0,d}$ respectively, and $\cal Quot_{0,d}$ is a projective variety of dimension $d^2$;
\item we have an isomorphism $[\cal Quot_{0,d}^\circ/G_d]\simeq\underline{\cal Coh}_{0,d}$;
\item let $\cal F\simeq V\otimes \Ocal$ for some finite dimensional vector space $V$. Then we have an isomorphism $[(\Hom(V,\bbk^d)\times\cal Quot_{0,d}^\circ)/G_d]\simeq\underline{\cal Coh}^{\leftarrow \cal F}_{0,d}$.
\end{enumerate}
\end{prop}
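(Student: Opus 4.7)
The plan for part (1) is to invoke Grothendieck's representability theorem: since we parametrize quotients of the fixed coherent sheaf $\bbk^d \otimes \Ocal$ with constant Hilbert polynomial $d$, the functor $\underline{\cal Quot}_{0,d}$ is represented by a projective scheme. To obtain smoothness, I would compute the deformation theory at a point $[\phi: \bbk^d \otimes \Ocal \twoheadrightarrow \cal E]$: the tangent space is $\Hom(\cal K, \cal E)$ and obstructions lie in $\Ext^1(\cal K, \cal E)$, where $\cal K = \ker\phi$. Since $\cal K$ is a subsheaf of a locally free sheaf on the smooth curve $C$, it is itself locally free of rank $d$; because $\cal E$ is torsion this forces $\Ext^1(\cal K, \cal E) \simeq H^1(C, \cal K^\vee \otimes \cal E) = 0$, yielding smoothness. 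Riemann--Roch then gives $\dim \Hom(\cal K, \cal E) = \chi(C, \cal K^\vee \otimes \cal E) = d^2$. Openness of $\cal Quot^\circ_{0,d}$ inside $\cal Quot_{0,d}$ follows from semicontinuity: torsion $\cal E_s$ of degree $d$ satisfies $h^0(\cal E_s) = d$, so $H^0(\phi_s)$ is a map between vector spaces of equal dimension, for which being an isomorphism is an open condition.

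For part (2), the plan is to construct the natural morphism $[\cal Quot^\circ_{0,d}/G_d] \to \underline{\cal Coh}_{0,d}$ which forgets the surjection, and then exhibit an étale-local inverse. Given a family $\cal E_S$ of degree-$d$ torsion sheaves on $S \times C$ flat over $S$, the fiberwise vanishing $H^1(C, \cal E_s) = 0$ together with cohomology and base change shows that $(\pi_S)_* \cal E_S$ is locally free of rank $d$ on $S$ and commutes with arbitrary base change, where $\pi_S: S \times C \to S$ is the projection. After passing to an étale cover of $S$ which trivializes $(\pi_S)_* \cal E_S$, the counit of adjunction yields a map $\bbk^d \otimes \Ocal_{S \times C} \to \cal E_S$; this is surjective in each fiber because every torsion sheaf on $C$ is a direct sum of skyscrapers and hence generated by its global sections, and is globally surjective by Nakayama's lemma. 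The residual ambiguity in the trivialization is exactly a $G_d$-torsor, which gives the desired quotient stack equivalence.

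Part (3) follows by an easy elaboration of (2): by $(\pi_S)_*$-adjunction, a datum $\alpha \in \Hom(V \otimes \Ocal_{S \times C}, \cal E_S)$ is equivalent to a morphism $V \otimes \Ocal_S \to (\pi_S)_* \cal E_S$, and fixing a trivialization of the target as in (2) identifies this with an $S$-point of $\Hom(V, \bbk^d)$; the compatible $G_d$-actions assemble to give the quotient $[(\Hom(V, \bbk^d) \times \cal Quot^\circ_{0,d})/G_d]$. The principal technical hurdle lies in part (2): one must verify carefully that the formation of $(\pi_S)_* \cal E_S$ commutes with arbitrary base change (resting on fiberwise vanishing of $H^1$), that fiberwise surjectivity of the evaluation map upgrades to global surjectivity, and that these local constructions glue to a genuine equivalence of stacks rather than merely a smooth presentation.
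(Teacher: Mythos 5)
Your proposal is correct and follows essentially the same route as the paper: part (1) is the standard deformation-theoretic argument that the cited reference [LeP] also uses (tangent space $\Hom(\cal K,\cal E)$, obstructions in $\Ext^1(\cal K,\cal E)=H^1(\cal K^\vee\otimes\cal E)=0$ since the curve is $1$-dimensional and $\cal K^\vee\otimes\cal E$ is torsion), while parts (2) and (3) unpack the paper's terse remark that torsion sheaves are generated by global sections and that isomorphisms of torsion sheaves are determined by their action on $H^0$, which is precisely the cohomology-and-base-change plus $G_d$-torsor argument you spell out. One minor imprecision: torsion sheaves on a curve are direct sums of punctual (finite-length) sheaves supported at single points, not literally skyscrapers, but the conclusion that they are globally generated is correct and is what the paper also uses.
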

\begin{proof}
For (1), see~\cite{LeP}. The claim (2) follows from the observation that any torsion sheaf of degree $d$ is generated by its global sections, and every isomorphism of torsion sheaves is completely determined by its action on global sections. Finally, for (3) let us consider the natural map $\underline{\cal Coh}_{0,d}^{\leftarrow V\otimes\Ocal}\ra \underline{\cal Coh}_{0,d}$. This is a vector bundle, which is trivialized in the atlas given by $\cal Quot^\circ_{0,d}$:
$$
\underline{\cal Coh}_{0,d}^{\leftarrow V\otimes\Ocal}\times_{\underline{\cal Coh}_{0,d}} \cal Quot^\circ_{0,d}\simeq \Hom(V,\bbk^d)\times\cal Quot_{0,d}^\circ.
$$
The statement of (2) then implies the desired isomorphism.
\end{proof}

\begin{rmq}
Note that (3) fails for other sheaves $\cal F\not\simeq V\otimes\Ocal$. In general, we would have to pick a certain closed subvariety out of $\Hom(\cal F,\bbk^d\otimes \Ocal)\times\cal Quot_{0,d}^\circ$; see~\cite{HL} for more details.
\end{rmq}

Recall that for any algebraic group $G$ and any smooth $G$-variety $X$ the cotangent bundle $T^*X$ is naturally equipped with a Hamiltonian $G$-action. Let $\mu: T^*X\ra \fk g^*$ be the corresponding moment map, where $\fk g$ is the Lie algebra of $G$, and put $T^*_G X:=\mu^{-1}(0)$. Note that the infinitesimal $G$-action provides a morphism $\fk g\otimes\Ocal_X\xra{\mu^*} \cal T_X$, where $\cal T_X$ is the tangent sheaf of $X$.

\begin{lm}\label{ham}
Let $X$ be a smooth variety equipped with an action of $G$. Then we have a natural isomorphism of stacks $T^*[X/G]\simeq[T^*_G X/G]$.
\end{lm}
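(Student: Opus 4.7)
The plan is to verify the isomorphism by pulling back to the atlas $\pi\colon X\ra [X/G]$, where it reduces to the familiar identification of ``horizontal'' covectors with the zero fiber of the moment map. The right-hand side $[T^*_G X/G]$ pulls back to $T^*_G X=\mu^{-1}(0)$ equipped with its natural $G$-action, so it will suffice to produce a $G$-equivariant isomorphism $T^*[X/G]\times_{[X/G]}X\simeq \mu^{-1}(0)$ and conclude by faithfully flat descent along $\pi$.

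First I would recall the standard description of the pullback to $X$ of the tangent complex of $[X/G]$: it is the two-term complex
$$\fk g\otimes\Ocal_X\xra{\mu^*}\cal T_X$$
in cohomological degrees $[-1,0]$, encoding the Atiyah-type extension for the $G$-torsor $\pi$. Dually, the pullback of the cotangent complex is $[\Omega_X\ra \fk g^\vee\otimes\Ocal_X]$ in degrees $[0,1]$. Interpreting the relative $\Spec$ in the definition of $T^*[X/G]$ through this two-term picture, the pullback $T^*[X/G]\times_{[X/G]} X$ is identified, at the classical level, with the fiber product
$$T^*X\times_{\fk g^*\times X} X,$$
where $T^*X\ra \fk g^*\times X$ is the map $(\mu,p)$ dual to $\mu^*$, with $p$ the bundle projection and $X\hookrightarrow\fk g^*\times X$ the zero section.

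The next step is the tautological observation that, by the very definition of the moment map as the dual of the infinitesimal action, a covector $\xi\in T^*_x X$ lies in this fiber product if and only if $\langle \xi,\mu^*_x(v)\rangle=0$ for all $v\in\fk g$, which is exactly the condition $\mu(\xi)=0$. Hence the fiber product equals $\mu^{-1}(0)=T^*_G X$. Both $\mu$ and the zero section are manifestly $G$-equivariant (the coadjoint action on $\fk g^*$ fixes $0$), so the identification respects the $G$-action and descends to the desired isomorphism of stacks.

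The main obstacle is conceptual rather than computational: for an Artin stack $[X/G]$ with non-trivial stabilizers, the naive tangent sheaf $\Coker(\mu^*)$ loses the stabilizer information, so the definition of $T^*[X/G]$ via $\Spec$ of a symmetric algebra has to be unpacked carefully to ensure that it truly recovers the fiber product above after pullback to the atlas. Once one checks that the definition in \cite[Chapitres 14, 17]{LMB} does compute this fiber product, the rest of the argument is purely formal.
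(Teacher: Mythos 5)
Your proof is correct and follows essentially the same route as the paper: identify the pullback of the tangent sheaf to $X$ as $\Coker(\mu^*)$, observe that $\Spec\Sym(\Coker\mu^*)$ sits inside $T^*X$ as the preimage of the zero section under the map to $\fk g^*\times X$ dual to $\mu^*$ (i.e.\ as $\mu^{-1}(0)$), and descend. The concluding worry about the derived tangent complex is superfluous here, since the paper's definition of $T^*[X/G]$ is deliberately the underived $\Spec\Sym$ of the tangent \emph{sheaf}, for which your fiber-product description is just a restatement of $\Spec\Sym(\Coker\mu^*)\simeq\mu^{-1}(0)$.
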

\begin{proof}
It follows from the definition of the moment map that the composition $$T^*X=\Spec\ona{Sym}(\cal T_X)\xra{\mu^*} \Spec\ona{Sym}(\fk g\otimes\Ocal_X)=\fk g^*\times X\xra{pr_1}\fk g^*$$ coincides with $\mu$. Therefore $\Spec\ona{Sym}(\Coker\mu^*)\simeq \mu^{-1}(0)=T^*_G X$, and we obtain the desired isomorphism after descending to $[X/G]$.
\end{proof}
The lemma above implies that $$\underline{\cal Higgs}_{0,d}=T^*\underline{\cal Coh}_{0,d}\simeq [T^*_G \cal Quot^\circ_{0,d}/G].$$

\begin{exe}\label{q01}
Let $d=1$. As a set, $\cal Quot_{0,1}(\bbk)=\{\Ocal\xra{\phi} \cal E\mid \cal E\in \ona{Coh}_{0,1}, \phi\neq 0\}$. Note that since the $G_1=\bb G_m$-action is trivial here and $\Hom(\Ocal,\cal E)=\bbk$ by Riemann-Roch, we actually have $\cal Quot_{0,1}\simeq\cal Quot^\circ_{0,1}\simeq C$, and $\underline{\cal Coh}_{0,1}\simeq C\times \mathrm B\bb G_m$. The corresponding universal family is given by $\Ocal_\Delta\in \ona{Coh}(C\times C)$, where $\Ocal_\Delta$ is the structure sheaf of the diagonal $\Delta\subset C\times C$. Moreover, $T^*_G \cal Quot_{0,1}\simeq T^* \cal Quot_{0,1}\simeq T^* C$, and thus $\underline{\cal Higgs}_{0,d}\simeq T^*C\times \mathrm{B}\bb G_m$.
\end{exe}

Let us also define the following filtered version of $\cal Quot^\circ_{0,d}$.

\begin{defi}
For any $d_\bullet=\{0=d_0\leq d_1\leq\ldots\leq d_k=d\}$, fix a filtration $\bbk^{d_1}\subset\ldots\subset\bbk^{d}$. Denote by $\widetilde{\cal Quot}_{0,d_\bullet}$ the subset of $\cal Quot_{0,d}$ consisting of quotients $\bbk^d\otimes \Ocal\xra{\phi} \cal E$ such that the map $H^0(\phi)|_{\bbk^{d_i}\otimes \Ocal}:\bbk^{d_i}\ra H^0(\Im\phi|_{\bbk^{d_i}\otimes \Ocal})$ is an isomorphism for any $i$.
\end{defi}

We introduce the following notations for later use:
\begin{gather*}
G_{d_\bullet}=\prod\limits_{i=1}^k G_{d_i-d_{i-1}},\quad \cal Quot_{0,d_\bullet}=\prod\limits_{i=1}^k \cal Quot_{0,d_i-d_{i-1}},\quad \cal Quot^\circ_{0,d_\bullet}=\prod\limits_{i=1}^k \cal Quot^\circ_{0,d_i-d_{i-1}}.
\end{gather*}

We also fix isomorphisms $\bbk^{d_i}/\bbk^{d_{i-1}}\simeq \bbk^{d_i-d_{i-1}}$ for each $i\in [1,k]$, so that $G_{d_i-d_{i-1}}$ gets identified with invertible maps in $\Hom(\bbk^{d_i-d_{i-1}},\bbk^{d_i}/\bbk^{d_{i-1}})$, and quotients of $(\bbk^{d_i}/\bbk^{d_{i-1}})\otimes \Ocal$ are parametrized by the variety $\cal Quot_{0,d_i-d_{i-1}}$.

\begin{prop}\label{quot}
$\widetilde{\cal Quot}_{0,d_\bullet}$ is a smooth closed subvariety in $\cal Quot^\circ_{0,d}$.
\end{prop}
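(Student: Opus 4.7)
The plan is to establish closedness and smoothness separately. For closedness, I would first note that for any $\phi \in \cal Quot^\circ_{0,d}$, the natural map $\bbk^{d_i} \to H^0(\Im\phi|_{\bbk^{d_i}\otimes\Ocal})$ is automatically injective: it is the factorization of the isomorphism $\bbk^d \cong H^0(\cal E)$ restricted to $\bbk^{d_i}$ through the inclusion $H^0(\Im\phi|_{\bbk^{d_i}\otimes\Ocal}) \hookrightarrow H^0(\cal E)$, the latter being injective because inclusions of torsion sheaves on a curve induce injections on $H^0$. Since $\deg = h^0$ for torsion sheaves, we always have $\deg \Im\phi|_{\bbk^{d_i}\otimes\Ocal} \geq d_i$, and the equality defining $\widetilde{\cal Quot}_{0,d_\bullet}$ amounts to the fiberwise length of $\Coker(\bbk^{d_i}\otimes\Ocal \to \cal E_{\mathrm{univ}})$ attaining its maximum value $d-d_i$, which is a closed condition on $\cal Quot^\circ_{0,d}$ by semi-continuity of cokernel length.

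For smoothness, I would work via deformation theory at an arbitrary point $\phi$. Writing $\cal E_i = \Im\phi|_{\bbk^{d_i}\otimes\Ocal}$, $K = \Ker\phi$ and $K_i = \Ker(\phi|_{\bbk^{d_i}\otimes\Ocal}) = K \cap (\bbk^{d_i}\otimes\Ocal)$, one obtains flags $K_\bullet \subset K$ and $\cal E_\bullet \subset \cal E$. Examining which first-order deformations $K^\varepsilon \subset \bbk^d\otimes\Ocal[\varepsilon]/\varepsilon^2$ are compatible with the flag $\bbk^{d_\bullet}\otimes\Ocal$, in the sense that each intersection $K^\varepsilon \cap (\bbk^{d_i}\otimes\Ocal[\varepsilon])$ is a flat deformation of $K_i$, one identifies the Zariski tangent space with the space of filtered homomorphisms
$$
T_\phi \widetilde{\cal Quot}_{0,d_\bullet} \cong \Hom_{\mathrm{fil}}(K_\bullet, \cal E_\bullet) := \{\psi \in \Hom(K, \cal E) : \psi(K_i) \subset \cal E_i\,\forall\, i\}.
$$
Obstructions to extending such deformations to higher order live in the corresponding filtered group $\Ext^1_{\mathrm{fil}}(K_\bullet, \cal E_\bullet)$.

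The key input is that these obstructions vanish. Each $K_i \subset \bbk^{d_i}\otimes\Ocal$ is torsion-free on a smooth curve, hence locally free, and likewise each subquotient $K_i/K_{i-1} \hookrightarrow (\bbk^{d_i}/\bbk^{d_{i-1}})\otimes\Ocal$ is locally free. Consequently $\Ext^1(K_i/K_{i-1}, \cal E_j/\cal E_{j-1}) = H^1((K_i/K_{i-1})^\vee \otimes \cal E_j/\cal E_{j-1}) = 0$, the tensor product being torsion. An inductive argument on the lengths of the filtrations, built from the long exact sequences attached to $0 \to K_{i-1} \to K_i \to K_i/K_{i-1} \to 0$ and $0 \to \cal E_{j-1} \to \cal E_j \to \cal E_j/\cal E_{j-1} \to 0$, then propagates these subquotient vanishings to $\Ext^1_{\mathrm{fil}}(K_\bullet, \cal E_\bullet) = 0$. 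Formal smoothness at every point gives smoothness of $\widetilde{\cal Quot}_{0,d_\bullet}$.

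The main obstacle is the filtered obstruction-vanishing step. In the unfiltered case, $\Ext^1(K, \cal E) = 0$ is immediate from $K$ locally free and $\cal E$ torsion, and this is what makes $\cal Quot^\circ_{0,d}$ smooth; here however one must additionally annihilate obstructions coming from deforming the flag structure, which reduce via the induction to controlling each cross-term $\Ext^1(K_i/K_{i-1}, \cal E_j/\cal E_{j-1})$ rather than just the diagonal $\Ext^1(K, \cal E)$. Rigorously setting up $\Ext^1_{\mathrm{fil}}$ and chasing through the filtered long exact sequences is the technical heart of the argument.
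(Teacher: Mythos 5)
Your proof is correct in substance but follows a genuinely different route from the paper's on both halves. For closedness, the paper works inside the ambient Grassmannian that defines $\cal Quot_{0,d}$ and characterizes $\widetilde{\cal Quot}_{0,d_\bullet}$ by incidence conditions of the form $\dim\bigl(V\cap(\bbk^{d_i}\otimes H)\bigr)\geq d_i(h-1)$; you instead argue intrinsically via upper semicontinuity of the fiberwise length of $\Coker(\bbk^{d_i}\otimes\Ocal\to\cal E_{\mathrm{univ}})$, after observing that this length is bounded above by $d-d_i$. Both arguments are sound, and yours is somewhat cleaner as it avoids choosing $\Ocal(1)$ and $n\gg 0$. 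For smoothness, the contrast is sharper. The paper gives a completely explicit geometric argument: it exhibits $\widetilde{\cal Quot}_{0,d_\bullet}\times G_{d_\bullet}$ as an affine fibration over $\cal Quot^\circ_{0,d_\bullet}\times G_{d_\bullet}$ with fibers $\bigoplus_i\bbk^{d_i-d_{i-1}}\otimes H^0(\cal E_{i-1})$, then deduces smoothness from that of $\cal Quot^\circ_{0,d_\bullet}$. This is more than a proof of smoothness: the affine fibration $\gr:\widetilde{\cal Quot}_{0,d_\bullet}\to\cal Quot^\circ_{0,d_\bullet}$ is an essential ingredient of the product construction in Section~2, so the paper gets this structure for free. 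Your approach via deformation theory and vanishing of a filtered $\Ext^1$ is a legitimate alternative: the tangent-space identification matches Proposition~\ref{tan}, and the vanishing of $\Ext^1(K_i/K_{i-1},\cal E_j/\cal E_{j-1})$ (locally free versus torsion on a curve) does propagate by induction through the two filtrations. The one point that would need genuine work — and you flag it yourself — is justifying that the relevant obstruction group for the nested/filtered Quot functor really is the naive $\Ext^1$ in the exact category of filtered sheaves, rather than merely asserting it; this is standard for nested Quot schemes but is the part of the argument that cannot be waved at. On balance, your route is conceptually clean but proves strictly less (it gives smoothness but not the affine-fibration structure the paper later relies on), while the paper's route is more hands-on and packages the extra geometric information into the same proof.
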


\begin{proof}
In order to prove that $\widetilde{\cal Quot}_{0,d_\bullet}$ is a closed subvariety of $\cal Quot^\circ_{0,d}$, let us recall the construction of $\cal Quot$-schemes in~\cite[Chapter 4]{LeP}. Namely, fix $n\gg 0$ and an ample line bundle $\Ocal(1)$ on $C$. Denote by $H=H^0(\Ocal(n))$ the space of global sections of $\Ocal(n)$, $h=\dim H$. Let $\Gr^d(\bbk^d\otimes H)$ be the Grassmanian of subspaces of codimension $d$ in $\bbk^d\otimes H$, and consider the following map:
\begin{align*}
\cal Quot_{0,d}&\ra \Gr^d(\bbk^d\otimes H),\\
\left(0\ra\cal K\ra \bbk^d\otimes\Ocal\xra\phi \cal E\ra 0\right)&\mapsto \left(H^0(\cal K\otimes \Ocal(n))\subset \bbk^d\otimes H\right).
\end{align*}
It is a closed embedding for $n$ big enough. Now, for each quotient $\bbk^d\otimes \Ocal\xra{\phi} \cal E$ and for each $i\in [1,k]$ we have the restricted short exact sequence 
\begin{align}\label{seqres}
0\ra\cal K_i\ra \bbk^{d_i}\otimes \Ocal\xra{\phi_i}\cal E_i\ra 0
\end{align}
with $\cal K_i:=\cal K\cap (\bbk^{d_i}\otimes \Ocal)$, $\phi_i=\phi|_{\bbk^{d_i}\otimes\Ocal}$, and $\cal E_i=\Im\phi_i$. Since $H^0(\phi)$ is an isomorphism, $H^0(\phi_i)$ is injective for all $i$, and thus $h^0(\cal E_i)\geq d_i$ for all $i$; moreover, $\phi$ belongs to $\widetilde{\cal Quot}_{0,d_\bullet}$ precisely when all the previous inequalities turn into equalities. Tensoring~(\ref{seqres}) by $\Ocal(n)$ and taking global sections, we get an exact sequence
\begin{align*}
0\ra H^0(\cal K_i(n))\ra \bbk^{d_i}\otimes H\ra H^0(\cal E_i(n))\ra 0
\end{align*} 
for $n$ big enough and all $\cal K_i$. Since $\cal E_i$ is torsion sheaf, there exists an isomorphism $\cal E_i(n)\simeq \cal E_i$, and the exact sequence above implies that
$$
\dim H^0(\cal K_i(n))=\dim H^0(\cal K(n))\cap (\bbk^{d_i}\otimes H)\leq d_i(h-1),
$$
where the equality holds if and only if $\phi$ belongs to $\widetilde{\cal Quot}_{0,d_\bullet}$. Therefore 
$$
\widetilde{\cal Quot}_{0,d_\bullet}=\cal Quot^\circ_{0,d}\cap\left\{V\subset \bbk^d\otimes H\,\middle\vert\, \begin{array}{c}
\dim V=d(h-1),\\
\dim V\cap (\bbk^{d_i}\otimes H)\geq d_i(h-1)
\end{array}\right\}\subset \Gr^d(\bbk^d\otimes H).
$$
The second set is closed in $\Gr(\bbk^d\otimes H)$, and thus $\widetilde{\cal Quot}_{0,d_\bullet}$ is closed in $\cal Quot^\circ_{0,d}$ as well.

In order to prove that $\widetilde{\cal Quot}_{0,d_\bullet}$ is smooth, consider the following diagram:
$$
\begin{tikzcd}
	 & {\widetilde{\cal Quot}_{0,d_\bullet}\times G_{d_{\bullet}}}\arrow[dl,"p"]\arrow[dr,"q"'] & \\
	{\cal Quot_{0,d_\bullet}\times G_{d_\bullet}} & & {\widetilde{\cal Quot}_{0,d_\bullet}}
\end{tikzcd}
$$
where we identify $G_{d_i-d_{i-1}}$ with invertible maps in $\Hom(\bbk^{d_i-d_{i-1}},\bbk^{d_i}/\bbk^{d_{i-1}})$, the map $p$ sends $(\phi, (g_i)_{i=1}^k)$ to $\left((g_i^*(\phi|_{\bbk^{d_i}/\bbk^{d_{i-1}}}))_{i=1}^k,(g_i)_{i=1}^k\right)$, and $q$ is the projection on the first coordinate. Note that we have the following map between short exact sequences for any point in $\widetilde{\cal Quot}_{0,d_\bullet}$:
$$
\begin{tikzcd}
0\arrow[r] & \bbk^{d_{i-1}}\arrow[r]\arrow[d,"\simeq"] & \bbk^{d_i}\arrow[r]\arrow[d,"\simeq"] & \bbk^{d_i}/\bbk^{d_{i-1}}\arrow[r]\arrow[d,"H^0(\phi|_{\bbk^{d_i}/\bbk^{d_{i-1}}})"] & 0\\
0\arrow[r] & H^0(\cal E_{i-1})\arrow[r] & H^0(\cal E_i)\arrow[r] & H^0(\cal E_i/\cal E_{i-1})\arrow[r] & 0
\end{tikzcd}
$$
It follows that the map $\phi|_{\bbk^{d_i}/\bbk^{d_{i-1}}}:(\bbk^{d_i}/\bbk^{d_{i-1}})\otimes\Ocal\ra \cal E_i/\cal E_{i-1}$ induces an isomorphism on global sections, and thus the image of $p$ belongs to $\cal Quot^\circ_{0,d_\bullet}\times G_{d_\bullet}$. Moreover, the diagram above also implies that $p$ is an affine fibration over $\cal Quot^\circ_{0,d_\bullet}$ with fiber
$$\bigoplus\limits_{i=1}^k\Hom(\bbk^{d_i-d_{i-1}}\otimes\Ocal, \cal E_{i-1})=\bigoplus\limits_{i=1}^k\bbk^{d_i-d_{i-1}}\otimes H^0(\cal E_{i-1})$$
of dimension $(d_i-d_{i-1})d_{i-1}$. Since $\cal Quot^\circ_{0,d_\bullet}$ is smooth and $q$ is a trivial $G_{d_{\bullet}}$-torsor, this observation implies the smoothness of $\widetilde{\cal Quot}_{0,d_\bullet}$.
\end{proof}
\begin{rmq}
In the proof above we chose an $n$ big enough so that all $\cal K$'s and $\cal K_i$'s cease to have higher cohomology groups and become generated by global sections after tensoring by $\Ocal(n)$.
It is possible because all our sheaves are parametrized by a finite union of $\cal Quot$-schemes, and thus form bounded families (see Lemma 4.4.4 in~\cite{LeP}).
\end{rmq}

\begin{nota}
Throughout the paper, for any quotient $\bbk^d\otimes\Ocal\xra{\phi}\cal E$ in $\cal Quot_{0,d}$ we will denote $\Ker \phi$ by $\cal K$, and the inclusion $\cal K\hookrightarrow \bbk^d\otimes\Ocal$ by $\iota$. We will also decorate $\cal K$, $\cal E$, $\phi$ and $\iota$ with appropriate indices and markings.
\end{nota}

Next, we recall the description of tangent spaces of $\cal Quot$-schemes.
\begin{prop}\label{tan}
Let $\bbk^d\otimes\Ocal\xra{\phi} \cal E$ be a point in $\cal Quot_{0,d}$, and let $\cal K=\Ker \phi$. Then the tangent space $T_\phi \cal Quot_{0,d}$ at $\phi$ is naturally isomorphic to $\Hom(\cal K,\cal E)$. Moreover, if $\phi\in \widetilde{\cal Quot}_{0,d_\bullet}$ we have
$$
T_\phi \widetilde{\cal Quot}_{0,d_\bullet}=\{v\in \Hom (\cal K,\cal E): v(\cal K_i)\subset\cal E_i\quad \forall i\}.
$$
\end{prop}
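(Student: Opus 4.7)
The first statement is Grothendieck's classical identification of the tangent space to a $\cal Quot$-scheme, which I would either cite or briefly reprove via deformation theory. A tangent vector at $\phi$ is by definition a submodule $\cal K_\epsilon\subset \bbk^d\otimes \Ocal_{C_\epsilon}$ (where $C_\epsilon := C\times\Spec \bbk[\epsilon]/(\epsilon^2)$) that is flat over $\bbk[\epsilon]$ and restricts to $\cal K$ modulo $\epsilon$. Such a flat deformation sits in a short exact sequence $0\to \cal K\xra{\cdot\epsilon}\cal K_\epsilon\to \cal K\to 0$, and after choosing any $\Ocal_C$-linear lift $\tilde v\colon \cal K\to \bbk^d\otimes\Ocal$ of some $v\in\Hom(\cal K,\cal E)$ one can write $\cal K_\epsilon=\{k+\epsilon(\tilde v(k)+k')\mid k,k'\in \cal K\}$. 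A direct check shows that this assignment gives a well-defined bijection $T_\phi\cal Quot_{0,d}\xra{\sim}\Hom(\cal K,\cal E)$ independent of the chosen lift.

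For the second statement, I would first observe that $\widetilde{\cal Quot}_{0,d_\bullet}$ is an \emph{open} subscheme of the locally closed locus $\{\phi\in \cal Quot^\circ_{0,d}\mid \deg\Im\phi|_{\bbk^{d_i}\otimes\Ocal}=d_i\ \forall i\}$, since once the degrees match, the map $H^0(\phi_i)$ is automatically surjective between spaces of the same dimension $d_i$. Consequently $T_\phi \widetilde{\cal Quot}_{0,d_\bullet}$ consists exactly of those deformations $\cal K_\epsilon$ of $\cal K$ for which, for each $i$, the intersection $\cal K_\epsilon\cap(\bbk^{d_i}\otimes\Ocal_{C_\epsilon})$ is flat over $\bbk[\epsilon]$ with special fiber $\cal K_i$, i.e.\ is a valid first-order deformation of the point $\phi_i\in \cal Quot_{0,d_i}$. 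Using the explicit model from part (1), a short calculation shows that this condition is equivalent to $\tilde v(\cal K_i)\subset (\bbk^{d_i}\otimes\Ocal)+\cal K$, which, modulo $\cal K$, is precisely $v(\cal K_i)\subset \cal E_i$; this condition is manifestly independent of the lift $\tilde v$, and yields the inclusion $T_\phi\widetilde{\cal Quot}_{0,d_\bullet}\subset \{v\in \Hom(\cal K,\cal E)\mid v(\cal K_i)\subset \cal E_i\ \forall i\}$.

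To promote this inclusion to an equality, I would compare dimensions. On the left, $\widetilde{\cal Quot}_{0,d_\bullet}$ is smooth with a known dimension coming from the affine fibration $p$ in the proof of Proposition~\ref{quot}. On the right, the dimension of $\{v\mid v(\cal K_i)\subset \cal E_i\ \forall i\}$ can be computed from the exact sequences induced by $0\to \cal E_i\to \cal E\to \cal E/\cal E_i\to 0$ together with the vanishings $\Ext^1(\cal K_i,-)=0$ (which hold because $\cal K_i$ is locally free on the smooth curve $C$). The main obstacle is exactly this dimension bookkeeping — everything else is routine deformation theory — but it is a mechanical Riemann--Roch computation once one observes that the resulting formula matches the fiber dimension of $p$ plus the trivial contribution of the $G_{d_\bullet}$-direction already accounted for there.
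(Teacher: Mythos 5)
Your route — identify tangent vectors to the flag Quot scheme with first-order deformations respecting the filtration, establish the inclusion $T_\phi\widetilde{\cal Quot}_{0,d_\bullet}\subset\{v\in\Hom(\cal K,\cal E)\mid v(\cal K_i)\subset\cal E_i\}$, and close with a dimension count against the smoothness statement of Proposition~\ref{quot} — is essentially what the paper has in mind (it defers to~\cite{LeP}, Ch.~8 and says ``keeping track of the subquotient condition''). The count does close: both sides have dimension $\sum_i(d_i-d_{i-1})d_i=\sum_{j\le i}(d_i-d_{i-1})(d_j-d_{j-1})$.

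However, your justification of the $\Ext^1$ vanishings is wrong as stated. $\Ext^1(\cal K_i,-)$ does \emph{not} vanish just because $\cal K_i$ is locally free on a smooth curve: already $\Ext^1(\Ocal_C,\Ocal_C)=H^1(\Ocal_C)$ is nonzero whenever $g(C)\ge 1$. What the argument actually needs is that the targets are \emph{torsion}: since $\cal K_i$ is locally free, $\Ext^1(\cal K_i,\cal E_j)\simeq H^1(C,\cal K_i^\vee\otimes\cal E_j)$, and this vanishes because $\cal K_i^\vee\otimes\cal E_j$ has zero-dimensional support. Without this the sequences you invoke are not guaranteed to be exact in degree one, and the dimension bookkeeping is uncontrolled. (Relatedly, no Riemann--Roch is needed: for $L$ locally free of rank $r$ and $\cal T$ torsion of length $\ell$ one has $\hom(L,\cal T)=r\ell$ directly.) A secondary point worth a sentence: the step ``Consequently $T_\phi\widetilde{\cal Quot}_{0,d_\bullet}$ consists exactly of those deformations\ldots'' silently assumes that the reduced subscheme built in Proposition~\ref{quot} represents the flag-Quot functor, whereas the paper defines $\widetilde{\cal Quot}_{0,d_\bullet}$ set-theoretically via the Grassmannian embedding; you should either cite representability of the flag-Quot functor or extract the inclusion you need from the tangent space of the Schubert-type locus in the Grassmannian model, which is how the argument of~\cite{LeP} most naturally adapts.
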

\begin{proof}
The proof of the first claim can be found in~\cite[Chapter 8]{LeP}. The second claim can be proved in a similar fashion, keeping track of the condition of admitting a sub-quotient throughout the proof of the first claim. 
\end{proof}

Because of this proposition, we will usually regard $T^*\cal Quot^\circ_{0,d}$ as a variety, whose $\bbk$-points are identified with pairs
\begin{gather*}
T^*\cal Quot^\circ_{0,d}=\left\{(\phi,\beta)\mid \phi\in\cal Quot^\circ_{0,d},\text{ }\beta\in \Hom(\cal K,\cal E)^*\right\}.
\end{gather*}

Additionally, let us define for later purposes the \textit{nilpotent} part $(T_\phi \widetilde{\cal Quot}_{0,d_\bullet})^{nilp}$ of $T_\phi \widetilde{\cal Quot}_{0,d_\bullet}$:
\begin{equation}\label{TQuotnilp}
(T_\phi \widetilde{\cal Quot}_{0,d_\bullet})^{nilp}:=\{v\in \Hom (\cal K,\cal E): v(\cal K_i)\subset\cal E_{i-1}\quad \forall i\}.
\end{equation}

Note that
$$
\Hom(\bbk^d\otimes\Ocal, \cal E)\simeq\Hom(\bbk^d,H^0(\cal E))\simeq \Hom(\bbk^d,\bbk^d)=:\fk g_d,
$$
with the second isomorphism being induced by $H^0(\phi):\bbk^d\xra{\sim}H^0(\cal E)$. In these terms the moment map for the $G_d$-action on $T^*\cal Quot^\circ_{0,d}$ can be written as follows:
\begin{align*}
\mu: T^*\cal Quot^\circ_{0,d} &\ra\mathfrak g_d^*,\\
(\phi,\beta)&\mapsto \iota^*\beta\in \Hom(\bbk^d\otimes\Ocal, \cal E)^*\simeq \mathfrak g_d^*.
\end{align*}

Since $\cal E$ is a torsion sheaf, we have $\Ext^1(\bbk^d\otimes\Ocal, \cal E)=0$, and thus over each $\phi\in\cal Quot^\circ_{0,d}$ the restriction $\mu_\phi$ of the map $\mu$ to $T^*_\phi\cal Quot^\circ_{0,d}$ can be embedded in a long exact sequence:
$$
0\ra \Ext^1(\cal E,\cal E)^*\ra \Hom(\cal K,\cal E)^*\xra{\mu_\phi} \Hom(\bbk^d\otimes\Ocal, \cal E)^*\ra\Hom(\cal E,\cal E)^*\ra 0.
$$
This implies that $\mu_\phi^{-1}(0)\simeq\Ext^1(\cal E,\cal E)^*$, and we get an identification on the level of $\bbk$-points 
\begin{equation}\label{ExtInQuot}
T^*_{G_d}\cal Quot^\circ_{0,d}=\left\{(\phi,\beta): \phi\in\cal Quot^\circ_{0,d},\text{ }\beta\in \Ext^1(\cal E,\cal E)^*\right\}\subset T^*\cal Quot^\circ_{0,d}.
\end{equation}

\begin{exe}\label{quotA1}
Let $C=\bb A^1$. Even though this curve is not projective, we can fix an isomorphism $\bb A^1= \bbP^1\setminus \{\infty\}$, and define
$$
\cal Quot_{0,d}(\bb A^1):=\{\bbk^d\otimes\Ocal_{\bbP^1}\xra{\alpha}\cal E: \supp \cal E\subset\bb A^1\}\subset \cal Quot_{0,d}(\bb P^1).
$$
Then the open subvariety $$\cal Quot_{0,d}^\circ(\bb A^1)=\cal Quot_{0,d}(\bb A^1)\cap \cal Quot_{0,d}^\circ(\bb P^1)$$ parametrizes quotients $\bbk^d[t]\xra{\alpha} V$, where $V$ is a torsion $\bbk[t]$-module of length $d$, and $\alpha$ induces an isomorphism of $\bbk$-vector spaces $\bbk^dt^0\simeq V$. Equivalently, $\cal Quot_{0,d}^\circ(\bb A^1)$ parametrizes linear operators on $\bbk^d$, that is
\begin{align*}
\cal Quot_{0,d}^\circ(\bb A^1)\simeq \mathfrak g_d,
\end{align*}
where the $G_d$-action on the left gets identified with the adjoint action on the right. Thus $T^*\cal Quot_{0,d}^\circ(\bb A^1)\simeq (\mathfrak g_d)^2$, and $T_{G_d}^*\cal Quot_{0,d}^\circ(\bb A^1)$ is isomorphic to the commuting variety $\mathscr C(\mathfrak g_d)=\{(u,v)\in (\mathfrak g_d)^2:[u,v]=0\}$.
\end{exe}
In light of the example above, we will refer to $T^*_{G_d}\cal Quot_{0,d}^\circ$ as the \textit{commuting variety} of $C$, and denote it by $\mathscr C_d=\mathscr C_d(C)$. We will also write $\mathscr C_{d_\bullet}=\mathscr C_{d_\bullet}(C):=T^*_{G_{d_\bullet}}\cal Quot^\circ_{0,d_\bullet}$.

\begin{exe}\label{quotloc}
Let us fix a geometric point $x\in C(\bbk)$, and consider the \textit{punctual} $\cal Quot$-scheme
$$
\cal Quot_{0,d}^\circ(x):= \{\bbk^d\otimes\Ocal\xra{\alpha}\cal E: \supp \cal E=x\} \subset \cal Quot^\circ_{0,d}.
$$
Such quotient is completely determined by its localization at $x$. More explicitly, since $C$ is smooth, the completion $\hat{\Ocal}_x$ is (non-canonically) isomorphic to $\bbk \llbracket t\rrbracket$. The stalk of $\alpha$ at $x$ is thus of the form $\bbk^d \llbracket t\rrbracket\xra{\alpha_1}\cal E$, where $\cal E$ is a $\bbk \llbracket t\rrbracket$-module, and $\alpha_1$ induces an isomorphism of $\bbk$-vector spaces $\bbk^dt^0\simeq \cal E$. Such quotient is in its turn uniquely determined by a nilpotent operator $T$ on $\bbk^d$, the correspondence given by
$$
T\in\End(\bbk^d)\mapsto 
(0\ra \bbk^d\llbracket t \rrbracket\xra{\iota} \bbk^d\llbracket t \rrbracket\ra \cal E\ra 0),\quad \iota(vt^i)=(T.v)t^i-vt^{i+1}.
$$
Thus we see that $\cal Quot_{0,d}^\circ(x)$ is isomorphic to the nilpotent cone $\mathscr N_d\subset\fk g_d$ together with the adjoint action of $G_d$. Moreover, under this identification the cotangent space $\Hom(\cal K,\cal E)^*$ in $\cal Quot^\circ_{0,d}$ of a point $\alpha$ gets identified with $\fk g_d$, and the restriction of the moment map $\mu:T^*\cal Quot^\circ_{0,d}\ra \fk g_d$ to $\mathscr N_d\times \fk g_d$ is the commutator. In particular,
$$
\mathscr C_d\cap T^*\cal Quot^\circ_{0,d}|_{\cal Quot^\circ_{0,d}(x)}\simeq \mathscr C_d^{\mathrm n,\bullet}:=\{(u,v)\in (\mathfrak g_d)^2:[u,v]=0, u\text{ nilpotent}\}.
$$

\end{exe}
\section{The product}\label{prod}
Let us once and for all fix a free oriented Borel-Moore homology theory (OBM) $A$; for the definition and basic facts about this notion, see Appendix~\ref{OBM}.
As explained there, we abuse the notation somewhat and consider the usual Borel-Moore homology $H_*$ as if it were a free OBM.
We also equip the cotangent bundle $T^*C$ with an action of $\bb G_m$, given by dilations along the fibers; let us denote this torus by $T$.

We begin by recalling a general construction from~\cite{SV}.
Let $G$ be an algebraic group with fixed Levi and parabolic subgroups $H\subset P$.
Assume we are given smooth quasi-projective varieties $X'$, $Y$, $V$, equipped with actions of $G$, $H$, $P$ respectively, and $H$-equivariant morphisms
$$
\begin{tikzcd}
& V\arrow[dl,"p"']\arrow[dr,"q"] & \\
Y & & X'
\end{tikzcd}
$$
such that $p$ is an affine fibration and $q$ is a closed embedding.
Set $W=G\times_P V$, $X=G\times_P Y$, where the $P$-action on $Y$ is induced by the natural projection $P\ra H$, and consider the following maps of $G$-varieties:
\begin{center}
\begin{tabular}{rc}
$
\begin{tikzcd}
& W\arrow[dl,"f"']\arrow[dr,"g"] & \\
X & & X'
\end{tikzcd}
$&
\begin{tabular}{l}
\\$f:(g,v)\ona{mod}P\mapsto (g, p.v)\ona{mod}P,$\\
$g:(g,v)\ona{mod}P\mapsto g.q(v).$\\
\end{tabular}
\end{tabular}
\end{center}

The map $(f,g): W\ra X\times X'$ is a closed embedding, so from now on we will identify the smooth variety $W$ with its image in $X\times X'$. Let $Z=T^*_W(X\times X')$ be the conormal bundle.
Projections on factors define two maps:
$$
\begin{tikzcd}
& Z\arrow[dl,"\Phi"']\arrow[dr,"\Psi"] & \\
T^*X & & T^*X'
\end{tikzcd}
$$
We denote $Z_G=Z\cap (T^*_G X\times T^*_G X')$. Then $\Phi^{-1}(T^*_G X)=\Psi^{-1}(T^*_G X')=Z_G$~\cite[Lemma 7.3(b)]{SV1}, and we have the following induced diagram:
\begin{equation}\label{diagZ}
\begin{tikzcd}
 & Z_G\arrow[d,hook]\arrow[dl,"\Phi_G"']\arrow[dr,"\Psi_G"] & \\
T^*_G X\arrow[d,hook]& Z\arrow[dl,"\Phi"']\arrow[dr,"\Psi"] & T^*_G X'\arrow[d,hook]\\
T^*X & & T^*X'
\end{tikzcd}
\end{equation}
Now, $\Psi$ and $\Psi_G$ are projective, $\Phi$ is an lci map, so that we get the following morphisms in $A$-groups:
\begin{align*}
(\Psi_G)_* &: A^G(Z_G)\ra A^G(T^*_G X'),\\
\Phi^! &: A^G(T^*_G X)\ra A^G(Z_G).
\end{align*}
By composing these two maps and using the induction isomorphism $A^H(T^*_H Y)\xra{\sim}A^G(T^*_G X)$ (see Proposition~\ref{indA}), we obtain a map 
\begin{align}\label{pushpull}
\Upsilon=(\Psi_G)_*\circ \Phi^!\circ \ind_H^G: A^H(T^*_H Y)\ra A^G(T^*_G X').
\end{align}

Let us apply this general construction to a particular case of $\cal Quot$-schemes of rank $0$.
Namely, let $d_\bullet=\{0=d_0\leq d_1\leq\ldots\leq d_k=d\}$, denote $G=G_d$, $H=G_{d_\bullet}$, and let $P=P_{d_\bullet}$ be the parabolic group preserving the flag $\bbk^{d_1}\subset\ldots\subset\bbk^d$.
We use Gothic letters $\fk g$, $\fk p$, $\fk h$ for corresponding Lie algebras, and $\fk p_-$ for the parabolic algebra opposite to $\fk p$.
Next, put
\begin{align*}
Y=\cal Quot^\circ_{0,d_\bullet}, \qquad V=\widetilde{\cal Quot}_{0,d_\bullet},\qquad X'=\cal Quot^\circ_{0,d}.
\end{align*}
By Proposition~\ref{quot} we have a closed embedding $g:V\hookrightarrow X'$ and an affine fibration $f=\gr:V\twoheadrightarrow Y$.

The following lemma will help us to identify all the terms in diagram~(\ref{diagZ}).
\begin{lm}\label{calc}
Let $G,P,H,X',X,V,Y$ be as above.
\begin{enumerate}
\item There exist natural isomorphisms of $G$-varieties
\begin{gather*}
T^* X'=T^*\cal Quot^\circ_{0,d},\qquad Z=G\times_P\left\{(\phi,\beta)\in T^* \cal Quot^\circ_{0,d}|_{\widetilde{\cal Quot}_{0,d_\bullet}}: \beta|_{(T_\phi \widetilde{\cal Quot}_{0,d_\bullet})^{nilp}}=0\right\},\\
T^* X=G\times_P(\mathfrak p_-^*\times_{\mathfrak h^*}T^* \cal Quot^\circ_{0,d_\bullet})=G\times_P\{(x,(\phi_\bullet,\beta_\bullet))\in \mathfrak p_-^*\times T^* \cal Quot^\circ_{0,d_\bullet} : x|_{\mathfrak h}=\mu(\phi_\bullet,\beta_\bullet)\},
\end{gather*}
where $(T_\phi \widetilde{\cal Quot}_{0,d_\bullet})^{nilp}$ is defined as in~(\ref{TQuotnilp}).
For each $(\phi, \beta)\in T^*\widetilde{\cal Quot}_{0,d_\bullet}$ we have
\begin{align*}
\Phi((g,\phi,\beta)\ona{mod}P)&=(g,\mu(\phi,\beta),\gr(\phi,\beta))\ona{mod}P,\\
\Psi((g,\phi,\beta)\ona{mod}P)&=g.(\phi,\beta).
\end{align*}
\item There are isomorphisms of $G$-varieties
\begin{gather*}
T^*_G X'=\mathscr C_d, \qquad T^*_G X=G\times_P \mathscr C_{d_\bullet}.
\end{gather*}
\end{enumerate}
\end{lm}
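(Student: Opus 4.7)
The plan is to verify the listed identifications one at a time for part (1), then deduce part (2) from the moment map description. The formula $T^*X' = T^*\cal Quot^\circ_{0,d}$ is tautological. For $T^*X$ with $X = G\times_P Y$, I will invoke the standard cotangent formula for associated bundles: $T^*(G\times_P Y)$ is obtained by symplectic reduction of $T^*G\times T^*Y$ by the diagonal $P$-action, yielding $G\times_P(T^*Y\times_{\fk{p}^*}\fk{g}^*)$. Since the $P$-action on $Y$ factors through $H$, the $P$-moment map on $T^*Y$ takes values in $\fk{n}^\perp\cap\fk{p}^* \simeq \fk{h}^*$, so the constraint $\xi|_\fk{p} = \mu_Y(\eta)$ breaks into $\xi|_\fk{n}=0$ (equivalently $\xi \in \fk{p}_-^*$ via $\fk{g}=\fk{p}_-\oplus\fk{n}$) together with $\xi|_\fk{h}=\mu(\eta)$, which matches the stated formula.

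To identify $Z$, I will work locally over a section of $G\to G/P$, reducing to computing the conormal bundle of $V\hookrightarrow Y\times X'$ along $(p, q) = (\gr,\mathrm{incl})$. The key input is that $p$ is an affine fibration (Proposition~\ref{quot}) whose fiberwise tangent space is precisely $(T_\phi V)^{nilp}$. Indeed, using the identification $T_\phi V = \{v\in\Hom(\cal K,\cal E): v(\cal K_i)\subset \cal E_i\ \forall i\}$ from Proposition~\ref{tan}, the differential $dp$ sends $v$ to its associated graded in $\bigoplus_i\Hom(\cal K_i/\cal K_{i-1}, \cal E_i/\cal E_{i-1})= T_{\gr(\phi)}Y$, with kernel the filtration-lowering maps $(T_\phi V)^{nilp}$. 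The conormal condition $\alpha\circ df+\theta\circ dg=0$ then becomes $\theta|_{(T_\phi V)^{nilp}}=0$, and $\alpha$ is uniquely determined by $\theta$ by descending along the quotient $T_\phi V\twoheadrightarrow T_{\gr(\phi)}Y$. This yields the description of $Z$, and the formula for $\Psi$ follows since it is literally the second projection.

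The computation of $\Phi$ is the main technical point. I need to check that, under the parametrization of $T^*X$, the induced $\alpha\in T^*_{f(w)}X$ corresponds to the data $(\mu(\phi,\theta), \gr(\phi,\theta))$. The $T^*Y$-component is precisely the cotangent vector descended through $dp$, which is by construction $\gr(\phi,\theta)$. For the $\fk{p}_-^*$-component, the crucial step is showing that $\mu(\phi,\theta) \in \fk{g}^*$ annihilates $\fk{n}$: for any $x\in\fk{n}$, the infinitesimal $G$-action produces $\phi\circ x\circ\iota \in \Hom(\cal K,\cal E)$, and since $x$ strictly lowers the flag $\bbk^{d_\bullet}$ while $\phi\in \widetilde{\cal Quot}_{0,d_\bullet}$, the composite lies in $(T_\phi V)^{nilp}$, where $\theta$ vanishes by the defining property of $Z$. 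The remaining compatibility $\mu(\phi,\theta)|_\fk{h} = \mu(\gr(\phi),\gr(\theta))$ is a parallel direct computation: for $x\in \fk{h}$, the action $\phi\circ x\circ\iota$ preserves the filtration and its graded pieces agree with the $H$-moment map on $T^*Y$ evaluated at $\gr(\phi,\theta)$.

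Finally, for (2), I apply the $\fk{g}$-moment map to the descriptions in (1). By definition, $T^*_GX' = \mu^{-1}(0) = \mathscr C_d$. For $T^*_GX$, the $G$-moment map on $G\times_P(\fk{p}_-^*\times_{\fk{h}^*}T^*Y)$ sends $[(g, x, (\phi_\bullet,\theta_\bullet))]$ to $\mathrm{Ad}^*(g)(x)$, so its vanishing forces $x = 0$ and hence $\mu(\phi_\bullet,\theta_\bullet) = x|_\fk{h} = 0$, giving $T^*_GX = G\times_P T^*_H Y = G\times_P\mathscr C_{d_\bullet}$. The main obstacle throughout is the step in the third paragraph: matching the abstractly-defined $\alpha$ with the concrete moment-map-valued element $\mu(\phi,\theta)$, which relies on the precise interplay between the flag-preserving tangent space $T_\phi V$, its nilpotent part, and the infinitesimal $G$-action on $\cal Quot^\circ_{0,d}$.
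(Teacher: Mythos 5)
Your proof is correct and follows essentially the same approach as the paper: symplectic reduction identifies $T^*X$, the affine-fibration/closed-embedding structure of $(p,q):V\to Y\times X'$ (with fiberwise tangent of $p$ equal to $(T_\phi V)^{nilp}$) yields the description of $Z$, and the moment-map constraints give part (2). The one step you spell out beyond the paper is the explicit verification that $\mu(\phi,\theta)$ annihilates $\fk{n}$ on $Z$ (so that $\Phi$ actually lands in $\fk{p}_-^*$), which is precisely the right argument and which the paper elides when asserting that the formula for $\Phi$ ``follows from the first equality.''
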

\begin{proof}
(1) The first isomorphism is obvious, so we start with $T^* X$:
\begin{align*}
T^* X&=T^*(G\times_P \cal Quot^\circ_{0,d_\bullet})=T^*_P(G\times \cal Quot^\circ_{0,d_\bullet})/P\\
&=\{(g,x,(\phi_\bullet,\beta_\bullet))\in G\times \mathfrak g^*\times T^* \cal Quot^\circ_{0,d_\bullet} : g.x|_{\mathfrak p}-\mu(\phi_\bullet,\beta_\bullet)=0\in\mathfrak p^*\}/P\\
&=G\times_P\{(x,(\phi_\bullet,\beta_\bullet))\in \mathfrak p_-^*\times T^* \cal Quot^\circ_{0,d_\bullet} : x|_{\mathfrak h}=\mu(\phi_\bullet,\beta_\bullet)\}.
\end{align*}
Let us also note that the moment map $\mu: T^*X\ra \fk g^*$ is given by
\begin{align*}
\mu\left( (g,x,\phi_\bullet,\beta_\bullet)\text{ mod }P \right)=g.x.
\end{align*}
By the same reasoning,
\begin{align*}
T^* &(X\times X')=T^*(G\times_P (\cal Quot^\circ_{0,d_\bullet}\times \cal Quot^\circ_{0,d}))=T^*_P(G\times \cal Quot^\circ_{0,d_\bullet}\times \cal Quot^\circ_{0,d})/P\\
&=G\times_P\{(x,(\phi_\bullet,\beta_\bullet),(\phi, \beta))\in \mathfrak g^*\times T^* \cal Quot^\circ_{0,d_\bullet} \times T^* \cal Quot^\circ_{0,d} : x|_{\mathfrak p}=\mu(\phi_\bullet,\beta_\bullet)+\mu(\phi,\beta)|_{\mathfrak p}\}.
\end{align*}
Next, let us compute $Z=T^*_W(X\times X')$. We have $W=G\times_P \widetilde{\cal Quot}_{0,d_\bullet}$, and therefore
\begin{align*}
T^* W &= T^*_P(G\times \widetilde{\cal Quot}_{0,d_\bullet})/P\\
&= \{(g,x,\phi,\beta)\in G\times \mathfrak g^*\times T^*\widetilde{\cal Quot}_{0,d_\bullet} : g.x|_{\mathfrak p}-\mu(\phi,\beta)=0\}/P\\
&= G\times_P\{(x,\phi,\beta)\in \mathfrak g^*\times T^*\widetilde{\cal Quot}_{0,d_\bullet} : x|_{\mathfrak p}=\mu(\phi,\beta)\},
\end{align*}
so that
\begin{align*}
T^*(X\times X')|_W = G\times_P\left\{
\begin{array}{c}
(x,\phi,\beta_\bullet,\beta)\in \mathfrak g^*\times \widetilde{\cal Quot}_{0,d_\bullet} \times T^*_{\gr \phi} \cal Quot^\circ_{0,d_\bullet} \times T^*_\phi \cal Quot^\circ_{0,d}\\
\text{such that }x|_{\mathfrak p}=\mu(\phi_\bullet,\beta_\bullet)+\mu(\phi,\beta)|_{\mathfrak p}
\end{array}
\right\}.
\end{align*}
But the conormal bundle $T^*_W(X\times X')$ can be expressed as the kernel of the following map of vector bundles:
\begin{align*}
T^*(X\times X')|_W &\ra T^* W,\\
(g,x,\phi,\beta_\bullet, \beta)\ona{mod} P &\mapsto (g,x,\phi,\beta_\bullet-\beta|_{T_\phi \widetilde{\cal Quot}_{0,d_\bullet}})\ona{mod} P.
\end{align*}
Therefore, we finally obtain
\begin{align*}
T^*_W(X\times X') &=G\times_P \{(\phi,\beta_\bullet,\beta)\in \widetilde{\cal Quot}_{0,d_\bullet} \times T^*_{\gr \phi} \cal Quot^\circ_{0,d_\bullet} \times T^*_\phi \cal Quot^\circ_{0,d}: \beta|_{T_\phi \widetilde{\cal Quot}_{0,d_\bullet}}=\beta_\bullet\}\\
&\simeq G\times_P\left\{(\phi,\beta)\in \widetilde{\cal Quot}_{0,d_\bullet} \times T^*_\phi \cal Quot^\circ_{0,d}: \beta|_{(T_\phi \widetilde{\cal Quot}_{0,d_\bullet})^{nilp}}=0\right\}.
\end{align*}

Note that the desired formula for $\Phi$ follows from the first equality, and the formula for $\Psi$ is evident.
The claim (2) follows from the explicit descriptions of moment maps $T^*X'\ra\frak g^*$, $T^*X\ra\frak g^*$.
\end{proof}

The general construction thus produces a map
$$
\Upsilon:A^H(\mathscr C_{d_\bullet})\ra A^G(\mathscr C_d).
$$
For instance, if $k=2$, then $H=G_{d'}\times G_{d''}$, $d=d'+d''$, and we get a bilinear map
$$
\Upsilon:A\bf{Ha}_C^0[d']\otimes A\bf{Ha}_C^0[d'']\ra A\bf{Ha}_C^0[d], \qquad A\bf{Ha}_C^0[d]:=A(\underline{\cal Higgs}_{0,d})=A^{G_d}(\mathscr C_d).
$$
We denote $A\bf{Ha}_C^0=\bigoplus_{d\geq 0} A\bf{Ha}_C^0[d]$, where $A\bf{Ha}_C^0[0]:=A(pt)$.

\begin{thm}\label{ass}
$(A\bf{Ha}_C^0,\Upsilon)$ is an associative algebra.
\end{thm}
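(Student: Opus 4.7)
The approach is the classical one for push-pull Hall-type products: both associators $m \circ (m \otimes \id)$ and $m \circ (\id \otimes m)$ will be shown to coincide with a single ``three-step'' convolution associated to the full flag $\bbk^{d_1} \subset \bbk^{d_1+d_2} \subset \bbk^{d}$ (with $d = d_1+d_2+d_3$), following the strategy of~\cite{SV1}. The main task is to extend the construction of Section~\ref{prod} from two-step to three-step filtrations and to recognize each iterated product as a composition of two two-step correspondences, both of which collapse to the three-step one.

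Concretely, set $\tilde d_\bullet = (0, d_1, d_1+d_2, d)$, $\tilde H = G_{\tilde d_\bullet}$, $\tilde P = P_{\tilde d_\bullet}$, and apply the general recipe of Section~\ref{prod} to $V = \widetilde{\cal Quot}_{0,\tilde d_\bullet}$, $Y = \cal Quot^\circ_{0,\tilde d_\bullet}$, $X' = \cal Quot^\circ_{0,d}$. A straightforward extension of Lemma~\ref{calc} produces a correspondence $Z_{\tilde d_\bullet, G}$ sitting inside $(G \times_{\tilde P} \mathscr C_{\tilde d_\bullet}) \times \mathscr C_d$, together with a push-pull map $\tilde\Upsilon : A^{\tilde H}(\mathscr C_{\tilde d_\bullet}) \to A^G(\mathscr C_d)$ built as in~(\ref{pushpull}). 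The product $m \circ (m \otimes \id)$ unwinds into the two-step push-pull along the splitting $(d_1, d_2)$ applied on the first factor, followed by the two-step push-pull along $(d_1+d_2, d_3)$; symmetrically, $m \circ (\id \otimes m)$ is a composition of the $(d_2, d_3)$ push-pull on the second factor with the $(d_1, d_2+d_3)$ push-pull.

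The heart of the proof is to identify both iterated compositions with $\tilde\Upsilon$. Two ingredients are needed. First, transitivity of the induction functor, $\ind^G_{\tilde H} = \ind^G_H \circ \ind^H_{\tilde H}$ for each of the two intermediate Levis $H = G_{(d_1+d_2, d_3)}$ and $H = G_{(d_1, d_2+d_3)}$; this follows from Proposition~\ref{indA}. Second, a base-change identity in the OBM $A$: the fiber product of the two stepwise conormal correspondences is canonically isomorphic to $Z_{\tilde d_\bullet, G}$, and this fiber product is transverse. Granting these two inputs, the projection formula and the compatibility of l.c.i.\ pullback with composition recorded in Appendix~\ref{OBM} give the desired equality $m \circ (m \otimes \id) = \tilde\Upsilon \circ (\text{K\"unneth}) = m \circ (\id \otimes m)$.

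I expect the main obstacle to be the transversality and cycle-level compatibility of the composed correspondences. Concretely, one must show that the nilpotency constraint $\theta|_{(T_\phi \widetilde{\cal Quot}_{0,d_\bullet})^{nilp}} = 0$ appearing in Lemma~\ref{calc} is preserved and exactly reproduced when the two-step conormal bundles are composed, so that no excess-intersection class is introduced in the l.c.i.\ pullback. This reduces to a tangent-space calculation on partial flag $\widetilde{\cal Quot}$-schemes that directly generalizes the proof of Proposition~\ref{quot}, exploiting the fact that each forgetful map $\widetilde{\cal Quot}_{0,d_\bullet} \to \cal Quot^\circ_{0,d_\bullet}$ is an affine fibration. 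Once this geometric input is in place, the rest of the argument is formal manipulation of push-pull diagrams.
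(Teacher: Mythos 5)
Your proposal is correct and follows essentially the same route as the paper. The paper likewise reduces the associativity to showing both iterated products equal a single push-pull $(\Psi_{2G})_* \circ \Phi_{2G}^!$ through the three-step correspondence built from $\widetilde{\cal Quot}_{0,(d_1,d_1+d_2,d)}$; the fiber-product and transversality statements you flag as the ``heart'' are exactly the content of the paper's Lemma~\ref{transvers} (proved by the tangent-space computation on $\widetilde{\cal Quot}$-schemes you anticipate), and the passage from the $W_i$-level fiber product to the conormal bundles $Z_i$ is done via Theorem 2.7.26 of~\cite{CG}, which is the precise base-change input you call for.
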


\begin{proof}
We begin by introducing some notations.
Let $d_1\leq d_2\leq d$, $d_\bullet=(d_1,d_2,d)$, $d'_\bullet=(d_1,d)$, $d''_\bullet=(d_2-d_1,d-d_1)$, $G=G_d$, $P=P_{d_\bullet}$, $P'=P_{d'_\bullet}$.
Define the following varieties:
\begin{align*}
X_1&=G\times_P \cal Quot^\circ_{0,d_\bullet}; & W_1&=G\times_{P'} \widetilde{\cal Quot}_{0,d'_\bullet};\\
X_2&=G\times_{P'}\cal Quot^\circ_{0,d'_\bullet}; & W_2&=G\times_{P} \widetilde{\cal Quot}_{0,d_\bullet};\\
X_3&=\cal Quot^\circ_{0,d}; & W_3&= G\times_P (\cal Quot^\circ_{0,d_1}\times \widetilde{\cal Quot}_{0,d''_\bullet}).
\end{align*}
These varieties are $\cal Quot$- and $\widetilde{\cal Quot}$-bundles over certain partial flag varieties, so we may identify their $\bbk$-points as pairs (flag, quotient).
Adopting ``mod 3''-notation for indices, we have obvious inclusions $W_i\hookrightarrow X_{i-1}\times X_{i+1}$.
\begin{lm}\label{transvers}
Using the notations above,
\begin{enumerate}
\item we have an isomorphism $W_2\ra W_1\times_{X_2} W_3$;
\item the intersection $(W_1\times X_1)\cap (X_3\times W_3)$ in $X_3\times X_2\times X_1$ is transversal.
\end{enumerate}
\end{lm}
\begin{proof}
First of all, we introduce a small abuse of notation.
Namely, for any morphism of sheaves $\cal E\xra{f}\cal F$ and for any subsheaf $\cal E'\hookrightarrow \cal E$, the codomain of $H^0(f)|_{\cal E'}$ is assumed to be $H^0(f(\cal E'))$.
With that in mind, we have
\begin{itemize}
\item[$-$] $X_1=\left\{(D_\bullet, (\phi_i)_{i=1}^3)\left| \begin{matrix}
\dim D_\bullet=d_\bullet, \phi_i:D_i/D_{i-1}\otimes \Ocal\ra \cal E_i,\\
H^0(\phi_i)\text{ is an iso}, i\in\{1,2,3\}
\end{matrix}\right.  \right\}$;
\item[$-$] $X_2=\{(D'_\bullet, (\phi_i)_{i=1}^2)\mid \dim D'_\bullet=d'_\bullet, \phi_i:D_i/D_{i-1}\otimes \Ocal\ra \cal E_i,H^0(\phi_i)\text{ is an iso}, i\in\{1,2\}\}$;
\item[$-$] $X_3=\{\phi\mid \bbk^d\otimes \Ocal\ra \cal E,H^0(\phi)\text{ is an iso}\}$;
\item[$-$] $W_1=\{(D'_\bullet, \phi)\mid \dim D'_\bullet=d'_\bullet, \phi:\bbk^d\otimes\Ocal\ra \cal E, H^0(\phi)|_{D_i\otimes\Ocal}\text{ is an iso},i\in\{1,2\}\}$;
\item[$-$] $W_2=\{(D_\bullet, \phi)\mid \dim D_\bullet=d_\bullet, \phi:\bbk^d\otimes\Ocal\ra \cal E, H^0(\phi)|_{D_i\otimes\Ocal}\text{ is an iso},i\in\{1,2,3\}\}$;
\item[$-$] $W_3=\left\{(D_\bullet, \phi_1,\phi_2)\left| \begin{matrix}
\dim D_\bullet=d_\bullet, \phi_1:D_1\otimes\Ocal\ra \cal E_1,\phi_2:\bbk^d/D_1\otimes \Ocal\ra \cal E_2,\\
H^0(\phi_i),i\in\{1,2\}, H^0(\phi_2)|_{D_2/D_1\otimes\Ocal}\text{ are iso}
\end{matrix}\right.  \right\}$.
\end{itemize}
Next, consider a commutative diagram
\begin{center}
\begin{tabular}{rc}
$
\begin{tikzcd}
X_1 & W_3\arrow[r,"p_{32}"]\arrow[l,"p_{31}"'] & X_2\\
& W_2\arrow[ul,"p_{21}"]\arrow[dr,"p_{23}"']\arrow[u,"\alpha_{23}"']\arrow[r,"\alpha_{21}"] & W_1\arrow[u,"p_{12}"']\arrow[d,"p_{13}"] \\
& & X_3
\end{tikzcd}
$&
\begin{tabular}{l}
$p_{12}:(D'_\bullet,\phi)\mapsto (D'_\bullet,\gr'\phi)$;\\
$p_{13}:(D'_\bullet,\phi)\mapsto \phi$;\\
$p_{21}:(D_\bullet,\phi)\mapsto (D_\bullet,\gr\phi)$;\\
$p_{23}:(D_\bullet,\phi)\mapsto \phi$;\\
$p_{31}:(D_\bullet,\phi_1,\phi_2)\mapsto (D_\bullet,(\phi_1,\gr''\phi_2))$;\\
$p_{32}:(D_\bullet,\phi_1,\phi_2)\mapsto (\oub(D_\bullet),(\phi_1,\phi_2))$;\\
$\alpha_{21}:(D_\bullet,\phi)\mapsto (\oub(D_\bullet),\phi)$;\\
$\alpha_{23}:(D_\bullet,\phi)\mapsto (D_\bullet,\gr'\phi)$,
\end{tabular}
\end{tabular}
\end{center}
where $\gr=\gr_{d_\bullet}:\widetilde{\cal Quot}_{0,d_\bullet}\ra \cal Quot^\circ_{0,d_\bullet}$ is the natural affine fibration, and
\begin{gather*}
\oub:(D_1\subset D_2\subset E)\mapsto (D_1\subset E),\\
\gr'=\gr_{d'_\bullet}, \gr''=\gr_{d''_\bullet}.
\end{gather*}

\paragraph{\textbf{(1)}}
We have the following equalities on the level of $\bbk$-points:
\begin{align*}
W_1\times_{X_2}W_3&=\left\{\left((D'_\bullet,\phi),(D_\bullet,\phi_1,\phi_2)\right)\mid\oub D_\bullet=D'_\bullet,(\phi_1,\phi_2)=\gr'\phi\right\}\\
&=\left\{\left((\oub D_\bullet,\phi),(D_\bullet,\gr'\phi)\right)\right\}\subset W_1\times W_3.
\end{align*}
The natural map
\begin{align*}
p=(p_{23},p_{21}):W_2 & \ra W_1\times_{X_2}W_3,\\
(D_\bullet,\phi) & \mapsto \left((\oub D_\bullet,\phi),(D_\bullet,\gr'\phi)\right)
\end{align*}
can be thus seen to be a bijection.
The fiber product $W_1\times_{X_2} W_3$ is normal by~\cite[Proposition 6.14.1]{EGAIV}, $W_2$ is connected, therefore Zariski's main theorem implies that $p$ is an isomorphism.

\paragraph{\textbf{(2)}}
To prove that our intersection is transversal, we need to show that for any $x\in (W_1\times X_1)\cap (X_3\times W_3)\simeq W_2$ there is an equality
$$
T_x(W_1\times X_1)\cap T_x(X_3\times W_3)= T_x W_2.
$$
By Proposition~\ref{tan} we have the following isomorphisms:
\begin{align}
T_\phi\cal Quot_{0,d}&=\Hom(\cal K,\cal E);\label{tand}\\
T_\phi\cal Quot_{0,d_\bullet}&=\Hom(\cal K_\bullet,\cal E_\bullet):=\bigoplus_i \Hom(\cal K_i/\cal K_{i-1},\cal E_i/\cal E_{i-1});\label{tandbul}\\
T_\phi\widetilde{\cal Quot}_{0,d_\bullet}&=\widetilde{\Hom_{d_\bullet}}(\cal K,\cal E):=\{\beta\in \Hom(\cal K,\cal E):\beta(\cal K_i)\subset \cal E_i\text{ for all $i$}\}.\label{tandtil}
\end{align}
Let us fix a flag $D_\bullet$ of dimension $d_\bullet$, and a quotient $\phi\in\cal Quot_{0,d}$ such that $(D_\bullet, \phi)\in W_2$.
Then the equalities above allow us to compute all the tangent spaces in question:
\begin{center}
\begin{tabular}{rc}
$
\begin{tikzcd}
T_{(D_\bullet,\gr\phi)}X_1 &[-15pt] T_{(D_\bullet,\gr'\phi)}W_3\arrow[r]\arrow[l] &[-15pt] T_{(D'_\bullet,\gr'\phi)}X_2\\
& T_{(D_\bullet,\phi)}W_2\arrow[ul]\arrow[dr]\arrow[u]\arrow[r] & T_{(D'_\bullet,\phi)}W_1\arrow[u]\arrow[d] \\
& & T_\phi X_3
\end{tikzcd}
$&
\begin{tabular}{l}
\\
$T_{(D'_\bullet,\phi)}W_1=\fk g/\fk p'\oplus \widetilde{\Hom_{d'_\bullet}}(\cal K,\cal E)$;\\
$T_{(D_\bullet,\phi)}W_2=\fk g/\fk p\oplus \widetilde{\Hom_{d_\bullet}}(\cal K,\cal E)$;\\
$T_{(D_\bullet,\gr'\phi)}W_3=\fk g/\fk p\oplus\Hom(\cal K_1,\cal E_1)$\\
$\qquad\qquad\qquad\qquad\oplus \widetilde{\Hom_{d_\bullet}}(\cal K/\cal K_1,\cal E/\cal E_1)$,
\end{tabular}
\end{tabular}
\end{center}
where $\fk p',\fk p\subset \fk g$ are parabolic subalgebras associated to flags $D'_\bullet$, $D_\bullet$ respectively, and $\cal K_1=\Ker\phi|_{D_1\otimes \Ocal_X}$.
A straightforward computation shows that at a point $x=(D_\bullet, \phi)\in W_2$ we have
$$
T_x(W_1\times X_1)\cap T_x(X_3\times W_3)=\{(\beta,(\xi^{red},\gr'\beta),(\xi,\gr\beta))\mid \beta\in\widetilde{\Hom_{d_\bullet}}(\cal K,\cal E),\xi\in \fk g/\fk p\},
$$
where $\xi^{red}$ denotes the image of $\xi$ under the quotient map $\fk g/\fk p\ra \fk g/\fk p'$. The space above is isomorphic to $T_xW_2$ by means of the map
\begin{align*}
T_xW_2&\ra T_x(W_1\times X_1)\cap T_x(X_3\times W_3),\\
(\xi,\beta)&\mapsto (\beta,(\xi^{red},\gr'\beta),(\xi,\gr\beta)).
\end{align*}
This proves the transversality.
\end{proof}

Let us now put $Z_i=T^*_{W_i}(X_{i-1}\times X_{i+1})$.
The lemma above, combined with Theorem 2.7.26 in~\cite{CG}, tells us that the projection $T^*(X_3\times X_2\times X_1)\ra T^*(X_3\times X_1)$ gives rise to an isomorphism
$$
Z_1\times_{T^* X_2} Z_3\xra{\sim} Z_2.
$$ 
Therefore, we obtain the following diagrams with cartesian squares:
$$
\begin{tikzcd}
T^* X_1 & Z_3\arrow[r,"\Psi_{3}"]\arrow[l,"\Phi_{3}"'] & T^* X_2\\
 & Z_2\arrow[ul,"\Phi_{2}"]\arrow[dr,"\Psi_{2}"']\arrow[u,"\alpha"']\arrow[r,"\beta"] & Z_1\arrow[u,"\Phi_{1}"']\arrow[d,"\Psi_{1}"] \\
 & & T^* X_3
\end{tikzcd}\qquad\qquad
\begin{tikzcd}
T^*_G X_1 & Z_{3G}\arrow[r,"\Psi_{3G}"]\arrow[l,"\Phi_{3G}"'] & T^*_G X_2\\
 & Z_{2G}\arrow[ul,"\Phi_{2G}"]\arrow[dr,"\Psi_{2G}"']\arrow[u,"\alpha_G"']\arrow[r,"\beta_G"] & Z_{1G}\arrow[u,"\Phi_{1G}"']\arrow[d,"\Psi_{1G}"] \\
 & & T^*_G X_3
\end{tikzcd}
$$
where $Z_{iG}:=Z_i\cap (T^*_G X_{i-1}\times T^*_G X_{i+1})$. 
In view of Lemma~\ref{Gysin}, the second diagram implies that
\[
\Upsilon\circ(\id\otimes \Upsilon)=((\Psi_{1G})_*\circ \Phi_{1G}^!)\circ((\Psi_{3G})_*\circ \Phi_{3G}^!)=(\Psi_{1G})_*\circ (\beta_{G})_*\circ\alpha_G^!\circ \Phi_{3G}^!=(\Psi_{2G})_*\circ \Phi_{2G}^!.
\]
In a similar way we may prove that $\Upsilon\circ(\Upsilon\otimes \id)=(\Psi_{2G})_*\circ \Phi_{2G}^!$, so that the associativity of multiplication in $A\bf{Ha}_C^0$ follows.
\end{proof}

Note that all varieties in the definition of $A\bf{Ha}_C^0$ admit a $T=\bb G_m$-action (by dilation along the fibers of cotangent and conormal bundles) and all maps we consider are $T$-equivariant. Therefore, the construction above also defines an associative product on
$$
A\bf{Ha}_C^{0,T}=\bigoplus_d A\bf{Ha}_C^{0,T}[d]=\bigoplus_d A^T(\underline{\cal Higgs}_{0,d}).
$$

\begin{exe}\label{SValgs}
	Let $C=\bb A^1$, and equip it with a natural action of weight $1$ of another torus $T'=\bb G_m$. In this case $K\bf{Ha}_C^{0,T\times T'}$ and $H\bf{Ha}_C^{0,T\times T'}$ are precisely the $K$-theoretic and cohomological Hall algebras studied in~\cite{SV1} and~\cite{SV2} respectively.
\end{exe}
\section{Global shuffle algebra}\label{globsh}
In this section we focus our attention on the algebra $A\bf{Ha}_C^{0,T}$.
In order to study its product, we will utilize the localization theorem~\ref{loc}.
Let $d_\bullet=\{0=d_0\leq d_1\leq\ldots\leq d_k=d\}$, and denote $G=G_d$, $P=P_{d_\bullet}$, $H=G_{d_\bullet}$ as before.
We will also denote by $T_d\subset H$ the maximal torus, which consists of operators diagonal with respect to the standard basis $v_1,\ldots,v_d$ of $\bbk^d$.
The Weyl group of $G$ is then naturally isomorphic to $\mathfrak S_d$, and the Weyl group of $H$ is isomorphic to $\mathfrak S_{d_\bullet}=\prod_{i}\mathfrak S_{d_i-d_{i-1}}$.

Denote $\mathbb T=T_d\times T$, and write $t_1,\ldots,t_d$ for the basis of character lattice of $T_d$ corresponding to the standard basis of $\bbk^d$.
In the same way, let $t$ be the character of $T$ of weight 1.
One can think about characters of $\bb T$ as equivariant line bundles over a point.
In this fashion, we identify $A^{\mathbb T}(pt)_{loc}:=\Frac(A^{\mathbb T}(pt))$ with $A(pt)(\!( e(t_1),\ldots, e(t_d),e(t))\!)$ (see Example~\ref{Aofpoint}).
\begin{lm}\label{Tst}
We have $(T^*\cal Quot^\circ_{0,d})^{\bb T}=\cal Quot_{0,1}^d$, where the embedding $\cal Quot_{0,1}^d\hookrightarrow\cal Quot^\circ_{0,d}$ is defined by the basis $\{v_1,\ldots,v_d\}$ associated to $T_d$.
\end{lm}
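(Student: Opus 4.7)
The plan is to treat the two factors of $\bb T = T_d\times T$ separately. First, since $T$ acts on $T^*\cal Quot^\circ_{0,d}$ by dilation along the cotangent fibers with weight $1$, a point $(\phi,\theta)$ can only be $T$-fixed when $\theta=0$. So the entire $\bb T$-fixed locus sits on the zero section, and the problem reduces to identifying $(\cal Quot^\circ_{0,d})^{T_d}$ inside $\cal Quot^\circ_{0,d}$.

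Next, I would argue that a closed point $\phi:\bbk^d\otimes\Ocal\twoheadrightarrow\cal E$ is $T_d$-fixed exactly when $\cal K=\Ker\phi$ is a $T_d$-stable subsheaf of $\bbk^d\otimes\Ocal$, equivalently when $\cal E$ carries a (unique) $T_d$-equivariant structure making $\phi$ equivariant. In that case $H^0(\phi):\bbk^d\xra{\sim}H^0(\cal E)$ is $T_d$-equivariant, so $H^0(\cal E)$ has the same pairwise distinct weights $t_1,\ldots,t_d$. The weight decomposition $\bbk^d = \bigoplus_i \bbk v_i$ therefore descends to $\cal E = \bigoplus_{i=1}^d \cal E_i$, with $\cal E_i := \phi(\bbk v_i\otimes\Ocal)$ of weight $t_i$ and $h^0(\cal E_i)=1$. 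Each $\cal E_i$ is then a torsion quotient of $\Ocal$ of length $1$, hence a skyscraper $\Ocal_{x_i}$ at a unique point $x_i\in C(\bbk)$, and $\phi_i:\bbk v_i\otimes\Ocal\twoheadrightarrow\cal E_i$ defines a point of $\cal Quot_{0,1}\simeq C$ (Example~\ref{q01}). Conversely, any tuple $(\phi_1,\ldots,\phi_d)\in\cal Quot_{0,1}^d$ reassembles into a $T_d$-equivariant surjection $\bbk^d\otimes\Ocal\twoheadrightarrow\bigoplus_i\cal E_i$, which lies in $\cal Quot^\circ_{0,d}$ because the induced map on global sections is a direct sum of isomorphisms. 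These two constructions are mutually inverse, yielding the bijection on $\bbk$-points.

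What remains, and which I expect to be the main technical point, is upgrading this bijection to an isomorphism of schemes. The cleanest route is to carry out the direct-sum construction functorially over an arbitrary base $S$ (using that a direct sum of flat families is flat), producing a morphism $\cal Quot_{0,1}^d\to(\cal Quot^\circ_{0,d})^{T_d}$ that is already bijective on points. Alternatively, one can invoke smoothness of the torus-fixed locus on a smooth variety and compare tangent spaces via Proposition~\ref{tan}: at a fixed $\phi$ with distinct $x_i$, one has $\cal K = \bigoplus_i \cal K_i$ with $\cal K_i\simeq\Ocal(-x_i)$, and the $T_d$-invariant part of $\Hom(\cal K,\cal E) = \bigoplus_{i,j}\Hom(\cal K_i,\cal E_j)$ collapses to $\bigoplus_i\Hom(\Ocal(-x_i),\Ocal_{x_i}) = \bigoplus_i T_{x_i}C$, matching the tangent space of $\cal Quot_{0,1}^d = C^d$ at $(x_1,\ldots,x_d)$. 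Either argument promotes the bijection to the stated equality of schemes.
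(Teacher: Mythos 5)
Your proof is correct, and it takes a somewhat different route from the paper's at the key technical step. Both proofs open identically: the $T$-weight on the cotangent fibers forces $\theta=0$, reducing to $(\cal Quot^\circ_{0,d})^{T_d}$, and both observe that $\phi$ is $T_d$-fixed iff $\cal K=\Ker\phi$ is a $T_d$-stable subsheaf. Where you diverge is in how you decompose. The paper twists $\cal K$ by a large power $\Ocal(N)$ of an ample bundle, takes global sections, and reduces to the linear-algebra fact that a torus-stable subspace of a $T_d$-representation splits into weight spaces, thence concluding $\cal K=\bigoplus_i\cal K\cap(\bbk v_i\otimes\Ocal)$. You instead work directly with the $T_d$-equivariant structure on the quotient $\cal E$: since $T_d$ acts trivially on $C$, $\cal E$ decomposes into weight subsheaves, $H^0(\phi)$ is a $T_d$-equivariant isomorphism, and the surjectivity of $\phi$ with distinct weights $t_1,\dots,t_d$ forces $\cal E=\bigoplus_i\phi(\bbk v_i\otimes\Ocal)$ with each summand a length-$1$ skyscraper. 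This avoids the auxiliary choice of $N$ and is arguably cleaner.

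You also explicitly flag and address the scheme-theoretic upgrade from a bijection on $\bbk$-points to an equality of schemes, either by carrying the direct-sum construction out in flat families or by matching tangent spaces via Proposition~\ref{tan} using the $T_d$-invariants of $\Hom(\cal K,\cal E)=\bigoplus_{i,j}\Hom(\cal K_i,\cal E_j)$. The paper's proof is stated purely at the level of closed points and leaves that step implicit, so this is a genuine point of additional rigor on your side, not a gap in yours.
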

\begin{proof}
First of all, $(T^*\cal Quot^\circ_{0,d})^{T_d\times T}=(\cal Quot^\circ_{0,d})^{T_d}$.
Next, a point $(\bbk^d\otimes\Ocal\xra{\phi}\cal E)\in \cal Quot_{0,d}$ is $T_d$-stable if and only if for any $g\in T_d$ we have $\cal K_\phi:=\Ker\phi=\Ker(\phi\circ(g\otimes \id))=g.\cal K_\phi$.
We choose an integer $N$ and an ample line bundle $\cal L$ such that for any $\phi$ the sheaf $\cal L^{N}\otimes \cal K_\phi$ is generated by its global sections.
Then $\cal K_\phi$ is uniquely determined by the subspace $V:=H^0(\cal L^{N}\otimes \cal K_\phi)\subset \bbk^d\otimes H^0(\cal L^{N})$ of codimension $d$, that is by a point in certain Grassmanian.
Let us denote $H=H^0(\cal L^{N})$.
Then $\cal K_\phi$ is $T$-stable if and only if $V$ is $T$-stable.
But we know that the only subspaces stable under the torus actions are direct sums of subspaces of weight spaces.
Therefore $V=\bigoplus_{i=1}^n V\cap (\bbk v_i\otimes H)$, and thus $\cal K_\phi$ is $T$-stable iff $\cal K_\phi=\bigoplus_{i=1}^d \cal K_\phi\cap(\bbk v_i\otimes\Ocal)$.
Finally, this is equivalent to saying that $\cal E=\bigoplus_{i=1}^d \phi(\bbk v_i\otimes \Ocal)$, that is $\phi\in\cal Quot_{0,1}^d$.
\end{proof}

Recall that $\cal Quot_{0,1}\simeq C$ (see Example~\ref{q01}), and let $p_{ij}:\cal Quot_{0,1}\times \cal Quot_{0,1}\times C\ra C\times C$ denote the projection along the unnamed factor.
\begin{lm}\label{Homsheaves}
Let $\cal K, \cal E\in \ona{Coh}(\cal Quot_{0,1}\times C)\simeq \ona{Coh}(C\times C)$ be the universal families of kernels and images of quotients $\Ocal\ra \cal E$ respectively.
Then
\begin{gather*}
p_{12*}\cal Hom(p_{13}^* \cal K,p_{23}^* \cal E)\simeq \Ocal_{C\times C}(\Delta),\\
p_{12*}\cal Hom(p_{13}^* \Ocal,p_{23}^* \cal E)\simeq \Ocal_{C\times C},
\end{gather*}
where $\Delta\subset C\times C$ is the diagonal.
\end{lm}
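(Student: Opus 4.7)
The plan is to identify the universal sheaves $\cal K$ and $\cal E$ as concrete objects on $C\times C$ and then reduce both identities to a direct pushforward computation on $C\times C\times C$, using that one of the sheaves involved is schematically supported on a graph.

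First I would invoke Example~\ref{q01} to identify $\cal E$ with $\Ocal_\Delta$, the structure sheaf of the diagonal $\Delta\subset C\times C$. The universal quotient is the canonical surjection $\Ocal_{C\times C}\twoheadrightarrow \Ocal_\Delta$, whose kernel fits into $0\ra\Ocal_{C\times C}(-\Delta)\ra\Ocal_{C\times C}\ra\Ocal_\Delta\ra 0$; hence $\cal K\simeq \Ocal_{C\times C}(-\Delta)$. Writing coordinates $(x,y,z)$ on $C_x\times C_y\times C_z$ with $p_{12}$ forgetting $z$, and denoting $\Delta_{ij}\subset C^3$ the relevant diagonals, the pullbacks become
$$p_{13}^*\cal K\simeq \Ocal_{C^3}(-\Delta_{xz}),\qquad p_{23}^*\cal E\simeq \Ocal_{\Delta_{yz}}.$$
Since the first factor is a line bundle,
$$\cal Hom(p_{13}^*\cal K,\, p_{23}^*\cal E)\simeq \Ocal_{C^3}(\Delta_{xz})\otimes \Ocal_{\Delta_{yz}}.$$

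Next I would push forward along $p_{12}$. The key geometric observation is that $\Delta_{yz}\hookrightarrow C^3$ maps isomorphically to $C\times C$ under $p_{12}$ via $(x,y,y)\leftrightarrow (x,y)$, so for any coherent sheaf $\cal F$ on $C^3$ we have $p_{12*}(\cal F\otimes\Ocal_{\Delta_{yz}})\simeq (p_{12}|_{\Delta_{yz}})_*(\cal F|_{\Delta_{yz}})$. Applying this to $\cal F=\Ocal_{C^3}(\Delta_{xz})$, one checks that $\Delta_{xz}$ and $\Delta_{yz}$ meet transversally in $C^3$ (they are cut out by the two independent equations $x=z$ and $y=z$), so $\Ocal_{C^3}(\Delta_{xz})|_{\Delta_{yz}}$ is the line bundle associated to the Cartier divisor $\Delta_{xz}\cap\Delta_{yz}=\{x=y=z\}$ on $\Delta_{yz}$. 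Under the isomorphism $\Delta_{yz}\simeq C\times C$ this divisor is carried to $\Delta\subset C\times C$, yielding the first identity
$$p_{12*}\cal Hom(p_{13}^*\cal K,p_{23}^*\cal E)\simeq \Ocal_{C\times C}(\Delta).$$

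The second identity is a degenerate version of the same argument: $p_{13}^*\Ocal=\Ocal_{C^3}$, so $\cal Hom(p_{13}^*\Ocal,p_{23}^*\cal E)\simeq \Ocal_{\Delta_{yz}}$, and the same isomorphism $\Delta_{yz}\xra{\sim}C\times C$ gives $p_{12*}\Ocal_{\Delta_{yz}}\simeq\Ocal_{C\times C}$. The only nontrivial point is verifying the transversality of $\Delta_{xz}$ and $\Delta_{yz}$ so that restriction of the line bundle $\Ocal_{C^3}(\Delta_{xz})$ to $\Delta_{yz}$ agrees with the line bundle of the scheme-theoretic intersection; this is immediate from the explicit defining equations, so I expect no serious obstacle.
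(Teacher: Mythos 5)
Your proof is correct and takes essentially the same route as the paper: identify $\cal E\simeq\Ocal_\Delta$, $\cal K\simeq\Ocal(-\Delta)$, use that $\cal K$ is a line bundle to turn $\cal Hom$ into a twist, observe $\Ocal(\Delta_{13})\otimes\Ocal_{\Delta_{23}}\simeq\Ocal_{\Delta_{23}}(\Delta_{123})$, and push forward along the isomorphism $\Delta_{23}\xra{\sim}C\times C$. The only cosmetic difference is that you spell out the transversality of $\Delta_{13}$ and $\Delta_{23}$, which the paper leaves implicit.
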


\begin{proof}
It is easy to see that $\cal E\simeq \Ocal_\Delta$, $\cal K\simeq \Ocal(-\Delta)$.
Since $\cal K$ is locally free and $\cal E$ is a torsion sheaf over any point $\phi\in\cal Quot_{0,1}$, the higher $\Ext$-sheaves $\cal Ext^i(p_{13}^* \cal K,p_{23}^* \cal F)$ vanish for all $i>0$.
We denote $\Delta_{ij}=p_{ij}^{-1}\Delta$, and $\Delta_{123}\subset C\times C\times C$ the small diagonal. Then
\begin{align*}
p_{12*}\cal Hom(p_{13}^* \cal K,p_{23}^* \cal E)&\simeq p_{12*}((p_{13}^* \cal K)^*\otimes p_{23}^* \cal E)\simeq p_{12*}(\Ocal(\Delta_{13})\otimes \Ocal_{\Delta_{23}})\simeq p_{12*}(\Ocal_{\Delta_{23}}(\Delta_{123}))\simeq\\
&\simeq\Ocal_{C\times C}(\Delta).
\end{align*}
This proves the first equality.
For the second one, we conclude by a similar computation:
\begin{equation*}
p_{12*}\cal Hom(p_{13}^* \Ocal,p_{23}^* \cal E)\simeq p_{12*}(\Ocal_{\Delta_{23}})\simeq \Ocal_{C\times C}.  \qedhere
\end{equation*}
\end{proof}

\begin{nota}
In order to keep notation concise, for any two sheaves $\cal A,\cal B\in \ona{Coh}(\cal Quot_{0,1}\times C)$ we will write $\Hom(\cal A,\cal B)$ instead of $p_{12*}\cal Hom(p_{13}^* \cal A,p_{23}^* \cal B)$ (here the pushforward $p_{12*}$ is underived).
\end{nota}

Let $j_H:\mathscr C_{d_\bullet}\hookrightarrow T^*\cal Quot^\circ_{0,d_\bullet}$ denote the closed embedding, and let $i_H:\cal Quot_{0,1}^d\hookrightarrow T^*\cal Quot^\circ_{0,d_\bullet}$ be the inclusion of the fixed point set.
Recall that by localization theorem~\ref{loc} the map $i_H^*$ becomes an isomorphism upon tensoring with the fraction field of $A^{\bb T}(pt)$.
Consider the following composition:
$$
\rho_H=i_H^*\circ j_{H*}:A^{H\times T}(\mathscr C_{d_\bullet})\simeq A^{\bb T}(\mathscr C_{d_\bullet})^{W_{d_\bullet}}\ra A^{\bb T}(C^d)^{W_{d_\bullet}},
$$
where the isomorphism on the left is given by Proposition~\ref{GtoT}, Weyl group acts on the right-hand side by Remark~\ref{WeylonT}, and $\cal Quot_{0,1}^d$ is identified with $C^d$.
In the same way, we can define a map
$$
\rho_d=\rho_G:A^{G\times T}(\mathscr C_{d})\ra A^{\bb T}(C^d)^{W_{d}}.
$$
The goal of this section is to construct a map $\Upsilon_{loc}$ (between localized $A$-groups), such that the following diagram commutes:
$$
\begin{tikzcd}
A^{H\times T}(\mathscr C_{d_\bullet})\ar[r,"\Upsilon"]\ar[d,"\rho_H"]& A^{G\times T}(\mathscr C_d)\ar[d,"\rho_G"]\\
A^{T_d\times T}(C^d)_{loc}^{\fk S_{d_\bullet}}\ar[r,dashrightarrow,"\Upsilon_{loc}"] & A^{T_d\times T}(C^d)_{loc}^{\fk S_d}
\end{tikzcd}
$$

As in~\cite[Section 10]{SV1}, one expects $\Upsilon_{loc}$ to be some incarnation of shuffle product.
Let $z$ be a formal variable, and let $g\in \left(A^*(C\times C)(\!(t)\!)\right)(\!(e(z))\!)$ be an $A_T^*(C\times C)_{loc}$-valued formal Laurent series in $e(z)$, where we interpret the latter as a formal symbol.
We will also abuse the notations and write $g$ as a function of $z$.
For any positive $d_1$, $d_2$ with $d_1+d_2=d$, put
\[
g_{d_1,d_2}(z_1,\ldots, z_d)=\prod_{i=1}^{d_1}\prod_{j=d_1+1}^{d}g\left(\frac{z_j}{z_i}\right).
\]
Let us also fix the following set of representatives of classes in $\frak S_d/(\frak S_{d_1}\times \frak S_{d_2})$:
$$
Sh(d_1,d_2)=\left\{\sigma\in\frak S_d\mid \sigma(i)<\sigma(j)\text{ if } 1\leq i< j\leq d_1\text{ or }d_1+1\leq i<j\leq d\right\}.
$$
\begin{defi}\label{shuf}
The \emph{shuffle algebra associated to $g$} is the vector space
$$
A\bf{Sh}_g=\bigoplus_d A\bf{Sh}_g[d]=\bigoplus_{d} \left(A(C^d)(\!(e(t);e(t_1),\ldots,e(t_d))\!)\right)^{\frak S_d}
$$
equipped with the product
\begin{equation}\label{shuffleprod}
\begin{gathered}
\Xi:A\bf{Sh}_g[d_1]\times A\bf{Sh}_g[d_2]\ra A\bf{Sh}_g[d_1+d_2],\\
\Xi(f,h)=\sum_{\sigma\in Sh(d_1,d_2)}\sigma.\left(g_{d_1,d_2}(t_1,\ldots,t_{d_1+d_2})f(t_1,\ldots,t_{d_1})h(t_{d_1+1},\ldots,t_{d_1+d_2})\right).
\end{gathered}
\end{equation}
\end{defi}

The formula~(\ref{shuffleprod}) requires some explanation.
First of all, the product between $f$ and $h$ is given by the map $A(C^{d_1})\otimes A(C^{d_2})\xra{\times} A(C^{d_1+d_2})$.
Next, after replacing $z_i$'s by $t_i$'s and taking Euler classes, the function $g_{d_1,d_2}$ becomes an honest cohomology class, which then operates on the product $f\cdot h$.
Finally, the natural action of $\sigma\in Sh(d_1,d_2)$ (see Remark~\ref{WeylonT}) simultaneously permutes $t_i$'s and factors in the direct product $C^{d_1+d_2}$.

It is easy to check that this product is associative.
We will be mainly concerned with two specific choices of $g$:
\begin{itemize}
	\item the \textit{global shuffle algebra}, denoted by $A\bf{Sh}_C$, is the shuffle algebra associated to
	$$
	g_C=\frac{e(tz^{-1})e(z\Ocal(-\Delta))e(tz\Ocal(\Delta))}{e(z)};
	$$
	\item the \textit{normalized global shuffle algebra}, denoted by $A\bf{Sh}^{norm}_C$, is the shuffle algebra associated to
	$$
	g_C^{norm}=\frac{e(z\Ocal(-\Delta))e(tz\Ocal(\Delta))}{e(z)e(tz)}.
	$$
\end{itemize}
By invoking the formal group law $\star$ associated to $A$, we can deduce that both functions are Laurent series in $e(z)$ (see also discussion before Proposition~\ref{quadrel}).

\begin{rmq}
	Two algebras above are isomorphic under the map $RN=\bigoplus_d RN[d]$, where:
	\begin{gather*}
	RN[d]:A\bf{Sh}^{norm}_C[d]\ra A\bf{Sh}_C[d],\\
	f(t_1,\ldots,t_d)\mapsto \left(\prod_{i\neq j} e\left(t\frac{t_j}{t_i}\right)\right)f(t_1,\ldots,t_d).
	\end{gather*}
\end{rmq}

\begin{thm}\label{prodC}
The collection of maps $\rho_d:A^{G_d\times T}(\mathscr C_d)\ra A^{\bb T}(C^d)^{\fk S_{d}}\subset A^{\bb T}(C^d)^{\fk S_{d}}_{loc}$ defines a morphism of graded associative algebras
$$
\rho:A\bf{Ha}_C^0\ra A\bf{Sh}_C.
$$
\end{thm}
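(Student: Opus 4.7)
The plan is to fix a splitting $d = d_1 + d_2$ with $d_\bullet = (d_1, d)$ and verify $\rho_d \circ m = \Xi \circ (\rho_{d_1} \otimes \rho_{d_2})$; the general statement then follows because $m$ is associative by Theorem~\ref{ass} and $\Xi$ is associative by direct check. The method is equivariant localization with respect to $\bbT = T_d \times T$: by Theorem~\ref{loc}, the pullbacks $i_H^*$ and $i_G^*$ to $\bbT$-fixed loci become isomorphisms after inverting Euler classes of the normal bundles, so both sides of the desired identity can be computed after passing to fixed loci.

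First I would identify the $\bbT$-fixed loci of every space appearing in diagram~(\ref{diagZ}). By Lemma~\ref{Tst}, $(T^*\cal Quot^\circ_{0,d})^{\bbT} = C^d$, and applied factor-wise this gives $(\mathscr C_{d_\bullet})^{\bbT} = C^d$ as well. For the induced space $X = G \times_P \cal Quot^\circ_{0,d_\bullet}$ of Lemma~\ref{calc}, the standard description of $\bbT$-fixed points on $G \times_P Y$ yields $(T^*_G X)^{\bbT} = \bigsqcup_{\sigma \in Sh(d_1,d_2)} C^d$, one copy per $T_d$-fixed $P$-coset, and by the explicit description of $Z_G$ in Lemma~\ref{calc} together with the transversality of the fixed-point intersection, the same indexing passes to $(Z_G)^{\bbT} = \bigsqcup_{\sigma} C^d$, with $\Phi_G$ and $\Psi_G$ acting on the $\sigma$-component as $\sigma$ itself under the obvious identifications. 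The shuffle sum in~(\ref{shuffleprod}) will therefore match this fixed-locus decomposition term by term.

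Second, I would run $m(\alpha_1 \boxtimes \alpha_2) = \Upsilon(\alpha_1 \boxtimes \alpha_2)$ through diagram~(\ref{diagZ}) in localized equivariant $A$-theory, applying the projection formula to $(\Psi_G)_*$ and the excess intersection formula to $\Phi^!$. On the $\sigma$-component, this produces a contribution $\sigma \cdot \bigl( (\rho_{d_1}(\alpha_1) \boxtimes \rho_{d_2}(\alpha_2)) \cdot E \bigr)$, where $E$ is a universal ratio of equivariant Euler classes of the ``off-diagonal'' normal directions connecting the first $d_1$ strands to the last $d_2$ strands. Proposition~\ref{tan} together with Lemma~\ref{calc} expresses these normal directions as sums of sheaves of the form $\Hom(\cal K_i, \cal E_j)$, $\Hom(\cal E_i, \cal E_j)$ and $\Hom(\bbk\otimes \Ocal, \cal E_j)$ (the last two coming from the moment-map constraint), each carrying $T_d$-weight $t_j/t_i$ and, depending on whether the direction is tangent or conormal, possibly an extra $T$-weight $t$. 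Lemma~\ref{Homsheaves} then identifies these sheaves with $\Ocal(\Delta)$, $\Ocal_\Delta$ and $\Ocal_{C\times C}$ respectively; converting into equivariant Euler classes via the formal group law (cf.\ the discussion following Definition~\ref{shuf}) and substituting $z = t_j/t_i$ should recover precisely $g_C(z) = e(tz^{-1})\,e(z\Ocal(-\Delta))\,e(tz\Ocal(\Delta))/e(z)$. Taking the product over all pairs $i \le d_1 < j$ yields $g_{d_1,d_2}$, and summing over $\sigma \in Sh(d_1,d_2)$ reproduces~(\ref{shuffleprod}).

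The main obstacle is the Euler-class bookkeeping in the second step: one must carefully isolate, among all normal directions to $Z_G \hookrightarrow T^*(X \times X')$ at a fixed point, which pieces belong to ``off-diagonal'' summands producing $g_C$, which belong to ``diagonal'' summands already absorbed into $\rho_{d_1}(\alpha_1) \boxtimes \rho_{d_2}(\alpha_2)$, and which cancel between the numerator (from $\Phi^!$) and the denominator (from the localization formula for $(\Psi_G)_*$). As a final sanity check, the resulting class lies automatically in $A^{\bbT}(C^d)^{\mathfrak S_d}$ and not merely in its localization, since it is the image under $\rho_d$ of an integral class defined before inverting any Euler classes.
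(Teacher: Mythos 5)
Your strategy — localize to $\bbT$-fixed points, identify the fixed loci with copies of $C^d$ indexed by shuffles, and match the resulting Euler-class ratio to $g_C$ — is exactly the paper's. Two caveats are worth making explicit.

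First, a technical point you gloss over: $\rho_d$ is \emph{not} a direct fixed-point pullback on $\mathscr C_d$, because the embedding $C^d\hookrightarrow\mathscr C_d$ is not known to be l.c.i.\ (the commuting variety is singular), so $i^*_T$ from Theorem~\ref{loc} does not apply there. The definition $\rho_d=i_d^*\circ j_{d*}$ first pushes forward into the smooth ambient $T^*\cal Quot^\circ_{0,d}$ and only then restricts to fixed points. Your invocation of $i_H^*$, $i_G^*$ must be understood as pullbacks on these smooth ambient spaces, and the whole computation has to be routed through the $j_*$'s. The paper's two lemmas are built precisely around diagrams that keep track of where this pushforward lives.

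Second, the ``universal ratio of Euler classes'' you leave as the main obstacle is not a single excess-intersection computation: it splits into two structurally different contributions, and your list of sheaves is off. The factor $e(tz^{-1})$ in $g_C$ comes from the failure of $\ind_H^G$ to commute with fixed-point restriction (Proposition~\ref{pullpush} applied to the affine fibration $\mathfrak p_-^*\times_{\mathfrak h^*}T^*\cal Quot^\circ_{0,d_\bullet}\simeq\mathfrak n_-^*\times T^*\cal Quot^\circ_{0,d_\bullet}$), yielding $e(\mathfrak n_-^*)=\prod_{(i,j)\in T_\mathfrak n}e(t\,t_i/t_j)$. The remaining three factors $e(z\Ocal(-\Delta))\,e(tz\Ocal(\Delta))/e(z)$ come from the $\Psi_*\circ\Phi^!$ step, via $e(T_{\hat C^d}Z)^{-1}s^*(e(T_{C^d}T^*\cal Quot^\circ_{0,d}))$, decomposed using only $\Hom(\cal K_j,\cal E_i)\simeq t_i/t_j\,\Ocal(-\Delta_{ij})$, its dual, and $\mathfrak n_-$. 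The sheaves $\Hom(\cal E_i,\cal E_j)$ and $\Hom(\bbk\otimes\Ocal,\cal E_j)$ you list do not enter the Euler-class computation as written in the paper; the moment-map constraint is absorbed by working with $T^*X=G\times_P(\mathfrak p_-^*\times_{\mathfrak h^*}T^*\cal Quot^\circ_{0,d_\bullet})$ rather than appearing as extra normal directions. Sorting the four Euler-class factors into these two sources, and proving each via its own commutative diagram, is the content of the proof that remains to be supplied.
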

\begin{proof}
Let us first introduce some notations.
Define
\begin{gather*}
I^2_d=[1;d]\times[1;d]\subset\bbZ^2;\\
T_{\mathfrak p}=\bigcup_{i=1}^{k}[d_{i-1}+1;d_i]\times[d_{i-1}+1;d];\qquad T_{\mathfrak n}=\bigcup_{i=1}^{k-1}[d_{i-1}+1;d_{i}]\times[d_{i}+1;d].
\end{gather*}
Also, let us denote the space $W\times_{W_H}C^d$ from Proposition~\ref{ind} by $\hat C^d$.
Recall that we have a projection $s:\hat C^d\ra C^d$, given by shuffle permutations $Sh(d_1,d_2)$.

Recall the notations of Section~\ref{prod}, specifically Lemma~\ref{calc}(2).
Our proof will proceed in two steps.
First, consider the following diagram:
$$
\begin{tikzcd}
A^{H\times T}(\mathscr C_{d_\bullet})\ar[d,"j_*"]\ar[rr,"\ind_H^G"]\ar[dr] &  & A^{G\times T}(T_G^* X)\ar[d,"j_*"] \\
A^{H\times T}(T^* \cal Quot^\circ_{0,d_\bullet})\ar[d,"i^*"]\ar[r] & A^{H\times T}(\mathfrak p_-^*\times_{\mathfrak h^*}T^* \cal Quot^\circ_{0,d_\bullet})\ar[r,"\ind_H^G"]\ar[d,"i^*"] & A^{G\times T}(T^* X)\ar[d,"i^*"]\\
A^{\bbT}(C^d)^{\fk S_d}\ar[r,equal] & A^{\bbT}(C^d)^{\fk S_d}\ar[r,"s^*"] & A^{\bbT}(\hat C^d)^{\fk S_d}
\end{tikzcd}
$$
Let us denote all vertical compositions by $\rho$, leaving out the subscripts.
For any closed embedding of smooth varieties $M\subset N$, we denote by $T_M N$ the normal bundle of $M$.
\begin{lm}
	We have
	$$
	\rho\circ\ind_H^G(c)=s^*\left(\prod_{(i,j)\in T_{\mathfrak n}} e\left(t\frac{t_i}{t_j}\right)\rho(c)\right)
	$$
	for any $c\in A^{H\times T}(\mathscr C_{d_\bullet})$.
\end{lm}
\begin{proof}
Everything in this diagram commutes, except for the lower left square.
Moreover, by Proposition~\ref{pullpush} this square becomes commutative after multiplying by an appropriate Euler class.
Note that since $\mathfrak p_-^*=\mathfrak n_-^*\oplus \mathfrak h^*$, we have an isomorphism
$$
\mathfrak p_-^*\times_{\mathfrak h^*}T^* \cal Quot^\circ_{0,d_\bullet}\simeq \mathfrak n_-^*\times T^* \cal Quot^\circ_{0,d_\bullet}.
$$
This (trivial) vector bundle has the same $T$-fixed points as its zero section $T^* \cal Quot^\circ_{0,d_\bullet}$.
Therefore, the Proposition~\ref{pullpush} tells us that the required Euler class is
\begin{align*}
e\left(\left.T_{T^* \cal Quot^\circ_{0,d_\bullet}}\mathfrak n_-^*\times T^* \cal Quot^\circ_{0,d_\bullet}\right|_{C^d}\right)=e(\mathfrak n_-^*)=\prod_{(i,j)\in T_{\mathfrak n}} e\left(t\frac{t_i}{t_j}\right),
\end{align*}
and we are done.
\end{proof}

Next, consider another diagram:
$$
\begin{tikzcd}
A^{G\times T}(T_G^* X)\ar[d,"j_*"]\ar[r,"\Phi^!"] & A^{G\times T}(Z_G)\ar[d,"j_*"]\ar[r,"\Psi_*"] & A^{G\times T}(\mathscr C_d)\ar[d,"j_*"]\\
A^{G\times T}(T^* X)\ar[r,"\Phi^*"]\ar[d,"i^*"]& A^{G\times T}(Z)\ar[r,"\Psi_*"]\ar[d,"i^*"]& A^{G\times T}(T^* \cal Quot^\circ_{0,d})\ar[d,"i^*"]\\
A^{\bbT}(\hat C^d)^{\frak S_d} \ar[equal,r] & A^{\bbT}(\hat C^d)^{\frak S_d} \ar[r,"s_*"] & A^{\bbT}(C^d)^{\frak S_d}
\end{tikzcd}
$$
Let $p_{ij}:C^d\ra C\times C$ be the projection to $i$-th and $j$-th components, and $\Delta_{ij}=p_{ij}^{-1}(\Delta)$. 
\begin{lm}
We have
$$
\rho\circ\Psi_*\circ\Phi^!(c)=s_*\left(s^*\left(\prod_{(i,j)\in T_{\mathfrak n}} e\left(\frac{t_j}{t_i}\right)^{-1} e\left(\frac{t_j}{t_i}\Ocal(-\Delta_{ij})\right) e\left(t\frac{t_j}{t_i}\Ocal(\Delta_{ij})\right)\right)\rho(c)\right)
$$
for any $c\in A^{G\times T}(T_G^* X)$.
\end{lm}
\begin{proof}
Once again, all squares in this diagram commute, except for the lower right one, which commutes up to multiplication by a certain Euler class (see Lemma~\ref{Gysin}(2) and Proposition~\ref{pullpush}).
Therefore, we have:
$$
\rho\circ\Psi_*\circ\Phi^!(c)=s_*\circ\rho\left(e(T_{\hat C^d}Z)^{-1} s^*(e(T_{C^d}T^* \cal Quot^\circ_{0,d}))\cdot c\right).
$$
It is left to compute the product of Euler classes in parentheses.
We have the following chain of equalities:
\begin{align*}
e(T_{\hat C^d} & Z)^{-1} s^*(e(T_{C^d}T^* \cal Quot^\circ_{0,d}))\\
&=s^*\left(e(T_{C^d}\widetilde{\cal Quot}_{0,d_\bullet})^{-1} e(T_{\widetilde{\cal Quot}_{0,d_\bullet}}Z|_{C^d})^{-1} e(T_{C^d}\cal Quot^\circ_{0,d}) e(T_{\cal Quot^\circ_{0,d}} T^*\cal Quot^\circ_{0,d}|_{C^d})\right)\\
&=s^*\left(e(T_{C^d}\widetilde{\cal Quot}_{0,d_\bullet})^{-1} e(\mathfrak n_-)^{-1}  e\left(\left.\frac{T^*\cal Quot^\circ_{0,d}}{(T\widetilde{\cal Quot}_{0,d_\bullet})^{nilp,*}}\right|_{ C^d}\right)^{-1}\right.\times\\
&\qquad\times\left. e(T_{ C^d}\cal Quot^\circ_{0,d}) e(T^*\cal Quot^\circ_{0,d}|_{C^d})\right)\\
&=s^*\left(e(\mathfrak n_-)^{-1} e((T\widetilde{\cal Quot}_{0,d_\bullet})^{nilp,*}|_{C^d}) e(T_{ C^d}\widetilde{\cal Quot}_{0,d_\bullet})^{-1} e(T_{ C^d}\cal Quot^\circ_{0,d})\right).
\end{align*}

Let $\phi=\oplus_{i=1}^d \phi_i$ be a point in $C^d\simeq\cal Quot_{0,1}^d$, where each $\phi_i$ produces a short exact sequence $0\ra \cal K_i\ra \Ocal_C\xra{\phi_i} \cal E_i\ra 0$.
Formulas (\ref{tand})-(\ref{tandtil}) provide us with explicit expressions for tangent spaces of various $\cal Quot$-schemes:
\begin{gather*}
T_\phi \cal Quot_{0,1}^d =\bigoplus_{i=1}^d \Hom(\cal K_i,\cal E_i);\qquad T_\phi \cal Quot^\circ_{0,d} =\bigoplus_{i,j=1}^d \Hom(\cal K_j,\cal E_i);\\
T_\phi \widetilde{\cal Quot}_{0,d_\bullet} = \bigoplus_{(i,j)\in T_{\mathfrak p}} \Hom(\cal K_j,\cal E_i).
\end{gather*}
Therefore by Lemma~\ref{Homsheaves}
\begin{gather*}
(T\widetilde{\cal Quot}_{0,d_\bullet})^{nilp,*}|_{C^d}=t\bigoplus_{(i,j)\in T_{\mathfrak n}} \Hom(\cal K_j,\cal E_i)^*\simeq \bigoplus_{(i,j)\in T_{\mathfrak n}}t\frac{t_j}{t_i}\Ocal(\Delta_{ij}),\\
T_{C^d}\widetilde{\cal Quot}_{0,d_\bullet}=\bigoplus_{(i,j)\in T_{\mathfrak p},i\neq j} \Hom(\cal K_j,\cal E_i)\simeq \bigoplus_{(i,j)\in T_{\mathfrak p},i\neq j}\frac{t_i}{t_j}\Ocal(-\Delta_{ij}),\\
T_{C^d}\cal Quot^\circ_{0,d}=\bigoplus_{i\neq j} \Hom(\cal K_j,\cal E_i)\simeq \bigoplus_{i\neq j}\frac{t_i}{t_j}\Ocal(-\Delta_{ij}),
\end{gather*}
and a straightforward computation shows that
\begin{gather*}
e(\mathfrak n_-)=\prod_{(i,j)\in T_{\mathfrak n}} e\left(\frac{t_j}{t_i}\right),\\
e((T\widetilde{\cal Quot}_{0,d_\bullet})^{nilp,*}|_{ C^d}) e(T_{ C^d}\widetilde{\cal Quot}_{0,d_\bullet})^{-1} e(T_{ C^d}\cal Quot^\circ_{0,d})=\prod_{(i,j)\in T_{\mathfrak n}} e\left(\frac{t_j}{t_i}\Ocal(-\Delta_{ij})\right) e\left(t\frac{t_j}{t_i}\Ocal(\Delta_{ij})\right).
\end{gather*}
The statement of lemma follows.
\end{proof}	

Combining the results of two lemmas, we get:
\begin{align*}
\rho_G \circ m(c)=\rho\circ\Psi_*\circ\Phi^!\circ\ind_H^G(c)=s_*\circ s^*\circ(g_{C,d_\bullet}(z)\rho(c))=\Xi\circ\rho_H(c),
\end{align*}
which proves the theorem.
\end{proof}

\begin{rmq}
In order to recover the shuffle presentation in~\cite[Theorem 10.1]{SV1}, we can add an action of another torus as in Example~\ref{SValgs}.
If we denote by $q$ the $T'$-character of weight by $-1$, we get $\Ocal(\Delta)=q^{-1}$, and we obtain the desired presentation after further replacing $t$ by $qt$.
Unfortunately, we do not have a succinct explanation for this change of variables.
Morally speaking, it occurs because in the natural compactification $\bb A^2\subset \bb P^2$ the divisor at infinity is ``diagonal'', and for $T^*C\subset \bbP(T^*C)$ it is ``horizontal''.
\end{rmq}

Even though we have got an explicit formula, the morphism $\rho$ depends on the embedding of $\mathscr C_d$ into a smooth ambient variety $T^* \cal Quot_{0,d}^\circ$.
Unfortunately, the scheme $\mathscr C_d$ is highly singular; for instance, the inclusion $C^d\hookrightarrow \mathscr C_d$ is not known to be lci, so that we can not localize to $\bbT$-fixed points directly.
Still, we can do a little better.
Let $\tilde\mu$ be the composition $T^*\cal Quot^\circ_{0,d}\xra{\mu}\fk g_d^*\simeq \fk g_d\ra \fk g_d/\fk t_d$, where $\fk t_d\subset\fk g_d$ is the tangent algebra of $T_d$.
We introduce the following auxiliary variety, analogous to the one in~\cite{Knu}:
$$
\mathscr C_d^\Delta:=\tilde\mu^{-1}(0)=\mu^{-1}(\fk t_d).
$$
\begin{prop}\label{Cdiaglci}
The closed embedding $\mathscr C_d^\Delta\hookrightarrow T^*\cal Quot^\circ_{0,d}$ is a complete intersection.
\end{prop}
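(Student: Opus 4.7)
The expected codimension of $\mathscr C_d^\Delta$ in the smooth ambient variety $T^*\cal Quot^\circ_{0,d}$ equals $\dim(\fk g_d/\fk t_d)=d^2-d$, so by Krull's Hauptidealsatz every irreducible component of $\mathscr C_d^\Delta$ has dimension at least $2d^2-(d^2-d)=d^2+d$. Thus it suffices to prove the matching upper bound $\dim\mathscr C_d^\Delta\leq d^2+d$; equality then forces the embedding to be a regular (complete intersection) one.

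The plan is to stratify $\cal Quot^\circ_{0,d}$ by the isomorphism type of the quotient sheaf and bound the fiber dimension of the projection $\pi:\mathscr C_d^\Delta\to \cal Quot^\circ_{0,d}$ over each stratum. Any quotient $(\bbk^d\otimes\Ocal\twoheadrightarrow \cal E)$ decomposes as $\cal E=\bigoplus_{i=1}^k\cal E_{x_i}$ over its distinct support points $x_1,\dots,x_k\in C$, where $\cal E_{x_i}$ has length $a_i$ (so $\sum a_i=d$) and is classified by a partition $\lambda^{(i)}$ of $a_i$. Let $S_{\underline a,\underline\lambda}\subset \cal Quot^\circ_{0,d}$ denote the corresponding locally closed stratum. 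Using Example~\ref{quotloc} together with the standard description of $G_a$-orbits in the nilpotent cone $\mathscr N_a$, one finds $\dim S_{\underline a,\underline\lambda}=k+\sum_i(a_i^2-\dim\End\cal E_{x_i})$.

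For the fiber of $\pi$, I would use the long exact sequence preceding Example~\ref{quotA1}. Identifying $\fk g_d^*\simeq \fk g_d$ via the trace form, the image of $\mu_\phi$ becomes $H^0(\End\cal E)^\perp$; combined with $\dim\Ext^1(\cal E,\cal E)=\dim\End\cal E$ (which holds on a curve since $\chi(\cal E,\cal E)=0$ for torsion sheaves), this yields
$$\dim\mu_\phi^{-1}(\fk t_d)=\dim\End\cal E+d-\dim\bigl(\ona{pr}_{\fk t_d}H^0(\End\cal E)\bigr),$$
where $\ona{pr}_{\fk t_d}$ is projection onto the diagonal. A direct computation in a Jordan basis shows that the commutant of a nilpotent of Jordan type $\lambda$ in $\fk g_a$ projects onto an $\ell(\lambda)$-dimensional subspace of $\fk t_a$, namely one scalar per Jordan block; consequently $\dim\ona{pr}_{\fk t_d}H^0(\End\cal E)=\sum_i\ell(\lambda^{(i)})$. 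Adding the fiber dimension to $\dim S_{\underline a,\underline\lambda}$ and cancelling the $\dim\End\cal E$ terms yields
$$\dim\pi^{-1}(S_{\underline a,\underline\lambda})=k+\sum_i a_i^2+d-\sum_i\ell(\lambda^{(i)}).$$

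To conclude, I would note that $\ell(\lambda^{(i)})\geq 1$ gives $\sum_i\ell(\lambda^{(i)})\geq k$, and $\sum a_i=d$ gives $\sum a_i^2\leq d^2$; hence each stratum contributes at most $d^2+d$, with equality precisely on the locus of indecomposable sheaves supported at a single point ($k=1$, $\ell(\lambda^{(1)})=1$). Taking the maximum over all strata yields $\dim\mathscr C_d^\Delta\leq d^2+d$, completing the proof. The main technical point is the computation of $\ona{pr}_{\fk t_d}H^0(\End\cal E)$ for arbitrary Jordan types, which requires some careful bookkeeping of the commutant of a nilpotent matrix with prescribed Jordan structure; once this is in hand, the complete intersection property follows from the dimension bound.
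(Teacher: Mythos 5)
Your overall strategy is sound and genuinely different from the paper's: where the paper stratifies by the support cycle in $S^d C$ and reduces the local fiber dimension estimate to Knutson's result for $\mathbb{A}^1$, you stratify $\cal Quot^\circ_{0,d}$ directly by the isomorphism type of the quotient sheaf and try to bound the fibers of $\pi:\mathscr C_d^\Delta\to\cal Quot^\circ_{0,d}$ from scratch. However, two of the explicit formulas are not correct, and the second one hides a real gap.

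First, the formula $\dim S_{\underline a,\underline\lambda}=k+\sum_i(a_i^2-\dim\End\cal E_{x_i})$ omits the contribution $d^2-\sum_i a_i^2$ coming from the choice of the direct-sum decomposition $\bbk^d=\bigoplus_i V_i$ with $\dim V_i=a_i$ that is part of the data of a point of $\cal Quot^\circ_{0,d}$: the support of $\cal E$ determines a canonical decomposition $\cal E=\bigoplus_i\cal E_{x_i}$, and $V_i=H^0(\phi)^{-1}H^0(\cal E_{x_i})$ varies over $G_d/\prod_i G_{a_i}$. As a sanity check, take $k=d$ and all $a_i=1$: your formula gives $\dim S=d$, but this stratum is open in the $d^2$-dimensional variety $\cal Quot^\circ_{0,d}$. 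The corrected formula is $\dim S_{\underline a,\underline\lambda}=k+d^2-\sum_i\dim\End\cal E_{x_i}$.

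Second, and more seriously, the identity $\dim\ona{pr}_{\fk t_d}H^0(\End\cal E)=\sum_i\ell(\lambda^{(i)})$ is not a function of the isomorphism type of $\cal E$ alone, and is therefore not constant on $S_{\underline a,\underline\lambda}$. The embedding $\End\cal E\hookrightarrow\fk g_d$ is via $H^0(\phi)$, and the orthogonal projection onto $\fk t_d$ is only $T_d$-equivariant, not $G_d$-equivariant, while your stratum is $G_d$-invariant. Conjugating a Jordan-form commutant by a generic $g\in G_d$ changes its diagonal projection: already for $d=2$ and $\lambda=(2)$, a generic regular nilpotent has nonzero diagonal, so $\dim\ona{pr}_{\fk t_2}(\End\cal E)=2$, not $\ell(\lambda)=1$. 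Conversely the projection can also \emph{drop}: for $k=2$, $a_1=a_2=1$, taking the rank-one idempotent $e_1=\frac{1}{2}\left(\begin{smallmatrix}1&1\\1&1\end{smallmatrix}\right)$ gives $\ona{diag}(e_1)=\ona{diag}(e_2)$, so $\dim\ona{pr}_{\fk t_2}(\End\cal E)=1<k$. Since after correcting the first formula your bound becomes $\dim\pi^{-1}(S)\le k+d^2+d-\dim\ona{pr}_{\fk t_d}(\End\cal E)$, you actually need the inequality $\dim\ona{pr}_{\fk t_d}(\End\cal E)\ge k$ over \emph{all} of $S_{\underline a,\underline\lambda}$, which, as the rank-one idempotent example shows, fails on a nonempty closed subset. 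The dimension count can still go through (the jump locus is of lower dimension, so the excess fiber dimension may be compensated), but this requires a further sub-stratification controlling where $\dim\ona{pr}_{\fk t_d}(\End\cal E)$ drops, and that is precisely the technical bookkeeping that your proof elides. The paper avoids this difficulty by observing that over a fixed support cycle the fiber of $\mathscr C_d^\Delta$ is a product of local models $\mathscr C_{\nu_i}^{\mathrm n,\bullet,\Delta}$ that are independent of the curve, and importing the dimension bound from the known case $C=\bb A^1$.
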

\begin{proof}
First of all, the statement is true for $C=\bb A^1$~\cite{Knu}.
For general $C$, it suffices to prove that $\dim \mathscr C_d^\Delta\leq \dim\cal Quot_{0,d}^\circ+\dim \fk t_d=d(d+1)$.
Let us consider the map
\begin{align*}
\sigma:T^*\cal Quot^\circ_{0,d} & \ra S^dC,\\
(\bbk^d\ra\cal E) & \mapsto \supp\cal E.
\end{align*}
For any $\nu=(\nu_1\geq\cdots\geq \nu_k)$ partition of $d$ let 
$$
S^\nu C=\{\underline{x}=\nu_1x_1+\cdots+\nu_kx_k:x_i\neq x_j\text{ for }i\neq j\}\subset S^dC
$$
Then $S^dC=\coprod_{\nu\vdash d} S^\nu C$, and this defines a stratification of $\mathscr C_d^\Delta$:
\begin{align*}
\mathscr C^\Delta_d = \coprod_{\nu\vdash d} \mathscr C^\Delta_\nu, \qquad &\mathscr C^\Delta_\nu=\mathscr C^\Delta_d\cap \sigma^{-1}(S^\nu C).
\end{align*}
Consider the restriction of $\sigma$ to these strata.
For any point $\underline{x}\in S^\nu C$, we have a $G_d$-equivariant map
\begin{align*}
\tau:\sigma^{-1}(\underline{x})\cap\mathscr C^\Delta_d & \ra\prod_i \Gr^{\nu_i}(\bbk^d),\\
(p,\beta) &\mapsto \left(H^0(\cal E|_{x_1}),\ldots,H^0(\cal E|_{x_k})\right),
\end{align*}
The image of this map is an open subset where the vector subspaces defined by points in Grassmanians do not intersect.
At each such point, the $G_d$-action induces an isomorphism between the preimage of $\tau$ and $\prod_i\mathscr C_{\nu_i}^{\mathrm n,\bullet,\Delta}$ by Example~\ref{quotloc}, where
$$
\mathscr C_{k}^{\mathrm n,\bullet,\Delta}=\{(x,y)\in (\mathfrak g_k)^2:[x,y]\in\fk t_k, x\text{ nilpotent}\}.
$$
Since in particular this applies to $C=\bb A^1$, we have
$$
\dim \mathscr C^\Delta_\nu(C) = k+\sum_i\left( \nu_i(d-\nu_i)+\dim \mathscr C_{\nu_i}^{\mathrm n,\bullet,\Delta}\right)=\dim \mathscr C^\Delta_\nu(\bb A^1)\leq d(d+1).
$$
The dimension of $\mathscr C_d^\Delta$ is bounded above by the dimensions of stratas, therefore $\dim \mathscr C_d^\Delta\leq d(d+1)$.
\end{proof}

Armed with the proposition above, let $i_d^\Delta:C^d\hookrightarrow \mathscr C_d^\Delta$, $j_d^\Delta:\mathscr C_d\hookrightarrow \mathscr C_d^\Delta$ be the natural closed embeddings, and consider the composition
$$
\varrho_d=(i_d^\Delta)^*\circ(j_d^\Delta)_*:A^{G\times T}(\mathscr C_d)\ra A^{\bbT}(C^d)^{\fk S_d}.
$$

\begin{corr}\label{prodnorm}
The collection of maps $\varrho_d$, $d\in\bbN^+$ defines a morphism of graded associative algebras
$$
\varrho:A\bf{Ha}_C^0\ra A\bf{Sh}^{norm}_C.
$$
\end{corr}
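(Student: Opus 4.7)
The plan is to relate $\varrho$ to the map $\rho$ of Theorem~\ref{prodC} through the normalization morphism $RN$, and then invoke the fact that $\rho$ is already known to be an algebra homomorphism. With $H = G_d$, the embeddings $j_H:\mathscr C_d \hookrightarrow T^*\cal Quot^\circ_{0,d}$ and $i_H:C^d \hookrightarrow T^*\cal Quot^\circ_{0,d}$ factor through $\mathscr C_d^\Delta$ via $j_d^\Delta$, $i_d^\Delta$ and the common inclusion $k_d:\mathscr C_d^\Delta\hookrightarrow T^*\cal Quot^\circ_{0,d}$, so by functoriality of Gysin pullback and pushforward
\[
\rho_d \;=\; i_H^* \circ j_{H*} \;=\; (i_d^\Delta)^* \circ k_d^* \circ k_{d*} \circ (j_d^\Delta)_*.
\]
Proposition~\ref{Cdiaglci} says $k_d$ is a regular embedding, so the self-intersection formula yields $k_d^*\circ k_{d*} = e(N_{k_d})\cdot(-)$ on $A^{G_d\times T}(\mathscr C_d^\Delta)$, and consequently $\rho_d(c) = (i_d^\Delta)^* e(N_{k_d}) \cdot \varrho_d(c)$.

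The normal bundle is easy to identify. Because $\mathscr C_d^\Delta = \tilde\mu^{-1}(0)$ is the zero locus of the $T$-equivariant map $\tilde\mu:T^*\cal Quot^\circ_{0,d} \to \fk g_d/\fk t_d$, with the target carrying weight $t$ (the moment map itself has weight $t$ under the scaling action on cotangent fibers), $N_{k_d}$ is the trivial $G_d\times T$-equivariant vector bundle with fiber $t\cdot(\fk g_d/\fk t_d)$. Decomposing into $T_d$-weights and pulling back to $C^d$ gives
\[
(i_d^\Delta)^* e(N_{k_d}) \;=\; \prod_{i\neq j} e\!\left(t\,\frac{t_j}{t_i}\right),
\]
which is precisely the scalar by which the map $RN[d]$ multiplies. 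Therefore $\rho_d = RN[d]\circ \varrho_d$ as maps into $A^{\mathbb T}(C^d)_{loc}^{\fk S_d}$.

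It remains to observe that $RN$ is an isomorphism of graded algebras $A\bf{Sh}^{norm}_C \to A\bf{Sh}_C$, which is a short computation with the explicit formulas: the ratio $g_C(z)/g_C^{norm}(z) = e(tz^{-1})e(tz)$ accounts exactly for the difference between the total factor $\prod_{i\neq j,\,1\le i,j\le d} e(t\,t_j/t_i)$ and the two partial factors $\prod_{i\neq j,\,1\le i,j\le d_1}\prod_{i\neq j,\,d_1<i,j\le d}$ appearing when one applies $RN$ to each argument of the shuffle product~(\ref{shuffleprod}). Since $\rho$ is an algebra homomorphism by Theorem~\ref{prodC} and $RN$ is an invertible algebra map (multiplication by a nonzero class in the localized target), $\varrho = RN^{-1}\circ \rho$ is also an algebra homomorphism, proving the corollary. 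The main substantive point is the weight identification of $N_{k_d}$ and its matching with $RN$; once that is in hand the rest is formal.
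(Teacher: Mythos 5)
Your proof is correct and follows essentially the same route as the paper: factor $\rho_d$ through $\mathscr C_d^\Delta$, identify the conormal Euler class, and use the algebra isomorphism $RN$. The paper reaches the Euler-class identity via Corollary~\ref{pullcomm} whereas you invoke the self-intersection formula for $k_d$ directly; these are two ways of saying the same thing.

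The one place where you are a little terser than the paper is the identification of $N_{k_d}$ with the trivial bundle $t\cdot(\fk g_d/\fk t_d)$. You deduce this from the statement that $\mathscr C_d^\Delta=\tilde\mu^{-1}(0)$ plus Proposition~\ref{Cdiaglci}, which does work — on the smooth (hence Cohen--Macaulay) ambient $T^*\cal Quot^\circ_{0,d}$, a complete intersection of the expected codimension is cut out by a regular sequence, and the conormal sheaf is then freely generated by the components of $\tilde\mu$, giving $N_{k_d}\simeq\tilde\mu^*(\fk g_d/\fk t_d)$ with the correct $T$-weight. The paper instead argues that the $T$-contraction forces $\tilde\mu$ to be flat (following Ginzburg--Gunningham) and then pulls back the normal bundle of $\{0\}\hookrightarrow\fk g_d/\fk t_d$ by base change. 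Both are valid; it is worth writing out your version of the argument explicitly, because as stated it reads as if "zero locus of a map" automatically has trivial normal bundle, which of course needs the regular-sequence input from Proposition~\ref{Cdiaglci}. Your verification that $RN$ intertwines the two shuffle products is a welcome addition: the paper records this only as a remark, and the computation you sketch — $g_C(z)/g_C^{\mathrm{norm}}(z)=e(tz^{-1})e(tz)$, which supplies exactly the "cross terms" $\prod_{i\le d_1<j}e(t\,t_j/t_i)e(t\,t_i/t_j)$ separating $RN[d]$ from $RN[d_1]\otimes RN[d_2]$ — is precisely the check needed.
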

\begin{proof}
Denote the closed embedding $\mathscr C_d^\Delta\hookrightarrow T^* \cal Quot^\circ_{0,d}$ by $j'_d$.
By Corollary~\ref{pullcomm}, we have the following identity:
\begin{gather*}
\rho(c)=i_d^*\circ j_{d*}(c)=i_d^*\circ j'_{d*}\circ (j^\Delta_d)_*(c)=(i_d^\Delta)^*\circ(j_d^\Delta)_*(e_1 c)=\varrho(e_1c),\qquad e_1=e(T_{\mathscr C_d^\Delta}T^* \cal Quot_{0,d}^\circ).
\end{gather*}
Note that the map $\tilde\mu$ is $T$-equivariant.
Since $T$ contracts $T^*\cal Quot^\circ_{0,d}$ to a subvariety of $\mathscr C_d^\Delta$ and $\dim \mathscr C_d^\Delta=\dim\tilde{\mu}$, the argument similar to the one found in~\cite[Proposition 2.3.2]{GG} shows that $\tilde{\mu}$ is flat.
In particular,
$$
e(T_{\mathscr C_d^\Delta}T^* \cal Quot_{0,d}^\circ)=e((\fk g_d/\fk t_d)^*),
$$
by base change; see~\cite[B.7.4]{Ful}.
Therefore, we have
\begin{align*}
\varrho(c)=\rho(e_1^{-1}c)=\rho\left( \left(\prod_{i\neq j} e\left(t\frac{t_j}{t_i}\right)\right)^{-1}c \right)=\rho\circ RN^{-1}(c).
\end{align*}
Since both $\rho$ and $RN$ are morphisms of algebras, $\varrho$ is as well.
\end{proof}

\begin{rmq}
For any function $g$, one can equip the algebra $A\bf{Sh}_g$ with a topological coproduct, analogous to the coproduct in~\cite[Section 4]{Ne1}.
If the morphism $\rho$ is injective (see Section~\ref{injsh} for discussion and partial results), it can be used to induce a coproduct on $A\bf{Ha}^0_C$.
However, it is less clear how to construct such coproduct without using shuffle presentation.
\end{rmq}

Let us conclude this section by computing some relations in the algebra $A\bf{Sh}_g$ for an arbitrary rational function $g(z)$.
We write $g(z)=h_1(z)/h_2(z)$, where $h_1$, $h_2$ are polynomials.
Given a line bundle $\cal L$ on $C$, define a bi-infinite series
$$
E_{\cal L}(z)=\sum_{i\in\bbZ}e(\cal L)^{-i} e(z)^i,
$$
where $z$ is a formal variable, and we consider $e(z)$ to be a formal expression.
Using the formal group law $\star$ associated to $A$ (see Appendix~\ref{OBM}), we have the following equality for some $f\in A^*(pt)\llbracket u,v \rrbracket$:
\begin{gather*}
0=e(1)=e\left(\frac{z}{z}\right)=e(z)\star e(z^{-1})=e(z)+e(z^{-1})+e(z)e(z^{-1})f(e(z),e(z^{-1}))\\
\Rightarrow e(z^{-1})=-e(z)\left(1+e(z^{-1})f(e(z),e(z^{-1}))\right).
\end{gather*}

Therefore, by implicit function theorem for formal series~\cite[Exercise 5.59]{Sta} $e(z^{-1})$ can be interpreted as a formal series in $e(z)$.
In particular, $g\left(w/z\right)$ is a formal series in $e(w)$, $e(z)$.

\begin{prop}\label{quadrel}
Let $\cal L_1,\cal L_2$ be two line bundles on $C$, and $g(z)$ a rational function.
Suppose that $e(zw)$ is a polynomial in $e(z)$ and $e(w)$, and $e(z^{-1})$ is a Laurent polynomial of $e(z)$.
Then the following equality holds:
\begin{equation}\label{shufflerel}
h\left(\frac{t_1\cal L_1 w}{t_2\cal L_2 z}\right)E_{\cal L_1}(z)E_{\cal L_2}(w)=h\left(\frac{t_2\cal L_2 z}{t_1\cal L_1 w}\right)E_{\cal L_2}(w)E_{\cal L_1}(z),
\end{equation}	
where $h(z)=h_1(z)h_2(z^{-1})$, the product between $E_{\cal L_1}$ and $E_{\cal L_2}$ is taken in $A\bf{Sh}_g$, and we consider both sides as bi-infinite series in $e(z)$, $e(w)$ with coefficients in $A\bf{Sh}_g[2]$.
\end{prop}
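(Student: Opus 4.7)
The plan is to verify (\ref{shufflerel}) by unwinding both sides using the definition of the shuffle product (\ref{shuffleprod}) and exploiting the ``delta-function'' property of $E_{\cal L}(z)$.

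First, I would record the key reindexing identity: directly from $E_{\cal L}(z) = \sum_{i\in\bbZ} e(\cal L)^{-i} e(z)^i$, sliding the summation index gives $e(z)\cdot E_{\cal L}(z) = e(\cal L)\cdot E_{\cal L}(z)$, and by linearity $P(e(z))\cdot E_{\cal L}(z) = P(e(\cal L))\cdot E_{\cal L}(z)$ for any Laurent polynomial $P$. Via the formal group law (applied just as in the paragraph preceding the proposition to make sense of $e(z^{-1})$ as a series in $e(z)$), this extends to the Laurent-series expression $g(t_1\cal L_1 w/(t_2\cal L_2 z))$.

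Next, I would expand both shuffle products $E_{\cal L_1}(z)E_{\cal L_2}(w)$ and $E_{\cal L_2}(w)E_{\cal L_1}(z)$ using $Sh(1,1)=\{e,(12)\}$ and $g_{1,1}(t_1,t_2)=g(t_2/t_1)$. Each is a sum of two terms, indexed by the placement of $E_{\cal L_1}, E_{\cal L_2}$ on the two factors of $C^2$; explicitly,
\begin{equation*}
E_{\cal L_1}(z)E_{\cal L_2}(w) = g(t_2/t_1)\,p_1^*E_{\cal L_1}(z)\cdot p_2^*E_{\cal L_2}(w) + g(t_1/t_2)\,p_2^*E_{\cal L_1}(z)\cdot p_1^*E_{\cal L_2}(w),
\end{equation*}
and similarly for the other ordering. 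Then in each of the four resulting terms I apply the delta identity from the first step to the outer factor $g(t_1\cal L_1 w/(t_2\cal L_2 z))$ or $g(t_2\cal L_2 z/(t_1\cal L_1 w))$: the substitutions $e(z)\mapsto e(\cal L_1)$ and $e(w)\mapsto e(\cal L_2)$, pulled back to the appropriate factor of $C^2$ dictated by the placement, collapse the argument of $g$ (the $\cal L_i$ in the numerator cancels with the $e(\cal L_i)$ from the delta substitution in the denominator, up to the appropriate slot labels).

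Finally, I would match terms. The LHS term coming from $\sigma\in Sh(1,1)$ has the same placement of $E_{\cal L_1},E_{\cal L_2}$ on $C^2$ as the RHS term coming from the opposite permutation; after the delta simplification, the scalar prefactors on both sides reduce to the same product $g(t_1/t_2)g(t_2/t_1)$, and the remaining cohomology classes agree because external products on disjoint factors of $C^2$ commute. The main obstacle is bookkeeping: one must carefully track on which factor of $C^2$ each instance of $\cal L_i$ is pulled back to under the shuffle action of $\sigma$, and justify that the bi-infinite series manipulations (in particular, expanding $g$ as a Laurent series in $e(z),e(w)$ via the formal group law before applying the delta substitution) are well-defined; these are the same formal-series issues already implicit in the definition of $A\bf{Sh}_g$.
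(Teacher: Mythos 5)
Your overall strategy is the same as the paper's: expand both sides of (\ref{shufflerel}) using the shuffle product formula, then exploit the ``delta'' property of $E_{\cal L}(z)$ (which is exactly the change-of-variables identity~(\ref{delchange}) in the paper), and compare the terms pairwise. The paper phrases this as reducing the claim to the equivalent equality (\ref{equivshuf}) and checking that both sides of it vanish; your term-by-term matching by placement is a direct rewording of that reduction.

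There is, however, a gap in the final step. Your claim that ``after the delta simplification, the scalar prefactors on both sides reduce to the same product $g(t_1/t_2)g(t_2/t_1)$'' is correct only for the term-pairing where $E_{\cal L_1}(z)$ sits over $p_1$ and $E_{\cal L_2}(w)$ over $p_2$: there the delta substitution replaces $e(z)$ by $e(\cal L_1)$ and $e(w)$ by $e(\cal L_2)$, and the $\cal L_1/z$ and $w/\cal L_2$ factors in the argument of $g$ both collapse to $1$, yielding $g(t_1/t_2)$. For the \emph{swapped} pairing (i.e., $E_{\cal L_1}(z)$ over $p_2$, $E_{\cal L_2}(w)$ over $p_1$), placement dictates $e(z)\mapsto e(\cal L_2)$, $e(w)\mapsto e(\cal L_1)$, and then the outer factor becomes $g\bigl(t_1\cal L_1^2/(t_2\cal L_2^2)\bigr)$ rather than $g(t_1/t_2)$: the $\cal L_1$ already in the numerator and the $e(\cal L_1)$ coming from $w\mapsto\cal L_1$ land on the \emph{same} side of the fraction, so there is no cancellation. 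The resulting two products $g\bigl(t_1\cal L_1^2/(t_2\cal L_2^2)\bigr)g(t_1/t_2)$ and $g\bigl(t_2\cal L_2^2/(t_1\cal L_1^2)\bigr)g(t_2/t_1)$ are not obviously equal; an argument for why they coincide is missing. The paper circumvents this by applying a \emph{partial} change of variables (cancel $\cal L_1/z$, insert $\cal L_2/w$) rather than the full substitution, which turns each side of (\ref{equivshuf}) into a difference of two products with mutually reciprocal arguments, which then cancels formally without any hypothesis on $g$ or $\cal L_i$. You should either adopt this partial substitution or supply a separate justification for the swapped pairing.
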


\begin{proof}
In order to unburden the notation, denote $Z=e(z)$, $W=e(w)$, $T_i=e(t_i)$, $L_i=e(\cal L_i)$.
Let us also introduce bi-infinite series $\delta(z)=\sum_{i\in\bbZ}z^i$.
Note that for any Laurent polynomial $f(z)$ the following identity, which we call \emph{change of variables}, is satisfied:
\begin{equation}\label{delchange}
\delta(w/z)f(z)=\delta(w/z)f(w).
\end{equation}
We have:
\begin{align*}
E_{\cal L_1}(z) & E_{\cal L_2}(w) =\left(\sum_{i\in\bbZ}L_1^{-i}Z^i\right)\left(\sum_{j\in\bbZ}L_2^{-j}W^j\right)=\sum_{i,j\in\bbZ}\left(L_1^{-i}L_2^{-j}g\left(\frac{t_2}{t_1}\right)+L_1^{-j}L_2^{-i}g\left(\frac{t_1}{t_2}\right)\right)Z^iW^j\\
 & = \delta\left(L_1^{-1}Z\right)\delta\left(L_2^{-1}W\right)g\left(\frac{t_2}{t_1}\right) + \delta\left(L_1^{-1}W\right)\delta\left(L_2^{-1}Z\right)g\left(\frac{t_1}{t_2}\right)\\
 & = \left(h_2\left(\frac{t_1}{t_2}\right)h_2\left(\frac{t_2}{t_1}\right)\right)^{-1}\left(\delta\left(L_1^{-1}Z\right)\delta\left(L_2^{-1}W\right)h\left(\frac{t_2}{t_1}\right) + \delta\left(L_1^{-1}W\right)\delta\left(L_2^{-1}Z\right)h\left(\frac{t_1}{t_2}\right)\right).
\end{align*}
Therefore, the equality~(\ref{shufflerel}) is equivalent to the following:
\begin{align}\label{equivshuf}
\begin{aligned}
\delta\left(L_1^{-1}Z\right) & \delta\left(L_2^{-1}W\right)\left( h\left(\frac{t_1\cal L_1 w}{t_2\cal L_2 z}\right)h\left(\frac{t_2}{t_1}\right)-h\left(\frac{t_2\cal L_2 z}{t_1\cal L_1 w}\right)h\left(\frac{t_1}{t_2}\right) \right)\\
&= \delta\left(L_1^{-1}W\right)\delta\left(L_2^{-1}Z\right)\left( h\left(\frac{t_2\cal L_2 w}{t_1\cal L_1 z}\right)h\left(\frac{t_2}{t_1}\right)-h\left(\frac{t_1\cal L_1 z}{t_2\cal L_2 w}\right)h\left(\frac{t_1}{t_2}\right) \right).
\end{aligned}
\end{align}
However, using change of variables~(\ref{delchange}) for LHS we get:
\begin{align*}
\delta & \left(L_1^{-1}Z\right)\delta\left(L_2^{-1}W\right)\left( h\left(\frac{t_1\cal L_1 w}{t_2\cal L_2 z}\right)h\left(\frac{t_2}{t_1}\right)-h\left(\frac{t_2\cal L_2 z}{t_1\cal L_1 w}\right)h\left(\frac{t_1}{t_2}\right) \right)\\
& =\left(\delta\left(L_1^{-1}Z\right)h\left(\frac{t_1\cal L_1 w}{t_2\cal L_2 z}\right)\right)\left(\delta\left(L_2^{-1}W\right)h\left(\frac{t_2}{t_1}\right)\right)\\
&\quad-\left(\delta\left(L_1^{-1}Z\right)h\left(\frac{t_2\cal L_2 z}{t_1\cal L_1 w}\right)\right)\left(\delta\left(L_2^{-1}W\right)h\left(\frac{t_1}{t_2}\right)\right)\\
& =\delta\left(L_1^{-1}Z\right) \delta\left(L_2^{-1}W\right)\left( h\left(\frac{t_1 w}{t_2\cal L_2}\right)h\left(\frac{t_2\cal L_2}{t_1 w}\right)-h\left(\frac{t_2\cal L_2}{t_1 w}\right)h\left(\frac{t_1 w}{t_2\cal L_2}\right) \right)=0.
\end{align*}
By the same reasoning RHS is also equal to zero.
Therefore~(\ref{equivshuf}) is satisfied, and we may conclude.
\end{proof}

In particular, if we set $\cal L_1=t_1^{-1}$ and $\cal L_2=t_2^{-1}$, the equality~(\ref{shufflerel}) assumes a simpler form:
\begin{align}\label{easyquad}
h\left(\frac{w}{z}\right)E(z)E(w)=h\left(\frac{z}{w}\right)E(w)E(z),
\end{align}
where $E(z)=\sum_{i\in\bbZ}e(t_1^{-1})^{-i} e(z)^i$.

\begin{rmq}
Note that the conditions of Proposition~\ref{quadrel} are extremely restrictive.
While they are satisfied for $A=H$, already for $A=K$ the Euler class $e(z^{-1})$ is not a Laurent polynomial of $e(z)$.
However, if we denote $\tilde{e}(z)=1-e(z)$, then $\tilde{e}(z^{-1})=\tilde{e}(z)^{-1}$, and thus the proof of relations~(\ref{shufflerel}) goes through if we replace $E_{\cal L}(z)$ by
\[
\widetilde{E}_{\cal L}(z)=\sum_{i\in\bbZ}\tilde{e}(\cal L)^{-i}\tilde{e}(z)^i=\sum_{i\in\bbZ}\tilde{e}(z\cal L^{-1})^i.
\] 
In particular, we recover the identity (3.4) in~\cite{Ne2}.
This slight discrepancy is related to the fact that our $K$-theory, considered in the context of OBM homology theories, has a different set of equivariant generators from the usual $K$-theory, as defined for instance in~\cite[Chapter 5]{CG}. 
Nevertheless, the two are isomorphic up to a certain completion, see Remark~\ref{Kthdiff}.
\end{rmq}
\section{Injectivity of shuffle presentation}\label{injsh}
Let $\omega = \omega_C$ be the canonical bundle of $C$.
Applying Serre duality to~(\ref{ExtInQuot}), one sees that $\bbk$-points of $\underline{\cal Higgs}_{0,d}$ are given by pairs $(\cal E,\theta)$, where $\cal E\in\ona{Coh}_{0,d}$, and $\theta\in\Hom(\cal E,\cal E\otimes\omega)$.
We call $\theta$ the \emph{Higgs field}.

\begin{defi}
A Higgs sheaf $(\cal E,\theta)$ is called \emph{nilpotent} if $\theta^N=0$ for $N$ big enough. 
\end{defi}
We denote the stack of nilpotent Higgs torsion sheaves by $\underline{\cal Higgs}_{0,d}^{nilp}$. It is a closed substack of $\underline{\cal Higgs}_{0,d}$, which has the following global quotient presentation:
\begin{align*}
\underline{\cal Higgs}_{0,d}^{nilp}&=[\mathscr C^{\bullet,\mathrm n}_d/G_d],\\
\mathscr C^{\bullet,\mathrm n}_d&=\left\{(\bbk^d\otimes\Ocal\xra{p}\cal E,\beta):p\in\cal Quot_{0,d}^\circ,\beta\in \Hom(\cal E,\cal E\otimes\omega),\beta\text{ is nilpotent}\right\},
\end{align*}
and the embedding $\underline{\cal Higgs}_{0,d}^{nilp}\hookrightarrow \underline{\cal Higgs}_{0,d}$ is given by the natural inclusion $\mathscr C^{\bullet,\mathrm n}_d\hookrightarrow\mathscr C_d$.

\begin{prop}\label{semiloc}
Let $G$ be a reductive group, and let $i:\Lambda\hookrightarrow X$ be a closed equivariant embedding of $G\times \bb G_m$-varieties. Suppose that
\begin{equation}\label{Lambdacond}
\left\{x\in X:\overline{G.x}\cap \Lambda\neq\emptyset\right\}=\Lambda,
\end{equation}
and assume that for any $x\in X$ the intersection $\overline{\bb G_m.x}\cap \Lambda$ is not empty. Then the pushforward along $i$ induces an isomorphism of localized $A$-groups:
$$
A^{G\times \bb G_m}(\Lambda)\otimes_{A_{\bb G_m}(pt)} \Frac(A_{\bb G_m}(pt))\xra{\sim} A^{G\times \bb G_m}(X)\otimes_{A_{\bb G_m}(pt)} \Frac(A_{\bb G_m}(pt)).
$$
\end{prop}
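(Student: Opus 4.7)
The plan is to reduce to Theorem~\ref{loc} (Borel--Atiyah--Segal localization in the present OBM setting) by identifying the $\bb G_m$-fixed loci of $X$ and $\Lambda$, and then arguing via functoriality of pushforward.

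First, I would establish the identity $F := X^{\bb G_m} = \Lambda^{\bb G_m}$. The inclusion $\Lambda^{\bb G_m} \subseteq X^{\bb G_m}$ is automatic from $i:\Lambda \hookrightarrow X$. Conversely, for $x \in X^{\bb G_m}$ one has $\overline{\bb G_m . x} = \{x\}$, and the second hypothesis of the proposition then forces $x \in \Lambda$. Thus $F$ is a common closed $(G\times \bb G_m)$-stable subvariety of both $X$ and $\Lambda$.

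Next, I would invoke Theorem~\ref{loc} separately for the inclusions $F \hookrightarrow X$ and $F \hookrightarrow \Lambda$, obtaining isomorphisms
\[
j_{X,*}\colon A^{G \times \bb G_m}(F)_{loc} \xrightarrow{\sim} A^{G \times \bb G_m}(X)_{loc}, \qquad j_{\Lambda,*}\colon A^{G \times \bb G_m}(F)_{loc} \xrightarrow{\sim} A^{G \times \bb G_m}(\Lambda)_{loc}.
\]
Functoriality of pushforward along the chain $F \hookrightarrow \Lambda \hookrightarrow X$ gives $j_{X,*} = i_* \circ j_{\Lambda,*}$, whence $i_* = j_{X,*} \circ j_{\Lambda,*}^{-1}$ is an isomorphism after localization.

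The main obstacle is ensuring that Theorem~\ref{loc} genuinely applies in the mixed equivariant setting --- $X$ and $\Lambda$ need not be smooth, and we are localizing only over $A_{\bb G_m}(pt)$ rather than over the whole of $A^{G\times \bb G_m}(pt)$. This is precisely where condition~(\ref{Lambdacond}) is needed: it guarantees that $\Lambda$ is ``$G$-saturated'' in $X$, so that the $\bb G_m$-fixed substack of $[X/G]$ is entirely contained in $[\Lambda/G]$. Explicitly, if $G.x$ represents a $\bb G_m$-fixed orbit in $[X/G]$, then $\bb G_m.x \subseteq G.x$, so $\overline{\bb G_m.x} \subseteq \overline{G.x}$; the second hypothesis then places a point of $\overline{G.x}$ in $\Lambda$, and (\ref{Lambdacond}) forces $x \in \Lambda$. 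This stacky refinement of $X^{\bb G_m} = \Lambda^{\bb G_m}$ is what ultimately legitimizes the use of the Borel--Atiyah--Segal theorem in our setting.
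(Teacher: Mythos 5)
Your proposal has a genuine gap, and the route through $F := X^{\bb G_m}$ cannot be repaired.

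The first problem is that Theorem~\ref{loc}, applied to the full torus $G\times\bb G_m$ (after reducing to $G$ a torus via Proposition~\ref{GtoT}), compares $A^{G\times\bb G_m}(X)$ with $A^{G\times\bb G_m}(X^{G\times\bb G_m})$ --- not with $A^{G\times\bb G_m}(X^{\bb G_m})$ --- and localizes over the entire ring $A_{G\times\bb G_m}(pt)$, whereas the proposition demands localization only over $A_{\bb G_m}(pt)$. The version of localization adapted to inverting a prescribed subset of characters is Proposition~\ref{partloc}, and it is this which must be invoked.

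The second, more serious, problem is that even Proposition~\ref{partloc} cannot be applied to the embedding $F\hookrightarrow\Lambda$ with only $\bb G_m$-characters inverted. The hypothesis needed would be that for $x\in\Lambda\setminus F$ the stabilizer $Stab_{G\times\bb G_m}(x)$ lies in $G\times\mu_N$ for some uniform $N$. This is false in general: a point $x\in\Lambda$ outside $X^{\bb G_m}$ may very well have stabilizer elements $(g,t)$ with $t$ of infinite order (the $\bb G_m$-scaling is ``compensated'' by some $g\in G$), and indeed such points occur in the intended application to nilpotent Higgs sheaves. So the intermediate object $F$ simply does not sit between $\Lambda$ and $X$ in the way your argument requires, and the second of your two localization isomorphisms is false as stated.

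What does work, and what the paper's proof does, is to compare $\Lambda$ with $X$ \emph{directly} via Proposition~\ref{partloc}. The crucial input is a uniform bound obtained from hypotheses~(\ref{Lambdacond}) and the $\bb G_m$-condition together: if $x\notin\Lambda$ and $(g,t)\in Stab(x)$ with $t$ of infinite order, then $\overline{\bb G_m.x}\subset\overline{G.x}$, so $\overline{G.x}\cap\Lambda\neq\emptyset$, forcing $x\in\Lambda$ by~(\ref{Lambdacond}), a contradiction. Hence for all $x\notin\Lambda$ the image of $Stab(x)$ in $\bb G_m$ is finite, and by finiteness of stabilizer types one finds a single $N$ with $Stab(x)\subset\Ker(\chi)$ for $\chi(g,t)=t^N$. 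Since $c_1(\chi)\in A_{\bb G_m}(pt)$, inverting it yields the desired statement. Your argument never establishes any such finiteness, which is the heart of the matter; the stacky paraphrase in your final paragraph restates the hypotheses but does not supply the missing bound.
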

\begin{proof}
Note that our assumptions imply $X^{G\times \bb G_m}\subset \Lambda$. Furthermore, by Proposition~\ref{GtoT} we can assume that $G$ is a torus. Take $x\in X$, and let $(g,t)\in G\times \bb G_m$ lie in the stabilizer of $x$. Suppose that $t$ has infinite order. Then $t^{-1}.x=g.x$, and by consequence $\overline{G.x}\cap \Lambda=\overline{(G\times T).x}\cap \Lambda$ is non-empty, so that $x\in \Lambda$. We conclude that for any $x\in X\setminus\Lambda$ there exists a positive number $N(x)$ such that $Stab(x)\subset G\times\mu_{N(x)}$. Since torus actions on finite type schemes always possess finitely many stabilizers, one can assume that $N=N(x)$ does not depend on $x$. Let us consider the following character of $G\times\bb G_m$:
\begin{align*}
\chi:G\times \bb G_m & \ra \bb G_m,\\
(g,t) & \mapsto t^N.
\end{align*}
It is clear that for any $x\in X\setminus\Lambda$ one has $Stab_{G\times \bb G_m}(x)\subset\Ker\chi$. Therefore by Proposition~\ref{partloc} one has an isomorphism
$$
A^{G\times \bb G_m}(\Lambda)[c_1(\chi)^{-1}]\xra{\sim}A^{G\times \bb G_m}(X)[c_1(\chi)^{-1}],
$$
which implies the desired result.
\end{proof}

\begin{corr}\label{isonilp}
The natural map
$$
A^{T}(\underline{\cal Higgs}_{0,d}^{nilp})\ra A^{T}(\underline{\cal Higgs}_{0,d})
$$
becomes an isomorphism upon tensoring by $\Frac(A_T(pt))$.
\end{corr}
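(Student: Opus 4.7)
The plan is to deduce the corollary directly from Proposition~\ref{semiloc}, applied to $G=G_d$ (which is reductive, since $G_d=GL_d(\bbk)$), the torus $\bb G_m=T$, the ambient variety $X=\mathscr C_d$, and the closed equivariant subvariety $\Lambda=\mathscr C^{\bullet,\mathrm n}_d$. Since $\underline{\cal Higgs}_{0,d}=[\mathscr C_d/G_d]$ and $\underline{\cal Higgs}_{0,d}^{nilp}=[\mathscr C^{\bullet,\mathrm n}_d/G_d]$, the identifications $A^T(\underline{\cal Higgs}_{0,d})=A^{G_d\times T}(\mathscr C_d)$ and $A^T(\underline{\cal Higgs}_{0,d}^{nilp})=A^{G_d\times T}(\mathscr C^{\bullet,\mathrm n}_d)$ reduce the statement to checking the two hypotheses of Proposition~\ref{semiloc}.

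The condition on $T$-orbits is immediate: $T$ acts by dilating the Higgs field, so $t.(p,\beta)=(p,t\beta)$, and as $t\to 0$ we get the limit $(p,0)$, which is trivially nilpotent. Hence $\overline{T.x}\cap \Lambda\neq\emptyset$ for every $x\in\mathscr C_d$.

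The main content is the first condition~\eqref{Lambdacond}. For this I would use that the Higgs field $\beta\in\Hom(\cal E,\cal E\otimes\omega)$ (which corresponds to an element of $\Ext^1(\cal E,\cal E)^*$ via Serre duality, exactly as in the identification $\mathscr C_d\subset T^*\cal Quot^\circ_{0,d}$) has a well-defined characteristic polynomial, whose coefficients live in $\bigoplus_{i=1}^d H^0(C,\omega^{\otimes i})$. This gives a morphism $\chi:\mathscr C_d\to\bigoplus_i H^0(C,\omega^{\otimes i})$ which is $G_d$-invariant (since $G_d$ only modifies the presentation $p$ of $\cal E$, not the intrinsic Higgs sheaf), and whose zero locus is precisely $\mathscr C^{\bullet,\mathrm n}_d$. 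Because $\chi$ is continuous and constant on $G_d$-orbits, it is constant on $G_d$-orbit closures, so if $x\notin\mathscr C^{\bullet,\mathrm n}_d$ then $\chi(y)=\chi(x)\neq 0$ for all $y\in\overline{G_d.x}$, whence $\overline{G_d.x}\cap\mathscr C^{\bullet,\mathrm n}_d=\emptyset$. This shows $\{x:\overline{G_d.x}\cap\Lambda\neq\emptyset\}\subset\Lambda$, and the reverse inclusion is trivial.

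The anticipated obstacle is only notational/foundational: one must carefully justify that the characteristic polynomial of $\beta$ (as an endomorphism $\cal E\to\cal E\otimes\omega$, iterated against tensor powers of $\omega$) descends to $G_d$-invariant regular functions on $\mathscr C_d$. This can be done locally, identifying $\beta$ at a point of the support of $\cal E$ with a matrix with entries in $\omega_C\otimes\hat{\Ocal}_{C,x}$, and taking its coefficients; alternatively, one can invoke the BNR correspondence to realize $\mathscr C_d$ as parametrizing coherent sheaves on $T^*C$ and identify $\chi$ with the support map to $\bigsqcup_{\nu\vdash d}S^\nu(T^*C)$, whose fiber over the zero section is exactly the nilpotent locus. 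Once this is in place, Proposition~\ref{semiloc} applies verbatim and yields the claimed isomorphism after inverting nonzero elements of $A_T(pt)$.
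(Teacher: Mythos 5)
Your proposal is correct and follows the same route as the paper: both invoke Proposition~\ref{semiloc} with $X=\mathscr C_d$, $\Lambda=\mathscr C^{\bullet,\mathrm n}_d$, verify the $T$-orbit condition by contracting the Higgs field to zero, and verify condition~\eqref{Lambdacond} via the $G_d$-invariant characteristic polynomial of $\beta$. You simply spell out the characteristic-polynomial argument (the morphism to $\bigoplus_i H^0(C,\omega^{\otimes i})$ and constancy on orbit closures) in more detail than the paper, which states it in one sentence.
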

\begin{proof}
Take $X=\mathscr C_d$, $\Lambda=\mathscr C^{\bullet,\mathrm n}_d$. Any point in $(p,\beta)\in X\setminus \Lambda$ is separated from $\Lambda$ by the characteristic polynomial of $\beta$. Therefore condition (\ref{Lambdacond}) is verified. Moreover, the action of $T$ contracts any Higgs field to zero, that is for any $x\in X$ the intersection $\overline{T.x}\cap\cal Quot^\circ_{0,d}\subset\overline{T.x}\cap\Lambda$ is not empty. We conclude by invoking Proposition~\ref{semiloc}.
\end{proof}

From now on till the end of the section we suppose that $\bbk=\bbC$, and $A$ is the usual Borel-Moore homology $H$.
\begin{thm}\label{locinj}
The group $H_*^{T}(\underline{\cal Higgs}_{0,d}^{nilp})$ is torsion-free as an $H^*_{G\times T}(pt)$-module.
\end{thm}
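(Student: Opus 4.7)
The goal is to prove that $H^{G_d \times T}_*(\mathscr C_d^{\bullet,\mathrm{n}})$ is torsion-free over $H^*_{G_d \times T}(pt)$. My plan is to establish the stronger statement that this module is \emph{free}, by showing that $\mathscr C_d^{\bullet,\mathrm{n}}$ has pure Borel--Moore homology of Tate type and then invoking equivariant formality.

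First, I would stratify $\mathscr C_d^{\bullet,\mathrm{n}}$ via the support map $\sigma : \mathscr C_d^{\bullet,\mathrm{n}} \to S^d C$, compatibly with the partition decomposition $S^d C = \bigsqcup_{\nu \vdash d} S^\nu C$. Each stratum $\mathscr C_\nu^{\bullet,\mathrm{n}} := \sigma^{-1}(S^\nu C)$ is $G_d \times T$-stable and locally closed. Combining the \'etale-local analysis of Example~\ref{quotloc} with the orthogonal decomposition coming from distinct supports, $\mathscr C_\nu^{\bullet,\mathrm{n}}$ is a Zariski-locally trivial fibration over $S^\nu C$, whose fiber at $\underline{x} = \sum_i \nu_i x_i$ is identified, after trivializing $\omega$ near each $x_i$, with $G_d \times_{\prod_i G_{\nu_i}} \prod_i \mathscr C_{\nu_i}^{\mathrm{n},\mathrm{n}}$, where $\mathscr C_k^{\mathrm{n},\mathrm{n}} = \{(x,y) \in (\fk g_k)^2 \mid [x,y]=0,\ x,y \text{ nilpotent}\}$ as in Conjecture~\ref{trivfib}.

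The core technical step is to prove that $\mathscr C_k^{\mathrm{n},\mathrm{n}}$ has pure $G_k \times T$-equivariant Borel--Moore homology of Tate type. One approach is to exhibit a $G_k$-equivariant affine paving via a Bia\l ynicki--Birula decomposition coming from a generic cocharacter $\lambda : \bb G_m \to T_k$ acting on a suitable $G_k$-equivariant compactification; the $\lambda$-fixed locus is finite, and the attracting cells intersect $\mathscr C_k^{\mathrm{n},\mathrm{n}}$ in affine spaces indexed by pairs of flag data. Alternatively, one may invoke the decomposition theorem applied to the proper semismall resolution by triples $(x, y, \fk b)$ with $x, y$ commuting and lying in the nilradical of a Borel subalgebra $\fk b$. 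Either approach, combined with the smoothness of the open configuration stratum $(S^\nu C)^\circ \subset S^\nu C$, yields purity of $H^{BM}_*(\mathscr C_\nu^{\bullet,\mathrm{n}})$ for each $\nu$.

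Finally, I would assemble the strata through the long exact sequences attached to open/closed decompositions of the form $\mathscr C_{\leq \nu}^{\bullet,\mathrm{n}} = \mathscr C_{<\nu}^{\bullet,\mathrm{n}} \sqcup \mathscr C_\nu^{\bullet,\mathrm{n}}$ under the refinement order. Since each stratum has pure BM homology of Tate type, the connecting maps of the associated Hodge--Deligne spectral sequence vanish for weight reasons (they would strictly shift weights, violating purity), so $H^{BM}_*(\mathscr C_d^{\bullet,\mathrm{n}})$ is itself pure of Tate type. The standard equivariant formality argument (degeneration of the Leray spectral sequence of $EG \times_{G_d \times T} \mathscr C_d^{\bullet,\mathrm{n}} \to B(G_d \times T)$) then yields freeness of $H^{G_d \times T}_*(\mathscr C_d^{\bullet,\mathrm{n}})$ as a module over $H^*_{G_d \times T}(pt)$. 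The main obstacle is the purity of the punctual nilpotent commuting variety $\mathscr C_k^{\mathrm{n},\mathrm{n}}$ together with an explicit $G_k$-equivariant affine paving; this step relies essentially on weight-filtration techniques unavailable for general oriented Borel--Moore theories, which explains why the statement is confined to $A=H$.
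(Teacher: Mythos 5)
Your approach differs substantially from the paper's and runs into a genuine gap at its core technical step. The paper does not stratify $\underline{\cal Higgs}_{0,d}^{nilp}$ by support; it uses Laumon's stratification $\underline{\cal Higgs}_{0,d}^{nilp}=\bigsqcup_{\nu\vdash d}\underline{\cal Nil}_\nu$ by the Jordan data of the nilpotent Higgs field $\theta$. The decisive feature of that stratification is Proposition~\ref{Nilfib}: each $\underline{\cal Nil}_\nu$ is a \emph{vector bundle} over $\prod_i\underline{\cal Coh}_{0,\nu_i}$, so its Borel--Moore homology is literally $H_*\bigl(\prod_i\underline{\cal Coh}_{0,\nu_i}\bigr)$, which is pure and explicitly known by Laumon's theorem as $\bigotimes_i S^{\nu_i}(H^*(C)[z])$. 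No analysis of singular commuting varieties is needed. The paper then assembles the strata by the weight-splitting argument and proves injectivity of the localization by a two-step localization: first at the ideal $I=\{f:f(0,\ldots,0,t)=0\}$, where $T$-fixed-point localization reduces the target to $\underline{\cal Coh}_{0,d}$ and freeness over $H^*_G(pt)=S^d(\bbQ[z])$ is manifest; then at the maximal homogeneous ideal $\fk m$, where injectivity is automatic because the modules are graded.

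Your support stratification, by contrast, forces you to control the punctual nilpotent commuting variety $\mathscr C_k^{\mathrm n,\mathrm n}$, and your argument requires purity (and Tate type) of its Borel--Moore homology. This is not known, and both of your proposed routes to it break down. For the Bia\l ynicki--Birula route: the commuting relation $[x,y]=0$ is a nontrivial quadratic condition, and the intersection of an attracting cell in $\fk g_k\times\fk g_k$ with $\{[x,y]=0,\ x,y\text{ nilpotent}\}$ is in general \emph{not} an affine space, so there is no affine paving to be had this way. For the semismall-resolution route: the proposed source $\{(x,y,\fk b):x,y\in\fk n_{\fk b},\ [x,y]=0\}$ is itself singular -- already for $k=3$ the commuting variety of the Heisenberg nilradical is a quadric cone in $\bb A^6$ -- so it is not a resolution and the decomposition theorem in the form you invoke does not apply. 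Finally, you assume the monodromy of the fibration $\mathscr C_\nu^{\bullet,\mathrm n}\ra S^\nu C$ on the homology of fibers is trivial; this is exactly the kind of statement left open in Conjecture~\ref{trivfib}, and nothing in your sketch supplies it. The lesson is that the Laumon stratification is chosen precisely to replace the singular punctual commuting varieties by vector bundles over the pure stacks $\underline{\cal Coh}_{0,\nu_i}$; your plan, if carried out, would give the stronger statement of freeness, but the step it hinges on is an open problem, not a technicality.
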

\begin{corr}\label{shufflefaith}
The morphism $\rho:H\bf{Ha}_C^{0,T}\ra H\bf{Sh}_C$ of Theorem~\ref{prodC} becomes injective after tensoring by $\Frac(H_T(pt))$.
\end{corr}

We will prove Theorem~\ref{locinj} in three steps:
\begin{enumerate}
	\item shrink localizing set;
	\item reduce the question to $\underline{\cal Coh}_{0,d}\subset \underline{\cal Higgs}_{0,d}^{nilp}$;
	\item explicit computation for $\underline{\cal Coh}_{0,d}$.
\end{enumerate}

First of all, let $I\subset H^*_{G\times T}(pt)$ be the ideal of functions $f(t_1,\ldots,t_d,t)$ such that $f(0,\ldots,0,t)=0$. It is clear that $\bbQ[t]\cap I=\{0\}$. For any $H^*_{G\times T}(pt)$-module $M$ we denote by $M_{loc,I}$ its localization with respect to $H^*_{G\times T}(pt)\setminus I$.

Since $H^*_T(pt)\setminus\{0\}\subset H^*_{G\times T}(pt)\setminus I$, the localization theorem~\ref{loc} yields an isomorphism
$$
H^{G_d\times T}_*((\mathscr C^{\bullet,\mathrm n})^T)_{loc,I}\xra{\sim}H^{G_d\times T}_*(\mathscr C^{\bullet,\mathrm n})_{loc,I}.
$$
Note, however, that
$$
H^{G_d\times T}_*((\mathscr C^{\bullet,\mathrm n})^T)_{loc,I}\simeq H^{G_d\times T}_*(\cal Quot_{0,d}^\circ)_{loc,I}\simeq H_*^T(\underline{\cal Coh}_{0,d})_{loc,I},
$$
where the $T$-action on the latter stack is trivial. By Poincaré duality and a result of Laumon~\cite[Théorème 3.3.1]{La2},
$$
H_*(\underline{\cal Coh}_{0,d})\simeq H^*(\underline{\cal Coh}_{0,d})\simeq S^d(H^*(C\times \mathrm{B}\bb G_m))=S^d(H^*(C)[z]).
$$
The $H^*_{G}(pt)$-action on the latter space is given as follows. The natural free $\bbQ[z]$-module structure on $H^*(C)[z]$ defines embeddings of algebras:

\begin{equation}\label{Cohaction}
\begin{tikzcd}
H^*_{G}(pt)\arrow[r,equal] & S^d(\bbQ[z])\arrow[r,hook]\arrow[d,hook] & S^d(H^*(C)[z])\arrow[d,hook] \\
 & T^d(\bbQ[z])\arrow[r,hook] & T^d(H^*(C)[z])
\end{tikzcd}
\end{equation}

The upper horizontal map defines us the desired action. Note that since lower horizontal map turns $T^d(H^*(C)[z])$ into a free $T^d(\bbQ[z])$-module, the same is true for a $H^*_G(pt)$-module $S^d(H^*(C)[z])$. In particular, this implies that the $H^*_{G\times T}(pt)$-module $H_*^T(\cal Coh_{0,d})_{loc,I}$ is torsion-free.
Putting together the arguments above, we get the following result:
\begin{prop}\label{locI}
The group $H_*^{T}(\underline{\cal Higgs}_{0,d}^{nilp})_{loc,I}$ is torsion-free as a $H^*_{G\times T}(pt)$-module.
\end{prop}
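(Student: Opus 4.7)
My plan is to assemble the pieces laid out in the preceding paragraphs, with the only remaining task being to verify torsion-freeness over $H^*_{G\times T}(pt)$ from the explicit description that results.

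First I would invoke the refined localization theorem at the multiplicative set $H^*_{G\times T}(pt) \setminus I$: since $H^*_T(pt) \setminus \{0\} \subset H^*_{G \times T}(pt) \setminus I$, Theorem~\ref{loc} applied to the scaling $T$-action produces an isomorphism
$$
H^{G_d \times T}_*\bigl((\mathscr C^{\bullet,\mathrm n}_d)^T\bigr)_{loc,I} \xra{\sim} H^{G_d \times T}_*(\mathscr C^{\bullet,\mathrm n}_d)_{loc,I}.
$$
Because $T$ scales the Higgs field $\beta$ with nonzero weight, the fixed locus is cut out by $\beta = 0$ and coincides with the zero section $\cal Quot^\circ_{0,d}$, on which $T$ acts trivially. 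Hence the module in question is isomorphic to $\bigl(H^*_T(pt) \otimes_\bbQ H_*(\underline{\cal Coh}_{0,d})\bigr)_{loc,I}$, and the $H^*_{G_d \times T}(pt)$-module structure factors as the tensor product of the tautological $\bbQ[t]$-module with the $H^*_{G_d}(pt)$-module $H_*(\underline{\cal Coh}_{0,d})$.

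Second, I would use Laumon's identification $H_*(\underline{\cal Coh}_{0,d}) \simeq S^d(H^*(C)[z])$, together with diagram~(\ref{Cohaction}). The crucial observation is that the tensor algebra $T^d(H^*(C)[z])$ is free over $T^d(\bbQ[z])$ (as a tensor power of free $\bbQ[z]$-modules), and therefore torsion-free over its subring $S^d(\bbQ[z]) = H^*_{G_d}(pt)$. Since the inclusion $S^d(H^*(C)[z]) \hookrightarrow T^d(H^*(C)[z])$ is $S^d(\bbQ[z])$-equivariant, $S^d(H^*(C)[z])$ inherits torsion-freeness. Tensoring with $\bbQ[t]$ and then localizing at $H^*_{G\times T}(pt) \setminus I$ both preserve torsion-freeness over $H^*_{G\times T}(pt)$, which yields the claim.

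The argument is essentially mechanical once the ingredients above are in place; no genuine obstacle arises at this stage. The substantive task that remains toward Theorem~\ref{locinj} is the upgrade from torsion-freeness of the $I$-localization to torsion-freeness of the unlocalized module, where steps (2) and (3) of the outlined strategy (reduction to $\underline{\cal Coh}_{0,d}$ and an explicit analysis there) will be needed.
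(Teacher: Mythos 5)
Your proposal is essentially identical to the paper's proof: the same invocation of the localization theorem at the multiplicative set $H^*_{G\times T}(pt)\setminus I$, the same identification of the $T$-fixed locus with $\underline{\cal Coh}_{0,d}$, the same appeal to Laumon's theorem $H_*(\underline{\cal Coh}_{0,d})\simeq S^d(H^*(C)[z])$, and the same use of the square relating symmetric and tensor algebras. The only minor difference is at the final step: the paper asserts that $S^d(H^*(C)[z])$ is in fact a \emph{free} $H^*_{G_d}(pt)$-module, whereas you only deduce torsion-freeness by exhibiting it as an $S^d(\bbQ[z])$-submodule of a free module; both are correct, and your variant is marginally more elementary since it sidesteps the need to verify freeness of the symmetric power.
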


Next, let us break down the stack of nilpotent Higgs sheaves into more manageable pieces. Recall the following stratification of $\underline{\cal Higgs}_{0,d}^{nilp}$ due to Laumon~\cite{La}.

\begin{defi}
	For any partition $\nu\vdash d$, $\nu=(1^{\nu_1}2^{\nu_2}\ldots)$, let
	$$
	\underline{\cal Nil}_\nu=\left\{(\cal E,\theta)\in \underline{\cal Higgs}_{0,d}^{nilp}:\deg\left(\Ker\theta^i/(\Im\theta\cap\Ker\theta^{i}+\Ker\theta^{i-1})\right)=\nu_i\text{ for all }i\right\}.
	$$
\end{defi}

These substacks are locally closed, disjoint, and cover the whole stack of nilpotent Higgs sheaves:
$$
\underline{\cal Higgs}_{0,d}^{nilp}=\bigsqcup_{\nu\vdash d}\underline{\cal Nil}_\nu.
$$

\begin{prop}\label{Nilfib}
For any $\nu\vdash d$, there exists a stack vector bundle $p_\nu:\underline{\cal Nil}_\nu\ra \prod_i \underline{\cal Coh}_{0,\nu_i}$.
\end{prop}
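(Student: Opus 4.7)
The plan is to define $p_\nu$ via the associated-graded construction appearing in the definition of $\underline{\cal Nil}_\nu$, exhibit a canonical splitting section, and then prove that the fibers globalize to a vector bundle by a deformation-theoretic argument. On $\bbk$-points I would set
$$
p_\nu(\cal E, \theta) = (\cal E_i)_i, \qquad \cal E_i := \Ker\theta^i/(\Ker\theta^{i-1} + \Im\theta \cap \Ker\theta^i).
$$
To promote this to a morphism of stacks one must check that in any flat family over a base $S$ whose geometric fibers lie in $\underline{\cal Nil}_\nu$, the subsheaves $\Ker\theta_S^i$ and $\Im\theta_S \cap \Ker\theta_S^i$ remain $S$-flat of constant relative degree. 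This should follow from the local-closedness of $\underline{\cal Nil}_\nu$ combined with the fact that torsion sheaves on the curve $C$ have vanishing higher cohomology, so that cohomology-and-base-change applies.

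Next I would construct a canonical section of $p_\nu$. Given $(\cal F_i)_i \in \prod_i \underline{\cal Coh}_{0,\nu_i}$, put
$$
\cal E^{\mathrm{can}} = \bigoplus_i \bigoplus_{j=0}^{i-1} \cal F_i \otimes \omega^{\otimes j},
$$
equipped with a "shift" Higgs field $\theta^{\mathrm{can}}$ which, on each Jordan block $\bigoplus_{j=0}^{i-1}\cal F_i\otimes\omega^{\otimes j}$, sends the weight-$j$ summand identically onto the weight-$(j+1)$ summand sitting inside the $\omega$-twist for $j\leq i-2$, and sends the top summand to zero. A direct computation of $\Ker(\theta^{\mathrm{can}})^m$ and $\Im\theta^{\mathrm{can}}$ verifies that $\cal E^{\mathrm{can}}$ lies in $\underline{\cal Nil}_\nu$ and that $p_\nu(\cal E^{\mathrm{can}}) = (\cal F_i)_i$, which produces the section.

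The hard part will be showing that the fiber over $(\cal F_i)_i$ carries the structure of an affine space in a way that globalizes to a vector bundle over $\prod_i \underline{\cal Coh}_{0,\nu_i}$. My strategy is to argue that every nilpotent Higgs sheaf in the fiber is obtained from $\cal E^{\mathrm{can}}$ by an iterated deformation along the kernel filtration, each step classified by an extension class in a $\bbExt^1$-group between the $\cal F_i$'s and their $\omega$-twists in the derived category of the Higgs category. To globalize I would work on an atlas $\prod_i \cal Quot^\circ_{0,\nu_i} \to \prod_i \underline{\cal Coh}_{0,\nu_i}$, construct the universal family of such deformations, and apply cohomology-and-base-change: since $C$ is a curve and all the sheaves involved are torsion, the relevant higher $\bbExt$-groups are controlled by Serre duality, and the resulting $\bbExt^1$-sheaf on the base should be locally free of rank depending only on the $\nu_i$'s. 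This would realize the fiber as a torsor, rigidified by $\cal E^{\mathrm{can}}$, under this $\bbExt^1$-bundle, hence as the total space of a vector bundle on $\prod_i \underline{\cal Coh}_{0,\nu_i}$.
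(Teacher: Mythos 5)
The paper's own proof of this proposition is only a citation to~\cite{SM}, so you are necessarily building a new argument; the overall shape (canonical section, then a deformation-theoretic description of the fiber) is a reasonable one to try. But there are two concrete problems. The first is a sign: with $\cal E^{\mathrm{can}}=\bigoplus_{j=0}^{i-1}\cal F_i\otimes\omega^{\otimes j}$ there is no canonical shift field $\theta^{\mathrm{can}}$, because on the weight-$j$ summand $\theta^{\mathrm{can}}$ must be a map $\cal F_i\otimes\omega^j\to \cal E^{\mathrm{can}}\otimes\omega$, and landing in the $\omega$-twist of the weight-$(j+1)$ summand would require a canonical map $\cal F_i\otimes\omega^j\to\cal F_i\otimes\omega^{j+2}$, which does not exist unless $\omega^{\otimes 2}\cong\Ocal$. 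The twists must be $\omega^{-j}$: then $\theta^{\mathrm{can}}|_{\cal F_i\otimes\omega^{-j}}$ can be taken to be the tautological identification $\cal F_i\otimes\omega^{-j}\xra{\sim}(\cal F_i\otimes\omega^{-(j+1)})\otimes\omega$, and one checks directly that the associated graded pieces $\cal E_k(\cal E^{\mathrm{can}})$ are indeed $\cal F_k$, with no residual twist.

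The second and more serious gap is the ``rigidified torsor under an $\bbExt^1$-bundle'' step: the fiber of $p_\nu$ is not a torsor under a vector bundle in the naive sense, so this cannot close the argument as stated. The point you are not accounting for is the group of automorphisms of $(\cal E,\theta)$ that act trivially on every $\cal E_i$ --- the vertical $\bbHom$-part of the deformation complex --- which is a nontrivial unipotent group. For example, take $\nu=(2)$ supported at a single point $x$: one computes $\underline{\cal Nil}_{(2)}|_x\simeq\bb A^1\times\mathrm{B}(\bb G_m\ltimes\bb G_a)$, mapping to $\underline{\cal Coh}_{0,1}|_x\simeq\mathrm{B}\bb G_m$, and the geometric fiber of $p_\nu$ is $\bb A^1\times\mathrm{B}\bb G_a$, not an affine space. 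Thus the fiber has the shape of a vector bundle \emph{stack} $[\bb A^a/\bb A^b]$, so what needs to be controlled is the full two-term deformation complex (vertical $\bbHom$ together with $\bbExt^1$), represented as a complex of vector bundles on the atlas --- not $\bbExt^1$ alone. This is also why the cohomology-and-base-change argument as you state it cannot produce a locally free $\bbExt^1$-sheaf: $\Ext^1$ between torsion sheaves on a curve jumps (e.g.\ $\Ext^1(\Ocal_x,\Ocal_y)$ as $y\to x$), while it is the Euler characteristic of the full complex that stays constant in families. An iterated-fibration argument can be made to work, but it must keep track of both cohomological degrees, and what it produces is a vector bundle stack over $\prod_i\underline{\cal Coh}_{0,\nu_i}$ rather than a torsor under a single $\bbExt^1$-bundle.
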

\begin{proof}
See the proof of Proposition 5.2 in~\cite{SM}.
\end{proof}

As a consequence of this proposition, for any $\nu\vdash d$ we have an isomorphism
\begin{align}\label{vecNil}
p^*_\nu:H_*^{T}(\prod_i \underline{\cal Coh}_{0,\nu_i})\xra{\sim}H_*^{T}(\underline{\cal Nil}_\nu).
\end{align}

Before continuing with the rest of the proof, let us recall some basic properties of weight filtration from~\cite{De2,De3} and references therein.
For any algebraic variety $X$, Deligne constructed the \textit{weight filtration} $W_k$ on cohomology groups $H^i(X)$.
This filtration is compatible with K\"{u}nneth isomorphisms.
Moreover, it is \textit{strictly} compatible with natural maps, in the sense that an element in target group belongs to $W_k$ if and only if it is an image of an element in $W_k$.
We say that the weight filtration on $H_i(X)$ is \textit{pure of weight $i$} if $W_{i-1}H^i(X)=0$, $W_iH^i(X)=H^i(X)$.
This is the case for any smooth projective variety $X$, as well as for classifying spaces $\mathrm{B}G$.
Weight filtration also exists for Borel-Moore homology and in equivariant setting; it can thus be extended to homology groups of quotient stacks.

\begin{lm}\label{lesweight}
Let $X$ be a $G$-variety, $U$ an open $G$-subvariety, and $Z=X\setminus U$.
Suppose that homology groups $H_i(U)$, $H_i(Z)$ are pure of weight $i$ for all $i$.
Then the long exact sequence in Borel-Moore homology splits into short exact sequences
\[
0\to H^G_i(Z)\to H^G_i(X)\to H^G_i(U)\to 0
\]
and $H^G_i(X)$ is pure of weight $i$ for all $i$.
\end{lm}
\begin{proof}
The weight filtration is strictly compatible with all maps in the long exact sequence.
In particular, since $H^G_i(U)$ and $H^G_{i-1}(Z)$ are pure and have different weights, the connecting homomorphism vanishes.
Furthermore, by strict compatibility we have the following short exact sequences for each $j$:
\[
0\to W_jH^G_i(Z)\to W_jH^G_i(X)\to W_jH^G_i(U)\to 0
\]
By purity of outer terms we have $W_{i-1}H^G_i(X)=0$, $W_iH^G_i(X)=H^G_i(X)$, so that $H^G_i(X)$ is pure of weight $i$.
\end{proof}

Let us choose a total order $\prec$ on the set of partitions of $d$ such that for any two partitions $\nu,\nu'$ the inclusion $\underline{\cal Nil}_\nu\subset \overline{\underline{\cal Nil}_{\nu'}}$ implies $\nu\prec \nu'$. Denote 
\begin{align*}
\Nil_{\prec\nu}=\coprod_{\nu'\prec \nu} \Nil_{\nu'};\qquad \Nil_{\preceq\nu}=\Nil_{\prec\nu}\sqcup \Nil_\nu.
\end{align*}
For each $\nu$, this order gives rise to a long exact sequence in Borel-Moore homology:
\begin{align}\label{lesNil}
\cdots\ra H_k^{T}(\Nil_{\prec\nu})\ra H_k^{T}(\Nil_{\preceq\nu})\ra H_k^{T}(\Nil_\nu)\ra\cdots
\end{align}

The homology groups $H_*(\underline{\cal Coh}_{0,d})$ comprise the $\fk S_d$-invariant part of $H_*(C\times \mathrm{B}\bb G_m)^{\otimes d}$.
Since the latter group has pure weight filtration, the same is true for the former as well, and by (\ref{vecNil}) for $H_*^{T}(\Nil_\nu)$ for any $\nu$.
A straightforward induction on $\nu$ using Lemma~\ref{lesweight} shows that both $H_*^{T}(\Nil_{\prec\nu})$ and $H_*^{T}(\Nil_{\preceq\nu})$ are also pure.
Additionally, the long exact sequence~(\ref{lesNil}) splits into short exact sequences:
$$
0 \ra H_k^{T}(\Nil_{\prec\nu})\ra H_k^{T}(\Nil_{\preceq\nu})\ra H_k^{T}(\Nil_\nu)\ra 0
$$
These short exact sequences yield a filtration $F_\bullet$ of $H_*^{T}(\underline{\cal Higgs}_{0,d}^{nilp})$, such that $F_i$ is of the form $H_k^{T}(\Nil_{\prec\nu})$, and
\begin{align}\label{grNil}
\gr_F H_*^{T}(\underline{\cal Higgs}_{0,d}^{nilp})=\bigoplus_{\nu\vdash d}H_*^{T}(\Nil_{\nu}).
\end{align}

\begin{proof}[Proof of Theorem~\ref{locinj}]
The desired statement is equivalent to injectivity of the localization morphism, which can be written as a composition of two successive localizations:
$$
H_*^{T}(\underline{\cal Higgs}_{0,d}^{nilp})\xra{l_I} H_*^{T}(\underline{\cal Higgs}_{0,d}^{nilp})_{loc,I}\ra H_*^{T}(\underline{\cal Higgs}_{0,d}^{nilp})_{loc}.
$$ 
The second map being injective by Proposition~\ref{locI}, it suffices to prove injectivity of $l_I$. Taking into account isomorphisms (\ref{grNil}), (\ref{vecNil}), and the fact that $H_*^T(\underline{\cal Coh}_{0,d})$ is a free $H^*_T(pt)$-module, we are reduced to proving the injectivity of
$$
l_{\fk m}:H_*\left(\prod_i\underline{\cal Coh}_{0,\nu_i}\right) \ra H_*\left(\prod_i\underline{\cal Coh}_{0,\nu_i}\right)_{loc,\fk m}
$$
for any $\nu\vdash d$, where we localize at $\fk m\subset H^*_G(pt)$ --- maximal homogeneous ideal with respect to homological grading. Since $H_*(\prod_i\underline{\cal Coh}_{0,\nu_i})$ is evidently a graded $H^*_G(pt)$-module, the annihilator of $c$ is a graded ideal for any $c\in H_*(\prod_i\underline{\cal Coh}_{0,\nu_i})$, therefore fully contained in $\fk m$. This proves injectivity of $l_{\fk m}$ and concludes the proof of the theorem.
\end{proof}

\begin{corr}
The identity~(\ref{shufflerel}) holds in $H\bf{Ha}_C^{0,T}$.
\end{corr}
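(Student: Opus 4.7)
The plan is to deduce the corollary as a direct combination of Proposition~\ref{quadrel} and Corollary~\ref{shufflefaith}. By Proposition~\ref{quadrel} applied to $g = g_C$, the identity~(\ref{shufflerel}) holds as an equality of bi-infinite formal series in $e(z), e(w)$ with coefficients in $H\bf{Sh}_C[2]$. By Theorem~\ref{prodC}, $\rho$ is a morphism of graded algebras, and by Corollary~\ref{shufflefaith} it is injective. So it suffices to lift the identity through $\rho$ term by term.

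To do so, I would first exhibit lifts of the generating series $E_{\cal L}(z)$ to $H\bf{Ha}_C^{0,T}[1]$. This is unproblematic in degree one: we have $\mathscr C_1 \simeq T^*C$ (Example~\ref{q01}), so the embedding $i_1:C\hookrightarrow \mathscr C_1$ is the zero section of a line bundle and the map $\rho_1 = i_1^*\circ j_{1*}$ is an isomorphism onto its image; in particular each individual coefficient of $E_{\cal L}(z)$ admits a canonical preimage in $H\bf{Ha}_C^{0,T}[1]$. Assembling these preimages provides a formal series $E^{Ha}_{\cal L}(z)$ whose image under $\rho_1$ equals $E_{\cal L}(z)$, coefficient-wise in $e(z)$.

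Next I would form the products $E^{Ha}_{\cal L_1}(z)\cdot E^{Ha}_{\cal L_2}(w)$ and $E^{Ha}_{\cal L_2}(w)\cdot E^{Ha}_{\cal L_1}(z)$ in $H\bf{Ha}_C^{0,T}[2]$; since the Hall product is defined coefficient-wise on formal Laurent series, these are well-defined elements of $H\bf{Ha}_C^{0,T}[2](\!(e(z),e(w))\!)$. The scalar factors $g_C\bigl(\tfrac{t_1\cal L_1 w}{t_2\cal L_2 z}\bigr)$ and $g_C\bigl(\tfrac{t_2\cal L_2 z}{t_1\cal L_1 w}\bigr)$ are rational expressions in variables living in $H^*_{T_2\times T}(C^2)$; via the inclusion $H^*_{G_2\times T}\hookrightarrow H^*_{T_2\times T}$ they act on each coefficient by multiplication. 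Using that $\rho$ is an algebra morphism, the image under $\rho$ of each side equals the corresponding side of~(\ref{shufflerel}) in $H\bf{Sh}_C$.

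The two images being equal in $H\bf{Sh}_C$ by Proposition~\ref{quadrel}, injectivity of $\rho$ applied to each coefficient of $e(z)^i e(w)^j$ yields the desired identity in $H\bf{Ha}_C^{0,T}$. The only mild subtlety is book-keeping: one must verify that the scalar factor $g_C(\cdot)$, although naturally expressed with $t_i$-variables, can be moved across $\rho$ because $\rho$ commutes with multiplication by elements pulled back from $H^*_{T_2\times T}(pt)$ via the standard $G_d\to T_d$ restriction. This is immediate from the construction of $\rho_d$ via the $T_d$-fixed-point localization, so no further geometric input is needed.
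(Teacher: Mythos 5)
Your proposal takes essentially the same route as the paper: combine Proposition~\ref{quadrel} (the identity holds in the shuffle algebra) with Corollary~\ref{shufflefaith} (injectivity of $\rho$) and use that $\rho$ is an algebra morphism compatible with the module structure. There are, however, two points where your write-up is looser than the paper's.

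First, the module structure: you write that the scalar factors $g_C(\cdot)$ act ``via the inclusion $H^*_{G_2\times T}\hookrightarrow H^*_{T_2\times T}$,'' which has the arrow pointing the wrong way for the argument you want. The factor $g_C$ involves the diagonal class $\Delta$ and non-symmetric combinations of $t_1,t_2$, so it lives in $H^*_{T_2\times T}(C^2)$, which is \emph{larger} than the ring that actually acts; the paper instead uses the embedding $\mathscr C_d\hookrightarrow T^*\cal Quot^\circ_{0,d}$ to get a $\bigl(H^*(C^d)[t;t_1,\ldots,t_d]\bigr)^{\fk S_d}\simeq H^*_{G_d\times T}(T^*\cal Quot^\circ_{0,d})$-module structure on $H\bf{Ha}_C^{0,T}$, compatible with $\rho$ by construction, and it is this ring that operates on the coefficient relations after extraction. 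Second, and more substantively, the paper's proof explicitly notes that $H_*^T(\underline{\cal Higgs}_{0,1}^{nilp})\simeq H_*^T(\underline{\cal Higgs}_{0,1})$, so the degree-one operators, and hence all their Hall products, lie in the nilpotent subalgebra $\bigoplus_d H_*^T(\underline{\cal Higgs}_{0,d}^{nilp})\subset H\bf{Ha}_C^{0,T}$. This observation is what connects directly to Theorem~\ref{locinj}, which proves torsion-freeness for the \emph{nilpotent} stack, and hence to the injectivity of $\rho$; your proposal reads Corollary~\ref{shufflefaith} as a blanket statement and bypasses this check. Given that the corollary is stated for all of $H\bf{Ha}_C^{0,T}$, your more direct route is formally valid, but the paper's detour through the nilpotent subalgebra is where the proof actually keeps contact with the technical input, and is worth retaining.
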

\begin{proof}
Note that we have the following chain of isomorphisms:
\begin{align*}
H^*_{G_d\times T}(T^*\cal Quot^\circ_{0,d}) & \simeq H^*_{G_d\times T}(\cal Quot^\circ_{0,d})\simeq H^*(\underline{\cal Coh}_{0,d})\otimes H^*_T(pt)\\
& \simeq S^d(H^*(\underline{\cal Coh}_{0,1}))[t]\simeq (H^*(C^d)[t;t_1,\ldots,t_d])^{\fk S_d}.
\end{align*}
Therefore, the closed embedding $\mathscr C_d\hookrightarrow T^*\cal Quot^\circ_{0,d}$ defines a $(H^*(C^d)[t;t_1,\ldots,t_d])^{\fk S_d}$-module structure on $H\bf{Ha}_C^{0,T}[d]$, compatible with $\rho$. In particular, the identity~(\ref{shufflerel}) is well-defined in $H\bf{Ha}_C^{0,T}$.

Combining Corollary~\ref{isonilp} and Theorem~\ref{locinj}, we see that the natural morphism
$$
H_*^T(\underline{\cal Higgs}_{0,d}^{nilp})\ra H_*^T(\underline{\cal Higgs}_{0,d})
$$
is injective. Furthermore, it easily follows from the construction in Section~\ref{prod} that the space $\bigoplus_d H_*^T(\underline{\cal Higgs}_{0,d}^{nilp})$ is a subalgebra in $H\bf{Ha}_C^{0,T}$. Since $H_*^T(\underline{\cal Higgs}_{0,1}^{nilp})= H_*^T(\underline{\cal Coh}_{0,1})\simeq H_*^T(\underline{\cal Higgs}_{0,1})$, all operators intervening in identity~(\ref{shufflerel}) belong to $\bigoplus_d H_*^T(\underline{\cal Higgs}_{0,d}^{nilp})$, and we conclude by Corollary~\ref{shufflefaith} and Proposition~\ref{quadrel}.
\end{proof}

\begin{rmq}
Note that for $A=H$ the function $g_C^{norm}$ takes the following form:
\begin{align*}
g_C^{norm}(z)=\frac{e(z\Ocal(-\Delta))e(tz\Ocal(\Delta))}{e(z)e(tz)}=\frac{(z-\Delta)(z+t+\Delta)}{z(z+t)}=1-\frac{\Delta(t+\Delta)}{z(t+z)},
\end{align*}
where $\Delta\in H^2(C\times C)$ is the class of diagonal. Using this explicit expression, we can rewrite the identity~(\ref{shufflerel}) as a set of relations. In particular, for $\cal L_1=t_1^{-1}$, $\cal L_2=t_2^{-1}$, we get
\begin{align*}
[e_i,e_j]_3-(t^2+\Delta(t+\Delta))[e_i,e_j]_1+t\Delta(t+\Delta)(e_ie_j+e_je_i)=0
\end{align*}
for any $i,j\in\bbZ$, where $e_i=t_1^i$, and
$$
[e_i,e_j]_n:=\sum_{k=0}^{n}(-1)^k {{n}\choose{k}}(e_{i+k}e_{j+n-k}-e_{j+n-k}e_{i+k}).
$$
For general $\cal L_1$ and $\cal L_2$ the relations become more complicated.
\end{rmq}

\begin{conj}\label{conjAll}
For any oriented Borel-Moore theory $A$, the morphism $\rho:A\bf{Ha}_C^{0,T}\ra A\bf{Sh}_C$ of Theorem~\ref{prodC} is injective.
\end{conj}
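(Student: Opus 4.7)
The plan is to mimic the proof of Theorem~\ref{locinj} and Corollary~\ref{shufflefaith} step by step for a general oriented Borel-Moore homology theory $A$, and to isolate precisely which ingredients of the $A=H$ argument are specific to singular Borel-Moore homology. First I would keep Corollary~\ref{isonilp} verbatim: it is already proved for any OBM via Proposition~\ref{semiloc}, so the problem reduces to showing that $A^{T}(\underline{\cal Higgs}_{0,d}^{nilp})$ is torsion-free as an $A^*_{G_d\times T}(pt)$-module. Next, the shrinking-of-localization set in Proposition~\ref{locI} should go through: the localization theorem~\ref{loc} and Proposition~\ref{partloc} are stated for general $A$, and the identification $A^{T}(\underline{\cal Coh}_{0,d})\simeq S^d(A^*(C)[z])$ (with $z=e(t)$ denoting the universal class of $\Ocal_{\mathrm{B}\bb G_m}$) can be obtained directly from the global quotient presentation $[\cal Quot^\circ_{0,d}/G_d]$ together with Proposition~\ref{GtoT}, yielding freeness over $A^*_{G_d}(pt)$ via diagram~(\ref{Cohaction}).

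The main obstacle is replacing the purity-of-mixed-Hodge-structures step, which was used to split the long exact sequence~(\ref{lesNil}) associated to Laumon's stratification $\underline{\cal Higgs}_{0,d}^{nilp}=\bigsqcup_{\nu\vdash d}\Nil_\nu$ into short exact sequences. For general $A$ this tool is not available, and I would try two independent routes. The first is to construct an additional one-parameter subgroup $\lambda:\bb G_m\ra T_d$ whose attracting stratification on $\mathscr C^{\bullet,\mathrm n}_d$ refines (or at least is compatible with) Laumon's stratification, so that a Bia\l{}ynicki-Birula style decomposition produces an explicit splitting of~(\ref{lesNil}) after a further localization, from which torsion-freeness on the unlocalized level would follow by the same graded-ideal argument used at the end of Theorem~\ref{locinj}. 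The second route is to reduce to the universal free OBM $\Omega$ (algebraic cobordism of~\cite{LM}) and to deduce a splitting from resolution of singularities of $\overline{\Nil_\nu}$ by comparing with $\prod_i\underline{\cal Coh}_{0,\nu_i}$ via the stack vector bundle structure of Proposition~\ref{Nilfib}; once the conjecture is established for $\Omega$, functoriality of OBM theories yields the statement for any free $A$, because the shuffle formula in Theorem~\ref{prodC} is a universal polynomial expression in Chern and Euler classes governed by the formal group law of $A$.

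Once the splitting step is secured, the end of the argument is essentially formal. The associated graded decomposes as $\bigoplus_{\nu\vdash d}A^{T}(\prod_i\underline{\cal Coh}_{0,\nu_i})$ via the isomorphism~(\ref{vecNil}), each summand is free over $A^*_{G_d}(pt)$ by the symmetric-product computation, and the final reduction via the homogeneous maximal ideal $\fk m\subset A^*_{G_d}(pt)$ uses only that annihilators of elements in a graded module are graded ideals, which is a feature of any OBM with a synthetic homological grading. Injectivity of $\rho$ then follows from the identification $\rho_d=i_H^*\circ j_{H*}$ together with Theorem~\ref{loc}, exactly as in the derivation of Corollary~\ref{shufflefaith} from Theorem~\ref{locinj}.

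I expect the hardest step to be the splitting of the Laumon long exact sequence for general $A$. The BB-decomposition route is attractive because it is geometric and produces an explicit splitting, but it is not clear that a cocharacter of $T_d$ interacts correctly with the numerical data $\nu_i=\deg(\Ker\theta^i/(\Im\theta\cap\Ker\theta^i+\Ker\theta^{i-1}))$ defining the stratification, since rescaling $\theta$ alone may degenerate several strata simultaneously. The cobordism route is more robust but requires a careful control of the pushforwards along the resolutions and of the formal-group-law corrections appearing in the Euler classes of the shuffle function $g_C$; in particular, verifying that the conjectured splitting is compatible with the product $\Xi$ of Definition~\ref{shuf} will be delicate.
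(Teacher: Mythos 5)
This statement is labeled as a \emph{conjecture} in the paper: it is not proved there, and the only hint given is the remark that the author expects to establish it ``in subsequent work by analyzing the action of $A\bf{Ha}_C^{0,T}$ on $A\mathscr M^T_n$ for varying $n$,'' i.e.\ a representation-theoretic route via the module structure of Section~\ref{acttri}, not by redoing the weight-filtration proof of Theorem~\ref{locinj}. So your proposal is not recovering a proof from the paper; it is an attempt at an open problem, and it takes a different approach from the one the author hints at.

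There are genuine gaps in the proposal. The most serious is the purity step, which you correctly identify as the crux, but neither of your two replacements is established. The Bia\l{}ynicki--Birula route requires a cocharacter $\lambda:\bb G_m\to T_d$ whose attracting cells are compatible with Laumon's stratification by partitions, but the strata $\underline{\cal Nil}_\nu$ are defined by the degrees of $\Ker\theta^i/(\Im\theta\cap\Ker\theta^i+\Ker\theta^{i-1})$, which are discontinuous under the scaling of $\theta$ already present in $T$, and no further rescaling by a cocharacter of $T_d$ acting on $\cal Quot^\circ_{0,d}$ is known to refine this stratification into a BB decomposition; you note this difficulty yourself but do not resolve it. The cobordism route faces an additional problem you do not address: even if the conjecture were established for $\Omega=\Sigma_*$, the base change $-\otimes_{\bb L_*}A^*(pt)$ that defines a free OBM is not exact, so injectivity of $\rho_\Omega$ does \emph{not} formally imply injectivity of $\rho_A$ unless one also controls $\mathrm{Tor}_1^{\bb L_*}$ of the cokernel, which is precisely the sort of torsion statement you are trying to prove. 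A third gap: the identification $A^T(\underline{\cal Coh}_{0,d})\simeq S^d(A^*(C)[z])$ that you invoke as if it followed ``directly from the global quotient presentation'' is for $A=H$ a theorem of Laumon using the topology of the stack, and no analogue is stated or proved in the paper for a general free OBM; the presentation $[\cal Quot^\circ_{0,d}/G_d]$ and Proposition~\ref{GtoT} alone give $A^{T_d}(\cal Quot^\circ_{0,d})^{\fk S_d}$, which is not obviously the $d$-th symmetric power of the $d=1$ case without further input. Until these three points are filled in, the proposal is a reasonable plan but not a proof.
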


We hope to prove Conjecture~\ref{conjAll} in subsequent work by analyzing the action of $A\bf{Ha}_C^{0,T}$ on modules $A\mathscr M^T_n$ for varying $n$, defined in next section.
\section{Moduli of stable Higgs triples}\label{modtri}
In this section we introduce an action of $A\mathbf{Ha}^0_C$ on the $A$-theory of certain varieties, which can be regarded as generalization of the Hilbert schemes of points on $T^*C$ (see Section~\ref{negu}).

We start with the stack $\underline{\cal Coh}^{\leftarrow \cal F}_{0,d}$, where $\cal F\in\ona{Coh}C$ is a fixed coherent sheaf on $C$.
The following proposition seems to be well-known (compare to~\cite[Theorem 4.1.(i)]{HL} and the entirety of~\cite{GK}), but we did not manage to find a precise reference.

\begin{prop}\label{tangentframe}
Let $p=(\cal E,\alpha)\in \underline{\cal Coh}^{\leftarrow \cal F}_{0,d}$ be a pair.
Then the tangent space $T_p\underline{\cal Coh}^{\leftarrow \cal F}_{0,d}$ at $p$ is naturally isomorphic to $\bbHom(\cal F\xra{\alpha}\cal E,\cal E)$.
\end{prop}

\begin{proof}
Let $D=\Spec\bbk[\epsilon]/\epsilon^2$.
By definition, the tangent space $T_p\underline{\cal Coh}^{\leftarrow \cal F}_{0,d}$ is given by the space of maps $D\ra\underline{\cal Coh}^{\leftarrow \cal F}_{0,d}$, which restrict to $p$ at origin.
Again, by definition
$$
\{D\ra\underline{\cal Coh}^{\leftarrow \cal F}_{0,d}\}=\{\cal F[\epsilon]\xra{\tilde{\alpha}} \tilde{\cal E}:\tilde{\cal E}\in \Ocal_C[\epsilon]-\ona{mod};\quad\tilde{\cal E}\text{ flat over }D;\quad \tilde{\alpha}\ona{mod} \epsilon=\alpha\},
$$
and since the infinitesimal deformations of a coherent sheaf over a scheme are given by its self-extensions, we see that maps $D\ra\underline{\cal Coh}^{\leftarrow \cal F}_{0,d}$ are parametrized by diagrams of the form

$$
\begin{tikzcd}
0\arrow{r}&\cal F\arrow{r}\arrow{d}{\alpha}&\cal F\oplus\cal F\arrow{d}{\tilde\alpha}\arrow{r}&\cal F\arrow{r}\arrow{d}{\alpha}&0\\
0\arrow{r}&\cal E\arrow{r}&\tilde{\cal E}\arrow{r}&\cal E\arrow{r}&0
\end{tikzcd}
$$
Splitting off $\cal F$ on the left, this data is equivalent to the following diagram:
\begin{equation}\label{extbr}
\begin{tikzcd}
&&\cal F\arrow{d}{\tilde\alpha}\arrow{dr}{\alpha}&&\\
0\arrow{r}&\cal E\arrow{r}&\tilde{\cal E}\arrow{r}&\cal E\arrow{r}&0
\end{tikzcd}
\end{equation}

On the other hand, let us compute $\bbHom(\cal F\xra{\alpha}\cal E,\cal E)$.
Fix an injective resolution $\cal I^\bullet$ of $\cal E$.
Applying $\Hom$-functor, one produces a double complex
$$
\begin{tikzcd}
\Hom(\cal E,\cal I^0)\ar[r,"d_0\circ -"]\ar[d,"-\circ\alpha"]&\Hom(\cal E,\cal I^1)\ar[r,"d_1\circ -"]\ar[d,"-\circ\alpha"]&\Hom(\cal E,\cal I^2)\ar[r]\ar[d,"-\circ\alpha"]&\cdots\\
\Hom(\cal F,\cal I^0)\ar[r,"d_0\circ -"]&\Hom(\cal F,\cal I^1)\ar[r,"d_1\circ -"]&\Hom(\cal F,\cal I^2)\ar[r]&\cdots
\end{tikzcd}
$$
Taking cohomology of its total complex, we get
\begin{align*}
\bbHom(\cal F\xra{\alpha}\cal E,\cal E)&=\frac{\{(f,g)\in\Hom(\cal E,\cal I^1)\oplus\Hom(\cal F,\cal I^0):d_1\circ f=0, d_0\circ g=f\circ\alpha\}}{\{(d_0\circ h,-h\circ\alpha):h\in\Hom(\cal E,\cal I_0)\}}\\
&=\frac{\{(f,g)\in\Hom(\cal E,\Ker d_1)\oplus\Hom(\cal F,\cal I^0):d_0\circ g=f\circ\alpha\}}{\{(d_0\circ h,-h\circ\alpha):h\in\Hom(\cal E,\cal I_0)\}}.
\end{align*}
But by Yoneda construction, pullback of the extension $0\ra\cal E\ra \cal I^0\ra\Ker d_1\ra 0$ gives a bijection between self-extensions of $\cal E$ and morphisms $\cal E\ra \Ker d_1$ up to the ones factorizing through $\cal I_0$.
Associating to every element $\rho\in\Ext^1(\cal E,\cal E)$ the corresponding extension $0\ra\cal E\ra\cal E_\rho\xra{\pi_\rho}\cal E\ra 0$, we get
\begin{align*}
\bbHom(\cal F\xra{\alpha}\cal E,\cal E)&=\{(\rho\in\Ext^1(\cal E,\cal E),g:\cal F\ra\cal E_\rho):g\circ\pi_\rho=\alpha\},
\end{align*}
which is precisely the space of infinitesimal deformations of $(\cal E,\alpha)$ as seen above in the diagram~(\ref{extbr}).
\end{proof}

\begin{defi}
A \textit{Higgs triple} of rank $r$, degree $d$ and frame $\cal F$ is the data $(\cal E,\alpha,\theta)$ of a coherent sheaf $\cal E\in\ona{Coh}_{r,d}C$, a map $\alpha:\cal F\ra\cal E$, and an element $\theta\in\bbExt^1(\cal E,(\cal F\xra{\alpha}\cal E)\otimes \omega)$. Given two Higgs triples $T_1=(\cal E_1,\alpha_1,\theta_1)$, $T_2=(\cal E_2,\alpha_2,\theta_2)$, a \emph{morphism} from $T_1$ to $T_2$ is a map $f\in\Hom(\cal E_1,\cal E_2)$ such that $\alpha_2=f\circ\alpha_1$, and $\theta_2\circ f=f\circ \theta_1$.
\end{defi}

Thanks to Serre duality and Proposition~\ref{tangentframe}, the $\bbk$-points of the stack $T^*\underline{\cal Coh}^{\leftarrow \cal F}_{0,d}$ are precisely Higgs triples of rank 0, degree $d$ and frame $\cal F$.
More generally, its $T$-points for any scheme $T$ are given by families of triples $(\cal E_T,\alpha_T,\theta_T)$, where $\cal E_T$ is flat over $T$. 

\begin{defi}\label{trstable}
A Higgs triple is called \textit{stable} if there is no subsheaf $\cal E'\subset \cal E$ such that:
\begin{itemize}
\item $\Im\alpha\subset \cal E'$, and
\item $a(\theta)\in \Im(b)$, where $a$, $b$ are the maps below, induced by inclusion $\cal E'\subset \cal E$:
\begin{equation}\label{substab}
\bbExt^1(\cal E,(\cal F\xra{\alpha}\cal E)\otimes \omega)\xra{a} \bbExt^1(\cal E',(\cal F\xra{\alpha}\cal E)\otimes \omega)\xla{b}\bbExt^1(\cal E',(\cal F\xra{\alpha}\cal E')\otimes \omega).
\end{equation}
\end{itemize}
\end{defi}

In other words, a triple is stable if the image of $\alpha$ generates $\cal E$ under $\theta$.
We denote by $\left( T^*\underline{\cal Coh}^{\leftarrow \cal F}_{0,d} \right)^{st}\subset T^*\underline{\cal Coh}^{\leftarrow \cal F}_{0,d}$ the substack of stable Higgs triples of rank 0.

Recall that for an abelian category $\mathscr C$ of homological dimension 1 every complex in the bounded derived category $\mathcal D^b(\mathscr C)$ is quasi-isomorphic to the direct sum of its shifted cohomology objects (see~\cite[Proposition 2.1.2]{Ka}). Because of this observation, we can alternatively write Higgs triples as quadruples
\begin{align*}
(\cal E, \alpha, \theta_e, \theta_h): \quad &\cal E\in\ona{Coh}_{r,d}, \quad \alpha:\cal F\to\cal E,\\
&\theta_e\in \Ext^1(\cal E,\Ker\alpha\otimes \omega),\quad\theta_h\in\Hom(\cal E,\Coker\alpha\otimes\omega).
\end{align*}

\begin{lm}\label{stabinquad}
A Higgs triple is stable if and only if there are no non-trivial subsheaves $\cal E'\subset \cal E$ such that $\Im\alpha\subset \cal E'$ and $\theta_h(\cal E')\subset (\cal E'/\Im\alpha)\otimes \omega$.
\end{lm}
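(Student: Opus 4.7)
The plan is to trace the decomposition $\theta = \theta_e + \theta_h$ through the restriction maps appearing in Definition~\ref{trstable}. Fix a subsheaf $\cal E'\subset \cal E$ with $\Im\alpha\subset\cal E'$, and denote by $\alpha^\bullet$ and $\alpha'^\bullet$ the two-term complexes $(\bbk^n\otimes\Ocal\xra{\alpha}\cal E)$ and $(\bbk^n\otimes\Ocal\xra{\alpha}\cal E')$ respectively. Since $\ona{Coh} C$ has homological dimension $1$, one has quasi-isomorphisms
$$\alpha^\bullet\simeq \Ker\alpha\oplus (\cal E/\Im\alpha)[-1],\qquad \alpha'^\bullet\simeq \Ker\alpha\oplus (\cal E'/\Im\alpha)[-1],$$
which induce the direct sum decompositions of the relevant $\bbExt^1$-groups used to define the pair $(\theta_e,\theta_h)$ in the discussion preceding the lemma.

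The first key step is to verify that the maps $a$ and $b$ in~(\ref{substab}) are block-diagonal with respect to these decompositions. The map $a$ is componentwise restriction to $\cal E'$. For $b$, the inclusion $\alpha'^\bullet\hookrightarrow\alpha^\bullet$ induces the identity on the $\Ker\alpha$-summand (both complexes share the same kernel) and the pushforward along $(\cal E'/\Im\alpha)\otimes\omega\hookrightarrow (\cal E/\Im\alpha)\otimes\omega$ on the cokernel summand. Once this is granted, the core of the equivalence becomes immediate: the $\theta_e$-component of $a(\theta)$ always lies in $\Im b$ because $b$ restricts to the identity on that summand, while the $\theta_h$-component $\theta_h|_{\cal E'}\colon\cal E'\to (\cal E/\Im\alpha)\otimes\omega$ lies in $\Im b$ if and only if it factors through $(\cal E'/\Im\alpha)\otimes\omega$, which is precisely the condition $\theta_h(\cal E')\subset (\cal E'/\Im\alpha)\otimes\omega$. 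Combining these two observations matches the destabilizing subsheaves of Definition~\ref{trstable} with those of the lemma.

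The main obstacle I anticipate is justifying the functoriality of the splitting $\bbExt^1\simeq \Ext^1\oplus\Hom$ with respect to the inclusions $\cal E'\hookrightarrow\cal E$ and $\alpha'^\bullet\hookrightarrow\alpha^\bullet$, since the abstract quasi-isomorphism between a complex and the sum of its shifted cohomologies is not automatically natural in maps of complexes. I would handle this by fixing injective resolutions of $\Ker\alpha$ and of the cokernels $\cal E/\Im\alpha$, $\cal E'/\Im\alpha$ separately, and expressing $a$ and $b$ on the corresponding total Hom-complexes, where the block-diagonal structure becomes manifest. Once this compatibility is set up, no further calculation is required.
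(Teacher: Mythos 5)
Your proof takes essentially the same approach as the paper: split $\bbExt^1(\cal E^{(\prime)},\alpha^{(\prime)\bullet}\otimes\omega)$ into $\Ext^1(\cdot,\Ker\alpha\otimes\omega)\oplus\Hom(\cdot,\Coker\alpha^{(\prime)}\otimes\omega)$, observe that $b$ acts as an isomorphism on the kernel-summand (since $\alpha$ and $\alpha'$ share the same kernel), and thereby reduce the condition $a(\theta)\in\Im b$ to the $\theta_h$-component. Your additional care about the non-naturality of the decomposition of a complex into its cohomology is a legitimate point that the paper elides; as you note it can be handled directly, and in fact any off-diagonal mixing in $a$ or $b$ is harmless here because the $\Ext^1$-summand of the target is hit surjectively by $b$ regardless.
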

\begin{rmq}
	Note that the stability condition does not depend on $\theta_e$ in this form.
\end{rmq}
\begin{proof}
Replacing the complex $\cal F\xra{\alpha} \cal E$ by the sum of its kernel and cokernel, the diagram~(\ref{substab}) splits into two:
\begin{gather*}
\Ext^1(\cal E,\Ker\alpha\otimes \omega)\xra{a_e} \Ext^1(\cal E',\Ker\alpha\otimes \omega)\xla{b_e}\Ext^1(\cal E',\Ker\alpha\otimes \omega),\\
\Hom(\cal E,\Coker\alpha\otimes \omega)\xra{a_h} \Hom(\cal E',\Coker\alpha\otimes \omega)\xla{b_h}\Hom(\cal E',(\cal E'/\Im\alpha)\otimes \omega).
\end{gather*}
Note that the map $b_e$ is an isomorphism. Therefore, the condition $a(\theta)\in \Im(b)$ is equivalent to $a_h(\theta_h)\in \Im(b_h)$, that is $\theta_h(\cal E')\subset (\cal E'/\Im\alpha)\otimes \omega$.
\end{proof}

We say that a morphism of triples is a \emph{quotient}, if the underlying map of sheaves is surjective.
\begin{lm}\label{quotst}
Let $T=(\cal E,\alpha,\theta)$, $T'=(\cal E',\alpha',\theta')$ be two triples, together with a quotient map $\pi:\cal E\ra \cal E'$. If $T$ is stable, then $T'$ is stable as well.
\end{lm}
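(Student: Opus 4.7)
The plan is to argue by contrapositive, reformulated via Lemma~\ref{stabinquad}: given any destabilizing subsheaf $\cal F'\subsetneq\cal E'$ for $T'$, I will produce a destabilizing subsheaf $\cal F\subsetneq\cal E$ for $T$ by pulling back along $\pi$.

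First, unpack what the morphism condition gives on the ``honest map'' part of $\theta$. Decomposing the two-term complexes in degrees $[0,1]$, the equality $\theta'\circ\pi=\pi\circ\theta$ in $\cal D(\ona{Coh}C)$ yields on the $\ona{Hom}$-summand the commutativity of
\[
(\bar\pi\otimes\id_\omega)\circ\theta_h=\theta'_h\circ\pi,
\]
where $\bar\pi:\Coker\alpha\twoheadrightarrow\Coker\alpha'$ is the surjection induced by $\pi$ (using $\pi\circ\alpha=\alpha'$, so $\pi(\Im\alpha)=\Im\alpha'$).

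Now set $\cal F:=\pi^{-1}(\cal F')$. Since $\pi$ is surjective, $\cal F\subsetneq\cal E$ is proper. From $\alpha'=\pi\circ\alpha$ and $\Im\alpha'\subset\cal F'$ we get $\Im\alpha\subset\cal F$. It remains to verify $\theta_h(\cal F)\subset(\cal F/\Im\alpha)\otimes\omega$, so that $\cal F$ destabilizes $T$ via Lemma~\ref{stabinquad}. The key identification is
\[
\cal F/\Im\alpha=\bar\pi^{-1}(\cal F'/\Im\alpha')\subset\Coker\alpha,
\]
which follows immediately from $\bar\pi(\cal F/\Im\alpha)=\cal F'/\Im\alpha'$ together with $\ker\bar\pi=(\ker\pi+\Im\alpha)/\Im\alpha\subset\cal F/\Im\alpha$ (since $\ker\pi\subset\cal F$). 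Then for any local section $f\in\cal F$,
\[
(\bar\pi\otimes\id_\omega)(\theta_h(f))=\theta'_h(\pi(f))\in(\cal F'/\Im\alpha')\otimes\omega
\]
by the destabilizing property of $\cal F'$, hence $\theta_h(f)\in(\bar\pi^{-1}(\cal F'/\Im\alpha'))\otimes\omega=(\cal F/\Im\alpha)\otimes\omega$.

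The argument is essentially formal; the only point requiring attention is the derivation of the compatibility $(\bar\pi\otimes\id_\omega)\circ\theta_h=\theta'_h\circ\pi$ from the definition of a morphism of triples, which I would isolate as a short preliminary remark, and the identification of $\cal F/\Im\alpha$ with $\bar\pi^{-1}(\cal F'/\Im\alpha')$, which is a quick diagram chase using $\ker\pi\subset\cal F$.
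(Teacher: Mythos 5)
Your argument is correct and follows the same route as the paper: argue by contrapositive, pull back a destabilizing subsheaf along $\pi$, and verify the destabilizing condition via Lemma~\ref{stabinquad} using the compatibility $(\bar\pi\otimes\id_\omega)\circ\theta_h=\theta'_h\circ\pi$ extracted from the morphism-of-triples axiom. Your final check is phrased slightly more cleanly --- you identify $\cal F/\Im\alpha$ directly as $\bar\pi^{-1}(\cal F'/\Im\alpha')$ rather than splitting the verification into a statement about $\Ker\pi$ and one about $\cal E_1/\Ker\pi$ as the paper does --- but the substance is identical.
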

\begin{proof}
Suppose that $T'$ is not stable, that is there exists a subsheaf $\cal E'_1\subset \cal E'$, such that $\Im\alpha'\subset \cal E'_1$ and $\theta'(\cal E'_1)\subset (\cal E'_1/\Im\alpha')\otimes \omega$.
Consider its preimage $\cal E_1:=\pi^{-1}(\cal E'_1)$. 
Since $\alpha'=\pi\circ\alpha$ by definition, we get $\Im\alpha\subset \cal E_1$.

Let us denote $U=\Ker \pi$.
Since $\theta'\circ\pi=\pi\circ\theta$, we have
\begin{equation}\label{stquot1}
\theta_h(U)\subset (U/(U\cap \Im\alpha))\otimes \omega\subset (\cal E_1/\Im\alpha)\otimes \omega.
\end{equation}
Moreover, since $\cal E_1/U\simeq \cal E'_1$, we get
\begin{equation}\label{stquot2}
\theta_h(\cal E_1)/(U+\Im\alpha)=\theta'_h(\cal E'_1)\subset (\cal E'_1/\Im\alpha')\otimes \omega=(\cal E_1/(U+\Im\alpha))\otimes \omega.
\end{equation}
Combining (\ref{stquot1}) and (\ref{stquot2}), we conclude that $\theta_h(\cal E_1)\subset (\cal E_1/\Im\alpha)\otimes \omega$.
Therefore $\cal E_1\subset \cal E$ is a destabilizing subsheaf by Lemma~\ref{stabinquad}, and thus instability of $T'$ implies instability of $T$.
\end{proof}

The following lemma can be viewed as an avatar of Schur's lemma.
\begin{lm}\label{noauto}
	Stable Higgs triples have no non-trivial automorphisms.
\end{lm}
\begin{proof}
	Let $T=(\cal E,\alpha,\theta)$ be a stable Higgs triple, and suppose $f\in\End(\cal E)$ induces an automorphism of $T$. We pose $\cal E'=\Ker(f-\id_{\cal E})\subset \cal E$. Since $f\circ \alpha=\alpha$, we have $\Im\alpha\subset\cal E'$. Moreover, by definition $f|_{\cal E'}=\id_{\cal E'}$. Therefore the equality $\theta_h\circ f=f\circ \theta_h$ implies that $(f-\id_{\Coker\alpha})\circ \theta_h|_{\cal E'}=0$, and thus $\theta_h(\cal E')\subset \cal E'/\Im\alpha$. This means that $\cal E'$ is a destabilizing subsheaf, which can only happen for $\cal E'=\cal E$. Thus $f=\id_{\cal E}$.
\end{proof}

From now on, we will only consider Higgs triples of rank 0.

\begin{thm}\label{moduli}
Let $\cal F$ be a locally free sheaf on $C$.
Then the moduli stack of stable Higgs triples of rank $0$, degree $d$ and frame $\cal F$ is represented by a smooth quasi-projective variety $\mathscr B(d,\cal F)$.
In particular, $\mathscr B(d,\Ocal)\simeq \ona{Hilb}_d T^*C$.
\end{thm}

We will prove this theorem in Section~\ref{negu} by realizing $\mathscr B(d,\cal F)$ as a moduli of torsion-free sheaves on a ruled surface.
It is also possible to prove it directly by relating stability of Higgs triples to Mumford's GIT stability~\cite{MFK} on an atlas of $T^*\underline{\cal Coh}_{0,d}^{\leftarrow\cal F}$, which was the approach used in a previous version of this paper.

Let us further assume that $\cal F\simeq \bbk^n\otimes\Ocal$ is a trivial sheaf of rank $n$.
To simplify the notation, we will write\footnote{$\cal Br$ stands for Bradlow, as in ``Bradlow pairs''~\cite{Th}}
\[
\underline{\cal Br}_{0,d}^n:=\underline{\cal Coh}^{\leftarrow \bbk^n\otimes \Ocal}_{0,d},\qquad \mathscr{B}(d,n):=\mathscr{B}(d,\bbk^n\otimes\Ocal).
\]
In the remainder of this section we will produce an action of $A\bf{Ha}_C^0$ on the $A$-theory of moduli spaces $\mathscr B(d,n)$.
In order to do this, we will use the general machinery from the beginning of Section~\ref{prod}.

Let $d_\bullet=\{0=d_0\leq d_1\leq\ldots\leq d_k=d\}$, and $F$ a vector space of dimension $n$. As before, we note $G=G_d$, $P=P_{d_\bullet}$. We put:
$$
\tilde{Y}=\Hom(F,\bbk^{d}/\bbk^{d_{k-1}})\times \cal Quot_{0,d_\bullet}^\circ,\quad \tilde V=\Hom(F,\bbk^{d})\times \widetilde{\cal Quot}_{0,d_\bullet},\quad \tilde X'=\Hom(F,\bbk^{d})\times\cal Quot^\circ_{0,d}.
$$ 
We have a natural closed embedding $\tilde{g}:\tilde V\hookrightarrow \tilde X'$ and an affine fibration $\tilde{f}:\tilde V\twoheadrightarrow \tilde Y$. The formula~(\ref{pushpull}) gives rise to a map in $A$-theory
$$
m_{d_\bullet}:A_*^H(T^*_H\tilde Y)\ra A_*^G(T^*_G\tilde X').
$$
For instance, in the case $k=2$ we get a map:
$$
m_{d_1,d_2}:A\bf{Ha}_C^0[d_1]\otimes A_*(T^*\underline{\cal Br}_{0,d-d_1}^n) \hookrightarrow A_*(\underline{\cal Higgs}_{0,d_1}\times T^*\underline{\cal Br}_{0,d-d_1}^n)\ra A_*(T^*\underline{\cal Br}_{0,d}^n).
$$
Collecting these maps for all $d_1,d_2$, we get a map
$$
m:A\bf{Ha}_C^0\otimes A\bf M_n\ra A\bf M_n,
$$
where $A\bf M_n=\bigoplus_{d} A_*(T^*\underline{\cal Br}_{0,d}^n)$.
\begin{prop}
The map $m$ defines an $A\bf{Ha}_C^0$-module structure on $A\bf M_n$.
\end{prop}
\begin{proof}
	The proof is mostly analogous to the proof of Theorem~\ref{ass}. Namely, using notations of that proof, let us consider the following varieties:
	\begin{align*}
	\tilde X_1&=G\times_P (\Hom(F,\bbk^{d}/\bbk^{d_2})\times\cal Quot^\circ_{0,d_\bullet}); & \tilde W_1&=G\times_{P'} (\Hom(F,\bbk^{d})\times\widetilde{\cal Quot}_{0,d'_\bullet});\\
	\tilde X_2&=G\times_{P'}(\Hom(F,\bbk^{d}/\bbk^{d_1})\times\cal Quot^\circ_{0,d'_\bullet}); & \tilde W_2&=G\times_{P} (\Hom(F,\bbk^{d})\times\widetilde{\cal Quot}_{0,d_\bullet});\\
	\tilde X_3&=\Hom(F,\bbk^{d})\times\cal Quot^\circ_{0,d}; & \tilde W_3&= G\times_P (\Hom(F,\bbk^{d}/\bbk^{d_1})\times\cal Quot^\circ_{0,d_1}\times \widetilde{\cal Quot}_{0,d''_\bullet}).
	\end{align*}
	Again, we have inclusions $\tilde W_i\hookrightarrow \tilde X_{i-1}\times \tilde X_{i+1}$. Taking into account $\Hom$-terms, the proof of Lemma~\ref{transvers} easily implies that $\tilde W_2=\tilde W_1\times_{\tilde X_2}\tilde W_3$, and that the intersection $(\tilde W_1\times \tilde X_1)\cap (\tilde X_3\times \tilde W_3)$ inside $\tilde X_3\times \tilde X_2\times \tilde X_1$ is transversal. Finally, putting $\tilde Z_i=T^*_{\tilde W_i}(\tilde X_{i-1}\times \tilde X_{i+1})$ and contemplating the diagram with cartesian square below:
	\begin{equation}\label{trmodunst}
	\begin{tikzcd}
	T^*\tilde X_1& \tilde Z_3\arrow[r]\arrow[l]& T^*\tilde X_2\\
	& \tilde Z_2\arrow[ul]\ar[dr]\arrow[u]\ar[r] & \tilde Z_1\arrow[u]\arrow[d]\\
	& & T^*\tilde X_3
	\end{tikzcd}
	\end{equation}
	we may conclude as in the proof of Theorem~\ref{ass}.
\end{proof}

Recall that we have open embeddings $\mathscr B(d,n)\subset T^*\underline{\cal Br}_{0,d}^n$. If we denote
$$
A\mathscr M_n=\bigoplus_d A\mathscr M_n[d]:=\bigoplus_d A(\mathscr B(d,n)),
$$
the collection of these embeddings defines us a map of graded vector spaces
\begin{equation}\label{vermahw}
A\bf M_n\rightarrow A\mathscr M_n,
\end{equation}
which is surjective if $A\neq H$ by Proposition~\ref{lespair}.

\begin{corr}\label{reps}
There exists a $A\bf{Ha}_C^0$-module structure on $A\mathscr M_n$, such that the map~(\ref{vermahw}) commutes with the action of $A\bf{Ha}_C^0$.
\end{corr}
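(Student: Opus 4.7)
The plan is to produce the module action on $A\mathscr M_n$ by restricting the correspondences from the proof of the previous proposition to the preimages of stable loci, using Lemma~\ref{quotst} to ensure the restrictions are compatible. Retaining the notation of the previous proposition, for $d_\bullet=(0\leq d_1\leq d)$ the action of $A\bf{Ha}^0_C[d_1]$ on $A\bf M_n[d-d_1]$ is built by push-pull along a correspondence involving a projective map $\Psi$ and an l.c.i. map $\Phi$ (after restriction to the zero locus of the moment map and induction from $H$ to $G$). Geometrically, a $\bbk$-point of this correspondence encodes a Higgs triple $T=(\cal E,\alpha,\theta)$ of degree $d$ together with a sub-Higgs-sheaf $\cal E_1\subset\cal E$ of degree $d_1$; $\Psi$ sends it to $T$, while $\Phi$ sends it to the pair $(\cal E_1, T/\cal E_1)$ formed by the sub-Higgs-sheaf and the quotient triple.

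Let $U_d\subset T^*\underline{\cal Br}_{0,d}^n$ denote the open substack $\mathscr B(d,n)$ and write $U_{d_1,d_2}:=\underline{\cal Higgs}_{0,d_1}\times\mathscr B(d-d_1,n)$; denote by $U'_d$ and $U'_{d_1,d_2}$ their preimages in the respective atlases. Set $Z^s:=\Psi^{-1}(U'_d)$, an open subscheme of the correspondence. By Lemma~\ref{quotst} the quotient of a stable triple by any sub-Higgs-sheaf is again stable, so $\Phi(Z^s)\subset U'_{d_1,d_2}$, and $\Phi$ restricts to an l.c.i. morphism $\Phi^{s,\circ}:Z^s\to U'_{d_1,d_2}$ while $\Psi^s:Z^s\to U'_d$ remains projective. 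Define
$$m^s_{d_1,d_2}(c\otimes \bar v):=(\Psi^s)_*\circ(\Phi^{s,\circ})^!\circ\ind_H^G(c\boxtimes \bar v).$$
Associativity of $m^s$ is obtained by restricting the iterated correspondence diagram~(\ref{trmodunst}) from the preceding proof to the opens defined by stability: Lemma~\ref{quotst} applied repeatedly guarantees that all $\Phi$-type maps factor through the relevant stable loci, and the transversality of Lemma~\ref{transvers} descends to opens, so the same argument as in Theorem~\ref{ass} goes through verbatim.

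It remains to verify compatibility with $j^*_d:A\bf M_n[d]\to A\mathscr M_n[d]$, the open restriction along $U_d\hookrightarrow T^*\underline{\cal Br}_{0,d}^n$. Denote by $j':Z^s\hookrightarrow Z$ and $j'':U'_{d_1,d_2}\hookrightarrow T^*_H\tilde Y$ the induced open embeddings. The square $Z^s=\Psi^{-1}(U'_d)$ is cartesian, so base change for open immersions gives $j^*_d\circ\Psi_*=\Psi^s_*\circ(j')^*$. The factorization $\Phi\circ j'=j''\circ\Phi^{s,\circ}$ (afforded by Lemma~\ref{quotst}) combined with functoriality of l.c.i. pullback under composition with the open embedding $j'$ yields $(j')^*\circ\Phi^!=(\Phi^{s,\circ})^!\circ(j'')^*$, and $(j'')^*$ commutes with the induction isomorphism. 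Chaining these identities produces $j^*_d\circ m_{d_1,d_2}=m^s_{d_1,d_2}\circ(\id\otimes j^*_{d-d_1})$, as required. The main subtlety is that $\Phi^{-1}(U'_{d_1,d_2})$ strictly contains $Z^s$ in general (an unstable triple may have a stable quotient by some particular sub-Higgs-sheaf), so the square involving $\Phi$ and $j''$ is not cartesian and the full base change for $\Phi^!$ along $j''$ is unavailable; however, the one-sided inclusion $\Phi(Z^s)\subset U'_{d_1,d_2}$ from Lemma~\ref{quotst}, together with functoriality of l.c.i. pullback through $j'$, is all the argument requires.
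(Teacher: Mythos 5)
Your argument follows the paper's proof in all essentials: restrict the correspondence to the stable loci, observe via Lemma~\ref{quotst} that $Z^s = \Psi^{-1}(U'_d)$ already lands in $U'_{d_1,d_2}\times U'_d$ (so the $\Psi$-square is cartesian, $\Psi^s$ is proper, and base change applies), define $m^s$ by the restricted push-pull, and check associativity by restricting diagram~(\ref{trmodunst}), where the relevant cartesian square is preserved. The only inaccuracy is terminological: $\Phi^{s,\circ}$ is not itself l.c.i.\ (its target $\underline{\cal Higgs}_{0,d_1}\times\mathscr B(d-d_1,n)$ has a singular factor), so ``functoriality of l.c.i.\ pullback'' is not literally what is used; the correct statement is that the \emph{refined} Gysin map $\Phi^!$, defined along the ambient l.c.i.\ morphism $\tilde\Phi\colon\tilde Z\to T^*\tilde X$, is compatible with open restriction on the base followed by further open restriction on the fiber product, which is exactly the implicit content of the paper's $\Phi$-side of diagram~(\ref{alltostable}).
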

\begin{proof}
Let us consider the following diagram:
\begin{equation}\label{alltostable}
\begin{tikzcd}
(T_G^*\tilde{X})^{st}\ar[d,hook,"i"] & (\tilde{Z_G})^{st}\ar[l,"\tilde\Phi'"']\ar[r,"\tilde\Psi'"]\ar[d,hook,"i"] & (T_G^*\tilde{X'})^{st}\ar[d,hook,"i"]\\
T_G^*\tilde{X} & \tilde{Z_G}\ar[l,"\tilde{\Phi}"']\ar[r,"\tilde{\Psi}"] & T_G^*\tilde{X'}
\end{tikzcd}
\end{equation}
where
\begin{align*}
(T_G^*\tilde{X'})^{st}&=T_G^*\tilde{X'}\times_{T^*\underline{\cal Br}_{0,d}^n}\mathscr B(d,n),\\
(T_G^*\tilde{X})^{st}&=T_G^*\tilde{X}\underset{\underline{\cal Higgs}_{0,d_1}\times T^*\underline{\cal Br}_{0,d-d_1}^n}{\bigtimes}\left(\underline{\cal Higgs}_{0,d_1}\times \mathscr B(d-d_1,n)\right),\\
(\tilde{Z_G})^{st}&=\tilde{Z_G}\cap \left( (T_G^*\tilde{X})^{st}\times (T_G^*\tilde{X'})^{st}\right).
\end{align*}
Recall that quotients of stable triples are stable by Lemma~\ref{quotst}.
Therefore we have an equality
$$
\tilde{Z_G}\cap \left( T_G^*\tilde{X}\times (T_G^*\tilde{X'})^{st}\right)=\tilde{Z_G}\cap \left( (T_G^*\tilde{X})^{st}\times (T_G^*\tilde{X'})^{st}\right),
$$
which shows that the map $\tilde\Psi'$ is proper, and right square in the diagram above is cartesian.
Hence, we have
$$
i^*\circ \tilde{\Psi}_*=\tilde{\Psi}'_*\circ i^*
$$
by Lemma~\ref{Gysin}.
This shows us that the diagram~(\ref{alltostable}) defines a commutative square
$$
\begin{tikzcd}
A\bf{Ha}_C^0\otimes A\bf M_n\ar[r,"m"]\ar[d,"\id\otimes\pi"] & A\bf M_n\ar[d,"\pi"]\\
A\bf{Ha}_C^0\otimes A\mathscr M_n\ar[r,"m'"] & A\mathscr M_n
\end{tikzcd}
$$
where $m'=(\tilde\Psi')_*\circ(\tilde\Phi')^!$.
Moreover, if we replace all varieties in diagram (\ref{trmodunst}) by open subvarieties of stable points as above, we can equally see that the upper right square remains cartesian.
Therefore the map $m'$ defines an $A\bf{Ha}_C^0$-module structure on $A\mathscr M_n$.
\end{proof}

Since the whole construction is $T$-equivariant, we also obtain an action of $A\bf{Ha}_C^{0,T}$ on $A\bf M^T_n:=\bigoplus_{d} A^T_*(T^*\underline{\cal Br}_{0,d}^n)$ and $A\mathscr M^T_n:=\bigoplus_d A^T_*(\mathscr B(d,n))$.

\begin{exe}\label{actionA1}
Suppose $C=\bb A^1$, and equip it with the natural action of $\bb G_m$ of weight $1$ as in Example~\ref{SValgs}.
In this setting, for $A=H$ and $A=K$ we recover algebras and representations constructed in~\cite[Proposition 6.2]{SV2} and~\cite[Proposition 7.9]{SV1} respectively.
\end{exe}

We finish this section by comparing our results with the classical construction of Grojnowski and Nakajima. Recall~\cite[Chapter 8]{Nak} that for any smooth surface $S$ there exists an action of Heisenberg algebra on $\bigoplus_d H_*(\ona{Hilb}_d S)$. More precisely, for any positive $k$ and any homology class $\alpha\in H_*(X)$ we possess an operator $P_{\alpha}[k]$, given as follows:
\begin{align*}
P_{\alpha}[i](\beta) & =q_*\circ p^!(\alpha\boxtimes\beta)\text{, where }\beta \in H_*(\ona{Hilb}_d S),\\
Z^\Delta & =\left\{(\cal I_1,\cal I_2)\mid \cal I_1\supset\cal I_2,|\supp(\cal I_1/\cal I_2)|=1\right\}\subset \ona{Hilb}_{d}S\times \ona{Hilb}_{d+k}S,\\
p:Z^\Delta & \ra S\times \ona{Hilb}_d S,\qquad (\cal I_1,\cal I_2)\mapsto (\supp(\cal I_1/\cal I_2), \cal I_1),\\
q: Z^\Delta & \ra S\times \ona{Hilb}_{d+k} S,\qquad (\cal I_1,\cal I_2)\mapsto \cal I_2.
\end{align*}
We now suppose that $S=T^*C$.
Let us compare this action with the $H\bf{Ha}^0_C$-action on $H\mathscr M_1$.
In view of Theorem~\ref{moduli}, $H\mathscr M_1=H_*(\ona{Hilb}_d T^*C)$.
Recall that $\underline{\cal Higgs}_{0,k}\simeq \underline{\cal Coh}_{k}(T^*C)$, where the latter stack parametrizes coherent sheaves of length $k$ on $T^*C$.
Therefore, the correspondence defining the $H\bf{Ha}^0_C$-module structure on $H\mathscr M_1$ can be identified with the lower row in the following diagram with cartesian square:
$$
\begin{tikzcd}
T^*C\times \ona{Hilb}_{d}(T^*C) & & \\
\underline{\cal Coh}^\Delta_{k}(T^*C)\times \ona{Hilb}_{d} (T^*C) \arrow[d,"i\times \id"]\arrow[u,"s\times \id"'] & Z^\Delta \arrow[ul,"p"']\arrow[l,"\Phi^\Delta"]\arrow[dr,"q"]\arrow[d] & \\
\underline{\cal Coh}_{k}(T^*C)\times \ona{Hilb}_{d} (T^*C)  & Z \arrow[r,"\tilde{\Psi}"]\arrow[l,"\tilde{\Phi}"'] & \ona{Hilb}_{d+k}(T^*C)
\end{tikzcd}
$$
where
\begin{align*}
\underline{\cal Coh}^\Delta_{k}(T^*C) & =\{\cal E\in \ona{Coh}_{k}(T^*C)\mid |\supp\cal E|=1\},\\
Z & =\left\{(\cal I_1,\cal I_2)\mid \cal I_1\supset\cal I_2\right\}\subset \ona{Hilb}_{d}T^*C\times \ona{Hilb}_{d+k}T^*C,
\end{align*}
$i$ is the natural closed embedding $\underline{\cal Coh}^\Delta_{k}(T^*C)\hookrightarrow\underline{\cal Coh}_k(T^*C)$, and $s:\underline{\cal Coh}^\Delta_k(T^*C)\ra T^*C$ sends each coherent sheaf to its support.
One would like to prove an equality of the form
\begin{equation}\label{Nakajop}
q_*\circ p^!=(\tilde{\Psi}_*\circ\tilde{\Phi}^!)\circ((\iota\times\id)_*\circ (s\times\id)^!),
\end{equation}
so that the operators $P_{\alpha}[k]$ are realized by action of certain elements in $H\bf{Ha}^0_C$, supported at diagonals $\underline{\cal Coh}^\Delta_k(T^*C)$.
Unfortunately, the map $s$ is too singular for a pullback to be well-defined.
However, one can easily check that it is a locally trivial fibration with a fiber isomorphic to $[\mathscr C_k^{\mathrm n,\mathrm n}/G_k]$, where
$$
\mathscr C_k^{\mathrm n,\mathrm n}:=\{(x,y)\in (\mathfrak g_k)^2:[x,y]=0, x,y\text{ nilpotent}\}.
$$
If the local system $I^k_s$ on $T^*C$, given by homology groups of fibers of $s$, were trivial, $H_*(\underline{\cal Coh}^\Delta_{k}(T^*C))$ would be isomorphic to the direct product $H_*(T^*C)\otimes H_*^{G_k}(\mathscr C_k^{\mathrm n,\mathrm n})$, and one would be able to define the pullback $s^!$ by $c\mapsto c\boxtimes \mathbf 1$.
After that, the identity~(\ref{Nakajop}) would follow once we proved that $p^!=(\Phi^\Delta)^!\circ(s^!\times \id)$.
In light of these considerations, let us state the following conjecture:

\begin{conj}\label{trivfib}
The local system $I^k_s$ is trivial, and the action of $P_\alpha[i]$ on $H\mathscr M_1\simeq\bigoplus_d H_*(\ona{Hilb}_d T^*C)$ is given by $\iota_*(\alpha\boxtimes \mathbf 1)\in H\bf{Ha}^0_C$.
\end{conj} 

Note that Conjecture~\ref{trivfib} is trivially satisfied for $k=1$.
Indeed, $\underline{\cal Coh}^\Delta_{1}(T^*C)\simeq \underline{\cal Coh}_{1}(T^*C)\simeq T^*C\times \mathrm{B}\bb G_m$, thus the diagram above takes the following form:
$$
\begin{tikzcd}
T^*C\times \ona{Hilb}_{d}(T^*C) & & \\
\mathrm{B}\bb G_m\times T^*C\times \ona{Hilb}_{d}(T^*C) \arrow[u,"s\times \id"'] & Z=Z^\Delta \arrow[ul,"p"']\arrow[l,"\tilde{\Phi}"]\arrow[r,"q=\tilde{\Psi}"] & \ona{Hilb}_{d+1}(T^*C)
\end{tikzcd}
$$
Since the scheme $Z$ is smooth by~\cite{Cheah}, pullbacks along all of the maps in triangle are well-defined, and therefore
$$
q_*\circ p^!=q_*\circ p^*=(\tilde{\Psi}_*\circ\tilde{\Phi}^*)\circ(s\times\id)^*,
$$
which gives us a realization of operators $P_\alpha[1]$.
\section{Quiver sheaves}\label{quishe}
In this section we recollect some properties of quiver sheaves, as introduced in~\cite{GK}.

Let $X$ be a scheme over $\bbk$.
Let $Q=(I,E)$ be a finite quiver with head and tail maps $h,t:E\to I$, and assume that $Q$ has no cycles.
For each edge $a\in E$, pick a locally free sheaf $\cal M_a \in \ona{Coh}X$, and set $\cal M_i=\Ocal_X$ for all $i\in I$.

Observe that $\cal A_0=\bigoplus_{i\in I}\cal M_i$ is a sheaf of $\Ocal_X$-algebras with coordinate-wise multiplication.
We equip $\cal A_1=\bigoplus_{a\in E}\cal M_a$ with an $\cal A_0$-bimodule structure, where the map
\begin{equation}\label{multmod}
\cal M_i\otimes \cal M_a\otimes \cal M_j=\Ocal_X\otimes \cal M_a\otimes \Ocal_X\to \cal M_a
\end{equation}
is the natural isomorphism if $h(a)=i$, $t(a)=j$, and zero otherwise.

\begin{defi}
The \textit{twisted path algebra} $\cal A=\cal A_{Q,\cal M}$ is the tensor algebra of $\cal A_1$ over $\cal A_0$.
\end{defi}

By definition, $\cal A$ is a sheaf of $\Ocal_X$-algebras.
The category $\cal A\text{-}mod$ of sheaves of coherent $\cal A$-modules (or $\cal A$-modules for short) is an abelian category with enough injectives (see~\cite[Prop. 3.5]{GK}).

Let $e_i$ be the unit section of $\cal M_i=\Ocal_X$.
For each $i\in I$, define left $\cal A$-modules 
$$\cal P_i:=\cal Ae_i=\bigoplus_{j\in I}e_j\cal Ae_i$$
and right $\cal A$-modules 
$$\cal I_i:=e_i\cal A=\bigoplus_{j\in I}e_i\cal Ae_j.$$
By definition of $\cal A$, it decomposes into the direct sum
$$
\cal A=\bigoplus_{i,j\in I}e_i\cal A e_j,
$$
so that we have an equality of left $\cal A$-modules $\cal A=\bigoplus_{i\in I} \cal P_i$ and of right $\cal A$-modules $\cal A=\bigoplus_{i\in I} \cal I_i$.
Note that the multiplication map~(\ref{multmod}) ensures we have maps of $\cal A$-modules
\[
m_a^{(i)}:\cal M_a\otimes \cal I_{t(a)}\to \cal I_{h(a)}, \qquad m_a^{(p)}:\cal P_{h(a)}\otimes \cal M_a\to \cal P_{t(a)}.
\]

An element $\cal V\in \cal A\text{-}mod$ can be equivalently defined as a collection $(\cal V_i,\phi_a)$ of coherent $\Ocal_X$-modules $\cal V_i$, $i\in I$, together with morphisms $\phi_a:\cal M_a\otimes \cal V_{t(a)}\to \cal V_{h(a)}$ for all $a\in E$.
Under this identification, we have a natural isomorphism of $\Ocal_X$-modules $\cal I_i\otimes_{\cal A}\cal V\simeq \cal V_i$.
Since the forgetful functor $\cal A\text{-}mod\to\cal A_0\text{-}mod$ is faithful, we deduce that the functor $\cal I_i\otimes_{\cal A}-$ is exact.

\begin{prop}\label{resolquiv}
We have an exact sequence of left $\cal A^{op}\otimes \cal A$-modules
\[
0\ra\bigoplus_{a\in E} \cal P_{h(a)} \otimes\cal M_a\otimes\cal I_{t(a)}\xra{q} \bigoplus_{i\in I}\cal P_i\otimes \cal I_i\xra{p} \cal A\ra 0,
\]
where all tensor products are considered over $\Ocal_X$, $p$ is the concatenation $\cal Ae_i\otimes e_i\cal A\to \cal A$, and $q$ is given by $q(x,n,y)=m_a^{(p)}(x,n)-m_a^{(i)}(n,y)$.
\end{prop}
\begin{proof}
	The statement is local in $X$.
	When $X=\Spec R$ is affine, this is the standard resolution of the twisted path algebra as a bimodule over itself~\cite[(1.2)]{BK}.
\end{proof}

Let us now consider the derived category $\cal D^b(\cal A\text{-}mod)$.
\begin{corr}\label{resolmod}
	For any $\cal V\in \cal A\text{-}mod$, we have a short exact sequence
	\[
	0\to \bigoplus_{a\in E} \cal P_{h(a)} \otimes\cal M_a\otimes\cal V_{t(a)}\to \bigoplus_{i\in I}\cal P_i\otimes \cal V_i\to \cal V\ra 0.
	\]
	More generally, for any $\cal V^\bullet\in \cal D^b(\cal A\text{-}mod)$ we have an exact triangle
	\[
	\bigoplus_{a\in E} \cal P_{h(a)} \otimes\cal M_a\otimes\cal V^\bullet_{t(a)}\to \bigoplus_{i\in I}\cal P_i\otimes \cal V^\bullet_i\to \cal V^\bullet\xra{+1},
	\]
	where $\cal V_i^\bullet:=\cal I_i\otimes_{\cal A}\cal V^\bullet\in \cal D^b(\ona{Coh}X)$.
\end{corr}
\begin{proof}
Apply the functor $-\otimes_{\cal A} \cal V^\bullet$ to the exact sequence from Proposition~\ref{resolquiv}.
\end{proof}

Let $(\cal V_i,\phi_a),(\cal W,\psi_a)\in\cal A\text{-}mod$, and consider the following complex of sheaves:
\[
C^\bullet(\cal V,\cal W)=\left( \bigoplus_{i\in I}\cal Hom_{\Ocal_X}(\cal V_i,\cal W_i)\xra{\delta} \bigoplus_{a\in E} \cal Hom_{\Ocal_X}(\cal M_a\otimes \cal V_{t(a)},\cal W_{h(a)})\right),
\]
where $\delta$ is given by
\[
(f_i)_{i\in I}\mapsto \left(f_{h(a)}\circ\phi_a-\psi_a\circ (1\otimes f_{t(a)})\right)_{a\in E}.
\]

\begin{thm}[{\cite[Theorem 5.1]{GK}}]\label{Rhoms}
	Let $\cal V,\cal W\in\cal A\text{-}mod$, and suppose $\cal V$ is locally free as $\cal O_X$-module. Then we have an isomorphism of complexes
	\[
	R\Hom(\cal V,\cal W)\simeq R\Gamma(C^\bullet(\cal V,\cal W)).
	\]
\end{thm}

Let us consider a closely related category $\cal A\text{-}mod_{\cal D}$.
Its objects are given by collections $(\cal V_i^\bullet, \phi_a)_{i\in I,a\in E}$, where $\cal V_i^\bullet\in \cal D^b(\cal M_i\text{-}mod)\simeq\cal D^b(\ona{Coh}X)$, $\phi_a\in \bbHom(\cal M_a\otimes \cal V_{t(a)}^\bullet,\cal V_{h(a)}^\bullet)$.
A morphism $(\cal V_i^\bullet, \phi_a)\to (\cal W_i^\bullet, \psi_a)$ is a collection of morphisms $(f_i:\cal V_i^\bullet\to\cal W_i^\bullet)_{i\in I}$, such that $\psi_a \circ f_{t(a)}=f_{h(a)}\circ \phi_a$.

We have a functor $F:\cal D^b(\cal A\text{-}mod)\to\cal A\text{-}mod_{\cal D}$, defined by
\[
\cal V^\bullet\mapsto (\cal V^\bullet_i,\phi_a),
\]
where the maps $\phi_a$ are induced by multiplication maps~(\ref{multmod}).

\begin{lm}\label{Finverse}
The functor $F$ is full and essentially surjective.
\end{lm}
\begin{proof}
Given an object $(\cal V_i^\bullet, \phi_a)\in \cal A\text{-}mod_{\cal D}$, let $\cal V^\bullet$ be a mapping cone of the map
\[
\bigoplus_{a\in E} \cal P_{h(a)} \otimes\cal M_a\otimes\cal V^\bullet_{t(a)}\xra{m_a^{(p)}\otimes 1-1\otimes \phi_a} \bigoplus_{i\in I}\cal P_i\otimes \cal V^\bullet_{i}.
\]
By Corollary~\ref{resolmod}, we have $F(\cal V^\bullet)=(\cal V_i^\bullet, \phi_a)$, so that $F$ is essentially surjective.
Moreover, if we consider any morphism $(\cal V_i^\bullet, \phi_a)\to (\cal W_i^\bullet, \psi_a)$ in $\cal A\text{-}mod_{\cal D}$, the existence of a compatible morphism $\cal V^\bullet\to \cal W^\bullet$ follows from the axioms of a triangulated category.
Thus $F$ is full, and we may conclude.
\end{proof}

Given a category $\mathscr C$, let us denote by $\underline{\mathscr C}$ the groupoid obtained from $\mathscr C$ by forgetting all non-invertible morphisms.

\begin{corr}\label{dermodcommutes}
The functor $F$ induces an equivalence of groupoids $F':\underline{\cal D^b(\cal A\text{-mod})}\to \underline{\cal A\text{-mod}_{\cal D}}$.
\end{corr}
\begin{proof}
Consider the forgetful functor
\[
\cal D^b(\cal A\text{-}mod)\to \cal D^b(\cal A_0\text{-}mod),\qquad \cal V^\bullet\mapsto (\cal V_i^\bullet).
\]
It preserves isomorphisms and factors through $F$. Therefore, $F'$ is faithful.
\end{proof}
\section{Torsion-free sheaves on $\bbP(T^*C)$}\label{negu}
In this section we prove Theorem~\ref{moduli} by realizing the moduli of Higgs triples as a certain moduli of sheaves on a surface.

Let $X$ be a scheme over $\bbk$, not necessarily smooth.
Pick a line bundle $L$ over $X$, and consider the projectivization $S=\bbP_X(L\oplus \Ocal_X)$ of its total space $\ona{Tot}L$.
Denote the complement of $\ona{Tot}L$ in $S$ by $D$; let also $i:D\hookrightarrow S$ be the natural embedding, and $\pi:S\ra X$ the natural projection.
Note that by definition of $S$ and $D$ we have $R\pi_*\Ocal(D)=\pi_*\Ocal(D)=\Ocal_X\oplus L^\vee$, and $\pi$ induces an isomorphism $D\simeq X$.

Let $\cal T=\Ocal_S(D)\oplus \Ocal_S$, and consider the sheaf of $\Ocal_X$-algebras $\pi_*\cal Hom(\cal T,\cal T)$.
We can write it as a matrix algebra over $X$; the opposite algebra, which we denote by $\cal A$, is then obtained by transposition:
\begin{equation*}
\pi_*\cal Hom(\cal T,\cal T)=\begin{pmatrix}
\Ocal & \Ocal\oplus L^\vee \\
0 & \Ocal
\end{pmatrix},
\qquad
\cal A=\begin{pmatrix}
\Ocal & 0 \\
\Ocal\oplus L^\vee & \Ocal
\end{pmatrix}.
\end{equation*}

Note that $\cal A$ can be seen as a twisted path algebra of the following quiver:
$$
Q=\begin{tikzcd}
\tikz{\node[draw, circle,inner sep=2pt]{1}}\ar[r,shift left=0.5ex,"\Ocal"]\arrow[r,shift right=0.5ex,"L^\vee"']&\tikz{\node[draw, circle,inner sep=2pt]{2}}
\end{tikzcd}
$$
A left $\cal A$-module $\cal V$ is then determined by a quadruple $(\cal V_1,\cal V_2, \phi_0,\phi_1)$, where $\cal V_1,\cal V_2\in\ona{Coh}X$, $\phi_0\in\Hom(\cal V_1,\cal V_2)$, and $\phi_1\in\Hom(\cal V_1\otimes L^\vee,\cal V_2)$.

For any coherent sheaf $E\in\ona{Coh}S$, the Hom-sheaf $\cal Hom(\cal T, E)=\pi_*(\cal T^\vee\otimes E)$ is naturally a left $\cal A$-module, given by the quadruple $(\pi_*E(-D),\pi_*E,\phi_0,\phi_1)$, where $(\phi_0,\phi_1)$ is the natural composition
\[
\pi_*E(-D)\otimes (\Ocal\oplus L^\vee)=\pi_*E(-D)\otimes \pi_*\Ocal(D)\to \pi_*E.
\]

Since $\cal T$ is an $(\cal A^{op},\Ocal_S)$-bimodule, we have a pair of adjoint functors
\begin{equation}\label{adjACK}
\begin{tikzcd}
-\otimes^L_{\cal A}\cal T: \cal D^b(\cal A\text{-}mod) \arrow[r,shift left=0.5ex]& \cal D^b(\ona{Coh}S) : R\pi_*\cal Hom(\cal T,-)\arrow[l,shift left=0.5ex].
\end{tikzcd}
\end{equation}

As a left module over itself, $\cal A$ can be decomposed as a direct sum $\cal P_1\oplus \cal P_2$, where
\[
\cal P_1=\begin{tikzcd}[column sep = small]
\Ocal\arrow[r,shift left=0.5ex]\arrow[r,shift right=0.5ex]&\Ocal\oplus L^\vee
\end{tikzcd}, \qquad \cal P_2=\begin{tikzcd}[column sep = small]
0\arrow[r,shift left=0.5ex]\arrow[r,shift right=0.5ex]&\Ocal
\end{tikzcd}
\]
are the left $\cal A$-modules defined in Section~\ref{quishe}.

The following proposition should be known to experts (for example, see remark at the end of~\cite{Bei}), but we include the proof for completeness.

\begin{prop}\label{Beiliequiv}
The pair of functors (\ref{adjACK}) establishes an equivalence of triangulated categories.
\end{prop}
\begin{proof}
The proof is based on Be\u{\i}linson's lemma~\cite{Bei}.
For any $E\in \ona{Coh}S$, there exists $n>0$ such that $E(nD)$ has no higher cohomology, and the counit map $\pi^*\pi_*E(nD)\to E(nD)$ is surjective.
By the seesaw principle~\cite[Corollary 5.6]{Mum}, the kernel of this map has the form $\pi^*(\cal N)(-D)$, where $\cal N\in \ona{Coh}X$.
Thus $E$ admits a resolution of the form
\[
0\to \pi^*(\cal N_2)(-(n+1)D)\to \pi^*(\cal N_1)(-nD)\to E\to 0,
\]
where $\cal N_1,\cal N_2\in \ona{Coh}X$. Taking into account short exact sequences
\[
0\to \Ocal((n-1)D) \to \Ocal(nD)\oplus \Ocal(nD)\to \Ocal((n+1)D)\to 0,
\]
we see that as a triangulated category, $\cal D^b(\ona{Coh}S)$ is generated by $\ona{Coh}X$ and $\cal T=\Ocal_S(D)\oplus \Ocal_S$.
Similarly, $\cal D^b(\cal A\text{-mod})$ is generated by $\ona{Coh}X$ and $\cal A=\cal P_1\oplus\cal P_2$ as a triangulated category by Corollary~\ref{resolmod}.

We have $R\pi_*\cal Hom(\cal T,\Ocal(D))=\cal P_1$, $R\pi_*\cal Hom(\cal T,\Ocal)=\cal P_2$.
Using Theorem~\ref{Rhoms}, it is easy to check the following isomorphisms:
\begin{align*}
&R\Hom(\pi^*\cal E,\pi^*\cal F)\simeq R\Hom(\cal E,\cal F)\simeq R\Hom(\cal E\otimes \cal P_1,\cal F\otimes \cal P_1),\\
&R\Hom(\pi^*\cal E(D),\pi^*\cal F)\simeq 0\simeq R\Hom(\cal E\otimes \cal P_1,\cal F\otimes \cal P_2),\\
&R\Hom(\pi^*\cal E,\pi^*\cal F(D))\simeq R\Hom(\cal E,\cal F\otimes (\Ocal\oplus L^\vee))\simeq R\Hom(\cal E\otimes \cal P_2,\cal F\otimes \cal P_1),\\
&R\Hom(\pi^*\cal E(D),\pi^*\cal F(D))\simeq R\Hom(\cal E,\cal F)\simeq R\Hom(\cal E\otimes \cal P_2,\cal F\otimes \cal P_2).
\end{align*}

Applying Be\u{\i}linson's lemma, we conclude that the functor $R\cal Hom_X(\cal T,-)$ is an equivalence of triangulated categories.
Moreover, since the functor $-\otimes^L_{\cal A}\cal T$ is its left adjoint, it provides the inverse equivalence.
\end{proof}

Let us apply this proposition to $X=T\times C$, $L=\Ocal_T\boxtimes \omega_C$.
Combining it with Corollary~\ref{dermodcommutes}, we obtain an equivalence of groupoids
\begin{equation}\label{mainderequi}
\Theta:\underline{\cal D^b(\ona{Coh}(T\times\bbP_C(\omega\oplus \Ocal)))}\simeq \underline{(Lp^*\cal A)\text{-}mod_{\cal D}},
\end{equation}
where $p:T\times C\to C$ is the projection.
Moreover, this equivalence commutes with base change in $T$ whenever the latter preserves bounded derived categories, e.g.\ for flat maps $T'\to T$.

\begin{rmq}
It would be desirable to express this as an equivalence of presheaves in groupoids.
The problem is that groupoids on both sides of~(\ref{mainderequi}) are not functorial in $T$.
Namely, boundedness of complexes is not preserved under pullbacks along general maps $T'\to T$.
Nevertheless, in the sequel we are only concerned with certain subgroupoids on both sides, see Proposition~\ref{nattrsubfun}. 
Their objects will satisfy flatness condition over $T$, and therefor will be preserved under arbitrary base change, forming presheaves.
We will thus abuse the notation for convenience, and say that the two sides of~(\ref{mainderequi}) form presheaves $\underline{\cal D^b(\ona{Coh}S)}$ and $\underline{\cal A\text{-}mod_{\cal D}}$ respectively.
\end{rmq}

From now on, let $X=C$, so that $S=\bbP_C(\omega\oplus \Ocal)$ compactifies the cotangent bundle $T^*C$.
Let us recall some properties of sheaves on $S$; we will closely follow the exposition in~\cite[Section 2]{Moz}.
The Neron-Severi group of $S$ is given by $\ona{NS}(S)=H^2(S,\bbZ)=\bbZ D\oplus \bbZ f$, where $f$ is the class of a fiber of $\pi:S\to C$.
Thus, for any coherent sheaf $E$ on $S$ we will write the first Chern class $c_1(E)$ as a linear combination $c_{1,D}(E)D+c_{1,f}(E)f$.
The product in $\ona{NS}(S)$ is determined by the following equalities:
\begin{equation*}
f^2=0,\qquad Df=1,\qquad D^2=2-2g,
\end{equation*}
where the last one follows from the fact that $\Ocal_S(D)|_D\simeq \omega^{-1}$.
Moreover, the canonical divisor of $S$ is $K_S=-2D$. 
We will write elements of $H^{even}(S,\bbZ)$ as triples $(a,b,c)\in\bbZ\oplus\ona{NS}(S)\oplus\bbZ$; the same applies to $H^{even}(C,\bbZ)$.
In this fashion, Todd classes of $S$ and $C$ are respectively given by
$$
\ona{td}S=(1,D,1-g),\qquad \ona{td}C=(1,1-g),
$$
and the pushforward along $\pi$ in cohomology is given by
$$
\pi_*(a,b_D D+b_f f,c)=(b_D,c).
$$
Given a sheaf $E\in\ona{Coh}S$, the Chern character of its derived pushforward $R\pi_*E$ can be computed using Grothendieck-Riemann-Roch theorem.
Namely, let $a=c_{1,D}(E)$, $b=c_{1,f}(E)$, $r=\rk E$, and recall that $$\ona{ch}(E)=\left(r,c_1(E),\frac{c_1(E)^2-2c_2(E)}{2}\right)=(r,aD+bf,a^2(1-g)+ab-c_2(E)).$$
We have:
\begin{align*}
(\rk(R\pi_*E),&\text{ }c_1(R\pi_*E)+(1-g)\rk(R\pi_*E))=\ona{ch}(R\pi_*E)\ona{td}C=\pi_*(\ona{ch}E\ona{td}S)\\
&=\pi_*((r,aD+bf,\ona{ch}_2(E))(1,D,1-g))\\
&=(a+r,(r+2a)(1-g)+b+\ona{ch}_2(E)).
\end{align*}
The result of this computation can be rewritten as follows:
\begin{equation}\label{GRRcomp}
\rk(R\pi_*E)=a+r,\qquad c_1(R\pi_*E)=a(1-g)+b+\ona{ch}_2(E).
\end{equation}

For any nef divisor $H$ on $S$, we can define a notion of \textit{$H$-semistability} for sheaves on $S$.
One example of nef divisor is given by $f$.
Instead of giving general definitions, we will use the following characterization of $f$-semistable sheaves:

\begin{lm}[{\cite[Lemma 4.3]{Moz}}]\label{Mozlem}
A torsion-free sheaf $E$ on $S$ is $f$-semistable if and only if its generic fiber over $C$ is isomorphic to $\Ocal_{\bbP^1}(l)^{\oplus m}$ for some $l\in\bbZ$, $m\in\bbN$.
\end{lm}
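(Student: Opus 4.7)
The plan is to translate the question to a statement about the generic fiber via the intersection-theoretic computation of the $f$-slope. Writing $c_1(F)=c_{1,D}(F)D+c_{1,f}(F)f$ and using $f^2=0$, $D\cdot f=1$, one has $c_1(F)\cdot f=c_{1,D}(F)$, so the $f$-slope is
\[
\mu_f(F)=\frac{c_1(F)\cdot f}{\rk F}=\frac{c_{1,D}(F)}{\rk F}.
\]
On the other hand, restriction to a generic fiber $F_c=\pi^{-1}(c)\simeq\bbP^1$ preserves the rank and satisfies $\deg(F|_{F_c})=c_1(F)\cdot f=c_{1,D}(F)$. Hence $\mu_f(F)$ equals the ordinary $\bbP^1$-slope of $F|_{F_c}$ for generic $c\in C$. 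This identification is the crux of both implications.

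For the easy direction ($\Leftarrow$), assume the generic fiber of $E$ is $\Ocal_{\bbP^1}(l)^{\oplus m}$. To test $f$-semistability it is enough to consider saturated subsheaves $F\subset E$, for which $E/F$ is torsion-free. Then for generic $c$ the map $F|_{F_c}\hookrightarrow E|_{F_c}$ is still injective, and $F|_{F_c}$ becomes a subsheaf of the semistable bundle $\Ocal_{\bbP^1}(l)^{\oplus m}$ of slope $l$, giving $\mu(F|_{F_c})\le l$. By the identification above, $\mu_f(F)\le l=\mu_f(E)$.

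For the harder direction ($\Rightarrow$), I argue by contrapositive. Suppose the generic fiber decomposes as $\bigoplus_{i=1}^{m}\Ocal_{\bbP^1}(a_i)$ with $a_1\ge\cdots\ge a_m$ but $a_1>a_m$. On a suitable dense open $U\subset C$, $E$ is locally free along the fibers and the decomposition type is constant; the first piece of the fiberwise Harder--Narasimhan filtration (i.e.\ the sum of the $\Ocal(a_1)$-summands) then glues to a subsheaf $F_U\subset E|_{\pi^{-1}(U)}$ of relative rank $n_1=\#\{i:a_i=a_1\}$, by the standard existence of relative HN filtrations for families of torsion-free sheaves over a curve. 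Let $F\subset E$ be the saturation of the closure of $F_U$. Then $F$ is saturated with $F|_{F_c}\simeq\Ocal_{\bbP^1}(a_1)^{\oplus n_1}$ generically, so
\[
\mu_f(F)=a_1>\frac{a_1+\cdots+a_m}{m}=\mu_f(E),
\]
contradicting $f$-semistability.

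The only nontrivial input is the construction of $F$ from the fiberwise maximal-slope piece: one must ensure that the relative HN filtration exists generically and extends to a genuine coherent subsheaf on all of $S$. This is where I expect the main technical content to sit, but it is a classical fact (see Huybrechts--Lehn, or the references in Mozgovoy's paper), and the remaining verifications are purely numerical via the slope identification established in the first paragraph.
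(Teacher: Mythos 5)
The paper does not prove this lemma itself; it cites it directly from Mozgovoy, [Moz, Lemma 4.3], so there is no ``paper's proof'' against which to compare. Judged on its own terms, your proof is correct and is the standard argument for this sort of statement. The slope identification $\mu_f(F)=c_{1,D}(F)/\rk F=\mu(F|_{F_c})$ for generic $c$ is right (since any subsheaf of a torsion-free sheaf is itself torsion-free, hence locally free along a generic fiber, and $\deg(F|_{F_c})=c_1(F)\cdot f$). For $(\Leftarrow)$ the reduction to saturated $F$ is fine, and saturation guarantees $E/F$ is torsion-free, so the restriction $F|_{F_c}\hookrightarrow E|_{F_c}$ remains injective for generic $c$; then $F|_{F_c}$ is a subsheaf of the semistable bundle $\Ocal_{\bbP^1}(l)^{\oplus m}$ and the slope inequality follows. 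For $(\Rightarrow)$ you use the relative Harder--Narasimhan filtration over a dense open $U\subset C$ (Huybrechts--Lehn, Theorem~2.3.2), and then the extension of $F_U$ to a coherent subsheaf of $E$ on all of $S$ is routine: set $Q:=\Im\bigl(E\to j_*(E|_{\pi^{-1}(U)}/F_U)\bigr)$ where $j:\pi^{-1}(U)\hookrightarrow S$ is the open immersion, and take $F:=\Ker(E\to Q)$. This $F$ is coherent, restricts to $F_U$, and has $\mu_f(F)=a_1>\mu_f(E)$, contradicting semistability. No gaps.
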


\begin{lm}\label{fsstnum}
For a torsion-free $f$-semistable sheaf $E$, the following numerical conditions are equivalent:
\[  \text{1. }\pi_*E=0\text{ and }\rk R^1\pi_*E=0,\qquad\qquad\text{2. }\rk E=-c_{1,D}(E),\qquad\qquad \text{3. }l=-1.\]
If these conditions are fulfilled, we further have $H^0(E)=H^2(E)=0$, and $H^1(E)=H^0(R^1\pi_*E)$.
\end{lm}
\begin{proof}
Let us first prove the equivalence.

$1\Rightarrow 2$: follows from the first formula in~(\ref{GRRcomp});

$2\Rightarrow 3$: rank is a generic invariant, therefore we have
\[
0=\rk R\pi_*E=m\left(h^0(\Ocal_{\bbP^1}(l))-h^1(\Ocal_{\bbP^1}(l))\right)=m(l+1).
\]
Since $m$ is a positive number, this implies that $l=-1$.

$3\Rightarrow 1$: since $R\Gamma(\Ocal_{\bbP^1}(-1))=0$, both $\pi_*E$ and $R^1\pi_*E$ have rank 0.
Furthermore, let $\cal T\subset \pi_*E$ be a torsion subsheaf.
By adjunction, we exhibit a  map $\pi^*\cal T\to E$ from a torsion sheaf to a torsion-free sheaf.
It is a zero map if and only if $\cal T=0$; thus $\pi_*E$ is locally free.
We conclude that $\pi_*E=0$.

In order to prove the second statement, recall that we have Leray spectral sequence
\[
H^i(C,R^j\pi_*E)\Rightarrow H^{i+j}(S,E).
\]
Since $R^j\pi_*E=0$ for $j\neq 1$, it degenerates to the equality $H^{i+1}(E)=H^i(R^1\pi_*E)$.
Finally, $R^1\pi_*E$ is a torsion sheaf, so that $H^i(E)$ is non-zero only for $i=1$.
\end{proof}

We will also need the following computation:
\begin{lm}\label{counit}
Let $\eps:\pi^*\pi_*\Ocal(D)\to \Ocal(D)$ be the natural counit map.
Then $\Ker\eps\simeq\pi^*\omega^\vee(-D)$.
\end{lm}
\begin{proof}
Let us denote $K=\Ker\eps$.
Since $\eps$ is surjective and becomes an isomorphism after applying $\pi_*$, we have $R\pi_*K=0$.
This means that at each point $c\in C$ the fiber $K_c$ is isomorphic to a direct sum of several copies of $\Ocal_{\bbP^1}(-1)$~\cite[Corollary 5.4]{Mum}.
In particular, $K(D)_c$ is trivial at each point $c$, and thus the natural map $\pi^*\pi_*(K(D))\ra K(D)$ is an isomorphism.
Consider the following short exact sequence:
\[
0\ra K(D)\ra \left(\pi^*\pi_*\Ocal(D)\right)\otimes \Ocal(D)\to \Ocal(2D)\to 0.
\]
Note that all these sheaves have globally generated fibers over $C$.
Therefore, after applying $\pi_*$ we obtain 
\[
\pi_*(K(D))\simeq \Ker\left( \pi_*\Ocal(D) \otimes \pi_*\Ocal(D)\to \pi_*\Ocal(2D) \right).
\]
However, since $\pi_*\Ocal(D)= \Ocal\oplus \omega^\vee$, we have
\[
\pi_*(K(D))\simeq\Ker\left((\Ocal\oplus \omega^\vee)\otimes (\Ocal\oplus \omega^\vee)\to \Ocal\oplus \omega^\vee\oplus (\omega^\vee)^2\right)\simeq \omega^\vee.
\]
Therefore $K\simeq \pi^*\pi_*(K(D))\otimes \Ocal(-D)\simeq \pi^*\omega^\vee(-D)$, and we may conclude.
\end{proof}

\begin{rmq}\label{counitconv}
For later purposes, let us fix an isomorphism $\pi_*(K(D))\simeq \omega^\vee$, so that the inclusion $\pi_*(K(D))\subset \pi_*\Ocal(D)\otimes \pi_*\Ocal(D)$ is identified with the composition $$\omega^\vee\xra{(1,-1)}\left(\omega^\vee\otimes\Ocal\right)\oplus \left(\Ocal\otimes\omega^\vee\right)\subset \left(\Ocal\oplus \omega^\vee\right)\otimes \left(\Ocal\oplus \omega^\vee\right).$$
\end{rmq}

Let us return to the equivalence~(\ref{mainderequi}).
Fix $d>0$, and a locally free sheaf $\cal F\in \ona{Coh}C$ of rank $n$.
Consider subfunctors $\underline{\cal Coh}_d^{\cal F}S\subset\underline{\cal D^b(\ona{Coh}S)}$, $\underline{\cal A\text{-}mod}_d^{\cal F}\subset\underline{\cal A\text{-}mod_{\cal D}}$, defined as follows:
\begin{align*}
&\underline{\cal Coh}_d^{\cal F}S(T)  =\left\{
E\in\ona{Coh}(S\times T)\,\middle\vert\,
\begin{array}{c}
E \text{ is torsion-free, flat over $T$, }E|_{D\times T}\simeq \cal F\boxtimes \Ocal_T,\\
c_1(E_t)=\deg\cal F\cdot f,c_2(E_t)=d,E_t\text{ is $f$-semistable }\forall t\in T\\
\end{array}
\right\},\\
&\underline{\cal A\text{-}mod}_d^{\cal F}(T)  =\left\{
(\cal V_1[-1],\cal V_2^\bullet,\phi_0,\phi_1)\,\middle\vert\,
\begin{array}{c}
\cal V_1\in\ona{Coh}(C\times T),\text{ flat over $T$, }Cone(\phi_0)\simeq \cal F\boxtimes \Ocal_T,\\
(\cal V_1)_t\text{ is a torsion sheaf, }\deg (\cal V_1)_t=d\text{ }\forall t\in T\\
\end{array} 
\right\}.
\end{align*}

\begin{prop}\label{nattrsubfun}
For any $d>0$, the equivalence (\ref{mainderequi}) induces a natural transformation $$\Theta_d':\underline{\cal Coh}_d^{\cal F}S\to \underline{\cal A\text{-}mod}_d^{\cal F}.$$
\end{prop}
\begin{proof}
We need to check that for any $T$, we have $\Theta\left( \underline{\cal Coh}_d^{\cal F}S(T) \right)\subset \underline{\cal A\text{-}mod}_d^{\cal F}(T)$.
Let $E\in \underline{\cal Coh}_d^{\cal F}S(T)$.
Then by definition $\Theta(E)=(R\pi_*E(-D),R\pi_*E,\phi_0,\phi_1)$, where $\phi_0:R\pi_*E(-D)\to R\pi_*E$ is obtained by applying $R\pi_*$ to the first map in the short exact sequence
\begin{equation*}
0\to E(-D)\to E\to E|_{D\times T}\to 0.
\end{equation*}
In particular, $Cone(\phi_0)\simeq R\pi_*(E|_{D\times T})\simeq \cal F\boxtimes \Ocal_T$.

Pick a point $t\in T$.
We have $\pi_*E(-D)_t=0$ by Lemma~\ref{fsstnum}, so that $R\pi_*E(-D)_t=R^1\pi_*E(-D)_t[-1]$.
Moreover, the formulas~(\ref{GRRcomp}) applied to $E(-D)_t$ show that $\rk(R\pi_*E(-D)_t)=0$ and $c_1 (R\pi_*E(-D)_t)=-d$.
Therefore, $R^1\pi_*E(-D)_t$ is a torsion sheaf of degree $d$.

Finally, let us prove flatness.
Let $f:T\times S\to T$, $p:T\times C\to T$ be the natural projections.
Using the second part of Lemma~\ref{fsstnum}, a proof analogous to~\cite[Corollary 4.2.12]{LeP} shows that $R^1f_*E(-D)$ is a locally free sheaf.
Let $\cal L$ be an ample line bundle on $C$, and $k\in\bbN$.
Since $R^1\pi_*E(-D)_t$ is a torsion sheaf for any $t\in T$, it is isomorphic to $\cal L^k\otimes R^1\pi_*E(-D)$ in the neighborhood of $t$.
In particular, the fact that $p_*(R^1\pi_*E(-D))\simeq R^1f_*E(-D)$ is locally free implies that $p_*(\cal L^k\otimes R^1\pi_*E(-D))$ is locally free for any $k$.
We conclude that $R^1\pi_*E(-D)$ is flat over $T$ by~\cite[Proposition 2.1.2]{HL2}.
\end{proof}

Consider rigidified functors $\underline{\cal Coh}_d^{\leftarrow\cal F}S$, $\underline{\cal D^b(\ona{Coh}S)}^{\leftarrow\cal F}$, where we fix the additional data of an isomorphism $\Psi:E|_D\xra{\sim} \cal F$.
We will refer to elements of $(\underline{\cal Coh}_d^{\leftarrow\cal F}S)(\bbk)$ as \textit{$\cal F$-framed sheaves}.

\begin{lm}\label{locfreeD}
Any $\cal F$-framed sheaf is locally free in a neighborhood of $D$.
\end{lm}
\begin{proof}
Recall that for any torsion-free sheaf $E$, its double dual $E^{\vee\vee}$ is a vector bundle.
Let $(E,\Psi)$ be an $\cal F$-framed sheaf, and consider the quotient $U=E^{\vee\vee}/E$.
It is a sheaf with zero-dimensional support.
If the intersection $D\cap \supp U$ is non-empty, $E|_D$ is a proper subsheaf of $E^{\vee\vee}|_D$.
However,
\[
\deg E|_D=\deg \cal F=c_1(E)D=c_1(E^{\vee\vee})D=\deg E^{\vee\vee}|_D,
\]
and $\rk E|_D=\rk E^{\vee\vee}|_D$, so that $E|_D=E^{\vee\vee}|_D$.
Therefore the support of $U$ is disjoint from $D$, and we have an isomorphism $E\simeq E^{\vee\vee}$ in a neighborhood of $D$.
\end{proof}

Let us recall a closely related notion of stable pairs.
We specialize the definition in~\cite{BM} to the case when polarization of $S$ given by the divisor $H=D+Nf$, and $N>2g-2$.
Recall that for any locally free sheaf $\cal E$ on $C$, its \textit{slope} is defined as $\mu(\cal E)=\deg\cal E/\rk \cal E$.
\begin{defi}
Let $E$ be a torsion-free sheaf on $S$ satisfying $\ona{ch}E=(n,\deg \cal F\cdot f,-d)$, and $\Psi:E|_D\xra{\sim}\cal F$ an isomorphism.
Fix $N>2g-2$, and $\delta>0$.
A pair $(E,\Psi)$ is said to be \textit{$(N,\delta)$-stable}, if for any subsheaf $E'\subset E$ with $0<\rk E'<n$ the following inequality holds:
\[
\frac{c_1(E')H}{\rk E'}<
\begin{cases*}
\mu(\cal F)-\delta/n & if $E'\subset E(-D)$, \\
\mu(\cal F)+\delta/\rk E'-\delta/n & otherwise.
\end{cases*}
\]
\end{defi}
It is known that the moduli of $(N,\delta)$-stable pairs is represented by a quasi-projective variety, see~\cite[Theorem 2.3]{BM}.

\begin{prop}\label{fstmodrep}
There exist $N,\delta$ big enough, such that every $\cal F$-framed sheaf $(E,\Psi)$ is $(N,\delta)$-stable.
In particular, the functor $\underline{\cal Coh}_d^{\leftarrow\cal F}S$ is represented by a quasi-projective variety $\mathscr B(d,\cal F)$.
\end{prop}
\begin{proof}
The $(N,\delta)$-stability condition is vacuous for sheaves of rank 1.
Therefore, we will assume that $n\geq 2$.
The existence of Harder-Narasimhan filtration~\cite[Chapter 5]{LeP} implies that for a locally free sheaf $\cal F$ on $C$, there exists a constant $\mu_{max}(\cal F)$, such that $\mu(\cal F')<\mu_{max}(\cal F)$ for all $\cal F'\subset\cal F$.
From now on, we will assume that $\delta>(\mu_{max}(\cal F)-\mu(\cal F))n^2$, and $N>2g-2+\delta$.

Let $E'\subset E$ be a subsheaf of rank $n'$ with $0<n'<n$.
Since $E_c\simeq\Ocal_{\bbP^1}^n$ for a generic $c\in C$, there exist integers $0\leq k_{1}\leq\ldots\leq k_{n'}$ such that generically $E'_c\simeq \bigoplus_{i=1}^{n'}\Ocal_{\bbP^1}(-k_i)$.

Assume first that $E'\not\subset E(-D)$.
Consider the saturation $\overline{E'}$ of $E'$ inside $E$.
It has the same rank as $E'$, and $c_{1,D}(\overline{E'})\leq 0$.
Moreover, since $E$ is a vector bundle in the neighborhood of $D$ by Lemma~\ref{locfreeD}, $\overline{E'}$ is its subbundle in the same neighborhood.
As a consequence, we have $\overline{E'}|_D\subset E|_D$, and $c_{1}(\overline{E'})D=\deg(\overline{E'}|_D)$.
Putting this together, we obtain
\begin{equation}\label{stisst}
\begin{aligned}
\frac{c_1(E')H}{r'} & \leq \frac{c_1(\overline{E'})H}{r'}=\mu\left(\overline{E'}|_D\right)+\frac{Nc_{1,D}(\overline{E'})}{r'}\leq \mu_{max}(\cal F)<\mu(\cal F)+\delta/n^2\\
& <\mu(\cal F)+\delta/r'-\delta/n,
\end{aligned}
\end{equation}
which is the desired estimate.

Now, suppose $E'\subset E(-D)$.
In this case $k_1>0$, and $E'$ is not contained in $E(-(k_1+1)D)$.
Let $k$ be the maximal positive integer such that $E'\subset E(-kD)$; we have $k\leq k_1$.
In particular, $E'(kD)$ is naturally a subsheaf of $E$, which is not contained in $E(-D)$.
Moreover, for a generic point $c\in C$, we have $E'(kD)_c\simeq\bigoplus_{i=1}^{n'}\Ocal_{\bbP^1}(-\widetilde{k}_i)$, where $\widetilde{k}_i=k_i-k\geq 0$ for all $i$.
Therefore, the inequality~(\ref{stisst}) holds for $E'(kD)$ by previous considerations.
We have
\begin{align*}
\frac{c_1(E')H}{r'} & =\frac{c_1(E'(kD))H}{r'}-kD\cdot H =\frac{c_1(E'(kD))H}{r'}+k(2g-2-N)\\
 & <\mu(\cal F)+\delta/n^2+k(2g-2-N)<\mu(\cal F)+\delta/n^2-k\delta<\mu(\cal F)-\delta/n.
\end{align*}
We can thus conclude that for our choice of $N,\delta$ every $\cal F$-framed sheaf is $(N,\delta)$-stable.
\end{proof}
\begin{rmq}
The divisor $D\subset S$ is not nef when $g(C)>1$, so that~\cite[Theorem 3.1]{BM} could not be invoked directly.
Moreover, one can check that if we omit framing, $f$-semistable sheaves can possess automorphisms.
\end{rmq}

Analogously to $\underline{\cal Coh}_d^{\leftarrow\cal F}S$, consider the rigidified functor $\underline{\cal A\text{-}mod}_d^{\leftarrow\cal F}$, given by fixing a distinguished triangle $\Delta = \left(\cal V_1[1]\xra{\phi_0} \cal V_2^\bullet\xra{\psi} \cal F\xra{\phi'_0}\right)$.
Then $\Theta_d'$ extends to a natural transformation $\Theta^\leftarrow=\Theta^\leftarrow_d: \underline{\cal Coh}_d^{\leftarrow\cal F}S\to \underline{\cal A\text{-}mod}_d^{\leftarrow\cal F}$.

Let us establish relation between $\underline{\cal A\text{-}mod}_d^{\leftarrow\cal F}$ and the stack of Higgs triples.
\begin{lm}\label{coneunique}
Let $\cal E,\cal F\in \ona{Coh}C$, $\phi\in\Hom(\cal F, \cal E)$, and $\cal C_1, \cal C_2$ two cones of $\phi$.
Then there exists the unique map $f\in\bbHom(\cal C_1,\cal C_2)$ making the following diagram commute:
\[
\begin{tikzcd}
 \cal E\ar[d,equal]\ar[r,"i"] & \cal C_1\ar[d,"f"]\ar[r,"j"] & \cal F[1]\ar[r,"\phi"]\ar[d,equal]& \text{ }\\
 \cal E\ar[r,"i"] & \cal C_2\ar[r,"j"] & \cal F[1]\ar[r,"\phi"] & \text{ }
\end{tikzcd}
\]
where $i,j$ are the natural maps.
\end{lm}
\begin{proof}
The existence of map $f$ is assured by axioms of triangulated category.
Let $f_1$, $f_2$ be two such maps, and consider their difference $g=f_1-f_2:\cal C_1\to \cal C_2$.
By definition, we have $g\circ i=0$ and $j\circ g=0$.
Therefore, $g$ lies in the image of composition
\[
\bbHom(\cal F[1],\cal E)\to \bbHom(\cal C_1,\cal E)\to \bbHom(\cal C_1,\cal C_2).
\]
Since both $\cal E$ and $\cal F$ lie in the heart of $\cal D^b(\ona{Coh}C)$, we have $\bbHom(\cal F[1],\cal E)=0$.
Thus $g=0$, and the unicity of $f$ follows.
\end{proof}

Thanks to the lemma above, we can define a natural transformation $\tau:\underline{\cal A\text{-}mod}_d^{\leftarrow\cal F}\to T^*\underline{\cal Coh}^{\leftarrow \cal F}_{0,d}$, which to each element $(\cal V_1[-1],\cal V_2^\bullet,\phi_0,\phi_1,\Delta)\in \underline{\cal A\text{-}mod}_d^{\leftarrow\cal F}(T)$ associates the triple
\[
(\cal V_1, \phi'_0, f\circ\phi_1),
\]
where $f:\cal V_2^\bullet\xra{\sim} (\cal F\xra{\phi_0} \cal V_1)$ is the unique isomorphism given by Lemma~\ref{coneunique}.

\begin{prop}\label{tauequiv}
The functor $\tau$ is a natural equivalence.
\end{prop}
\begin{proof}
Let us consider a natural transformation
\[
\upsilon: T^*\underline{\cal Coh}^{\leftarrow \cal F}_{0,d}\to \underline{\cal A\text{-}mod}_{d}^{\leftarrow\cal F},
\]	
defined on $T$-points by the following formula:
\[
\upsilon(\cal E, \alpha,\theta)=(\cal E[-1],\cal F\xra{\alpha}\cal E,\iota,\theta[-1],\Delta).
\]
Here $\iota$ is the natural map $\cal E[-1]\to(\cal F\to\cal E)$, and $\Delta$ is obtained from the mapping cone of $\alpha$:
\[
\Delta = \left(\cal E[-1]\xra{\iota} (\cal F\to\cal E) \xra{\psi} \cal F\xra{\alpha}\right).
\]
It is clear that $\tau\circ \upsilon$ is the identity functor.
On the other hand, the composition $\upsilon\circ \tau$ sends $(\cal V_1,\cal V_2^\bullet,\phi_0,\phi_1,\Delta)$ to $(\cal V_1,\cal F\xra{\phi'_0}\cal V_1,f\circ \phi_0,f\circ \phi_1,\Delta')$, where $\Delta'$ is the triangle
\[
\Delta' = \left(\cal V_1[-1]\xra{\iota} (\cal F\to\cal V_1) \xra{\psi} \cal F\xra{\phi_0'}\right).
\] 
The map $f$ induces a natural equivalence $\upsilon\circ \tau\simeq \Id_{\underline{\cal A\text{-}mod}_{d}^{\leftarrow\cal F}}$, so that $\tau$ and $\upsilon$ are mutually inverse equivalences.
\end{proof}

\begin{thm}\label{torfree}
The composition $\overleftarrow{\Theta} =\tau\circ \Theta^\leftarrow$ factors through the stack of stable Higgs triples, and induces an equivalence $$\overleftarrow{\Theta}:\underline{\cal Coh}_d^{\leftarrow\cal F}S\xra{\sim}\left( T^*\underline{\cal Coh}^{\leftarrow \cal F}_{0,d} \right)^{st}.$$
\end{thm}
\begin{proof}
Since the functor $\overleftarrow{\Theta}$ is fully faithful, it is enough to compute its image on $T$-points of $\underline{\cal Coh}_d^{\leftarrow\cal F}S$ for every $T$.
Let $(\cal E,\alpha,\theta)$ be a Higgs triple, and consider the morphism
\begin{align*}
\xi=\eps\otimes 1+\pi^*(\iota,\theta\otimes\omega^\vee) & \in\bbHom\left( \pi^*\pi_*\Ocal(D)\otimes \pi^*\cal E, \Ocal(D)\otimes\pi^*\cal E\oplus \pi^*\alpha[1]\right),
\end{align*}
where $\eps:\pi^*\pi_*\Ocal(D)\to\Ocal(D)$ is the counit map from Lemma~\ref{counit}, and we use the identification $\pi_*\Ocal(D)\simeq \Ocal\oplus\omega^\vee$.
After restricting to $D$, we get
\[
\xi|_D=\begin{pmatrix}
\iota & \theta\otimes\omega^\vee \\
0 & 1
\end{pmatrix}:\cal E\oplus\cal E\otimes\omega^\vee\to \alpha[1]\oplus\cal E\otimes\omega^\vee.
\]
The map $\xi|_D$ can be naturally completed to a distinguished triangle
\[
\cal F\xra{(\alpha,0)}\cal E\oplus\cal E\otimes\omega^\vee\xra{\xi|_D} \alpha[1]\oplus\cal E\otimes\omega^\vee\xra{+1}.
\]
Thus, we obtain an identification $\Psi:Cone(\xi|_D)\xra{\sim} \cal F$.

Note that $$\overleftarrow{\Theta}(E)=\tau\circ F'(R\pi_*\cal Hom(\cal T,E))$$ up to remembering the framing.
Combining the inverses of each functor in the composition provided by Proposition~\ref{tauequiv}, Lemma~\ref{Finverse} and Proposition~\ref{Beiliequiv}, we obtain the following left inverse of $\Theta^\leftarrow$ on the set of isomorphism classes of $T$-points:
\[
G_T:T^*\underline{\cal Coh}^{\leftarrow \cal F}_{0,d}(T) \to \underline{\cal D^b(\ona{Coh}S)}^{\leftarrow\cal F}(T),\qquad(\cal E,\alpha,\theta) \mapsto (Cone(\kappa\circ\xi)[-1],\Psi),
\]
where $\kappa:\pi^*\cal E(D)\oplus \pi^*\alpha[1]\to \pi^*\cal E(D)\oplus \pi^*\alpha[1]$ multiplies first summand by $1$, and the second one by $-1$.
In what follows, we are only interested in the set-theoretic image of $G_T$.
As such, we will not concern ourselves with functoriality, and will liberally make use of (non-unique) cones of various maps.
\begin{lm}\label{adjuct}
As a complex, $\xi$ is quasi-isomorphic to $\Ocal(-D)\otimes (\pi^*(\cal E\otimes \omega^\vee)\xra{\xi'} \Ocal(D)\otimes \pi^*\alpha[1])$, where $\xi'$ is obtained by adjunction from $(-\theta\otimes\omega^\vee,\iota\otimes\omega^\vee)\in\bbHom(\cal E\otimes \omega^\vee,\pi_*\Ocal(D)\otimes \alpha[1])$.
\end{lm}
\begin{proof}
Let $p_1,p_2$ be the projection maps from $\pi^*\cal E(D)\oplus \pi^*\alpha[1]$ to the first and the second summand respectively.
Note that $p_1\circ \xi=\eps\otimes 1$.
Applying octahedral axiom to these three maps, we obtain a distinguished triangle, denoted by dashed arrows below:
\[
\begin{tikzcd}
\pi^*(\cal E\otimes\omega^\vee)(-D)\ar[r,dashed,"\xi''"]\ar[d,hook,"j"]&\pi^*\alpha[1]\ar[r,dashed]\ar[d,"i_2"] &Cone(\xi)\ar[d,equal]\\
\pi^*\cal E\otimes \pi^*\pi_*\Ocal(D)\ar[r,"\xi"]\ar[d,two heads,"\eps\otimes 1"]&\pi^*\cal E(D)\oplus \pi^*\alpha[1]\ar[r]\ar[d,"p_1"] &Cone(\xi)\\
\pi^*\cal E(D)\ar[r,equal]&\pi^*\cal E(D) &
\end{tikzcd}
\]
Here, $j$ is defined by Lemma~\ref{counit}, and $i_2$ is the natural inclusion of a summand.
We see that $\xi$ is quasi-isomorphic to $\xi''$, so it remains to compute the map $\xi''$.
Since $j$ is injective and the diagram commutes, we have $\xi''=p_2\circ\xi\circ j=\pi^*(\iota,\theta\otimes\omega^\vee)\circ j$.
After tensoring with $\Ocal(D)$ and applying $\pi_*$, we obtain the composition
\[
\cal E\otimes\omega^\vee\hookrightarrow (\Ocal\oplus\omega^\vee)\otimes(\Ocal\oplus\omega^\vee)\otimes\cal E\xra{1\otimes (\iota,\theta\otimes\omega^\vee)}(\Ocal\oplus\omega^\vee)\otimes\alpha[1].
\]
The map on the left is induced by the diagonal embedding $\omega^\vee\xra{(1,-1)} \omega^\vee\otimes \omega^\vee$ as in Remark~\ref{counitconv}.
Thus the composition is precisely $(-\theta\otimes\omega^\vee,\iota\otimes\omega^\vee)$, and we may conclude.
\end{proof}

Let us consider the map $G_\bbk$ between $\bbk$-points.
Recall (see Section~\ref{modtri}) that as a complex, $\alpha$ is quasi-isomorphic to $K\oplus J[-1]$, where $K=\Ker\alpha$, $J=\Coker \alpha$.
Thus, we can express $\xi'$ as a sum:
\[
\xi'=\xi'_e+\xi'_h,\qquad \xi'_e:\pi^*(\cal E\otimes\omega^\vee)\ra\pi^*K(D)[1],\qquad\xi'_h:\pi^*(\cal E\otimes\omega^\vee)\ra\pi^*J(D).
\]
Let $M\in\Ext^1(\pi^*(\cal E\otimes\omega^\vee),\pi^*K(D))$ be the extension given by $\xi'_e$.
Then the two-step complex given by $\xi'$ is quasi-isomorphic to $M\ra \pi^*J(D)$, with arrow defined as the composition of $\xi'_h$ with the projection $M\twoheadrightarrow \pi^*(\cal E\otimes\omega^\vee)$.
Consequently, the cone of $\xi'$ has length $1$ if and only if $\xi'_h$ is surjective.

\begin{lm}\label{stabsurj}
The map $\xi'_h$ is surjective if and only if the triple $(\cal E,\alpha,\theta)$ is stable.
\end{lm}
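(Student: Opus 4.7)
The plan is to reduce the surjectivity of $\psi_h$ to a fiber-wise linear-algebraic condition over $\pi$ and then match that condition against the stability of the Higgs triple.

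First, by Nakayama's lemma applied to the coherent cokernel of $\psi_h$, it suffices to check surjectivity at the fiber at each closed point of $S$; equivalently, the restriction $\psi_h|_{S_c}$ to every fiber $S_c = \pi^{-1}(c) \simeq \bbP^1$ (for $c$ in the finite set $\supp \cal E$) must be surjective as a map of coherent sheaves on $\bbP^1$. I would then unpack this fiber-wise map using $\Ocal_S(D)|_{S_c} \simeq \Ocal_{\bbP^1}(1)$ together with the identification of $H^0(\Ocal_{\bbP^1}(1))$ with $(\omega_c \oplus \bbk)^*$ coming from the tautological surjection on $\bbP(\omega_c \oplus \bbk) = S_c$. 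Working in terms of fiber reductions modulo $\fk m_c$, the map $\psi_h|_{S_c}$ is determined by the pair consisting of the natural quotient $q: \cal E_c \to J_c$ and the Higgs field $\theta_h: \cal E_c \to J_c \otimes \omega_c$: after trivializing $\omega_c$, evaluation at $[x:y] \in \bbP^1$ is the $\bbk$-linear combination $xq + y\theta_h$, reducing to $q$ on $D \cap S_c$ and to $\theta_h$ on $D_0 \cap S_c$.

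Surjectivity at every $[x:y]$ is then equivalent, by duality of finite-dimensional $\bbk$-vector spaces, to the absence of a nonzero functional $h^* \in J_c^*$ making $h^* \circ q$ and $h^* \circ \theta_h$ linearly dependent as functionals on $\cal E_c$. Given such an $h^*$ with $h^* \circ \theta_h = \lambda (h^* \circ q)$, I would show that the hyperplane $H := \ker h^* \subsetneq J_c$ lifts to a codimension-one $\Ocal_{C,c}$-submodule of $J$ that destabilizes the triple in the sense of Definition~\ref{trstable}: by construction $\Im\alpha \subset \ker q \subset q^{-1}(H)$, and the proportionality forces $\theta_h(q^{-1}(H)) \subset H \otimes \omega_c$, which is the criterion of Lemma~\ref{stabinquad}.

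For the converse I would start from an arbitrary destabilizer $\cal E' \subsetneq \cal E$ and refine it to a hyperplane destabilizer. Setting $J' := \cal E'/\Im\alpha \subsetneq J$, the condition $\theta_h(q^{-1}(J')) \subset J' \otimes \omega$ ensures that $\theta_h$ descends to a well-defined $\Ocal_{C,c}$-linear map $\bar\theta_h : J_c/J'_c \to (J_c/J'_c) \otimes \omega_c$. Reducing modulo $\fk m_c$ gives a $\bbk$-linear endomorphism of the nonzero top semisimple quotient $(J_c/J'_c)/\fk m_c(J_c/J'_c)$ (after trivializing $\omega_c$), which by the algebraic closure of $\bbk$ admits an invariant hyperplane via a dual-eigenvector argument; pulling this back successively to $J_c/J'_c$ and to $J_c$ produces a codimension-one $\Ocal_{C,c}$-submodule that destabilizes, hence a functional $h^*$ of the type above. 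Combining the two directions yields the stated equivalence. The main obstacle will be this last step: extracting a hyperplane destabilizer from a general destabilizing subsheaf requires handling the Artinian $\Ocal_{C,c}$-module structure of $\cal E_c$ rather than just a $\bbk$-vector-space structure, and exploiting the algebraic closure of $\bbk$ to produce the invariant hyperplane of the induced operator; the other direction and the fiber-wise surjectivity analysis amount to routine linear algebra on $\bbP^1$.
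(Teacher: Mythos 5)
Your proof is correct. It shares the paper's skeleton — use Nakayama to reduce surjectivity of $\psi_h$ to surjectivity of the fiber maps $a\theta_c+b\iota_c:\cal E\otimes\bbk(c)\to J\otimes\bbk(c)$ for $c\in\supp\cal E$ and $[a:b]\in\bbP^1$, then deduce the converse from the existence of eigenvalues over the algebraically closed $\bbk$ — but the two directions are executed differently. In the forward direction the paper constructs the destabilizer as $\iota^{-1}(\Im(a\theta_c+b\iota_c))$ at $c$ together with the full local pieces $\cal E_p$ elsewhere, while you dualize and take the codimension-one preimage $q^{-1}(\ker h^*)$ of a functional killing the deficient image; both work, and yours makes the forward and converse constructions symmetric (destabilizers of hyperplane type in both directions). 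In the converse the paper argues by contradiction, performing a length computation to conclude that surjectivity of all the $\psi_s$ would make $a\Id+b\bar\theta_c$ invertible on $\cal E_c/\cal E'_c$ for every $[a:b]$, which is impossible since $\bar\theta_c$ has an eigenvalue; you descend $\theta_h$ to an endomorphism $\bar\theta_h$ of the nonzero torsion sheaf $J/J'\simeq\cal E/\cal E'$, take a dual eigenvector $h^*_0$ of its fiber at some $c\in\supp(J/J')$, and pull back along $J\otimes\bbk(c)\twoheadrightarrow(J/J')\otimes\bbk(c)$ to obtain $h^*$ with $h^*\circ\theta_c=\lambda\,h^*\circ q_c$, so that $[1:-\lambda]$ is a concrete witness of non-surjectivity. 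This is arguably cleaner since it replaces the length bookkeeping by a direct construction of a destabilizing hyperplane. The only points to tighten are notational: $H=\ker h^*$ lives in the \emph{fiber} $J\otimes\bbk(c)$, so "$q^{-1}(H)$" should be read as the preimage in $\cal E$ of the preimage of $H$ under the reduction $J_c\twoheadrightarrow J\otimes\bbk(c)$, and $\theta_h:\cal E\to J\otimes\omega$ descends to $\cal E/\cal E'\to(J/J')\otimes\omega$ rather than to a genuine self-map of $J$, becoming an endomorphism of $J/J'$ only after composing with the canonical isomorphism $\cal E/\cal E'\simeq J/J'$. These are exactly the lifts you gesture at, and once spelled out the argument closes.
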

\begin{proof}
By abuse of notation, we will write $\xi=\xi'_h\otimes\omega$ throughout the proof, and study surjectivity of $\xi$.
Thanks to Lemma~\ref{adjuct}, $\xi$ is adjoint to
$$
(\theta,-\iota):\cal E\ra J\otimes(\omega\oplus\Ocal),
$$
where $\theta=\theta_h$ as in Section~\ref{modtri}, and $\iota=\iota_h:\cal E\twoheadrightarrow J$ is the natural projection.
Let $c\in C$, and $s=\pi^{-1}(c)=\bbP(\omega_c\oplus \Ocal_c)$.
If we choose an identification $\omega_c\simeq \Ocal_c$, the stalk $\xi_s$ at the point $s$ is given by a linear combination $a\theta_c+b\iota_c$ for some $a,b\in\bbk$.
In particular, $\xi$ is surjective if and only if it is surjective at each point $s\in S$, that is for every $c\in C$ and $[a:b]\in\bbP^1$ the map $a\theta_c+b\iota_c$ is surjective.
	
Suppose that $\xi$ is not surjective.
Then there exists a point $s\in \pi^{-1}(c)$, $c\in C$ where surjectivity fails.
This means that $J'_c:=\Im(a\theta_c+b\iota_c)$ is a proper subsheaf of $J_c$ for some $a\neq 0$, $b$.
Denote $\cal E'_c=\iota^{-1}(J'_c)$; then $\theta_c(\cal E'_c)\subset J'_c$.
Further, let
$$
\cal E'=\cal E'_c\oplus\bigoplus_{p\in C\setminus\{c\}}\cal E_p,\quad J'=J'_c\oplus\bigoplus_{p\in C\setminus\{c\}}J_p.
$$
Then $\Im\alpha\subset \cal E'$ and $\theta(\cal E')\subset J'$, which precludes the triple $(\cal E,\alpha,\theta)$ from being stable.
	
Conversely, suppose that $(\cal E,\alpha,\theta)$ is destabilized by a subsheaf $\cal E'\subset \cal E$.
Denote $J'=\iota(\cal E')$.
Let us choose a point $c\in C$, such that $\cal E'_c\subset \cal E_c$ is a proper subsheaf.
By assumption $\xi(\cal E'_c)\subset J'_c$ and $\cal E/\cal E'\simeq J/J'$.
Suppose $\xi$ is surjective.
Then for any $s\in\pi^{-1}(c)$ the stalk $\xi_s$ induces an automorphism of $\cal E_c/\cal E'_c$.
In particular, the map $a\Id+b\theta_c$ is an automorphism of $\cal E_c/\cal E'_c$ for each $[a:b]\in \bbP^1$.
However, $\cal E$ is a torsion sheaf, therefore $\cal E_c/\cal E'_c$ is finite-dimensional as a $\bbk$-module.
Because of this, $\theta$ must possess an eigenvalue $\lambda$, so that $\theta-\lambda \Id$ cannot be invertible.
Thus $\xi$ is not surjective.
\end{proof}

Lemma~\ref{stabsurj} shows that any family of Higgs triples which contains a non-stable one is mapped outside of $\underline{\cal Coh}_d^{\leftarrow\cal F}S$ by $G_T$.
This proves that the essential image of $\overleftarrow{\Theta}$ is contained in $\left( T^*\underline{\cal Coh}^{\leftarrow \cal F}_{0,d} \right)^{st}$.
We now need to show that every flat $T$-family $(\cal E_T,\alpha_T,\theta_T)$ of stable Higgs triples lies in the image of $\overleftarrow{\Theta}$, or equivalently its image $G_T(\cal E_T,\alpha_T,\theta_T)$ lies in $\underline{\cal Coh}_d^{\leftarrow\cal F}S(T)$.
By Lemma~\ref{stabsurj}, it is a coherent sheaf $E_T$ on $S\times T$, equipped with an isomorphism $\Psi:E_T|_{D\times T}\xra{\sim}\cal F\boxtimes\Ocal_T$.
Moreover, by construction $E_T$ is a subsheaf of $M_T$, with latter being obtained as an extension of a torsion-free sheaf $\pi^*K_T(D)$ by $\pi^*\cal E_T$.
As a consequence, the torsion $\ona{Tor}E_T$ is contained in the support of $\pi^*\cal E_T$.
However, since $\cal F$ is locally free, the existence of $\Psi$ implies that the support of $\ona{Tor}E_T$ must be disjoint from $D\times T$.
Since the support of every subsheaf of $\pi^*\cal E$ intersects $D\times T$, we conclude that $E$ is torsion-free.

Pick a point $t\in T$.
Outside of the support of $\pi^*\cal E$, the complex $\xi[-1]$ is quasi-isomorphic to $\pi^*\cal F$.
By Lemma~\ref{Mozlem}, it implies that $E_t$ is $f$-stable.

Let us compute the Chern character of $E_t$:
\begin{align*}
\ona{ch}(E_t) & =\ona{ch}(\Ocal(-D)\otimes \pi^*(\cal E_t\otimes\omega^\vee))-\ona{ch}(\pi^*\alpha_t[1])\\
& =(1,-D,1-g)(0,df,0)-(0,df,0)-(n,\deg F\cdot f,0) =(n,\deg F\cdot f,-d).
\end{align*}
Thus $c_1(E_t)=\deg F\cdot f$, and $c_2(E_t)=c_1(E_t)^2/2-\ona{ch}_2(E_t)=d$.

It remains to show that $E_T$ is $T$-flat.
For this, we will express $E_T$ in a different fashion.
In what follows, we will drop the subscript $T$, implicitly assuming that all objects live in families over $T$.

Denote by $ev:H^0(\cal E)\otimes\Ocal\twoheadrightarrow \cal E$ the natural evaluation map.
Let $\widetilde{\cal K}$ be the kernel of
\[
(\alpha,ev):\cal F\oplus \left(H^0(\cal E)\otimes\Ocal\right)\twoheadrightarrow \cal E.
\]
The octahedral axiom applied to the composition $\cal F\xra{(\id,0)}\cal F\oplus H^0(\cal E)\otimes\Ocal\xra{(\alpha,ev)} \cal E$ produces a distinguished triangle
\[
\pi^*\alpha\to \pi^*\widetilde{\cal K}\to H^0(\cal E)\otimes\Ocal_S\xra{+1}.
\]
Next, consider the composition $H^0(\cal E)\otimes\Ocal_S[-1]\to \pi^*\alpha\to E$, where the first map is defined by the triangle above, and the second map comes from the quasi-isomorphism $E\simeq \xi'\otimes(-D)$.
One more application of the octahedral axiom gives rise to the following diagram:
\begin{equation}\label{octaflat}
\begin{tikzcd}
\pi^*\alpha\ar[r]\ar[d]&[-12pt]E\ar[r]\ar[d,hook] &[-12pt]\pi^*(\cal E\otimes\omega^\vee)(-D)\ar[d,equal]\\
\pi^*\widetilde{\cal K}\ar[r,hook,dashed]\ar[d]&\widetilde{E}\ar[r,two heads,dashed]\ar[d,two heads] &\pi^*(\cal E\otimes\omega^\vee)(-D)\\
H^0(\cal E)\otimes\Ocal_S\ar[r,equal]&H^0(\cal E)\otimes\Ocal_S &
\end{tikzcd}
\end{equation}
Here, $\widetilde{E}$ is defined as a cone of the composition above.
Note that since both $E$ and $H^0(\cal E)\otimes\Ocal_S$ are sheaves (as opposed to complexes of sheaves), $\widetilde{E}$ is also a sheaf.

Recall that if $N$ is a $T$-flat sheaf, and $$0\to M_1\to M_2\to N\to 0$$ is a short exact sequence, then $M_1$ is $T$-flat if and only if $M_2$ is.
As a consequence of this, $\pi^*\widetilde{\cal K}$ is $T$-flat as the kernel of $\pi^*(\alpha,ev)$; the middle row of diagram~(\ref{octaflat}) shows $\widetilde{E}$ is $T$-flat; and finally, the middle column of~(\ref{octaflat}) shows that $E$ is $T$-flat as well.
\end{proof}

\begin{proof}[Proof of Theorem~\ref{moduli}]
Representability follows from Theorem~\ref{torfree} together with Proposition~\ref{fstmodrep}.
For smoothness, recall~\cite[Theorem 4.3]{BM} that $\mathscr B(d,\cal F)$ is smooth at a point $(E,\Psi)$ if the kernel of the trace map
\[
\Ext^2(E,E(-D))\to H^2(S,\Ocal(-D))
\]
vanishes.
Since $\Ext^2(E,E(-D))\simeq \Hom(E,E(-D))^*$ by Serre duality, and $H^2(S,\Ocal(-D))\simeq\Hom(\Ocal,\Ocal(-D))^*=0$, it suffices to show that for any $f$-stable sheaf $E$ there exist no non-zero maps from $E$ to $E(-D)$.
Let $\phi:E\to E(-D)$ be such a map.
By definition of $f$-stability, $E|_{\pi^{-1}(c)}\simeq \Ocal_{\bbP^1}^n$ for a generic point $c\in C$.
Since $\Ocal_{\bbP^1}(-1)$ has no global sections, this implies that the image of $\phi$ must be a torsion sheaf.
However, $E(-D)$ is a torsion-free sheaf, so we may conclude.

For the second claim, let $E_1$ be a locally free sheaf of rank $1$ on $S$, such that $c_1(E_1)=0$.
By the seesaw principle, $E_1\simeq \pi^*\pi_*E_1$.
In particular, if $E_1|_D\simeq \Ocal_C$, then $E_1\simeq \Ocal_S$.
By consequence, we have $E^{\vee\vee}\simeq \Ocal_S$ for any $(E,\Psi)\in\mathscr B(d,1)$, and fixing $\Psi$ makes this isomorphism canonical.
Therefore, the map
\[
E\mapsto (E|_{T^*C})\subset E^{\vee\vee}|_{T^*C}=\Ocal_{T^*C}
\]
establishes the desired isomorphism $\mathscr B(d,1)\simeq \ona{Hilb}_dT^*C$.
\end{proof}

In view of Theorem~\ref{moduli}, it is instructive to compare our results with recent works of Negu{\c t}~\cite{Ne2,Ne3}.
For any smooth projective surface $S$ and an ample divisor $H$, he considers the moduli space $\cal M$ of $H$-stable sheaves on $S$ with varying second Chern class, and for every $n\in \bbZ$ defines an operator $e_n:K(\cal M)\ra K(\cal M\times S)$ by Hecke correspondences.
These operators generate a subalgebra $\cal A$ inside $\bigoplus_{k>0}\Hom(K(\cal M)\ra K(\cal M\times S^k))$, which can be then projected to a shuffle algebra $\cal V_{sm}$.
The content of Conjecture 3.20 in~\cite{Ne2} is that this projection is supposed to be an isomorphism.
This conjecture is proved under rather restrictive assumptions; for instance, it is required that $K(S\times S)\simeq K(S)\otimes K(S)$.  

Let us now take $S=T^*C$ together with a scaling action of $T\simeq \bb G_m$, and replace usual $K$-groups with their $T$-equivariant counterpart.
In this case, the algebra $\cal V_{sm}$ can be identified with the subalgebra of $K\bf{Sh}^{norm}_C$, generated by $K(\mathrm{B}\bb G_m)\subset K\bf{Sh}^{norm}_C[1]\simeq K^T(C\times \mathrm{B}\bb G_m)$.
If we further replace $K$-groups by Borel-Moore homology, then by Corollary~\ref{shufflefaith} homological version of $\cal V_{sm}$ is realized as a subalgebra of $H\bf{Ha}_C^{0,T}$.
Therefore, one can regard results of Section~\ref{modtri} as a ``homological non-compact'' version of Negu\c{t}'s conjecture for $S=T^*C$, $c_{1,D}=0$, and stability condition given by $f$.
Another modest gain of our approach is that while $\cal A$ is given by operators on $K$-groups $K(\cal M)$, the definition of $A\bf{Ha}_C^{0,T}$ is independent from its natural representations, which allows to study this algebra without invoking torsion-free sheaves on $T^* C$.

In general, one expects that the moduli of framed sheaves on $\bbP_C(\omega\oplus \Ocal)$ with non-trivial first Chern class can be recovered from the moduli of stable Higgs triples of \emph{positive} rank.
Nevertheless, as stability condition for triples varies, Lemma~\ref{stabsurj} seems to suggest that the objects on $S$ which correspond to stable Higgs sheaves do not have to lie in the usual heart of $\cal D^b(\ona{Coh}S)$.
These questions will be investigated in future work~\cite{Mi2}.
\appendix
\section{Oriented Borel-Moore homology theories}\label{OBM}
In this appendix we recall the notion of equivariant oriented Borel-Moore functor and recollect some of its properties. For a more detailed exposition, we refer the reader to the monograph~\cite{LM} for a treatment of non-equivariant version, and to~\cite{HeLo} for the equivariant case.

\begin{defi}
An \textit{oriented Borel-Moore homology theory} $A$ on $\Sch/\bbk$ (or OBM for short) is the data of:
\begin{enumerate}
	\item for every object in $X\in \Sch/\bbk$, a graded abelian group $A_*(X)$;
	\item for every projective morphism $f:X\ra Y$, a homomorphism $f_*:A_*(X)\ra A_*(Y)$;
	\item for every locally complete intersection (lci for short) morphism $g:X\ra Y$ of relative dimension $d$, a homomorphism $f^*:A_*(Y)\ra A_{*+d}(X)$;
	\item an element $\mathbbm 1\in A_0(pt)$, and for any $X,Y\in \Sch/\bbk$ a bilinear pairing
	\begin{align*}
	\times: A_*(X)\otimes A_*(Y) & \ra A_*(X\times Y),\\
	u\otimes v & \mapsto u\times v,
	\end{align*}
	which is associative, commutative and has $\mathbbm 1$ as unit;
\end{enumerate}
satisfying the following conditions:
\begin{enumerate}
	\item[(BM0)] $A_*(X_1\sqcup X_2)=A_*(X_1)\oplus A_*(X_2)$;
	\item[(BM1)] $\Id^*_X=\Id_{A_*(X)}$, $(f\circ g)^*=g^*\circ f^*$;
	\item[(BM2)] $g^*\circ f_*=f_*'\circ g'^*$ for any cartesian diagram with projective $f$, transversal to lci $g$:
	$$
	\begin{tikzcd}
	W\arrow[r,"g'"]\arrow[d,"f'"] & X\arrow[d,"f"]\\
	Y\arrow[r,"g"] & Z
	\end{tikzcd}
	$$
	\item[(BM3)] $(f\times g)_*(u'\times v')=f_*(u')\times g_*(v')$, $(f\times g)^*(u\times v)=f^*(u)\times g^*(v)$;
	\item[(PB)] let $E\ra X$ be a vector bundle of rank $n+1$, $q:\bbP E\ra X$ its projectivization, $\Ocal(1)\ra \bbP E$ tautological line bundle, and $s:\bbP E\ra \Ocal(1)$ its zero section. Then the map
	$$
	\sum_{i=0}^n (s^*\circ s_*)^i\circ q^*:\bigoplus_{i=0}^n A_{*+i-n}(X)\ra A_*(\bbP E)
	$$
	is an isomorphism;
	\item[(EH)] let $p:V\ra X$ be an affine fibration of rank $n$. Then $p^*:A_*(X)\ra A_{*+n}(V)$ is an isomorphism; 
	\item[(CD)] for $r,N>0$, let $W=(\bbP^N)^r$, with $p_i:W\ra \bbP^N$ being $i$-th projection. Let also $[X_0:\cdots:X_N]$ be homogeneous coordinates on $\bbP^N$, $n_1,\ldots,n_r\in\bbZ$, and $i:E\ra W$ the subscheme defined by $\prod_{i=1}^{r}p_i^*(X_N)^{n_i}=0$. Then $i_*:A_*(E)\ra A_*(W)$ is injective. 
\end{enumerate}
\end{defi}

Given an OBM $A$, we can further define:
\begin{itemize}
	\item for any smooth variety $Y$, set $A^*(Y):=A_{\dim Y-*}(Y)$. The map $\Delta_Y^*\circ\times: A^*(Y)\otimes A^*(Y)\ra A^*(Y)$ defines an associative product on $A^*(Y)$;
	\item for any $f:Z\ra Y$, we have the graph morphism $\Gamma_f=(f,\Id_Z):Z\ra Y\times Z$, which is always a regular embedding. The map $\Gamma_f^*\circ\times: A^*(Y)\otimes A_*(Z)\ra A_*(Z)$ defines a $A^{-*}(Y)$-module structure on $A_*(Z)$;
	\item for any lci morphism $f:Y\ra X$ and arbitrary morphism $g:Z\ra X$, we have a \textit{Gysin pullback} map
	$$
	f^!=f^!_g:A_*(Z)\ra A_*(Z\times_X Y).
	$$
	It coincides with the usual pullback for $Z=X$, $g=\Id_X$. We will also liberally replace $f$ by its pullback $f':Z\times_X Y\ra Z$ in the notations;
	\item for any line bundle $L\ra X$, denote by $s:X\ra L$ its zero section. We have a graded homomorphism
	$$
	c_1(L):=s^*\circ s_*:A_*(X)\ra A_{*-1}(X).
	$$
	Moreover, let us consider any vector bundle $E\ra X$ of rank $n$ together with its projectivization $q:\bbP E\ra X$ and the tautological line bundle $\Ocal(1)\ra \bbP E$. Then there exist unique homomorphisms $c_i(E):A_*(X)\ra A_{*-i}(X)$ for $i=\{0,\ldots,n\}$, called \emph{$i$-th Chern classes}, such that $c_0(E)=1$ and
	$$
	\sum_{i=0}^n(-1)^i c_1(\Ocal(1))^{n-i}\circ q^*\circ c_i(E)=0.
	$$
	They satisfy all the usual properties of Chern classes (see~\cite[Proposition 4.1.15]{LM}). For a smooth variety $X$, the classes $c_i(E)$ can be realized by elements of $A^i(X)$;
	\item there exists a formal group law $F_A\in (u+v)+uvA^*(pt)\llbracket u,v\rrbracket$ on $A^*(pt)$ such that for any $X\in \Sch_\bbk$ and two line bundles $L, M$ on $X$, we have
	$$
	c_1(L\otimes M)=F_A(c_1(L),c_1(M)).
	$$
	Since we never consider two OBMs at the same time, we will use multiplicative notation for group laws, and write $F_A(c_1(L),c_1(M))=c_1(L)\star c_1(M)$.
\end{itemize}

In~\cite{LM}, Levine and Morel define and study algebraic cobordism theory $\Sigma_*$ associated to the universal formal group law $(\bb L,F_{\bb L})$ on the Lazard ring $\bb L$. Let us call an OBM $A$ \emph{free} if the natural map $\Sigma_*\otimes_{\bb L_*}A^*(pt)\ra A_*$ is an isomorphism. For this class of OBMs many properties will follow immediately after establishing them for $\Sigma_*$.

\begin{exe}
Chow group functor $CH_*$ and the Grothendieck group of coherent sheaves $K_0$ are free OBMs. 
\end{exe}

Note that usual Borel-Moore homology is \emph{not} an OBM, because of the presence of odd-dimensional part. Moreover, even the even-dimensional part fails to be a free OBM, which prevents us from translating results found in~\cite{LM} in a straightforward way. Still, all of the results we need can be proved in a similar way for the usual Borel-Moore homology. We will thus abuse the language somewhat and allude to it as to a free OBM in the propositions below, giving separate proofs where needed; in the case of omitted proof, we will give a separate reference.

For any reductive group $G$, free OBM $A$, and a $G$-variety $X$ Heller and Malag\'on-L\'opez~\cite{HeLo} define equivariant homology groups $A_*^G(X)$. Roughly speaking, the group $G$ has a classifying space represented by a projective system $\{EG_N\}_{N \in\bbN}$ of $G$-varieties, and we set
$$
A_*^G(X)=\varprojlim_N A_*(X\times_G EG_N).
$$ 
For example, if $G=GL_d$, the varieties $EG_N$ are just the Grassmanians $\Gr^d(d,N)$. Most of the constructions mentioned above for ordinary OBMs can be extended to the equivariant ones.

\begin{exe}\label{Aofpoint}
If $G=T$ is an algebraic torus of rank $d$, then by Lemma 1.3 in~\cite{YZ} we have
$$
A_*^T(pt)=A_*(pt)\llbracket c_1(t_1),\ldots,c_1(t_d) \rrbracket,
$$
for some choice of basis $t_1,\ldots,t_d$ of the character lattice of $T$.
\end{exe}

\begin{rmq}\label{Kthdiff}
One can observe that in the case of algebraic $K$-theory we get $K^T(pt)=\bbZ[1-t_1^{-1},\ldots,1-t_d^{-1}]$, which is different from the usual ring of Laurent polynomials $\bbZ\llbracket t_1^{\pm 1},\ldots,t_d^{\pm 1}\rrbracket$. However, the two become isomorphic after passing to completion. One can prove that this happens for any $T$-scheme $X$, using the argument in~\cite[Lemma 3.1]{AS} (the author would like to thank Gufang Zhao for pointing out this article). 
\end{rmq}

From now on until the end of appendix, let us fix a free OBM $A$. Moreover, since we are not concerned with questions of integrality, we also \textit{assume that $A^*(pt)$ contains $\bbQ$}, so that all $A$-groups are $\bbQ$-vector spaces. We will often omit homological grading from notations, and write $A=A_*$, $A^G=A_*^G$, $A_G=A^*_G$.

\begin{rmq}
To the best of author's knowledge, the notion of oriented Borel-Moore homology theory is not yet fully developed for arbitrary algebraic stacks. However, since all the stacks of interest in our paper are quotient stacks, we usually slightly abuse the notation and write
$$
A([X/G]):=A^G(X)
$$
for any quotient stack $[X/G]$ (see also~\cite[Proposition 27]{HeLo}).
\end{rmq}

\begin{prop}[{\cite[Theorem 26]{HeLo}}]\label{indA}
Let $H\subset G$ be a closed subgroup, and $X$ a $G$-variety. Then there exists a natural isomorphism
$$
\ind_H^G:A^H(X)\xra{\sim} A^G((X\times G)/H),
$$
where $H$ acts on $X\times G$ diagonally. 
\end{prop}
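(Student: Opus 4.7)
The plan is to reduce the assertion to a scheme-level isomorphism of Borel-type approximations and then take inverse limits.

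First I would unwind the definition of equivariant $A$-theory via the finite-dimensional approximations $\{EG_N\}$ of $EG$. Since $G$ acts freely on $EG_N$ the restriction of the action to $H\subset G$ is also free, so the same system $\{EG_N\}$ simultaneously serves as an admissible approximation of $BH$; this lets me write
\[
A^H(X)=\varprojlim_N A\bigl(X\times_H EG_N\bigr),\qquad A^G\bigl((X\times G)/H\bigr)=\varprojlim_N A\bigl(((X\times G)/H)\times_G EG_N\bigr).
\]
Both inverse systems stabilize in each fixed degree by the standard argument (the approximations become arbitrarily highly connected), so it suffices to produce a natural isomorphism of the schemes appearing inside $A$ and then promote it to an isomorphism on $A$-groups.

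Next I would construct the comparison map. Fixing the convention that $H$ acts on $X\times G$ by $h\cdot(x,g)=(hx,gh^{-1})$ (so that the residual left $G$-action on $(X\times G)/H$ is $g'\cdot[x,g]=[x,g'g]$), I send
\[
X\times EG_N\longrightarrow \bigl((X\times G)/H\bigr)\times EG_N,\qquad (x,e)\longmapsto \bigl([x,1_G],\,e\bigr).
\]
A direct check shows this descends to a morphism
\[
\varphi_N:\,X\times_H EG_N\;\longrightarrow\;\bigl((X\times G)/H\bigr)\times_G EG_N,
\]
because $(hx,h\cdot e)$ maps to $([hx,1],h\cdot e)=([x,h^{-1}],h\cdot e)$, which is $G$-equivalent to $([x,1],e)$ via the element $h^{-1}\in G$.

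To see $\varphi_N$ is an isomorphism, I exhibit an inverse: any class $([x,g],e)$ on the right has a unique representative with $g=1_G$ (use the $G$-action by $g^{-1}$), and two such representatives $([x,1],e)$, $([x',1],e')$ agree if and only if $x'=hx$, $e'=h\cdot e$ for a unique $h\in H$; this is precisely the equivalence relation defining $X\times_H EG_N$. One can make this rigorous by noting that $\varphi_N$ is a bijective morphism between smooth quotient schemes of the same dimension, or more cleanly by observing that the map $X\times EG_N\to(X\times G)/H\times EG_N$ is a principal $H$-bundle onto an $H$-saturated open (in fact all) of the target after passing to the $G$-quotient.

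Finally, the isomorphisms $\varphi_N$ are compatible with the structure maps $EG_N\hookrightarrow EG_{N+1}$ because the latter are $G$-equivariant, hence $H$-equivariant, and $\varphi_N$ is defined by a fixed formula independent of $N$. Thus $(\varphi_N)^{-1}_*$ induces the desired isomorphism
\[
\ind_H^G:A^H(X)\xra{\sim}A^G\bigl((X\times G)/H\bigr)
\]
upon passage to the inverse limit, and naturality in $X$ and in pairs $H\subset G$ follows from naturality of $\varphi_N$. The only real subtlety is the bookkeeping of conventions for the $H$- and residual $G$-actions; once those are fixed the rest is formal. Since this is exactly the content of \cite[Theorem 26]{HeLo}, I would ultimately simply invoke that reference.
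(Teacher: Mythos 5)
The paper does not prove this proposition; it simply cites \cite[Theorem 26]{HeLo}, and you end your sketch by deferring to the same reference, so the two ``proofs'' coincide. Your outline of the underlying argument is the standard one and is essentially correct, but it contains a bookkeeping slip in the verification that $\varphi_N$ descends: with the stated convention $h\cdot(x,g)=(hx,gh^{-1})$, one has $[hx,1_G]=[x,h]$ rather than $[x,h^{-1}]$ (apply $h$ to $(x,h)$ to get $(hx,hh^{-1})=(hx,1_G)$). With that correction, the subsequent $G$-translation by $h^{-1}$ does send $([x,h],h\cdot e)$ to $([x,1_G],e)$ as needed, so the map is well defined; as written, though, $h^{-1}\cdot([x,h^{-1}],h\cdot e)=([x,h^{-2}],e)$, which is not $([x,1_G],e)$. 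The rest of the sketch --- that the same approximating system $\{EG_N\}$ serves for $H$ because free $G$-actions restrict to free $H$-actions, that $\varphi_N$ is an isomorphism of the two quotients, and that the $\varphi_N$ are compatible with the structure maps of the inverse system --- is sound.
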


\begin{prop}[{\cite[Theorem 33]{HeLo}}]\label{GtoT}
Let $G$ be a reductive simply connected algebraic group, $T\subset G$ a maximal torus with normalizer $N$, $W=N/T$ the Weyl group, and $X$ a $G$-variety. Then $W$ acts on $A^T(X)$, and we have a natural isomorphism
$$
A^G(X)\simeq A^T(X)^W.
$$
\end{prop}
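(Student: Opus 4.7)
The plan is to model the map $A^G(X)\to A^T(X)$ geometrically, exhibit $A^T(X)$ as a free $A^G(X)$-module of rank $|W|$ via a Leray--Hirsch-type statement for the $G/T$-fibration, and then match the Weyl invariants with the base. Concretely, after replacing $EG$ by its finite-dimensional approximations, the inclusion $T\hookrightarrow G$ furnishes a natural quotient map
\[
\pi: X\times_T EG \;\longrightarrow\; X\times_G EG
\]
whose fibers are isomorphic to $G/T$. The pullback along $\pi$ gives the comparison $\pi^*:A^G(X)\to A^T(X)$. The normalizer $N$ acts on $X\times_T EG$ via its right action on $EG$; since $T$ acts trivially in $A$, this descends to a $W$-action. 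The map $\pi$ is $W$-invariant, so $\Im \pi^*\subset A^T(X)^W$.

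The first substantive step is the case $X=pt$, i.e.\ showing $A(BG)=A(BT)^W$. For this I would prove that $A(G/T)$ is a free $A(pt)$-module of rank $|W|$ on which $W$ permutes a natural basis. Two OBM axioms do the work: the map $G/T\to G/B$ is an affine bundle (with fiber the unipotent radical $B/T$), so by (EH) we have $A(G/T)\cong A(G/B)$; and $G/B$ admits a Bott--Samelson resolution $\widetilde{X}_w\to \overline{X}_w$ which is an iterated $\mathbb P^1$-bundle, so (PB) applied inductively yields a Schubert-type basis $\{[X_w]\}_{w\in W}$. A standard argument (see, e.g.\ the Chow-ring case of Demazure) then shows the $W$-action on $A(BT)$ has invariant ring exactly equal to $A(BG)$.

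For general $X$, I would globalize this calculation. Replacing $T$ by $B$ (using (EH) once more to identify $A^T(X)$ with $A^B(X)$), the map $X\times_B EG\to X\times_G EG$ is a Zariski-locally trivial $G/B$-bundle over each approximation. Pulling back the Bott--Samelson tower fiberwise and applying (PB) at each step, one obtains that $A^T(X)\simeq A^B(X)$ is a free $A^G(X)$-module of rank $|W|$ with a Schubert basis. Because the $W$-action permutes this basis compatibly with the $A^G(X)$-module structure, and because we have rational coefficients, averaging $\tfrac{1}{|W|}\sum_{w\in W} w$ produces a splitting $A^T(X)^W\xrightarrow{\sim} A^G(X)$ inverse to $\pi^*$.

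The main obstacle I anticipate is the Leray--Hirsch statement for the $G/T$-fibration: the (PB) axiom is stated only for projectivizations of honest vector bundles, whereas here we need a more general decomposition for a smooth projective fibration. The way around this is precisely the Bott--Samelson trick above, but one must handle equivariance carefully, since the Bott--Samelson tower is not $G$-equivariant on the nose. This forces one to work with the approximations $X\times_G EG_N$ one at a time and check compatibility with the structure maps of the projective system defining $A^G(X)$; this compatibility, together with the naturality of Chern classes and the projection formula, is exactly what is packaged in Heller--Malag\'on-L\'opez.
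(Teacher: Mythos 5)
The paper does not actually prove this proposition; it cites \cite[Theorem~33]{HeLo}, and the surrounding Remark~\ref{WeylonT} only discusses how the $W$-action is realized. So the comparison is really against the argument in Heller--Malag\'on-L\'opez. Your overall strategy matches theirs: model the restriction $A^G(X)\to A^T(X)$ by the $G/T$-fibration $X\times_T EG\to X\times_G EG$, replace $T$ by $B$ via axiom~(EH), use Bott--Samelson towers and~(PB) to build a Schubert basis and conclude that $A^T(X)$ is a free $A^G(X)$-module of rank~$|W|$, and then finish by a Chevalley/invariant-theory step. The concern you raise about the Bott--Samelson tower not being $G$-equivariant and having to work with the approximations $EG_N$ is legitimate and is precisely what the reference handles.

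However, there is a genuine gap in your surjectivity argument: \emph{$W$ does not permute the Schubert basis.} Already for $G=SL_2$, the Schubert basis of $A^T(\bbP^1)$ over $A^G(\bbP^1)$ is $\{1,[\mathrm{pt}_0]\}$, while the simple reflection sends $[\mathrm{pt}_0]$ to $[\mathrm{pt}_\infty]=[\mathrm{pt}_0]-e(\cal L)$ for an explicit $T$-equivariant line bundle $\cal L$; that is not a permutation of the basis. In general the $W$-action on Schubert classes is governed by Demazure/BGG operators, not by permutation, so averaging by $\tfrac{1}{|W|}\sum_w w$ does not a priori land in $\pi^*(A^G(X))$. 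The actual input one needs is finer than ``free of rank $|W|$'': one needs a $W$-equivariant identification $A^T(X)\cong A^T(pt)\otimes_{A^G(pt)}A^G(X)$ (Leray--Hirsch compatible with the $W$-structure) together with the Chevalley/Demazure fact that the coinvariant algebra $A^T(pt)/A^G(pt)_+A^T(pt)$ carries the regular representation of $W$, so that the invariants of $A^T(pt)$ over $A^G(pt)$ are exactly $A^G(pt)$ --- and this is where simple connectedness of $G$ and the rational coefficients fixed in the appendix are used. Your write-up also asserts without justification that $T$ ``acts trivially in $A$'' on $X\times_T EG$; this is correct (connectedness of $T$ plus rational coefficients) but worth a sentence, since without it the $N$-action does not visibly descend to a $W$-action on $A^T(X)$. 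Fixing the permutation claim and supplying the regular-representation/Chevalley step would make the argument correct.
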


\begin{prop}
Let $Z$ be a non-reduced $G$-scheme, and denote by $Z^{red}$ its reduction. Then the pushforward map along the natural embedding
$$
A^G(Z^{red})\ra A^G(Z)
$$
is an isomorphism.
\end{prop}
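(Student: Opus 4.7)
The plan is to reduce successively to a known property of algebraic cobordism. First, I would pass from the equivariant to the non-equivariant setting using the Borel construction underlying the definition of $A^G$. Recall that $A^G_*(Z) = \varprojlim_N A_*((Z \times EG_N)/G)$ for a Totaro-type system of smooth $G$-varieties $EG_N$ on which $G$ acts freely. Since $\bbk$ has characteristic $0$ and $EG_N$ is smooth (in particular reduced), one has $(Z \times EG_N)^{red} = Z^{red} \times EG_N$, and since quotients by free $G$-actions are étale-locally trivial, the same identity descends to the quotient: $((Z \times EG_N)/G)^{red} = (Z^{red} \times EG_N)/G$. Therefore it suffices to prove the non-equivariant version of the statement.

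Second, I would reduce from a general free OBM $A$ to algebraic cobordism $\Omega_*$ of Levine--Morel: by freeness of $A$ we have $A_*(-) \simeq \Omega_*(-) \otimes_{\bb{L}} A_*(pt)$ as functors, so the claim for $A$ follows from the claim for $\Omega_*$. For the separate case of ordinary Borel--Moore homology (which is not a free OBM in the strict sense), the assertion is immediate: the scheme-theoretic embedding $Y^{red} \hookrightarrow Y$ is a homeomorphism of the underlying topological spaces, and $H^{BM}_*$ depends only on the latter.

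Third, the corresponding assertion for $\Omega_*$ is standard, cf.~\cite[Chapter 4]{LM}: the group $\Omega_*(Y)$ admits a presentation by generators of the form $[f : W \to Y, L_1, \ldots, L_n]$ with $W$ smooth over $\bbk$ and $f$ projective, modulo relations encoded by analogous geometric data on smooth schemes. Since every smooth scheme is reduced, each such morphism $f$ factors uniquely through the closed immersion $Y^{red} \hookrightarrow Y$, and the same holds for all relations; this yields an explicit section of the pushforward $\Omega_*(Y^{red}) \to \Omega_*(Y)$, forcing it to be an isomorphism. The only nontrivial point in the whole argument is verifying the compatibility of reduction with the Borel construction in the first step (in particular that the chosen approximations $EG_N$ may be taken smooth, which is a standard feature of Totaro's construction); the remaining ingredients are essentially formal.
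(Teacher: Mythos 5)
Your proof is correct and takes essentially the same route as the paper, which simply asserts the claim "follows from the definition of algebraic cobordism theory" and cites Levine--Morel (Proposition 3.4.1, in Chapter 3 rather than Chapter 4). You have just unpacked the implicit steps — the equivariant reduction through the Borel construction, the specialization from a free OBM to $\Omega_*$, the separate treatment of Borel--Moore homology, and the generators-and-relations argument for cobordism — in more detail than the paper chooses to give.
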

\begin{proof}
Follows from the definition of algebraic cobordism theory. For an explicit mention of this fact, see the proof of Proposition 3.4.1 in~\cite{LM}.
\end{proof}

\begin{prop}[Projection formula]\label{projf}
Let $f:X\ra Y$ be a $G$-equivariant projective morphism of smooth varieties, $\beta\in A^G_*(X)$, and $\alpha \in A^*_G(Y)$. We have the following identity:
$$f_*(f^*\alpha\cdot \beta)=\alpha\cdot f_*\beta.$$
\end{prop}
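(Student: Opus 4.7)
The plan is to unfold both sides using the diagonal‐pullback description of the cap product on smooth varieties, and then to apply base change to a cartesian square built from the diagonal of $Y$ and from $f$. First I would reduce to the non-equivariant case: by construction $A^G_*(-) = \varprojlim_N A_*(- \times_G EG_N)$, every operation involved (projective pushforward, l.c.i.\ pullback, external product) is defined term-by-term in this inverse system, so it suffices to verify the identity for an ordinary $G$-equivariant projective map of smooth $G$-varieties replaced by its mixed quotient, i.e.\ for an honest projective map of smooth varieties $f:X\to Y$.

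In the non-equivariant setting, the definitions recalled in the excerpt give
\[
\alpha\cdot f_*\beta \;=\; \Delta_Y^*\bigl(\alpha\times f_*\beta\bigr),\qquad
f^*\alpha\cdot\beta \;=\; \Delta_X^*\bigl(f^*\alpha\times\beta\bigr),
\]
so the identity we have to prove reads
\[
f_*\Delta_X^*\bigl(f^*\alpha\times\beta\bigr) \;=\; \Delta_Y^*\bigl(\alpha\times f_*\beta\bigr).
\]
The key geometric input is the cartesian square
\[
\begin{tikzcd}
X \arrow[r,"\Gamma_f"] \arrow[d,"f"'] & Y\times X \arrow[d,"\id_Y\times f"] \\
Y \arrow[r,"\Delta_Y"'] & Y\times Y
\end{tikzcd}
\]
in which the horizontal maps are regular embeddings (hence l.c.i.) because $Y$ is smooth, the vertical maps are projective because $f$ is, and the square is transversal since all four varieties are smooth and the identity $X = Y\times_{Y\times Y}(Y\times X)$ holds on the nose with the expected dimension. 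Axiom (BM2) then gives
\[
\Delta_Y^*\circ(\id_Y\times f)_* \;=\; f_*\circ\Gamma_f^*.
\]
Applying this to $\alpha\times\beta$ and using (BM3) in the form $(\id_Y\times f)_*(\alpha\times\beta)=\alpha\times f_*\beta$, the right-hand side becomes $f_*\Gamma_f^*(\alpha\times\beta)$.

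Finally I would factor $\Gamma_f = (f\times\id_X)\circ\Delta_X$, so that $\Gamma_f^* = \Delta_X^*\circ(f\times\id_X)^*$, and then use (BM3) again to get $(f\times\id_X)^*(\alpha\times\beta)=f^*\alpha\times\beta$. Combining these steps yields
\[
\alpha\cdot f_*\beta \;=\; \Delta_Y^*(\alpha\times f_*\beta) \;=\; f_*\Gamma_f^*(\alpha\times\beta) \;=\; f_*\Delta_X^*(f^*\alpha\times\beta) \;=\; f_*(f^*\alpha\cdot\beta),
\]
which is the projection formula. The only point requiring any real care is the transversality hypothesis in (BM2) for the square above; everything else is a formal manipulation with the defining axioms. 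In the equivariant setting, passing to the Borel approximations preserves all of cartesianness, transversality, projectivity and l.c.i.\ properties stratum by stratum, so the inverse limit of the non-equivariant identity gives the equivariant one.
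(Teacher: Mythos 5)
Your proof is correct and amounts to the standard diagonal argument that Levine--Morel give for their Proposition~5.2.1, which is the reference the paper cites in lieu of a proof (the paper states that the equivariant case is analogous, exactly as your term-by-term reduction over the Borel approximations shows). One small point worth making explicit: the transversality hypothesis in (BM2) for your square is not automatic, since $\id_Y\times f$ need not be flat; it does hold here because the graph of $f$ sits inside the smooth (hence Cohen--Macaulay) variety $Y\times X$ with codimension $\dim Y$, so the local defining equations of $\Delta_Y$ pull back to a regular sequence on $Y\times X$, which is precisely the Tor-independence that (BM2) requires.
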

\begin{proof}
See~\cite[Proposition 5.2.1]{LM} for non-equivariant version; equivariant proof is completely analogous. 
\end{proof}

The following proposition holds only for universal OBMs, usual Borel-Moore homology not included.

\begin{prop}[{\cite[Theorem 20]{HeLo}}]\label{lespair}
Let $i:Z\ra X$ be a closed equivariant embedding of $G$-varieties, and $j:U\ra X$ the complementary open embedding. Then the sequence
$$
A^G(Z)\xra{i_*}A^G(X)\xra{j^*}A^G(U)\ra 0
$$
is exact.
\end{prop}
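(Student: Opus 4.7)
The plan is to reduce to the non-equivariant version and then pass to the Borel-style inverse limit defining $A^G$.

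For the non-equivariant statement, the sequence
$$
A(Z)\xra{i_*} A(X)\xra{j^*} A(U)\ra 0
$$
is the classical localization sequence. For algebraic cobordism $\Sigma_*$ it is proved in~\cite[Theorem 3.2.7]{LM}. Since by assumption $A$ is free, we have a canonical isomorphism $A(-)\simeq \Sigma_*(-)\otimes_{\bb L}A^*(pt)$, and right-exactness is preserved under tensor product, so the sequence is exact for $A$ as well (the $\otimes \bb Q$ implicit in our conventions is also exact).

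To handle the equivariant case, I would use the Borel-style definition $A^G(Y)=\varprojlim_N A(Y\times_G EG_N)$ of Heller--Malagón-López, where $\{EG_N\}$ is a directed system of smooth $G$-varieties admitting a free $G$-action such that the codimension of the non-free locus in $EG_N$ grows to infinity with $N$. For each $N$, the closed-open decomposition $Z\hookrightarrow X\hookleftarrow U$ pulls back to a closed-open decomposition
$$
Z\times_G EG_N\xra{i_N} X\times_G EG_N \xla{j_N} U\times_G EG_N,
$$
and the non-equivariant statement furnishes a compatible family of right-exact sequences indexed by $N$.

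The main obstacle is that inverse limits are only left-exact, so one must argue that right-exactness survives passing to $\varprojlim_N$. The standard device is the codimension-stabilization argument: applying the non-equivariant localization sequence to the pair consisting of the free locus in $EG_N$ and its complement, the growing codimension of the non-free locus forces the transition maps $A_k(Y\times_G EG_{N+1})\ra A_k(Y\times_G EG_N)$ to become isomorphisms in any fixed degree $k$ as soon as $N$ is large enough. This is precisely what makes the inverse limit well-defined, and it implies that the inverse systems obtained from $Z,X,U$ are essentially constant in each degree. On such systems $\varprojlim_N$ is exact, and the desired exact sequence follows. The only subtlety is verifying that the required codimension bound can be realized uniformly for all three of $Z,X,U$, but this is automatic since the bound concerns only $EG_N$ itself and is inherited by any base change.
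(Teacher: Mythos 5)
Your reduction to the non-equivariant localization sequence and the base-change to a free theory $A=\Sigma_*\otimes_{\mathbb L}A^*(pt)$ are fine: tensoring a right-exact sequence is right-exact, as is the passage to $\bbQ$-coefficients. The gap is in the codimension-stabilization step. You claim that because the non-free locus of $EG_N$ has large codimension, the transition maps $A_k(Y\times_G EG_{N+1})\to A_k(Y\times_G EG_N)$ become isomorphisms in any fixed degree $k$ once $N$ is large. This is the argument of Totaro and Edidin--Graham for Chow groups, where it works precisely because $CH_k(W)=0$ for $k>\dim W$: the high-codimension complement has Chow groups vanishing in the relevant range, so the open restriction is an iso. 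But that vanishing fails for algebraic cobordism, and hence for free OBMs in general. For any nonempty $W$ one has $\Sigma_k(W)\supset \Sigma_0(W)\cdot\mathbb L_k\neq 0$ for every $k\geq 0$; already $\Sigma_k(pt)=\mathbb L_k$ is nonzero above $\dim(pt)=0$. So the contribution of the high-codimension stratum does not die in a fixed degree, and the transition maps need not stabilize.

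This is exactly the technical difficulty that makes equivariant cobordism subtler than equivariant Chow theory. Krishna's definition, for instance, introduces a niveau filtration before taking the inverse limit for precisely this reason, and the proof of the localization sequence in Heller--Malag\'on-L\'opez cited by the paper does not reduce to a naive stabilization. To salvage your argument you would at minimum need to control $\varprojlim^1$ of the kernel systems $K_N=\ker\bigl(A(X\times_G EG_N)\to A(U\times_G EG_N)\bigr)$, and surjectivity of the transition maps on the $A(X\times_G EG_N)$ does not automatically descend to the $K_N$. So the proposal has a genuine gap at the step that needs the most care; the rest of the outline is reasonable but cannot be completed as written.
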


For a $G$-equivariant vector bundle $E\ra X$, we define its \textit{Euler class} $e(E)$ as the top Chern class:
$$
e(E):=c_{\rk E}(E)\in A^G(X).
$$
Also, for any regular embedding of smooth varieties $M\subset N$, let $T_M N$ be the normal bundle of $M$ in $N$.

\begin{prop}[Self-intersection formula]\label{euler}
Let $i:N\hookrightarrow M$ be a $G$-equivariant regular embedding of smooth $G$-varieties. Then
$$
i^*i_*(c)=e(T_N M)\cdot c
$$
for any $c\in A^G(N)$.
\end{prop}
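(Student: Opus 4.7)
The strategy is the classical one: reduce to the zero section of a vector bundle via deformation to the normal cone, then use the projection formula to reduce further to evaluating $s^*s_*(1)$, and finally invoke the splitting principle to reduce to the defining case of a line bundle.

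For the first reduction, I would form the $G$-equivariant deformation space $\tilde M:=\mathrm{Bl}_{N\times\{0\}}(M\times\bb A^1)\setminus \widetilde{M\times\{0\}}$ together with its natural flat map $\pi:\tilde M\ra\bb A^1$. The fiber of $\pi$ over $t\neq 0$ is $M$, while the fiber over $t=0$ is the normal bundle $E:=T_N M$. The trivial family $\tilde N:=N\times\bb A^1$ embeds as a $G$-equivariant regular subscheme $\tilde i:\tilde N\hookrightarrow\tilde M$, restricting over $1$ to the given $i$ and over $0$ to the zero section $s:N\hookrightarrow E$. Setting $\alpha:=\tilde i^*\tilde i_*(\pi_N^*(c))\in A^G(\tilde N)$, and using (EH) to see that $\pi_N^*$ is an isomorphism, it follows from (BM2) applied to the cartesian fiber squares at $0,1\in\bb A^1$ that the pullbacks of $\alpha$ to $N$ at both endpoints coincide; these pullbacks compute $i^*i_*(c)$ and $s^*s_*(c)$ respectively. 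Thus we may assume $M=E$ is a $G$-equivariant vector bundle and $i=s$ is its zero section.

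In that setting, let $p:E\ra N$ be the bundle projection, so $p\circ s=\id_N$. The projection formula (Proposition~\ref{projf}) gives
$$
s_*(c)=s_*\bigl(s^*p^*(c)\bigr)=p^*(c)\cdot s_*(1),
$$
whence $s^*s_*(c)=c\cdot s^*s_*(1)$, and the formula reduces to the identity $s^*s_*(1)=e(E)$. For $E$ a line bundle this is literally the definition of $c_1(E)$ recorded in the axioms. For higher rank, invoke the splitting principle: there is a flag bundle $\phi:F\ra N$ along which $\phi^*E$ splits as a direct sum $L_1\oplus\cdots\oplus L_r$ of line bundles and for which $\phi^*$ is injective, both statements being consequences of iterated use of (PB) together with the projection formula. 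Factoring the zero section of $\phi^*E$ as the composition of the successive zero-section inclusions into $L_1,\, L_1\oplus L_2,\,\ldots,\, L_1\oplus\cdots\oplus L_r$, and applying the rank one identity at each step, one obtains $\phi^*(s^*s_*(1))=c_1(L_1)\cdots c_1(L_r)=\phi^*e(E)$, so injectivity of $\phi^*$ concludes.

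The main technical obstacle lies in the first reduction: one has to verify that the blowup inherits a compatible $G$-action, that the complement $\tilde M$ is smooth with $\pi$ flat, and that the base change squares at $0$ and $1$ are transversal to the relevant l.c.i. morphisms so that (BM2) can be applied. These are standard facts for algebraic cobordism (see~\cite[Ch. 6]{LM}); the equivariant version follows by applying the non-equivariant result to the finite approximations $X\times_G EG_N$ and passing to the limit. For ordinary Borel–Moore homology, which is not free in the sense of Levine–Morel, the same strategy applies, with the splitting principle verified directly via the PB axiom.
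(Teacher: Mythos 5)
Your overall strategy---deformation to the normal cone, projection formula, then splitting---is the classical one and is broadly the route followed in the reference the paper cites (\cite[Theorem~6.6.9]{LM}); the paper's own ``proof'' is just that citation, so you are essentially supplying the argument behind it. The first two reductions are sound: the deformation space argument correctly reduces the statement to the zero section $s:N\ra E$ of the normal bundle $E=T_N M$ (modulo checking that the fiber inclusions at $0$ and $1$ are Tor-independent from $\tilde N$, which holds because $\tilde N\ra\bb A^1$ is flat, so that (BM2) applies), and the projection formula correctly reduces that case to showing $s^*s_*(1)=e(E)$.

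The gap is in the third step. You assert that on a flag bundle $\phi:F\ra N$ the pullback $\phi^*E$ \emph{splits as a direct sum} $L_1\oplus\cdots\oplus L_r$ of line bundles. This is false in the algebraic category: the iterated projectivization only produces a complete \emph{filtration} $0\subset F_1\subset\cdots\subset F_r=\phi^*E$ with line bundle quotients $L_i=F_i/F_{i-1}$, and this filtration does not split in general. (The Euler sequence $0\ra\Ocal(-1)\ra\Ocal^{\oplus2}\ra\Ocal(1)\ra 0$ on $\bbP^1=\bbP(\bbk^2)$ is already a counterexample.) Without an actual direct sum decomposition the zero section of $\phi^*E$ does not factor as a composition of zero sections of line bundles, so the computation $\phi^*(s^*s_*(1))=c_1(L_1)\cdots c_1(L_r)$ does not follow as written.

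The fix is routine. After passing to the flag bundle, pass further to the iterated affine bundle of splittings of the tautological filtration; each step is a torsor under a vector bundle, hence an affine fibration, so (EH) guarantees the pullback remains an isomorphism on $A$-groups, and over that space $\phi^*E$ genuinely decomposes as $\bigoplus_i L_i$. Alternatively, apply deformation to the normal cone once more at each stage $F_{i-1}\subset F_i$ of the filtration to replace $\phi^*E$ by its associated graded, which is the desired direct sum. Either way, your factoring argument then applies verbatim, and $\prod_i c_1(L_i)=e(E)$ follows from the Whitney sum formula, which holds for the filtration as well as for its associated graded.
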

\begin{proof}
Follows from~\cite[Theorem 6.6.9]{LM}. For $A=H$, see~\cite[Corollary 2.6.44]{CG}.
\end{proof}
Given $G$-equivariant regular embeddings $j:P\hookrightarrow N$, $i:N\hookrightarrow M$, Whitney product formula applied to the short exact sequence
$$
0\ra T_P N \ra T_P M \ra j^*T_N M \ra 0
$$
tells us that
\begin{equation}\label{pullEu}
e(T_P M)=e(T_P N)\cdot j^*e(T_N M).
\end{equation}


For our purposes, one of the most important pieces of data coming from an OBM is the Gysin pullback. Let us state several compatibility results about it.
\begin{lm}\label{Gysin}
The following properties of Gysin pullback are verified:
\begin{enumerate}
\item Gysin pullback commutes with composition, that is for any diagram with cartesian squares
$$
\begin{tikzcd}
Z'\arrow[r]\arrow[d] & Y'\arrow[r]\arrow[d,"g'"] & X'\arrow[d,"g"]\\
Z\arrow[r,"f_2"] & Y\arrow[r,"f_1"] & X
\end{tikzcd}
$$
one has $(f_1\circ f_2)_g^!=(f_2)_{g'}^!\circ (f_1)_g^!$, provided that $f_1$ and $f_2$ are locally complete intersections;
\item let $F:Y\ra X$, $G:X'\ra X$, $\iota:Z\ra X$ be morphisms of schemes such that $F$ is lci, $G$ and $\iota$ are proper, and $F$ and $G$ are transversal. Consider the following diagram, where all squares are cartesian:
$$
\begin{tikzcd}
Y'\arrow[rrr]\arrow[ddd] &  &  & X'\arrow[ddd,"G"]\\
 & W'\arrow[r,"f'"]\arrow[d,"g'"]\arrow[ul] & Z'\arrow[d,"g"]\arrow[ur,"\iota'"] & \\
 & W\arrow[r,"f"]\arrow[dl] & Z\arrow[dr,"\iota"] & \\
Y\arrow[rrr,"F"] &  &  & X
\end{tikzcd}
$$
Then we have an equality $$g'_*\circ f'^!_{\iota'}=f_\iota^!\circ g_*.$$
\end{enumerate}
\end{lm}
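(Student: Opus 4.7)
My plan is to reduce both claims to standard compatibilities of refined Gysin pullbacks by using the factorization of any l.c.i.\ morphism as a regular closed embedding followed by a smooth morphism. Writing $f_1=p_1\circ i_1$ and $f_2=p_2\circ i_2$ (and similarly $F=p\circ i$ for part (2)) with $p_k$ smooth and $i_k$ regular embeddings, the Gysin pullback splits as $f_k^!=i_k^!\circ p_k^*$, where $p_k^*$ is the ordinary (flat) pullback and $i_k^!$ is the refined Gysin map associated to a regular embedding. Via this reduction it suffices to treat the two fundamental cases (smooth morphism, regular embedding) separately and then combine.

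For part (1), the smooth--smooth case is (BM1) applied to $p_2\circ p_1$. The regular embedding--regular embedding case is the standard functoriality of refined Gysin maps, proved by deformation to the normal cone (see \cite[Theorem~6.6.6]{LM}, and for $A=H$ one instead invokes \cite[Proposition~2.6.47]{CG}). The two mixed cases follow by interpolating pullback diagrams along the smooth factors and moving the Gysin pullbacks across them using (BM2); this is legitimate because pullback along a smooth morphism is automatically transversal to any morphism, and because regular embeddings are stable under base change. For part (2) I would argue analogously. If $F$ is smooth, the required equality $g'_*\circ f'^!_{\iota'}=f^!_\iota\circ g_*$ collapses to the axiom (BM2), since the transversality of $F$ and $G$ is built into the hypotheses. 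If $F$ is a regular closed embedding, the equality is the classical base-change identity for refined Gysin pullback: both $f^!_\iota$ and $f'^!_{\iota'}$ arise as specialization to the normal cone of $F$ followed by a Thom isomorphism, and each of these operations commutes with proper pushforward along $g$ (the first by naturality of the deformation space construction, the second by projection formula~\ref{projf} combined with (BM3)). This is worked out in \cite[Section~6.6]{LM} for free OBM theories.

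The main obstacle I expect is twofold. First, one must check that the reduction above, in each case, produces cartesian intermediate squares that are Tor-independent (equivalently, transverse in the sense required by (BM2)); this is essentially a diagram chase, using that at every step either one side is smooth or the regular embeddings have the expected codimension in the fibered product. Second, the statement for usual Borel--Moore homology $A=H$ is not literally covered by \cite{LM}, since $H$ is not a free OBM in that sense. This is handled by running the same reduction scheme and appealing to the corresponding statements in \cite[Section~2.6]{CG}, in particular \cite[Proposition~2.6.43 and Proposition~2.6.47]{CG}, which together provide the functoriality and base-change identities needed. No new geometric input is required beyond these references.
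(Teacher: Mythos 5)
The paper's proof of this lemma is a pure citation: part~(1) to \cite[Theorem~6.6.6(3)]{LM} and part~(2) to \cite[Lemma~1.14]{YZ}. Your sketch reproduces the strategy behind those references (factor an l.c.i.\ morphism into a regular closed embedding followed by a smooth morphism, treat the two elementary cases, and combine); it is not a different route so much as a reasonable expansion of what the citation suppresses.

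Two small points deserve correction. In the smooth case of~(2) the reduction to (BM2) is right, but it implicitly uses the identification of the refined Gysin $f^!_\iota$ with the ordinary flat pullback along the base-changed smooth morphism $W\to Z$; once this is stated, the transversality hypothesis of (BM2) is automatic because smooth morphisms are flat. In the regular-embedding case of~(2) your appeal to Proposition~\ref{projf} is out of scope: that proposition is stated for projective morphisms of \emph{smooth} varieties, while $Z$ and $Z'$ need not be smooth here. No projection formula is needed anyway: the Thom isomorphism is the inverse of pullback along the normal-bundle projection, which is a smooth affine morphism, so commutation with proper pushforward follows directly from (BM2) together with axiom~(EH). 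Your remark that $A=H$ is not literally covered by~\cite{LM} is well taken; falling back to~\cite{CG} is consistent with how the paper handles the same issue in Proposition~\ref{euler}.
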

\begin{proof}
See~\cite[Theorem 6.6.6(3)]{LM} for (1) and~\cite[Lemma 1.14]{YZ} for (2).
\end{proof}

\begin{prop}\label{partloc}
Let $i:Y\hookrightarrow X$ be a closed embedding of $T$-varieties, and $\{\chi_1,\ldots,\chi_k\}\subset T^\vee$ a finite set of characters. Suppose that $X^T$ is not empty, $X^T\subset Y$, and for any point $x\in X\setminus Y$ its stabilizer under the action of $T$ is contained in $\bigcup_{i=1}^k\Ker(\chi_i)$. Then the pushforward along $i$ induces an isomorphism
$$
i_*:A_*^T(Y)[c_1(\chi_1)^{-1},\ldots,c_1(\chi_k)^{-1}]\xra{\sim}A_*^T(X)[c_1(\chi_1)^{-1},\ldots,c_1(\chi_k)^{-1}].
$$
\end{prop}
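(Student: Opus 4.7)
The plan is to reduce Proposition~\ref{partloc} to the following general vanishing statement: for any $T$-scheme $Z$ such that for every geometric point $z\in Z$ the identity component $\ona{Stab}_T(z)^{\circ}$ lies in some $\Ker(\chi_{i(z)})$, one has $A^T_*(Z)[c_1(\chi_1)^{-1},\ldots,c_1(\chi_k)^{-1}]=0$. We may assume each $\chi_i$ is nonzero (otherwise $c_1(\chi_i)=0$ and both sides of the claimed isomorphism are annihilated). Granting the vanishing, surjectivity of $i_*$ is immediate: Proposition~\ref{lespair} yields the right-exact sequence
\[
A^T(Y)\xra{i_*}A^T(X)\ra A^T(X\setminus Y)\ra 0,
\]
localization preserves right-exactness, and the hypothesis of the proposition applied to $X\setminus Y$ is stronger than the one required by the vanishing statement, since a connected algebraic subgroup contained in a finite union of closed subgroups is contained in one of them.

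To establish the vanishing, I would argue by Noetherian induction on $Z$. Applying generic flatness to the stabilizer group scheme over $Z$ produces a dense $T$-stable open $V\subset Z$ on which the identity components of stabilizers form a constant subtorus $T_0\subset T$. By hypothesis, $T_0\subset\Ker(\chi_i)$ for some single index $i$. Since $\chi_i:T\twoheadrightarrow\bb G_m$ is nonzero, one splits it by a cocharacter $\lambda:\bb G_m\hookrightarrow T$ with $\chi_i\circ\lambda=\id_{\bb G_m}$; automatically $\lambda(\bb G_m)\cap T_0=\{1\}$, so $\lambda(\bb G_m)$ acts on $V$ with finite stabilizers and $[V/\lambda(\bb G_m)]$ is a Deligne--Mumford stack of finite type. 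Under the induction isomorphism $A^T_*(V)\simeq A^{\Ker\chi_i}_*([V/\lambda(\bb G_m)])$ (Proposition~\ref{indA}), the class $c_1(\chi_i)$ becomes the first Chern class of a line bundle on this stack, hence is nilpotent. This gives $A^T_*(V)[c_1(\chi_i)^{-1}]=0$, and combining Proposition~\ref{lespair} for the pair $(Z\setminus V,V)$ with the inductive hypothesis finishes the argument.

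The harder part is injectivity of $i_*$, because the sequence of Proposition~\ref{lespair} has no continuation to the left for a general free OBM. My plan is to adapt the deformation-to-the-normal-cone argument of the classical Atiyah--Bott--Segal/Thomason localization theorem, following the version for equivariant algebraic cobordism in~\cite{HeLo}: one compares $A^T(X)$ with $A^T(C_{Y/X})$ via the specialization map, and realizes a partial inverse to $i_*$ through the zero-section Gysin pullback $Y\hookrightarrow C_{Y/X}$, whose composition with $i_*$ is multiplication by the Euler class of $N_{Y/X}$ up to correction terms controlled by the vanishing statement above. By the classical localization Theorem~\ref{loc} applied at $X^T\subset Y$, that Euler class restricts to a polynomial in the $\chi_j$'s, hence becomes invertible after the requested localization. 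The remaining technical difficulty is the singular nature of $Y$ and $X$, which must be handled either by a $T$-equivariant embedding into a smooth ambient space (à la Edidin--Graham) or by the refined Gysin formalism of~\cite{HeLo}; this constitutes the main obstacle to a self-contained proof.
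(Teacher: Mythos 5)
Your surjectivity argument shares the paper's skeleton (Proposition~\ref{lespair} plus Noetherian induction reduces everything to a vanishing statement on an open locus with big stabilizers), but the way you establish the vanishing is different and has gaps. The paper cites~\cite[Lemma~2]{EG}, which directly produces a dense open $U\subset X\setminus Y$ admitting a $T$-equivariant product decomposition $U\simeq\tilde{U}\times T/T_1$ with $T_1\subset\Ker(\chi_i)$; then $A_*^T(U)\simeq A_*(\tilde{U})\otimes_{A_*(pt)}A_*^T(T/T_1)$, and since $\chi_i|_{T_1}$ is trivial, the class $c_1(\chi_i)$ already acts by zero on $A_*^T(T/T_1)=A_{*}^{T_1}(pt)$, so $c_1(\chi_i)A_*^T(U)=0$ outright. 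Your route --- generic flatness of the stabilizer scheme, choosing a cocharacter $\lambda$ splitting $\chi_i$, and passing to the Deligne--Mumford stack $[V/\lambda(\bb G_m)]$ --- runs into several problems: Proposition~\ref{indA} concerns induction $A^H(X)\simeq A^G((X\times G)/H)$ for $H\subset G$, and does not furnish an isomorphism $A_*^T(V)\simeq A_*^{\Ker\chi_i}([V/\lambda(\bb G_m)])$; splitting $\chi_i$ by a cocharacter requires $\chi_i$ primitive, which you would have to arrange; and nilpotence of a first Chern class is only established in the paper's framework for schemes (via~\cite[Remark~5.2.9]{LM}), not for DM stacks, so the punchline is not available as stated. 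The net effect is that your argument proves less cleanly what the paper's [EG] citation gives for free.

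For injectivity your plan is a genuinely different route, and as you yourself note it is not completed. The paper does not go through deformation to the normal cone at all: after noting that $A=H$ can be handled by the long exact sequence, it reduces (by the surjectivity already proved) to the case $Y=Y_\natural$, the closure of the set of points whose stabilizer is not inside $\bigcup_i\Ker(\chi_i)$, and then follows Brion~\cite[2.3, Corollary~2]{Br}. Using again the product locus $U\simeq\tilde{U}\times T/T_1$, the character $\chi_i$ gives a regular function on $U$ that extends to a rational function $f:X\ra\bbP^1$ satisfying $f(t.x)=\chi_i(t)f(x)$; any point of $Y=Y_\natural$ lies in the support $D$ of the divisor of $f$, and one shows $A_*^T(D)\ra A_*^T(X)$ is injective after inverting $c_1(\chi_i)$, concluding by Noetherian induction on $D$. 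This divisor argument works for singular $X$ and $Y$ without any smooth embeddings or refined Gysin maps, which is precisely the obstacle you flag as unresolved in the deformation-to-the-normal-cone approach.
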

\begin{proof}
In the interest of brevity, we will abuse the notation and write $\chi_i$ instead of $c_1(\chi_i)$. Let us start with surjectivity. By Proposition~\ref{lespair} we have an exact sequence
$$
A^T(Y)\xra{i_*}A^T(X)\ra A^T(X\setminus Y)\ra 0.
$$ 
Thus it suffices to prove that $A_*^T(X\setminus Y)[\chi_1^{-1},\ldots,\chi_k^{-1}]=0$. By Lemma 2 in~\cite{EG}, there exists an open subvariety $U\subset X\setminus Y$ and a subgroup $T_1\subset T$ such that $U\simeq \tilde{U}\times T/T_1$ as $T$-variety, where $\tilde{U}$ is equipped with a trivial action of $T$. In particular, $A_*^T(U)\simeq A_*(\tilde{U})\otimes_{A_*(pt)}A_*^T(T/T_1)$. Because of our hypotheses, one has $T_1\subset \Ker(\chi_i)$ for some $i$, and thus $\chi_iA_*^T(U)=0$. We conclude by Noetherian induction. Namely, let $Z$ be the complement of $U$ in $X\setminus Y$. We have the following exact sequence:
$$
A^T(Z)\xra{i_*}A^T(X\setminus Y)\ra A^T(U).
$$
By induction $pA^T(Z)=0$, where $p$ is a monomial in $\chi_1,\ldots \chi_k$. Therefore $A^T(X)$ is annihilated by $\chi_ip$, and thus $A^T(X\setminus Y)[\chi_1^{-1},\ldots,\chi_k^{-1}]=0$.

It is left to prove injectivity. If $A=H$, we may already conclude by invoking long exact sequence in homology. Otherwise, we follow an approach found in~\cite[2.3, Corollary 2]{Br}. First, let us denote
$$
Y_{\natural}=\overline{\left\{y\in Y:Stab(y)\not\in\bigcup_{i=1}^k\Ker(\chi_i)\right\}}.
$$
Note that $Y_\natural$ is non-empty, since $X^T\subset Y_\natural$. We have the following commutative triangle
$$
\begin{tikzcd}
A_*^T(Y_\natural)[\chi_1^{-1},\ldots,\chi_k^{-1}]\arrow[rr]\arrow[dr,two heads] & & A_*^T(X)[\chi_1^{-1},\ldots,\chi_k^{-1}] \\
& A_*^T(Y)[\chi_1^{-1},\ldots,\chi_k^{-1}]\arrow[ur,two heads] & 
\end{tikzcd}
$$
where the diagonal arrows are surjective by the first part of the proof. If the horizontal arrow is injective, the same can be said of the map $A_*^T(Y)[\chi_1^{-1},\ldots,\chi_k^{-1}]\ra A_*^T(X)[\chi_1^{-1},\ldots,\chi_k^{-1}]$. Thus, from now on we will assume that $Y=Y_\natural$.

Let $U\simeq \tilde{U}\times T/T_1\subset X\setminus Y$ be as in the proof of surjectivity. Since $T_1\subset \Ker(\chi_i)$ for some $i$, we get a regular function 
\begin{align*}
f:\tilde{U}\times T/T_1 \ra \bbk^*,\qquad (u,t) \mapsto \chi_i(t).
\end{align*}
It extends to a rational function $f:X\ra \bbP^1_\bbk$ with the property that $f(t.x)=\chi_i(t)f(x)$ for any $x\in X$ and $t\in T$. In particular, if $y\in Y$ and $t\in\Ker(\chi_i)\setminus Stab_T(y)$ this equality becomes $\chi_i(t)f(y)=f(y)$, so that $y$ belongs to the support of the divisor associated to $f$. Let us denote this support by $D$. Thus $Y\subset D$, and the map $A_*^T(D)\ra A_*^T(X)$ becomes injective after inverting $\chi_i$; see~\cite{Br} for details. We conclude by Noetherian induction on $D$.
\end{proof}

For any commutative ring $R$ and an $R$-module $M$, let $M_{loc}$ be the localized $\Frac(R)$-module $\Frac(R)\otimes_R M$. 

\begin{thm}[Localization theorem]\label{loc}
Let $T$ be an algebraic torus, $R=A_*^T(pt)$, $X$ a $T$-variety, and $i_T:X^T\ra X$ inclusion of the fixed point set. Suppose that $X^T$ is not empty. Then the $\Frac(R)$-linear map
$$i_{T*}:A_*^T(X^T)_{loc}\ra A_*^T(X)_{loc}$$
is an isomorphism. Moreover, if $X$ is smooth, then the map
$$i^*_T:A_*^T(X)_{loc}\ra A_*^T(X^T)_{loc}$$
is an isomorphism as well.
\end{thm}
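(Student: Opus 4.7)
The plan is to deduce both statements from the partial localization result Proposition~\ref{partloc}.

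For the first (pushforward) statement, I would apply Proposition~\ref{partloc} directly to the closed embedding $i_T:X^T\hookrightarrow X$. The hypothesis $X^T\subset Y$ is satisfied tautologically with $Y=X^T$. The key verification is that the set of stabilizers $\{Stab_T(x) : x\in X\setminus X^T\}$ can be covered by finitely many kernels of non-trivial characters. Since $X$ is of finite type, a Noetherian stratification argument (exactly as in the proof of surjectivity of Proposition~\ref{partloc}, invoking Lemma~2 of~\cite{EG}) shows that only finitely many stabilizer subgroups $T_1,\dots,T_k\subsetneq T$ occur. For each proper closed subgroup $T_i\subsetneq T$ one can choose a non-trivial character $\chi_i$ vanishing on $T_i$. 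Proposition~\ref{partloc} then gives an isomorphism after inverting $c_1(\chi_1),\dots,c_1(\chi_k)$, and inverting all remaining non-zero elements of $R$ yields the desired isomorphism $i_{T*}:A^T(X^T)_{loc}\xrightarrow{\sim} A^T(X)_{loc}$.

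For the second (pullback) statement, assume $X$ is smooth. Then $X^T$ is smooth as well (in characteristic $0$ the tangent space at a fixed point is a $T$-representation whose invariants coincide with the tangent space of $X^T$), so $i_T$ is a regular closed embedding with normal bundle $N:=N_{X^T/X}$. The self-intersection formula (Proposition~\ref{euler}) gives
\[
i_T^*\circ i_{T*}(c)=e(N)\cdot c \qquad \text{for any } c\in A^T(X^T).
\]
Thus once $e(N)$ is invertible in $A^T(X^T)_{loc}$, the first part of the theorem forces $i_T^*$ to be an isomorphism as well.

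The main obstacle is therefore to show that $e(N)$ is invertible after localization. Since the fibers of $N$ carry no trivial $T$-weight (a trivial summand would produce additional fixed points outside $X^T$), the bundle $N$ decomposes $T$-equivariantly as $N=\bigoplus_{\chi\neq 0}N_\chi$, and hence $e(N)=\prod_\chi e(N_\chi)$. Using the splitting principle, one sees that each $e(N_\chi)$ is a polynomial in non-equivariant Chern classes together with $c_1(\chi)$, whose expansion has leading term $c_1(\chi)^{\rk N_\chi}$; since the non-equivariant Chern classes are nilpotent, a geometric series in $c_1(\chi)^{-1}$ inverts $e(N_\chi)$ in the localization. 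Equivalently, by the first part of the theorem applied fiberwise, it is enough to check invertibility after restriction to each $T$-fixed point, where $e(N)|_p$ is a product of Chern classes of non-trivial characters and hence a non-zero element of $R$, invertible in $\Frac(R)$.
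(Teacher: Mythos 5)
Your proposal follows the paper's proof essentially step for step: the first (pushforward) statement is reduced to Proposition~\ref{partloc} via the observation that a torus action on a finite-type scheme has only finitely many distinct stabilizers, and the second (pullback) statement is deduced via the self-intersection formula (Proposition~\ref{euler}) by showing that $e(N_{X^T/X})$ becomes invertible after localization, using the decomposition of the normal bundle into non-trivial isotypic components and the nilpotence of the non-equivariant Chern classes. The one small difference is cosmetic: the paper phrases the key claim as ``$e(T_{X^T}X)$ is not a zero-divisor in $A^T_*(X^T)$,'' while you state directly that it is invertible in $A^T_*(X^T)_{loc}$ (which is what the self-intersection argument actually requires, and what both computations deliver); your closing ``fiberwise'' remark is a looser gloss on the same reduction and is not needed once the splitting-principle argument is in place.
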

\begin{proof}
Any action of an algebraic torus has finitely many distinct stabilizers. One can therefore assume that
$$
\bigcup_{x\in X\setminus X^T}Stab_T(x)\subset \bigcup_{i=1}^k\Ker(\chi_i)
$$
for some choice of characters $\chi_1,\ldots,\chi_k\in T^\vee$. Applying Proposition~\ref{partloc} to the embedding $X^T\hookrightarrow X$ and localizing Chern classes of all characters instead of chosen ones, we see that $i_{T*}$ becomes an isomorphism after localization. In view of Proposition~\ref{euler}, it remains to prove that if $X$ is smooth, then the Euler class $e(T_{X^T}X)$ is not a zero-divisor in $A_*^T(X^T)$. Since the $T$-action on $X^T$ is trivial, we can decompose $T_{X^T}X$ into isotypical components:
$$
T_{X^T}X=\bigoplus_i p_i\otimes E_i,
$$
where $p_i$ are non-trivial characters of $T$, and $E_i$ are vector bundles on $X^T$. It suffices to assume $T_{X^T}X=p_1\otimes E_1$, because Euler class is multiplicative with respect to direct sums. In the case when $E_1$ is a line bundle, we have
$$
e(p_1\otimes E_1)=c_1(p_1)\star c_1(E_1)=c_1(p_1)+c_1(E_1)(1+c_1(p_1+\cdots)).
$$
The class $c_1(p_1)$ is not a zero-divisor by Example~\ref{Aofpoint}, $c_1(E_1)$ is nilpotent by~\cite[Remark 5.2.9]{LM}, therefore $e(p_1\otimes E_1)$ is not a zero-divisor as well. Finally, the case when rank of $E_1$ is bigger than $1$ can be reduced to the former by using axiom (PB) and the usual technique of Chern roots.
\end{proof}

The following proposition describes the behavior of localization map with respect to pullbacks and pushforwards.
\begin{prop}\label{pullpush}
Let $f:X\ra Y$ be a morphism of smooth $T$-varieties. Assume that the fixed point sets $X^T$, $Y^T$ are non-empty, and consider the natural commutative diagram
$$
\begin{tikzcd}
X\arrow[r,"f"]& Y\\
X^T\arrow[r,"f_T"]\arrow[u,hook,"i_X"] & Y^T\arrow[u,hook,"i_Y"']
\end{tikzcd}
$$
\begin{enumerate}
\item if $f$ is lci, then $i_X^*\circ f^*=f_T^*\circ i_Y^*$;
\item if $f$ is projective, then the following diagram commutes:
$$
\begin{tikzcd}
A^T(X)_{loc}\arrow[r,"f_*"]\arrow[d,"e(T^*_{X_T}X)^{-1}\otimes i_X^*(-)"']& A^T(Y)_{loc}\arrow[d,"e(T^*_{Y_T}Y)^{-1}\otimes i_Y^*(-)"]\\
A^T(X^T)_{loc}\arrow[r,"f_{T*}"] & A^T(Y^T)_{loc}
\end{tikzcd}
$$
\end{enumerate}
\end{prop}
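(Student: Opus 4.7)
The plan is to handle the two parts separately, reducing both to standard functoriality of Gysin pullback and proper pushforward combined with the localization machinery already established.

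For part (1), the first step is to note that since $T$ is an algebraic torus acting on the smooth varieties $X$ and $Y$, the fixed loci $X^T$ and $Y^T$ are smooth closed subvarieties (a classical theorem of Iversen in characteristic zero); in particular $i_X$ and $i_Y$ are regular embeddings. Because $f$ is $T$-equivariant it restricts to a map $f_T\colon X^T\to Y^T$, and smoothness of the fixed loci forces $f_T$ to be l.c.i.\ whenever $f$ is. The claim then reduces to compatibility of pullback with composition (Lemma~\ref{Gysin}(1)) applied to the obvious equality $f\circ i_X=i_Y\circ f_T$:
\[
i_X^*\circ f^*=(f\circ i_X)^*=(i_Y\circ f_T)^*=f_T^*\circ i_Y^*.
\]

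For part (2), the strategy is to recognize the vertical maps as the inverses of $i_{X*}$ and $i_{Y*}$ after localization. Indeed, by the self-intersection formula (Proposition~\ref{euler}) the composition $i_X^*\circ i_{X*}$ is multiplication by $e(T_{X^T}X)$, and the localization theorem~\ref{loc} ensures both that this Euler class is invertible in $A^T(X^T)_{loc}$ and that $i_{X*}$ is itself an isomorphism after localization. Hence $(i_{X*})^{-1}=e(T_{X^T}X)^{-1}\cdot i_X^*(-)$, and analogously for $Y$. Substituting these identifications, commutativity of the displayed square is equivalent to
\[
f_*\circ i_{X*}=i_{Y*}\circ f_{T*},
\]
which is simply functoriality of proper pushforward applied to the commutative square $f\circ i_X=i_Y\circ f_T$ (using that $f_T$ is projective whenever $f$ is, since projectivity is preserved under base change along $i_Y$ and $X^T$ is closed in the pullback).

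The only point that demands any real care is ensuring that $f_T$ is well-behaved enough for its Gysin pullback and proper pushforward to be defined, i.e.\ that it is l.c.i.\ in part (1) and projective in part (2); both facts follow from smoothness of $X^T,Y^T$ together with the equality $f\circ i_X=i_Y\circ f_T$. Everything else is bookkeeping with the localization isomorphisms, so I do not anticipate a genuine obstacle here.
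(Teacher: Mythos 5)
Your proposal is correct and follows essentially the same route as the paper, which simply declares part (1) obvious and for part (2) invokes the self-intersection formula (Proposition~\ref{euler}) together with the localization theorem; you have spelled out the details faithfully. One remark worth making explicit: the self-intersection formula yields $i_X^*\circ i_{X*}=e(T_{X^T}X)\cdot(-)$ with the \emph{normal} bundle, so the vertical maps you identify as $(i_{X*})^{-1}$ are $e(T_{X^T}X)^{-1}\cdot i_X^*(-)$, not $e(T^*_{X^T}X)^{-1}\cdot i_X^*(-)$ as written in the statement; the star (conormal) appears to be a typo in the proposition, as confirmed by Corollary~\ref{pullcomm} which uses $e(T_{X^T}X)$ without the star — so your proof, which silently uses the normal bundle throughout, is the one that actually proves the intended statement. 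The only small inefficiency is citing Lemma~\ref{Gysin}(1) in part (1): the simpler axiom (BM1) (functoriality of l.c.i.\ pullback) applied to $f\circ i_X=i_Y\circ f_T$ already suffices.
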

\begin{proof}
First claim is obvious, and Proposition~\ref{euler} coupled with the localization theorem proves the second claim as well. 
\end{proof}

\begin{corr}\label{pullcomm}
Under conditions of Proposition~\ref{pullpush}(2), we have
$$
i^*_Y\circ f_*(c)=f_{T*}\left(f_T^*(e(T_{Y_T}Y))\cdot e(T_{X_T}X)^{-1}\cdot i^*_X(c)\right)
$$
for any $c\in A^T(X)_{loc}$. If, moreover, $f$ is a regular embedding such that $X^T=Y^T$, then
$$
i^*_Y\circ f_*(c)=i^*_X(e(T_X Y)c).
$$
\end{corr}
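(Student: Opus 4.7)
The plan is to extract the first identity by directly unpacking the commutative diagram of Proposition~\ref{pullpush}(2), and to obtain the second identity by combining this with the multiplicativity of Euler classes (formula~(\ref{pullEu})) applied to the tower of embeddings $X^T \hookrightarrow X \hookrightarrow Y$.

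First I would start with $c \in A^T(X)_{loc}$ and apply the commutativity of the square in Proposition~\ref{pullpush}(2). Chasing $c$ right-then-down gives $e(T_{Y^T}Y)^{-1} \cdot i_Y^*(f_*(c))$, while chasing down-then-right gives $f_{T*}(e(T_{X^T}X)^{-1}\cdot i_X^*(c))$. Multiplying both sides by $e(T_{Y^T}Y)$ (valid after localization, since the Euler class becomes invertible by the proof of Theorem~\ref{loc}) yields
\[
i_Y^* \circ f_*(c) = e(T_{Y^T}Y) \cdot f_{T*}\!\left(e(T_{X^T}X)^{-1}\cdot i_X^*(c)\right).
\]
Now I would apply the projection formula (Proposition~\ref{projf}) to the proper morphism $f_T$ in order to absorb $e(T_{Y^T}Y)$ inside the pushforward, replacing it by its $f_T$-pullback. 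This directly produces the first claimed identity.

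For the second identity, assume $f$ is a regular embedding and $X^T = Y^T$. Then $f_T$ is the identity, so $f_{T*}$ and $f_T^*$ disappear, and the first identity becomes
\[
i_Y^* \circ f_*(c) = e(T_{Y^T}Y) \cdot e(T_{X^T}X)^{-1} \cdot i_X^*(c).
\]
It remains to identify the product of Euler classes with $i_X^*(e(T_X Y))$. Applying formula~(\ref{pullEu}) to the tower $X^T \hookrightarrow X \hookrightarrow Y$ gives
\[
e(T_{X^T}Y) = e(T_{X^T}X) \cdot i_X^*(e(T_X Y)),
\]
and since $X^T = Y^T$ we have $T_{X^T}Y = T_{Y^T}Y$. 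Dividing yields $e(T_{Y^T}Y)\cdot e(T_{X^T}X)^{-1} = i_X^*(e(T_X Y))$, so substituting and using multiplicativity of $i_X^*$ finishes the proof.

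There is essentially no obstacle here; the argument is a formal manipulation of the diagram combined with the Whitney-sum relation~(\ref{pullEu}). The only subtle point to mention is the invertibility of the relevant Euler classes, which is exactly the content built into the proof of the localization theorem~\ref{loc}, and is what makes dividing by $e(T_{X^T}X)$ legitimate in localized $A$-theory.
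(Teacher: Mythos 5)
Your derivation is correct and is the standard argument that the paper leaves implicit (no proof is given after the corollary's statement): commute the square in Proposition~\ref{pullpush}(2), use projection formula~\ref{projf} for $f_T$ to move $e(T_{Y^T}Y)$ inside $f_{T*}$, and for the second identity use Whitney multiplicativity~(\ref{pullEu}) on the tower $X^T\hookrightarrow X\hookrightarrow Y$. The note about invertibility of the Euler classes in the localized groups (built into the proof of Theorem~\ref{loc}) is exactly the right justification for the division steps.
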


For the following proposition we fix a reductive group $G$, and let $T\subset H\subset P$ be a maximal torus, Levi and parabolic subgroup of $G$ respectively. Denote by $W$ and $W_H$ the Weyl groups of $T$ in $G$ and $H$ respectively; we also fix a representative $\sigma$ for each class in $W/W_H$.

\begin{prop}\label{ind}
Let $X$ be an $H$-variety, and denote $Y=G\times_P X$, where the action of $P$ on $X$ is given by the natural projection $P\ra H$. Then $Y^T=W\times_{W_H}X^{T}$, and we have a commutative diagram
$$
\begin{tikzcd}
A^H(X)\arrow[r,"\ind_H^G"]\arrow[d,"i_X^*"']& A^G(Y)\arrow[d,"i_Y^*"]\\
A^T(X^T)\arrow[r,"s^*"] & A^T(W\times_{W_H}X^T)
\end{tikzcd}
$$
where $s:W\times_{W_H}X^T\ra X^T$ is the projection associated to the choice of representatives $\sigma$.
\end{prop}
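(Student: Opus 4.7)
To prove the identification $Y^T = W \times_{W_H} X^T$, I will analyze when a point $[g,x] \in G \times_P X$ is $T$-fixed. The condition is that for every $t \in T$, there exists $p_t \in P$ with $tg = g p_t$ and $p_t x = x$. The first equation says $T \subset gPg^{-1}$; since parabolic subgroups containing $T$ are classified by $W/W_H$ via the generalized Bruhat decomposition, we may choose the representative $g = \sigma \in N_G(T)$ in its $P$-coset. Then $p_t = \sigma^{-1}t\sigma \in T \subset H$, and the condition $p_t x = x$ becomes $x \in X^T$ because $\sigma$-conjugation is an automorphism of $T$. For the equivalence relation, two pairs $(\sigma,x)$ and $(\sigma',x')$ with $\sigma,\sigma' \in N_G(T)$ coincide in $Y$ iff $\sigma^{-1}\sigma' \in N_G(T) \cap P$; I will verify this intersection equals $N_H(T)$ by noting that $T$ acts with non-trivial weights on the unipotent radical $U_P$, hence $U_P \cap N_G(T) = 1$. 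Passing to the Weyl-group quotient yields $Y^T = W \times_{W_H} X^T$.

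For the commutativity of the diagram, the strategy is to exploit the characterization of $\ind_H^G$ from Proposition~\ref{indA}: it is the canonical isomorphism $A^H(X) = A^P(X) \xrightarrow{\sim} A^G(Y)$ (with the first equality from contractibility of $U_P$), whose inverse is pullback along the $P$-equivariant inclusion $X \hookrightarrow Y$, $x \mapsto [1,x]$. I will verify commutativity component-by-component under the decomposition $Y^T = \coprod_{[\sigma]} X^T$ afforded by the chosen representatives. Each component is the image of a $T$-equivariant inclusion $\phi_\sigma: X^T \hookrightarrow Y$, $x \mapsto [\sigma,x]$; equivariance holds because $\sigma \in N_G(T)$ normalizes $T$ and $T$ acts trivially on $X^T$. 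For $\sigma = 1$, $\phi_1$ factors through $X \hookrightarrow Y$, and the pullback of $\ind_H^G(\alpha)$ equals $i_X^*(\alpha)$ directly from the definition of induction. For general $\sigma$, I will invoke the $G$-equivariant left-translation $\lambda_\sigma: Y \to Y$, $[g,x] \mapsto [\sigma g, x]$, which carries the identity component of $Y^T$ to the $\sigma$-component and fixes $\ind_H^G(\alpha)$ since the latter is $G$-invariant; hence the $\sigma$-component of $i_Y^*(\ind_H^G(\alpha))$ is determined by the $\sigma = 1$ case via the identification of the $\sigma$-component with $X^T$ afforded by the chosen representative.

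The main technical obstacle will be bookkeeping the $\sigma$-conjugation of $T$ arising from $\lambda_\sigma$: this conjugation produces a Weyl-group twist on $A(BT)$-coefficients that must match the twist absorbed in the definition of $s^*$ via the chosen representatives. I expect this to go through cleanly because $T$ acts trivially on $X^T$, so the only footprint of the twist lies in the equivariant parameters; concretely, I plan to reformulate both sides in terms of the stacks $[X^T/T] \to [X/H] \to [Y/G]$ and check that the pullbacks factor uniformly through $[(W \times_{W_H} X^T)/T]$, so that the Weyl twist is encoded tautologically in the scheme structure of the target.
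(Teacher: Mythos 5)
Your proposal tracks the paper's own proof quite closely. For the identification of $Y^T$, the paper derives the chain $g\in N_G(T)\cdot P$ directly; you reach the same conclusion via classification of parabolics containing $T$, and your argument for $N_G(T)\cap P=N_H(T)$ (via $N_G(T)\cap U_P=1$) is the same content phrased differently. For the commutativity, both you and the paper decompose $Y^T$ into components indexed by $W/W_H$ and reduce to the $\sigma=e$ component via $G$-translation: the paper's inclusion $i_{\sigma P}:X\hookrightarrow Y$ is literally $\lambda_\sigma$ composed with the fiber inclusion over $eP$.

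There is, however, one imprecision worth fixing. The map $\lambda_\sigma:Y\to Y$, $[g,x]\mapsto[\sigma g,x]$, is \emph{not} $G$-equivariant: left translation does not commute with the left $G$-action ($\lambda_\sigma(g'y)=\sigma g'y\neq g'\sigma y=g'\lambda_\sigma(y)$ unless $\sigma$ is central). What is true is that $\lambda_\sigma$ is equivariant for $T$ once you twist the source action by $\sigma$-conjugation, $t\mapsto\sigma^{-1}t\sigma$; this is precisely where the Weyl twist enters. Consequently the statement that $\lambda_\sigma$ ``fixes $\ind_H^G(\alpha)$ since the latter is $G$-invariant'' cannot be taken at face value — if $\lambda_\sigma^*$ were an $A_T(pt)$-linear map fixing the class, the $\sigma$-component of $i_Y^*(\ind_H^G\alpha)$ would equal the $e$-component on the nose, which already fails for $G=SL_2$, $H=T$, $P=B$, $X=pt$ (the two fixed points of $G/B$ see $c$ and $s.c$ respectively). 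You do flag the $\sigma$-conjugation of $T$ as the ``main technical obstacle'' and correctly predict that it must match the twist built into $s^*$, so you are aware of this subtlety; but the resolution is not that it ``goes through cleanly,'' it is that $\lambda_\sigma^*$ is $A_T(pt)$-\emph{semilinear} with respect to the Weyl action, and that semilinearity is exactly what produces the $\sigma.(-)$ in the formula $i_Y^*(\ind_H^G c)=\sum_\sigma\sigma.i_X^*(c)$. The paper handles this by phrasing the intermediate result as landing in $A^{H^g}(X)$ and then invoking the $W$-action from Proposition~\ref{GtoT} and Remark~\ref{WeylonT}; your stacky reformulation is a reasonable alternative packaging, but make the semilinearity of $\lambda_\sigma^*$ explicit rather than appealing to an equivariance that does not hold.
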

\begin{rmq}\label{WeylonT}
For an arbitrary $H$-variety $X$, the action of normalizer $N_H(T)$ can be restricted to $X^T$, and thus induces an action of Weyl group $W_H$ on $A^T(X^T)$. Moreover, the restriction map $A^T(X)\ra A^T(X^T)$ can be seen to be $W_H$-equivariant. With that in mind, note that even though $s^*$ depends on the choice of representatives $\sigma$, its restriction to the $W_H$-equivariant part $A^{T}(X^{T})^{W_H}$, which contains the image of $i_X^*$, does not.
\end{rmq}
\begin{proof}
First, let us compute $T$-fixed points of $Y$. Let $(g,x)$ be a point in $G\times X$. Then we have:
\begin{align*}
t.(g,x)=(g,x)\ona{mod} P\,\,\forall t\in T &\Leftrightarrow \forall t\in T\,\exists p\in P :tg=gp^{-1}, p.x=x\\
&\Leftrightarrow g^{-1}Tg\subset P, x\in X^{g^{-1}Tg}\\
&\Leftrightarrow g\in N_G(T)\cdot P, x\in X^{g^{-1}Tg}\\
&\Leftrightarrow \exists p'\in P: gp'^{-1}\in N_G(T), x\in p'.X^{T}
\end{align*}
Therefore $(g,x)\ona{mod} P$ is $T$-stable iff $(g,x)\in (N_G(T)P)\times (p'.X^{T})/P=W\times_{W_H}X^{T}$, which proves the first claim.
Next, let $i_\xi:X\hookrightarrow Y$ be the inclusion of the fiber over $\xi\in G/P$. Note that by definition of $\ind_H^G$, it is a right inverse to $i_e^*$. Therefore, $G$-equivariance implies that
$$
i_{gP}^*(\ind_H^G c)\simeq g.c\in A^{H^g}(X)\text{ for all $g\in G$.}
$$
If we restrict all our structure groups to $T$ and suppose that $g\in N(T)$, we get
$$
i_\xi^*(\ind_H^G c)\simeq g.c\in A^T(X)\text{ for all $\xi\in G/P$}
$$
with the action of $g$ on $A^T(X)$ is as in Proposition~\ref{GtoT}. Moreover, we have the following commutative square
$$
\begin{tikzcd}
A^T(X)\arrow{r}{g}\arrow{d}{i_X^*} & A^T(X)\arrow{d}{i_X^*}\\
A^T(X^T) \arrow{r}{g}& A^T(X^T)
\end{tikzcd}
$$
since the action of $g$ on $A^T(X^T)$ is just the restriction of the action above. Finally,
\begin{align*}
i_Y^*(\ind_H^G c)& = i_X^*\left(\sum_{\xi\in N_G(T)/N_H(T)}i_{\xi P}^*\right)(\ind_H^G c)=i_X^*\left(\sum_{\xi\in N_G(T)/N_H(T)}\xi.c\right)\\
& = \sum_{\sigma\in W/W_H}\sigma.i_X^*(c)=s^*i_X^*(c)
\end{align*}
for all $c\in A^H(X)$, and the second claim follows.
\end{proof}
\begin{rmq}
The same proof as above shows that $(G\times_P X)^T=W\times_{W_H}X^{T}$ for a $P$-variety $X$.
\end{rmq}
\bibliography{bib}{}
\bibliographystyle{plain}

\end{document}